\newcommand{\bC}{\mathbb{C}}
\newcommand{\bF}{\mathbb{F}}
\newcommand{\bG}{\mathbb{G}}
\newcommand{\bM}{\mathbb{M}}
\newcommand{\bQ}{\mathbb{Q}}
\newcommand{\bR}{\mathbb{R}}
\newcommand{\bZ}{\mathbb{Z}}
\newcommand{\unit}{\mathbf{1}}
\newcommand{\cY}{\mathcal{Y}}
\newcommand{\fm}{\mathfrak{m}}
\newcommand{\fsl}{\mathfrak{sl}}
\newcommand{\ft}{\mathfrak{t}}
\DeclareMathOperator{\Aut}{Aut}
\DeclareMathOperator{\uAut}{\underline{Aut}}
\DeclareMathOperator{\End}{End}
\DeclareMathOperator{\Hom}{Hom}
\DeclareMathOperator{\uHom}{\underline{Hom}}
\DeclareMathOperator{\Res}{Res}
\DeclareMathOperator{\Spec}{Spec}
\DeclareMathOperator{\Tr}{Tr}
\DeclareMathOperator{\wt}{wt}
\newcommand{\simto}{\overset{\sim}{\to}}
\newtheorem{thm}{Theorem}[section]
\newtheorem{conj}[thm]{Conjecture}
\newtheorem{prop}[thm]{Proposition}
\newtheorem{lem}[thm]{Lemma}
\newtheorem{cor}[thm]{Corollary}
\theoremstyle{definition}
\newtheorem{defn}[thm]{Definition}
\newtheorem{rem}[thm]{Remark}
\begin{document}

\allowdisplaybreaks

\newcommand{\arXivNumber}{1710.00737}

\renewcommand{\thefootnote}{}

\renewcommand{\PaperNumber}{030}

\FirstPageHeading

\ShortArticleName{A Self-Dual Integral Form of the Moonshine Module}

\ArticleName{A Self-Dual Integral Form of the Moonshine Module\footnote{This paper is a~contribution to the Special Issue on Moonshine and String Theory. The full collection is available at \href{https://www.emis.de/journals/SIGMA/moonshine.html}{https://www.emis.de/journals/SIGMA/moonshine.html}}}

\Author{Scott CARNAHAN}
\AuthorNameForHeading{S.~Carnahan}
\Address{University of Tsukuba, Japan}
\Email{\href{mailto:carnahan@math.tsukuba.ac.jp}{carnahan@math.tsukuba.ac.jp}}
\URLaddress{\url{http://www.math.tsukuba.ac.jp/~carnahan/}}

\ArticleDates{Received February 13, 2018, in final form April 06, 2019; Published online April 19, 2019}

\Abstract{We construct a self-dual integral form of the moonshine vertex operator algebra, and show that it has symmetries given by the Fischer--Griess monster simple group. The existence of this form resolves the last remaining open assumption in the proof of the modular moonshine conjecture by Borcherds and Ryba. As a corollary, we find that Griess's original 196884-dimensional representation of the monster admits a positive-definite self-dual integral form with monster symmetry.}

\Keywords{moonshine; vertex operator algebra; orbifold; integral form}

\Classification{17B69; 11F22; 20C10; 20C20; 20C34}

\vspace{-3mm}
\tableofcontents

\renewcommand{\thefootnote}{\arabic{footnote}}
\setcounter{footnote}{0}

\section{Introduction}

In this paper, we construct self-dual $R$-forms of the moonshine module vertex operator algebra~\cite{FLM88} over various commutative rings $R$, culminating in the universal case where $R$ is the ring of rational integers $\bZ$. For the $\bZ$-form, we show that the vertex operator algebra has monster symmetry, and is self-dual with respect to an invariant bilinear form that respects the monster symmetry. Base change then gives us self-dual monster-symmetric vertex operator algebras over any commutative ring. Our construction yields the final step in the affirmative resolution of Ryba's modular moonshine conjecture.

Monstrous moonshine began in the 1970s with McKay's observation that the number 196884, namely the $q^1$ coefficient in Klein's modular $j$-invariant, is one more than the dimension of the smallest faithful complex representation of the monster simple group $\bM$. Similar observations by Thompson for higher-order coefficients~\cite{T79} led to what is now called the McKay--Thompson conjecture, asserting the existence of a natural graded faithful complex representation $V = \bigoplus_{n \geq 0} V_n$ of the monster with graded dimension given by
\begin{gather*} \sum_{n \geq 0} \dim V_n q^{n-1} = J(\tau) = j(\tau) - 744.\end{gather*}

The McKay--Thompson conjecture was resolved by the construction of the moonshine mo\-du\-le~$V^\natural$ in~\cite{FLM85}. This construction was then refined in~\cite{FLM88} so that $V^\natural$ is not just a graded vector space, but a vertex operator algebra whose automorphism group is precisely the monster. The vertex operator algebra structure on $V^\natural$ turned out to be essential to Borcherds's resolution of the monstrous moonshine conjecture \cite{B92} for $V^\natural$. This conjecture \cite{CN79} is a substantial refinement of the McKay--Thompson conjecture, given by replacing graded dimensions with graded traces of elements of $\bM$, and asserts that the resulting functions are Hauptmoduln for certain genus zero groups commensurable to ${\rm SL}_2(\bZ)$.

It is natural to ask whether $V^\natural$ can be defined over smaller subrings of $\bC$, and the first progress on this question was given at the end of \cite{FLM88}, where a monster-symmetric self-dual $\bQ$-form is constructed. In~\cite{BR96}, the authors show that the $\bQ$-form can be defined over $\bZ[1/2]$, but find that inversion of 2 is essential to the construction. Finally, an integral form of $V^\natural$ with monster symmetry was constructed in~\cite{DG12}, but part of the construction involves passing to a~monster-invariant sublattice to obtain the full symmetry, and this process generally destroys any control over self-duality. In particular, the inner product may be quite degenerate after reduction modulo a prime. A similar question concerns forms of the 196884-dimensional algebra~$V_2$, which is essentially what Griess used in his initial construction of the monster~\cite{G82}. Conway sketches a construction due to Norton of a monster-symmetric integral form of the 196884-dimensional subalgebra $V_2$ in \cite{C85}, and expresses hope that it may be self-dual. However, the question of extending such a form to the rest of $V^\natural$ has not been addressed. Indeed, remarks in \cite{BR96} suggest that this form would be incompatible with the form that we construct in this paper. We show in this paper that both questions have the best possible answers: there is a~positive definite self-dual integral form of $V^\natural$ with monster symmetry, and its weight 2 subspace is a positive definite self-dual integral form of Griess's algebra (up to a minor change in the multiplication on the Virasoro vector, as is pointed out in \cite{FLM88}), with monster symmetry.

The main motivation for this paper comes from the modular moonshine conjecture, which arose out of experimental observations by Ryba in~\cite{R96} that suggested the existence of a~mod~$p$ version of moonshine. Modular moonshine asserted the existence of certain vertex algebras over~$\bF_p$ attached to elements $g$ of $\bM$ in class $p$A such that Brauer traces of $p$-regular centralizing elements $h$ are given by certain McKay--Thompson series, i.e., graded traces on $V^\natural$. This conjecture, which was somewhat strengthened in~\cite{BR96}, was essentially resolved in~\cite{BR96} and~\cite{B98}, up to some assumptions about the existence of integral versions of various constructions that were known over $\bR$ or $\bC$. Some of these assumptions, such as a description of root spaces of a $\bZ_p$-form of the monster Lie algebra, were resolved in~\cite{B99}. The remaining problem was that the conjecture asserts the existence of a~self-dual $\bZ$-form of $V^\natural$ with $\bM$-symmetry. Since such a~form was not immediately available, the authors of~\cite{BR96} noted that for the original conjecture, one only needs for each prime $p$ dividing the order of~$\bM$, a~self-dual form over the ring $\bZ_p$ of $p$-adic integers. If a~self-dual $\bZ_p$-form (or more generally, a~form over any $p$-adic integer ring) with monster symmetry exists, the arguments given in~\cite{BR96} and~\cite{B98} (combined with some technical details resolved in \cite{B99}) yield a resolution of the conjecture. For odd primes~$p$, it therefore suffices to produce a~self-dual $\bZ[1/2]$-form, and in~\cite{BR96}, the authors showed that the construction in~\cite{FLM88} in fact yields such a form. For the case $p=2$, the problem has remained unambiguously open until now.

In this paper, we give a construction of a self-dual integral form of $V^\natural$ with monster symmetry, which resolves the last open assumption in modular moonshine. The construction requires several very recent developments in the theory of vertex operator algebras. The most notable of these is the cyclic orbifold construction given in~\cite{vEMS}. However, despite the dependence on recent technology, there is an effortless quality to our construction, like we are getting this object for free. In particular, the argument requires a surprisingly small amount of group theoretic input, essentially all compressed into some information about maximal subgroups of $\bM$ in the proof of Lemma~\ref{lem:maximal-subgroups-of-monster}. Aside from this input, the main ingredients are:
\begin{enumerate}\itemsep=0pt
\item The existence and uniqueness of self-dual $R$-forms of abelian intertwining algebras, from generating subalgebras. This is basically given by extending some well-known skew-symmetry and associativity properties of vertex algebras to the abelian intertwining algebra setting.
\item A very recent method for analyzing $V^\natural$ by applying multiple cyclic orbifolds to the Leech lattice vertex operator algebra, due to Abe, Lam, and Yamada. Together with the previous tool, this lets us extract self-dual forms of $V^\natural$ over cyclotomic S-integer rings.
\item Faithfully flat descent. This technique is more than a half-century old and has broad applications, but does not seem to get much attention in the vertex operator algebra world, where the base ring is almost always~$\bC$.
\end{enumerate}

While modular moonshine is now resolved, there are still many related open questions remaining. For example, most of the questions listed at the end of~\cite{BR96} remain unresolved, and the question of what happens in the case $g$ has composite order seems particularly natural.

\section{Cyclic orbifolds over subrings of the complex numbers}

The main goal of this section is to show that the cyclic orbifold construction, as worked out in \cite{vEMS}, is definable over distinguished subrings of~$\bC$. For this purpose, we introduce some commutative algebra technology.

For any complex number $r$, we write $e(r)$ to denote the normalized exponential ${\rm e}^{2\pi {\rm i} r}$. We will also use the following notation conventions, some of which may be unfamiliar to non-specialists in group schemes (see, e.g., \cite{SGA3}):
\begin{enumerate}\itemsep=0pt
\item For any commutative ring $R$ and any $n \in \bZ_{\geq 1}$, we write $\mu_n(R)$ to denote the group $\{ r \in R \,|\, r^n = 1\}$. The functor $\mu_n$ is represented by the affine scheme $\Spec \bZ[x]/\big(x^n-1\big)$.
\item For any commutative ring $R$, we write $\bG_m(R)$ to denote the group $R^\times$ of invertible elements. The functor $\bG_m$ is represented by the affine scheme $\Spec \bZ[x,y]/(xy-1)$, which is also written as the spectrum of the ring $\bZ\big[x,x^{-1}\big]$ of Laurent polynomials.
\item Given commutative group schemes $G$ and $H$, we write $\uHom(G,H)$ for the functor that takes a ring $R$ to the group of natural transformations $G \times \Spec R \to H \times \Spec R$ over $\Spec R$. In this paper, we will only consider the case $G$ is a torus or a lattice of rank $n$, and $H$ is either $\mu_n$ or~$\bG_m$. In particular, $D(\bZ^n) = \uHom(\bZ^n,\bG_m) \cong \bG_m^n$, and $D(\bG_m^n) = \uHom(\bG_m^n, \bG_m) \cong \bZ^n$.
\end{enumerate}

\subsection{Descent and gluing for finite projective modules and bilinear forms}

\begin{defn}
Let $R$ be a commutative ring. We say that an $R$-module $M$ is faithfully flat if for any 3-term complex $M_1 \to M_2 \to M_3$ of $R$-modules, the following conditions are equivalent:
\begin{enumerate}\itemsep=0pt
\item[1)] $M_1 \to M_2 \to M_3$ is exact,
\item[2)] $M \otimes_{R} M_1 \to M \otimes_{R} M_2 \to M \otimes_{R} M_3$ is exact.
\end{enumerate}
Given a homomorphism $f\colon R \to S$ of commutative rings, we say $f$ is faithfully flat if $S$ is faithfully flat as an $R$-module under the induced action.
\end{defn}

See \cite[\href{https://stacks.math.columbia.edu/tag/00H9}{Section~00H9}]{Stacks} for a brief overview of basic properties of flat and faithfully flat modules and ring maps.

\begin{defn} \label{defn:descent-datum-for-modules}
Let $f\colon R \to S$ be a homomorphism of commutative rings. A descent datum for modules with respect to $f$ is a pair $(M, \phi)$, where $M$ is an $S$-module, and $\phi\colon M \otimes_{R} S \to S \otimes_{R} M$ is an $S \otimes_{R} S$-module isomorphism satisfying the ``cocycle condition'', i.e., that the following diagram commutes:
\begin{gather*} \xymatrix{ M \otimes_{R} S \otimes_{R} S \ar[rr]^{\phi_{0,1} = \phi \otimes 1} \ar[rrd]_{\phi_{0,2}} & & S \otimes_{R} M \otimes_{R} S \ar[d]^{\phi_{1,2} = 1 \otimes \phi} \\ & & S \otimes_{R} S \otimes_{R} M,} \end{gather*}
where, if we write $\phi(m \otimes 1) = \sum s_i \otimes m_i$, then the $S \otimes_{R} S \otimes_{R} S$-module homomorphisms $\phi_{i,j}$ are defined by
\begin{gather*}
\phi_{0,1}(m \otimes 1 \otimes 1) = \sum s_i \otimes m_i \otimes 1, \\
\phi_{0,2}(m \otimes 1 \otimes 1) = \sum s_i \otimes 1 \otimes m_i, \\
\phi_{1,2}(1 \otimes m \otimes 1) = \sum 1 \otimes s_i \otimes m_i.
\end{gather*}
A morphism of descent data from $(M, \phi)$ to $(M', \phi')$ is an $S$-module homomorphism $\psi\colon M \to M'$ such that the following diagram commutes:
\begin{gather*} \xymatrix{ M \otimes_{R} S \ar[r]^{\phi} \ar[d]_{\psi \otimes 1} & S \otimes_{R} M \ar[d]^{1 \otimes \psi} \\
M' \otimes_{R} S \ar[r]^{\phi'} & S \otimes_{R} M'. } \end{gather*}
\end{defn}

\begin{thm} \label{thm:descent-for-modules}
Let $f\colon R \to S$ be a homomorphism of commutative rings. If $f$ is faithfully flat, then the category of $R$-modules is equivalent to the category of descent data for modules with respect to $f$. In particular, the functor taking an $R$-module $M$ to the $S$-module $M \otimes_{R} S$ equipped with descent datum $\phi((m \otimes s) \otimes s') = s \otimes (m \otimes s')$ has quasi-inverse given by $(M, \phi) \mapsto \tilde{M} = \{m\in M \,|\, 1 \otimes m = \phi(m \otimes 1)\}$.
\end{thm}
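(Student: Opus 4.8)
The plan is to exhibit the functor $F\colon N \mapsto (N\otimes_R S,\ \phi_{\mathrm{can}})$ of the statement together with the candidate quasi-inverse $G\colon (M,\phi)\mapsto \tilde M$, and then to construct natural isomorphisms $G\circ F \cong \mathrm{id}$ and $F\circ G \cong \mathrm{id}$. First I would record that $\tilde M$ is naturally the equalizer of the two $R$-linear maps $M \rightrightarrows S \otimes_R M$ given by $m \mapsto 1 \otimes m$ and $m \mapsto \phi(m \otimes 1)$, so that $G$ is visibly functorial: a morphism of descent data commutes with both maps and hence restricts to the respective subobjects. That $\phi_{\mathrm{can}}$ satisfies the cocycle condition is a direct check, so $F$ genuinely lands in descent data. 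The entire argument then rests on a single exactness input, which I would isolate as a lemma.

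\textbf{The engine.} For every $R$-module $N$ the augmented complex
\[ 0 \to N \xrightarrow{\,1 \otimes f\,} N \otimes_R S \xrightarrow{\,d^0 - d^1\,} N \otimes_R S \otimes_R S \]
with $d^0(n \otimes s) = n \otimes s \otimes 1$ and $d^1(n \otimes s) = n \otimes 1 \otimes s$ is exact. Because $S$ is faithfully flat over $R$, exactness may be tested after applying $- \otimes_R S$; but after this base change the left-hand map acquires the retraction induced by the multiplication $S \otimes_R S \to S$, and a standard contracting homotopy shows that the base-changed sequence is split exact. Faithful flatness then transports exactness back to the original complex.

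\textbf{The two isomorphisms.} Applying the lemma to an $R$-module $N$ identifies $\widetilde{N \otimes_R S}$, which is by definition the equalizer computed above for $\phi_{\mathrm{can}}$, with $N$ itself, yielding $G\circ F \cong \mathrm{id}$. For the other composite I would show that the multiplication map $\theta\colon \tilde M \otimes_R S \to M$, $\tilde m \otimes s \mapsto s\tilde m$, is an isomorphism of $S$-modules compatible with $\phi$. The main obstacle lies here: $\theta$ resists direct analysis, so I would again pass to the faithfully flat cover and prove instead that $S \otimes_R \theta$ is bijective. This is where the cocycle condition does the real work, as it is precisely the statement that, after base change along $f$, the pair $(M,\phi)$ becomes isomorphic to a module carrying the \emph{canonical} descent datum, reducing the claim to the split case already handled by the engine. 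Verifying this trivialization---by writing the required inverse out of $\phi$ and checking that the two triangles of the cocycle diagram match the relevant composites---is the crux of the argument. Once it is in place, faithful flatness upgrades the base-changed isomorphism to $\theta$ itself, and the naturality of both isomorphisms is then routine.
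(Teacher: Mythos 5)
Your outline is correct, but note that the paper does not actually prove this theorem: its ``proof'' is a citation to the standard references (Grothendieck's original FGA expos\'e, Vistoli's notes, and the Stacks Project). What you have written is precisely the argument given in those sources: the exactness of the augmented Amitsur complex $0 \to N \to N \otimes_R S \to N \otimes_R S \otimes_R S$, proved by base-changing along the faithfully flat $f$ so that the unit acquires a retraction and the complex becomes contractible; the identification of $\widetilde{N \otimes_R S}$ with $N$ as the equalizer of the two coface maps; and, for the harder composite, the reduction of $\theta\colon \tilde M \otimes_R S \to M$ to the split case by base-changing along $f$ and using $\phi$ together with the cocycle condition to identify the resulting complex with an Amitsur complex for $S \to S \otimes_R S$. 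You have correctly located the crux in that last trivialization step, and while you defer the diagram chase, the deferred computation is routine and does go through; there is no gap in the strategy. In short, you have supplied (in outline) the proof that the paper outsources.
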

\begin{proof}
See \cite[\href{https://stacks.math.columbia.edu/tag/023N}{Proposition 023N}]{Stacks} or Theorem~4.21 of~\cite{V05} for freely available expositions, or Theorem~1 of~\cite{G59} for the original reference.
\end{proof}

We note that faithful flatness can be replaced by the weaker condition of ``universally injective for modules'', namely that $f \otimes {\rm id}_M\colon M = R \otimes_{R} M \to S \otimes_{R} M$ is injective for all $R$-modules~$M$. The fact that this condition is necessary and sufficient for effectiveness of descent of modules was noted without proof in~\cite{O70} and proved in~\cite{M00}.

\begin{defn}Given a homomorphism $f\colon R \to S$ of commutative rings, a bilinear map with descent datum with respect to $f$ is a quadruple $(\kappa\colon M \times N \to T, \phi_M, \phi_N, \phi_T)$, where $M$, $N$, $T$ are $S$-modules, $\kappa$ is an $S$-bilinear map, and $\phi_M\colon M \otimes_{R} S \to S \otimes_{R} M$, $\phi_N\colon N \otimes_{R} S \to S \otimes_{R} N$, $\phi_T\colon T \otimes_{R} S \to S \otimes_{R} T$ are descent data for modules with respect to $f$, such that the following diagram of $S \otimes_{R} S$-module homomorphisms commutes:
\begin{gather*} \xymatrix{ (M \otimes_{R} S)\otimes_{S \otimes_{R} S} (N \otimes_{R} S) \ar[r]^{\phi_M \otimes \phi_N} \ar[d]_{\bar{\kappa} \otimes 1} & (S \otimes_{R} M) \otimes_{S \otimes_{R} S} (S \otimes_{R} N) \ar[d]^{1 \otimes \bar{\kappa}} \\ T \otimes_{R} S \ar[r]_{\phi_T} & S \otimes T. } \end{gather*}
Here, $\bar{\kappa}\colon M \otimes N \to T$ is the $S$-module map canonically attached to~$\kappa$.
\end{defn}

\begin{lem} \label{lem:descent-for-bilinear-maps}
Given a faithfully flat homomorphism $f\colon R \to S$ of commutative rings, any bi\-li\-near map with descent datum with respect to $f$ is equivalent to the base change of a unique bilinear map of $R$-modules. That is, given $(\kappa\colon M \times N \to T, \phi_M, \phi_N, \phi_T)$, there is a unique $R$-linear map $\tilde{\kappa}\colon \tilde{M} \otimes \tilde{N} \to \tilde{T}$ such that the following diagram commutes:
\begin{gather*} \xymatrix{ \big(\tilde{M} \otimes_{R} S\big) \otimes_S \big(\tilde{N} \otimes_{R} S\big) \ar[r]^-{\tilde{\kappa} \otimes 1} \ar[d]_{\psi_M \otimes \psi_N} & \tilde{T} \otimes_{R} S \ar[d]^{\psi_T} \\
M \otimes N \ar[r]^-{\bar{\kappa}} & T. } \end{gather*}
Here, $\psi_M\colon \tilde{M} \otimes_{R} S \to M$, $\psi_N\colon \tilde{N} \otimes_{R} S \to N$, $\psi_T\colon \tilde{T} \otimes_{R} S \to T$ are the $S$-module isomorphisms induced by the descent data.
\end{lem}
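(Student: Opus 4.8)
The plan is to reduce the statement to the module-descent equivalence of Theorem~\ref{thm:descent-for-modules}. First I would apply that theorem to the three descent data $\phi_M,\phi_N,\phi_T$ to produce the descended $R$-modules $\tilde M,\tilde N,\tilde T$ together with the isomorphisms $\psi_M,\psi_N,\psi_T$, and I would replace the $S$-bilinear map $\kappa$ by its linearization $\bar\kappa\colon M\otimes_S N\to T$. The conceptual heart of the argument is the observation that $\phi_M$ and $\phi_N$ combine into a descent datum $\phi_M\otimes\phi_N$ on $M\otimes_S N$: under the canonical isomorphism $(M\otimes_S N)\otimes_R S\cong (M\otimes_R S)\otimes_{S\otimes_R S}(N\otimes_R S)$, the top arrow of the square in the definition of a bilinear map with descent datum is the base change of this tensor datum, and the cocycle condition for $\phi_M\otimes\phi_N$ follows from those for $\phi_M$ and $\phi_N$. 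With this in hand, the commuting square says precisely that $\bar\kappa$ is a morphism of descent data $(M\otimes_S N,\phi_M\otimes\phi_N)\to(T,\phi_T)$.

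I would then use monoidality of the equivalence: base change is symmetric monoidal, so $(\tilde M\otimes_R\tilde N)\otimes_R S\cong M\otimes_S N$, and the canonical descent datum on the base change of $\tilde M\otimes_R\tilde N$ matches $\phi_M\otimes\phi_N$ under this isomorphism; hence $\tilde M\otimes_R\tilde N$ is the module descended from $(M\otimes_S N,\phi_M\otimes\phi_N)$. Since the descent functor is an equivalence, the morphism of descent data $\bar\kappa$ comes from a unique $R$-linear map $\tilde\kappa\colon\tilde M\otimes_R\tilde N\to\tilde T$, and full faithfulness supplies the asserted uniqueness; unwinding the quasi-inverse turns the abstract descent of $\bar\kappa$ into the displayed commuting square. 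Concretely, this $\tilde\kappa$ is just the restriction of $\kappa$ to $\tilde M\times\tilde N$: for $m\in\tilde M$ and $n\in\tilde N$ we have $\phi_M(m\otimes 1)=1\otimes m$ and $\phi_N(n\otimes 1)=1\otimes n$, and chasing $(m\otimes 1)\otimes(n\otimes 1)$ around the compatibility square yields $\phi_T(\kappa(m,n)\otimes 1)=1\otimes\kappa(m,n)$, so that $\kappa(m,n)$ lies in $\tilde T=\{t\in T\mid 1\otimes t=\phi_T(t\otimes 1)\}$. That the base change of $\tilde\kappa$ recovers $\bar\kappa$ is then immediate from $S$-bilinearity together with the fact that $\psi_M,\psi_N$ identify $\tilde M\otimes_R S$ and $\tilde N\otimes_R S$ with $M$ and $N$; uniqueness can alternatively be seen directly, since a map of $R$-modules whose base change to the faithfully flat extension $S$ vanishes must itself vanish.

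I expect the main obstacle to be the bookkeeping in the monoidality step, namely verifying that the canonical descent datum on $(\tilde M\otimes_R\tilde N)\otimes_R S$ really coincides with $\phi_M\otimes\phi_N$ and that the various module structures over $S$, $S\otimes_R S$, and $S\otimes_R S\otimes_R S$ appearing in the cocycle diagrams are correctly matched. Once this identification is pinned down, Theorem~\ref{thm:descent-for-modules} supplies both existence and uniqueness, and everything else is a formal diagram chase.
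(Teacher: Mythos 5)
Your proposal is correct and follows exactly the paper's route: the paper's entire proof is the one-line remark that the statement ``amounts to applying the equivalence in Theorem~\ref{thm:descent-for-modules} to the $S$-module map $\bar{\kappa}$,'' and your write-up simply makes explicit the monoidality bookkeeping (that $\phi_M\otimes\phi_N$ is a descent datum on $M\otimes_S N$ descending to $\tilde M\otimes_R\tilde N$) which that one-liner leaves implicit.
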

\begin{proof}This amounts to applying the equivalence in Theorem~\ref{thm:descent-for-modules} to the $S$-module map $\bar{\kappa}$.
\end{proof}

\begin{prop} \label{prop:finite-projective-property-descends} Let $f\colon R \to S$ be a faithfully flat homomorphism of commutative rings, and let $M$ be an $R$-module. Then $M$ is a finite projective $($equivalently, flat and finitely presented$)$ $R$-module if and only if $M \otimes_{R} S$ is a finite projective $S$-module.
\end{prop}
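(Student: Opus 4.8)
The plan is to treat the two implications separately and, for the substantive converse, to use the equivalence recorded in the statement that finite projective is the same as flat and finitely presented, showing that each of these two properties descends along $f$. The forward implication is the elementary base-change fact: writing $M$ as a direct summand $M \oplus M' \cong R^n$ of a finite free module and applying $- \otimes_R S$ exhibits $M \otimes_R S$ as a direct summand of $S^n$, hence finite projective over $S$.

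For the converse I assume $M \otimes_R S$ is finite projective over $S$. First I would show flatness descends: given an injection $N_1 \hookrightarrow N_2$ of $R$-modules, I want $M \otimes_R N_1 \to M \otimes_R N_2$ injective, and by faithful flatness this may be tested after applying $S \otimes_R -$. Under the canonical identification $S \otimes_R M \otimes_R N_i \cong (M \otimes_R S) \otimes_S (S \otimes_R N_i)$, the resulting map is injective because $S \otimes_R N_1 \to S \otimes_R N_2$ is injective (flatness of $S$ over $R$) and $M \otimes_R S$ is flat over $S$.

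Next I would show finite generation descends and bootstrap to finite presentation. For finite generation, pick finitely many generators of $M \otimes_R S$, expand each as a finite sum $\sum_j m_j \otimes s_j$, and let $M_0 \subseteq M$ be the finitely generated $R$-submodule spanned by the finitely many $m_j$ occurring. Right exactness of $- \otimes_R S$ applied to $M_0 \to M \to M/M_0 \to 0$, together with the fact that the chosen generators lie in the image of $M_0 \otimes_R S$, forces $(M/M_0) \otimes_R S = 0$; faithful flatness then gives $M/M_0 = 0$, so $M = M_0$ is finitely generated. For finite presentation, choose a surjection $R^n \to M$ with kernel $K$ and base change along the flat map $f$ to obtain the exact sequence $0 \to K \otimes_R S \to S^n \to M \otimes_R S \to 0$; since $M \otimes_R S$ is finitely presented over $S$ and $S^n$ is finite free, $K \otimes_R S$ is finitely generated over $S$, and the finite-generation descent just established yields that $K$ is finitely generated over $R$, so $R^n \to M$ is a finite presentation.

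The main obstacle is the finite-generation descent, as it is the one step where faithful flatness, rather than mere flatness, is indispensable: the vanishing argument $(M/M_0) \otimes_R S = 0 \Rightarrow M/M_0 = 0$ breaks down for a flat-but-not-faithfully-flat $f$. The remaining steps use only flatness of $S$ over $R$ or the elementary summand observation. One could instead route the converse through the descent equivalence of Theorem~\ref{thm:descent-for-modules}, but the property-level argument above is more direct.
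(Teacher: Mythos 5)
Your proof is correct. Note, however, that the paper does not actually prove this proposition: it simply cites \cite[Proposition~2.5.2]{EGA4v2}, so your self-contained argument is a genuine addition rather than a parallel to an argument in the text. What you have written is essentially the standard proof of the cited result: the forward direction is the elementary direct-summand observation (and needs no faithful flatness at all), while the converse is assembled from three descent steps --- flatness, finite generation, and finite presentation --- each of which you carry out correctly. The reduction of injectivity of $M \otimes_R N_1 \to M \otimes_R N_2$ to the base-changed map via the identification $S \otimes_R (M \otimes_R N_i) \cong (M \otimes_R S) \otimes_S (S \otimes_R N_i)$ is sound, the vanishing argument $(M/M_0)\otimes_R S = 0 \Rightarrow M/M_0 = 0$ is the one place faithfulness (as opposed to mere flatness) enters, as you correctly flag, and the bootstrap to finite presentation uses the standard fact that the kernel of a surjection from a finitely generated module onto a finitely presented module is finitely generated. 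The only implicit ingredient you should be aware you are using is the equivalence ``finite projective $\Leftrightarrow$ flat and finitely presented'' over an arbitrary commutative ring; since the proposition's statement records this equivalence parenthetically, invoking it is legitimate, but a fully self-contained writeup would cite or prove it (e.g., \cite[\href{https://stacks.math.columbia.edu/tag/058Q}{Lemma 058Q}]{Stacks}). Your route buys independence from EGA at the cost of a page of standard commutative algebra; the paper's citation buys brevity.
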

\begin{proof}This is given by \cite[Proposition 2.5.2]{EGA4v2}.
\end{proof}

\begin{defn}Given a commutative ring $R$, we define the groupoid of finite projective $R$-modules with bilinear form by setting:
\begin{enumerate}\itemsep=0pt
\item Objects are given by pairs $(M, \kappa)$, where $M$ is a finite projective $R$-module, and $\kappa\colon M \times_R M \to R$ is an $R$-bilinear map.
\item Morphisms $(M, \kappa) \to (M', \kappa')$ are $R$-module isomorphisms $\psi\colon M \to M'$ such that
\begin{gather*} \kappa'(\psi(m_1), \psi(m_2)) = \kappa(m_1,m_2) \qquad \text{for all} \quad m_1,m_2 \in M.\end{gather*}
\end{enumerate}
Given a homomorphism $f\colon R \to S$ of commutative rings, a descent datum for finite projective modules with bilinear form with respect to $f$ is a triple $(M, \kappa, \phi)$, where $(M, \phi)$ is a finite projective $S$-module with descent datum with respect to $f$, and $(\kappa, \phi, \phi, {\rm id}_S)$ is a bilinear map with descent datum with respect to $f$. An isomorphism of descent data from $(M, \kappa, \phi)$ to $(M', \kappa', \phi')$ is an $S$-module isomorphism $\psi\colon M \to M'$ such that $\kappa(m_1,m_2) = \kappa'(\psi(m_1),\psi(m_2))$ for all $m_1,m_2 \in M$, and the following diagram commutes:
\begin{gather*} \xymatrix{ M \otimes_{R} S \ar[r]^{\phi} \ar[d]_{\psi \otimes 1} & S \otimes_{R} M \ar[d]^{1 \otimes \psi} \\
M' \otimes_{R} S \ar[r]^{\phi'} & S \otimes_{R} M'. } \end{gather*}
A bilinear form $\kappa$ on a finite projective $R$-module is called non-singular if it induces an $R$-module isomorphism $M \to \Hom_R(M,R)$ under the canonical adjunction isomorphism $\Hom_R(M \otimes_{R} M, R) \to \Hom_R(M, \Hom_R(M,R))$.
\end{defn}

\begin{thm} \label{thm:descent-for-finite-projective-modules-with-bilinear-form}
Let $f\colon R \to S$ be a homomorphism of commutative rings. If $f$ is faithfully flat, then the groupoid of finite projective $R$-modules with bilinear form is equivalent to the groupoid of descent data for finite projective modules with bilinear form with respect to $f$. Furthermore, the subgroupoid of non-singular forms is preserved under this equivalence.
\end{thm}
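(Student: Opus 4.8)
The plan is to assemble the desired equivalence from the three descent results already established, since the data of a descent datum for finite projective modules with bilinear form splits cleanly into two independent pieces. By definition such a datum is a finite projective $S$-module with descent datum $(M,\phi)$ together with a bilinear map $\kappa$ that is compatible with $\phi$ in the sense that $(\kappa,\phi,\phi,\mathrm{id}_S)$ is a bilinear map with descent datum. Thus the module-with-descent part is governed by Theorem~\ref{thm:descent-for-modules}, the bilinear-map-with-descent part by Lemma~\ref{lem:descent-for-bilinear-maps}, and the finite projectivity constraint by Proposition~\ref{prop:finite-projective-property-descends}. The equivalence of groupoids should then follow by bookkeeping, with no genuinely new argument required.

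First I would define the base-change functor sending $(M,\kappa)$ to $\big(M \otimes_{R} S,\ \kappa \otimes 1,\ \phi_{\mathrm{can}}\big)$, where $\phi_{\mathrm{can}}$ is the canonical descent datum $\phi_{\mathrm{can}}((m\otimes s)\otimes s') = s\otimes(m\otimes s')$ of Theorem~\ref{thm:descent-for-modules}. That $M \otimes_{R} S$ is finite projective over $S$ is exactly Proposition~\ref{prop:finite-projective-property-descends}, and the compatibility of $\kappa\otimes 1$ with $\phi_{\mathrm{can}}$ is immediate from the explicit formulas. In the reverse direction, given $(M,\kappa,\phi)$ I would produce $\tilde{M}$ via the quasi-inverse of Theorem~\ref{thm:descent-for-modules} and $\tilde{\kappa}$ via Lemma~\ref{lem:descent-for-bilinear-maps}; finite projectivity of $\tilde{M}$ over $R$ is again Proposition~\ref{prop:finite-projective-property-descends}. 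On morphisms, an isomorphism of descent data is by definition an $S$-module isomorphism respecting both $\phi$ and $\kappa$, hence descends to an $R$-module isomorphism respecting $\tilde{\kappa}$, which is just the combination of the morphism parts of the two cited results. That the two functors are mutually quasi-inverse then reduces entirely to the corresponding statements in Theorem~\ref{thm:descent-for-modules} and Lemma~\ref{lem:descent-for-bilinear-maps}.

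The substantive part is the final claim that non-singularity is preserved. Non-singularity of $\kappa$ means the adjoint map $\kappa^\flat\colon M \to \Hom_S(M,S)$ is an isomorphism, and similarly for $\tilde{\kappa}^\flat\colon \tilde{M} \to \Hom_R(\tilde{M},R)$. The two key points are: (i) for a finite projective module the formation of the dual commutes with base change, so that there is a canonical isomorphism $\Hom_R(\tilde{M},R)\otimes_{R} S \cong \Hom_S(\tilde{M}\otimes_{R}S,\,S)$, and under the identification $\tilde{M}\otimes_{R}S \cong M$ the map $\tilde{\kappa}^\flat\otimes 1$ is identified with $\kappa^\flat$; and (ii) since $f$ is faithfully flat, an $R$-module map is an isomorphism if and only if its base change to $S$ is. Combining (i) and (ii), $\tilde{\kappa}^\flat$ is an isomorphism if and only if $\kappa^\flat = \tilde{\kappa}^\flat\otimes 1$ is, which is precisely the assertion that non-singularity descends and ascends along $f$.

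I expect the main obstacle to be verifying point (i), the base-change compatibility of the duality map. This relies on $\tilde{M}$ being finitely presented and flat — exactly the content of finite projectivity supplied by Proposition~\ref{prop:finite-projective-property-descends} — and on chasing the canonical adjunction isomorphism through the descent identifications so that the two adjoint maps match. Everything else in the argument is formal, so once (i) is pinned down the preservation of the non-singular subgroupoid follows at once from faithful flatness.
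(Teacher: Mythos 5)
Your proposal is correct and follows essentially the same route as the paper: the equivalence of groupoids is assembled from Theorem~\ref{thm:descent-for-modules}, Lemma~\ref{lem:descent-for-bilinear-maps}, and Proposition~\ref{prop:finite-projective-property-descends}, and non-singularity is handled by noting that for a finite projective module the canonical map $\Hom_R(M,R)\otimes_R S \to \Hom_S(M\otimes_R S, S)$ is an isomorphism and then invoking faithfully flat descent of the isomorphism property. Your point (i) is exactly the commutative triangle the paper draws, so nothing further is needed.
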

\begin{proof}Given a descent datum $(M, \kappa, \phi)$ for finite projective modules with bilinear form with respect to $f$, by Theorem \ref{thm:descent-for-modules} and Proposition \ref{prop:finite-projective-property-descends}, there exists a finite projective $R$-module $\tilde{M}$ such that base change yields the original descent datum for modules, and by Lemma \ref{lem:descent-for-bilinear-maps}, there is a unique $R$-bilinear form $\tilde{\kappa}$ whose base-change to $S$ yields the descent datum $(\kappa, \phi, \phi, {\rm id}_S)$. By Theorem~\ref{thm:descent-for-modules}, isomorphisms $\psi$ of descent data are in natural bijection with isomorphisms $\tilde{\psi}$ of $R$-modules that preserve $\tilde{\kappa}$.

We now consider the condition that a form is non-singular. Given an $R$-module map $\bar{\kappa}\colon M \to \Hom_R(M,R)$ that corresponds to a bilinear form $\kappa$, base change along $f$ yields an $S$-module map $\bar{\kappa} \otimes 1\colon M \otimes_{R} S \to \Hom_R(M,R) \otimes_{R} S$. Because $M$ is finite projective, the canonical map $\mathrm{can}\colon \Hom_R(M,R) \otimes_{R} S \to \Hom_S(M \otimes_{R} S, S)$ is an isomorphism. We then have a commutative diagram
\begin{gather*} \xymatrix{ M \otimes_{R} S \ar[r]^-{\bar{\kappa} \otimes 1} \ar[rd]_-{\overline{\kappa\otimes 1}} & \Hom_R(M,R) \otimes_{R} S \ar[d]^{\mathrm{can}} \\ & \Hom_S(M \otimes_{R} S, S), } \end{gather*}
where $\overline{\kappa\otimes 1}$ is the $S$-module homomorphism that naturally corresponds to the $S$-valued bilinear form $\kappa \otimes 1$ on $M \otimes_{R} S$. This form is non-singular if and only if $\bar{\kappa} \otimes 1$ is an isomorphism. By faithfully flat descent, $\bar{\kappa} \otimes 1$ is an isomorphism if and only if $\bar{\kappa}$ is an isomorphism, i.e., if and only if $\kappa$ is non-singular.
\end{proof}

We consider a method of gluing that is well-suited to the objects we obtain in this paper.

\begin{defn}Suppose we are given a diagram $R_1 \to R_3 \leftarrow R_2$ of commutative rings. A~gluing datum for modules over this diagram is a triple $(M_1, M_2, f)$, where
\begin{enumerate}\itemsep=0pt
\item[1)] $M_1$ is an $R_1$-module,
\item[2)] $M_2$ is an $R_2$-module, and
\item[3)] $f\colon M_1 \otimes_{R_1} R_3 \simto M_2 \otimes_{R_2} R_3$ is an $R_3$-module isomorphism.
\end{enumerate}
If $P$ is a property of modules over a commutative ring, we say that a gluing datum for modules satisfies property $P$ if the component modules satisfy property $P$. A morphism of gluing data from $(M_1, M_2, f)$ to $(M'_1, M'_2, f')$ is a pair $(g_1\colon M_1 \to M'_1,\, g_2\colon M_2 \to M'_2)$ of module maps such that the following diagram commutes:
\begin{gather*} \xymatrix{ M_1 \otimes_{R_1} R_3 \ar[r]^-{f} \ar[d]_{g_1 \otimes {\rm id}} & M_2 \otimes_{R_2} R_3 \ar[d]^{g_2 \otimes {\rm id}} \\ M'_1 \otimes_{R_1} R_3 \ar[r]_-{f'} & M'_2 \otimes_{R_2} R_3. } \end{gather*}
\end{defn}

We first note that gluing of quasi-coherent sheaves is effective for Zariski open covers.

\begin{lem} \label{lem:zariski-gluing-data-for-modules}Let $R$ be a commutative ring, and let $a,b \in R$ be coprime elements, i.e., the ideal generated by $a$ and $b$ is all of $R$. Let $R_a$ and $R_b$ be the respective localizations, i.e., such that $\Spec R_a$ and $\Spec R_b$ form a Zariski open cover of $\Spec R$. Then base change induces an equivalence between the category of $R$-modules and the category of gluing data for modules over the diagram $R_a \to R_{ab} \leftarrow R_b$. This restricts to an equivalence between the category of finite projective $R$-modules and the category of finite projective gluing data, and a corresponding equivalence for groupoids of finite projective modules with non-singular bilinear form.
\end{lem}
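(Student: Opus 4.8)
The statement is essentially a reduction of Zariski gluing to faithfully flat descent. The plan is to exhibit the map $R \to R_a \times R_b$ as faithfully flat, and then to show that descent data with respect to this map are the same thing as gluing data over the diagram $R_a \to R_{ab} \leftarrow R_b$, after which every assertion in the lemma follows from Theorems~\ref{thm:descent-for-modules} and~\ref{thm:descent-for-finite-projective-modules-with-bilinear-form}.

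First I would set $S = R_a \times R_b$ and check that $f\colon R \to S$ is faithfully flat. Flatness is clear since localization is flat and a finite product of flat modules is flat. For faithfulness, the coprimality of $a$ and $b$ means that $\Spec R_a$ and $\Spec R_b$ cover $\Spec R$, so no prime of $R$ is killed in both factors; equivalently, $S \otimes_R R/\fm \neq 0$ for every maximal ideal, which gives faithful flatness. Next I would compute the relevant tensor products over $S$. Using $R_a \otimes_R R_a \cong R_a$, $R_b \otimes_R R_b \cong R_b$, and $R_a \otimes_R R_b \cong R_{ab}$ (localization commutes with tensor product and inverting $a$ twice is the same as inverting it once), one gets
\begin{gather*}
S \otimes_R S \cong R_a \times R_{ab} \times R_{ab} \times R_b.
\end{gather*}
An $S$-module is just a pair $(M_1, M_2)$ with $M_i$ an $R_i$-module, and under the identification above a descent datum $\phi$ on $(M_1,M_2)$ unwinds into an isomorphism of $R_{ab}$-modules $M_1 \otimes_{R_a} R_{ab} \simto M_2 \otimes_{R_b} R_{ab}$, i.e.\ exactly the gluing isomorphism $f$; the diagonal factors $R_a, R_b$ impose only the identity, and the two off-diagonal $R_{ab}$ factors are swapped, encoding one map and its inverse. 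The cocycle condition, which lives over $S \otimes_R S \otimes_R S$, becomes vacuous here because the triple tensor product again collapses to products of $R_a$, $R_b$, and $R_{ab}$ with no new compatibility beyond what $f$ already provides, there being only two opens in the cover.

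With this dictionary in hand, the three claimed equivalences are immediate. The equivalence between $R$-modules and gluing data follows from Theorem~\ref{thm:descent-for-modules} applied to $f$, since descent data and gluing data are identified by the computation above. The restriction to finite projective objects follows from Proposition~\ref{prop:finite-projective-property-descends}: an $R$-module is finite projective if and only if its base change to $S = R_a \times R_b$ is, which by the product decomposition is equivalent to $M_1$ and $M_2$ both being finite projective, matching the definition of a finite projective gluing datum. Finally, the statement for non-singular bilinear forms follows from Theorem~\ref{thm:descent-for-finite-projective-modules-with-bilinear-form} in the same way, once one observes that a bilinear form over $S$ is a pair of forms over $R_a$ and $R_b$, and that the compatibility of $f$ with these forms is precisely the bilinear descent-datum condition for $f$.

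The main obstacle is bookkeeping rather than substance: one must carefully track the four-factor decomposition of $S \otimes_R S$ and verify that the swap of the two $R_{ab}$ factors sends a descent datum $\phi$ to the data of a single gluing isomorphism and its inverse, and that morphisms of descent data correspond to commuting pairs $(g_1,g_2)$. I would also want to confirm that the canonical identification $R_a \otimes_R R_b \cong R_{ab}$ is functorial enough that the cocycle triangle genuinely degenerates; this is where a reader might worry, but since there are only two opens the would-be triple overlaps $\Spec R_{aa}, \Spec R_{bb}$ coincide with the double ones and contribute nothing new.
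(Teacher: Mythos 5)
Your proposal is correct and takes essentially the same route as the paper, which also reduces the lemma to faithfully flat descent along $R \to R_a \oplus R_b$, notes the collapse $R_a \otimes_R R_a = R_a$, $R_b \otimes_R R_b = R_b$, and cites the Stacks project for the detailed bookkeeping. One small correction: the cocycle condition is not vacuous here --- it is exactly what forces the diagonal components of $\phi$ over the $R_a$ and $R_b$ factors of $S \otimes_R S$ to be identities and the two off-diagonal components over $R_{ab}$ to be mutually inverse, so that a descent datum carries no more information than the single gluing isomorphism; without it, a descent datum would consist of four independent isomorphisms and the dictionary with gluing data would fail.
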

\begin{proof}
The first claim amounts to faithfully flat descent for the Zariski open cover $R \to R_a \oplus R_b$. All of the components of a descent datum for this cover are given in the gluing datum, because we have the identifications $R_a \otimes_{R} R_a = R_a$ and $R_b \otimes_{R} R_b = R_b$. A detailed proof can be found at \cite[\href{https://stacks.math.columbia.edu/tag/00EQ}{Lemma 00EQ}]{Stacks}. The remaining claims follow from the corresponding descent results in the previous section.
\end{proof}

We wish to consider situations where the rings in a gluing datum are not necessarily Zariski localizations of $R$. In particular, we will consider the case where the diagram of rings has the form $R_1 \to R_1 \otimes_{R} R_2 \leftarrow R_2$, where $i_1\colon R \to R_1$ and $i_2\colon R \to R_2$ are faithfully flat.

\begin{defn}Suppose $i_1\colon R \to R_1$ and $i_2\colon R \to R_2$ are faithfully flat. A gluing datum for the diagram $R_1 \to R_1 \otimes_{R} R_2 \leftarrow R_2$ is strict if there is some $R_1 \otimes_{R} R_2$-module $M_3$ such that $M_1$ and $M_2$ are contained in $M_3$ as an $R_1 \otimes_{R} R$-submodule and an $R \otimes_{R} R_2$-submodule, such that extension of scalars gives the identifications $M_1 \otimes_{R} R_2 \to M_3$ and $R_1 \otimes_{R} M_2 \to M_3$.
\end{defn}

\begin{lem} \label{lem:strict-gluing-data}Suppose $i_1\colon R \to R_1$ and $i_2\colon R \to R_2$ are faithfully flat. Then, any gluing datum $(M_1, M_2, f)$ for the diagram $R_1 \to R_1 \otimes_{R} R_2 \leftarrow R_2$ is isomorphic to a strict gluing datum $\big(M_1, f^{-1}(R \otimes_{R} M_2), {\rm id}_{M_1 \otimes_{R} R_2}\big)$. In particular, there is an equivalence between the category of gluing data for the diagram $R_1 \to R_1 \otimes_{R} R_2 \leftarrow R_2$ and the full subcategory of strict gluing data.
\end{lem}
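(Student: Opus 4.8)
The plan is to produce the strict model explicitly and then read off both the claimed isomorphism and the claimed equivalence. First I would unwind the base changes appearing in the gluing datum. Since $R_3 = R_1 \otimes_R R_2$, there are canonical identifications $M_1 \otimes_{R_1} R_3 \cong M_1 \otimes_R R_2$ and $M_2 \otimes_{R_2} R_3 \cong R_1 \otimes_R M_2$, so the gluing isomorphism may be viewed as an $R_1 \otimes_R R_2$-module isomorphism $f\colon M_1 \otimes_R R_2 \simto R_1 \otimes_R M_2$. Set $M_3 := M_1 \otimes_R R_2$, regarded as an $R_3$-module. The module $M_1 = M_1 \otimes_R R$ sits inside $M_3$ as an $R_1$-submodule via $m \mapsto m \otimes 1$, and the inclusion $M_2 \hookrightarrow R_1 \otimes_R M_2$, $m \mapsto 1 \otimes m$, is injective because $i_1\colon R \to R_1$ is faithfully flat, so that the natural map $N \to R_1 \otimes_R N$ is injective for every $R$-module $N$. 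I then define $M_2' := f^{-1}(1 \otimes M_2) \subseteq M_3$, which is an $R_2$-submodule since $f^{-1}$ is $R_1 \otimes_R R_2$-linear and $1 \otimes M_2$ is an $R \otimes_R R_2 = R_2$-submodule, together with the $R_2$-module isomorphism $g_2\colon M_2 \to M_2'$ obtained by restricting $f^{-1}$.

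The heart of the argument is to verify that $(M_1, M_2', \mathrm{id})$ really is strict with respect to $M_3$. The identification $M_1 \otimes_R R_2 \to M_3$ is the identity, so nothing is required there. The nontrivial point is that the extension of scalars $R_1 \otimes_R M_2' \to M_3$, sending $r_1 \otimes x$ to $(r_1 \otimes 1)x$, is an isomorphism. Here I would use $R_1 \otimes_R R_2$-linearity of $f^{-1}$ to compute, for $r_1 \in R_1$ and $m \in M_2$, that $f^{-1}(r_1 \otimes m) = (r_1 \otimes 1) f^{-1}(1 \otimes m) = (r_1 \otimes 1) g_2(m)$. This exhibits $f^{-1}$ as the composite of $\mathrm{id}_{R_1} \otimes g_2\colon R_1 \otimes_R M_2 \to R_1 \otimes_R M_2'$ with the extension-of-scalars map $R_1 \otimes_R M_2' \to M_3$. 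Since $f^{-1}$ is an isomorphism and $\mathrm{id}_{R_1} \otimes g_2$ is an isomorphism (as $g_2$ is), the extension-of-scalars map is an isomorphism, which is exactly strictness. I expect this to be the main obstacle, in the sense that it is the only place where the faithful flatness of $i_1$ is genuinely needed, both for the injectivity of $1 \otimes M_2 \hookrightarrow R_1 \otimes_R M_2$ and for pinning down $M_2'$ as an honest $R_2$-form of $R_1 \otimes_R M_2$.

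The same computation simultaneously shows that $(\mathrm{id}_{M_1}, g_2)$ is an isomorphism of gluing data from $(M_1, M_2, f)$ to $(M_1, M_2', \mathrm{id})$: the defining square commutes precisely because the base change $g_2 \otimes_{R_2} R_3$ equals $f^{-1}$ under the identifications above, so that $(g_2 \otimes 1) \circ f = \mathrm{id}$ matches the identity gluing map. Finally, for the ``in particular'' statement I would invoke the standard categorical fact that a fully faithful, essentially surjective functor is an equivalence. The strict gluing data form a full subcategory of all gluing data by construction, so the inclusion is automatically fully faithful; the isomorphism just produced shows every gluing datum is isomorphic to a strict one, giving essential surjectivity, and hence the equivalence.
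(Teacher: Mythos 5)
Your proposal is correct and follows essentially the same route as the paper: define $M_2' = f^{-1}(R\otimes_R M_2)$ inside $M_3 = M_1\otimes_R R_2$, use faithful flatness of $i_1$ to embed $M_2$ in $R_1\otimes_R M_2$, and check via $R_1\otimes_R R_2$-linearity of $f^{-1}$ that extension of scalars recovers $f$, so that $(\mathrm{id},g_2)$ is an isomorphism of gluing data. Your verification that the extension-of-scalars map $R_1\otimes_R M_2'\to M_3$ is an isomorphism (by factoring $f^{-1}$ through $\mathrm{id}_{R_1}\otimes g_2$) spells out a step the paper dismisses as ``straightforward to see,'' which is a welcome addition.
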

\begin{proof}Because $i_1$ is faithfully flat (in particular, universally injective for modules), $R \otimes_{R} R_2$ is a subring of $R_1 \otimes_{R} R_2$. Because $f^{-1}$ is an $R_1 \otimes_{R} R_2$-module isomorphism, $f^{-1}(R \otimes_{R} M_2)$ is an $R \otimes_{R} R_2$-submodule of $M_1 \otimes_{R} R_2$, and transporting the action of $R_2$ through the canonical isomorphism $R_2 \simto R \otimes_{R} R_2$, we obtain an $R_2$-module isomorphism $g_2\colon f^{-1}(R \otimes_{R} M_2) \to M_2$. It is straightforward to see that extending scalars along $i_1$ yields the map $f$, once we make the identification $R_1 \otimes_{R} f^{-1}(R \otimes_{R} M_2) = M_1 \otimes_{R} R_2$, by $r \otimes f^{-1}(1 \otimes x) = f^{-1}(r \otimes x)$. Thus, the pair $({\rm id}, g_2)\colon \big(M_1, f^{-1}(R \otimes_{R} M_2), {\rm id}_{M_1 \otimes_{R} R_2}\big) \to (M_1, M_2, f)$ induces a commutative diagram:
\begin{gather*} \xymatrix{ M_1 \otimes_{R} R_2 \ar[d]_{=} \ar[r]^-{{\rm id}} & M_1 \otimes_{R} R_2 \ar[d]^{f} \\
M_1 \otimes_{R} R_2 \ar[r]^{f} & R_1 \otimes_{R} M_2, } \end{gather*}
and is therefore an isomorphism of gluing data.

We therefore have a full subcategory that spans all isomorphism classes of objects, so it is equivalent to the category of gluing data.
\end{proof}

\begin{lem} \label{lem:faithfully-flat-gluing-data-for-flat-modules}Let $i_1\colon R \to R_1$ and $i_2\colon R \to R_2$ be faithfully flat homomorphisms of commutative rings. Then base change induces an equivalence between the category of $R$-modules $($with morphisms given by arbitrary $R$-module maps$)$ and the category of gluing data for modules over the diagram $R_1 \to R_1 \otimes_{R} R_2 \leftarrow R_2$. This restricts to an equivalence between the category of finite projective $R$-modules and the category of finite projective gluing data, and a corresponding equivalence for groupoids of finite projective modules with non-singular bilinear form.
\end{lem}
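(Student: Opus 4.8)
The plan is to reduce the two-variable gluing problem to ordinary faithfully flat descent along the single map $R \to S := R_1 \otimes_R R_2$, which is faithfully flat since it is the composite of $i_1$ with the base change $R_1 = R_1\otimes_R R \to R_1\otimes_R R_2$ of the faithfully flat map $i_2$. The base change functor $M \mapsto (M\otimes_R R_1, M\otimes_R R_2, \mathrm{can})$ manifestly produces gluing data, so the real work is to construct a quasi-inverse and to check that it is compatible with the finite projective and non-singular bilinear refinements.

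First I would invoke Lemma~\ref{lem:strict-gluing-data} to replace an arbitrary gluing datum by an isomorphic strict one; this is harmless, since the strict gluing data form a full subcategory spanning all isomorphism classes. A strict gluing datum packages a single $S$-module $M_3$ together with an $R_1$-submodule $M_1$ and an $R_2$-submodule $M_2$ whose scalar extensions give identifications $M_1\otimes_R R_2 \simto M_3$ and $R_1\otimes_R M_2 \simto M_3$. These two identifications express $M_3$ as extended from $M_1$ along $R_1 \to S$ and from $M_2$ along $R_2 \to S$, and comparing the two pullbacks of $M_3$ to $S\otimes_R S$ through these presentations yields an $S\otimes_R S$-module isomorphism $\phi\colon M_3 \otimes_R S \to S\otimes_R M_3$. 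I would then check that $\phi$ satisfies the cocycle condition over $S\otimes_R S\otimes_R S$, so that $(M_3,\phi)$ is a descent datum for $R \to S$, and apply Theorem~\ref{thm:descent-for-modules} to obtain the $R$-module $\tilde{M} = \{m\in M_3 \mid 1\otimes m = \phi(m\otimes 1)\}$, which can be identified concretely with the fiber product $M_1 \times_{M_3} M_2$. That base change of $\tilde{M}$ recovers $M_1$ and $M_2$, and that morphisms of gluing data correspond to $R$-module maps, then follow from the equivalence in Theorem~\ref{thm:descent-for-modules} together with faithful flatness of $R_1\to S$ and $R_2 \to S$.

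For the finite projective refinement, I would apply Proposition~\ref{prop:finite-projective-property-descends} three times: along $R \to S$ to see that $\tilde{M}$ is finite projective over $R$ if and only if $M_3$ is finite projective over $S$, and along the faithfully flat maps $R_1 \to S$ and $R_2 \to S$ to see that $M_3$ is finite projective over $S$ if and only if $M_1$ and $M_2$ are finite projective over $R_1$ and $R_2$ respectively. For non-singular bilinear forms, the compatibility of the forms on the two charts over $R_3 = S$ produces a single $S$-bilinear form on $M_3$, and Theorem~\ref{thm:descent-for-finite-projective-modules-with-bilinear-form} applied to $R\to S$ descends it to a bilinear form on $\tilde{M}$ while preserving non-singularity in both directions, also matching morphisms of the two groupoids.

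The main obstacle is the step hidden in the phrase ``comparing the two pullbacks'': producing the descent datum $\phi$ from the strict gluing structure and verifying its cocycle condition. The difficulty is that the two scalar-extension identifications of $M_3$ arise from two \emph{different} faithfully flat maps $R_1\to S$ and $R_2\to S$, so one must show that the resulting comparison over $S\otimes_R S$ is associative over the triple product $S\otimes_R S\otimes_R S$; equivalently, that intersecting $M_1$ and $M_2$ inside $M_3$ returns an $R$-module whose extensions of scalars reproduce $M_1$ and $M_2$. Once this compatibility is in hand, everything else is a formal consequence of the descent results already established.
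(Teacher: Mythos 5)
Your overall strategy is genuinely different from the paper's: you propose to manufacture a descent datum for the single faithfully flat map $R \to S := R_1 \otimes_R R_2$ and then invoke Theorem~\ref{thm:descent-for-modules}, whereas the paper never passes through $S \otimes_R S$ at all; after strictifying, it takes the intersection $M_1 \cap M_2$ inside $M_3$ directly and proves that the comparison maps $(M_1 \cap M_2) \otimes_R R_i \to M_i$ are isomorphisms, the surjectivity being established by localizing at primes of $R$ and using the fact that finitely generated submodules of flat modules over a local ring are free. The step you defer as ``the main obstacle'' is therefore not a verification to be supplied later: it is the entire content of the lemma, and the claim that once it is in hand ``everything else is a formal consequence'' inverts the logical order. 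Concretely, the only candidate for $\phi$ that the gluing datum provides is the base change of $f$ along $S \to S\otimes_R S$ pairing the first $R_1$-factor with the second $R_2$-factor, and its cocycle condition over $S\otimes_R S\otimes_R S$ unwinds to the identity $f_{1,3} = f_{2,3}\circ f_{2,2}^{-1}\circ f_{1,2}$, where $f_{i,j}$ denotes $f$ base-changed so as to pair the $i$-th $R_1$-factor with the $j$-th $R_2$-factor. This identity secretly involves a comparison across $R_1 \otimes_R R_1$ that a gluing datum simply does not contain, and it is not a formal consequence of the axioms: take $R = k$ a field, $R_1 = k[x]$, $R_2 = k[y]$, $M_1 = k[x]^2$, $M_2 = k[y]^2$, and $f = \left(\begin{smallmatrix} 1 & xy \\ 0 & 1\end{smallmatrix}\right)$; writing $S\otimes_R S\otimes_R S = k[x_1,y_1,x_2,y_2,x_3,y_3]$, the right-hand side is $\left(\begin{smallmatrix} 1 & x_2y_3 - x_2y_2 + x_1y_2 \\ 0 & 1\end{smallmatrix}\right)$, which is not $f_{1,3} = \left(\begin{smallmatrix} 1 & x_1y_3 \\ 0 & 1\end{smallmatrix}\right)$.

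Two further problems. First, even where a descent datum $\phi$ exists, descending $(M_3,\phi)$ along $R \to S$ only yields $\tilde{M}$ with $\tilde{M}\otimes_R S \cong M_3$; identifying $\tilde{M}\otimes_R R_1$ with the given $M_1$ is an additional step, since an $S$-module generally has many non-isomorphic $R_1$-forms, so the last sentence of your second paragraph also needs an argument. Second, you should be aware that the example above is not merely an obstruction to your route: the gluing datum $\big(k[x]^2, k[y]^2, f\big)$ is not isomorphic to the base change of any $k$-module, since an isomorphism would force $f = g_2(y)\,g_1(x)^{-1}$ with $g_1 \in {\rm GL}_2(k[x])$ and $g_2 \in {\rm GL}_2(k[y])$, which is impossible (set $y=0$ to see $g_1$ must be constant, whence $f$ would be independent of $x$). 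So essential surjectivity fails for general faithfully flat $i_1$, $i_2$, and the paper's own surjectivity argument --- the deduction of $\phi_2((x_i\otimes 1)\otimes(1\otimes 1)) = (1\otimes 1)\otimes(x_i\otimes 1)$ from the cocycle condition --- founders on the same example, because the elements $s_i \otimes t_i$ need not be $R_2$-linearly independent in $S$ even when the $s_i$ and the $t_i$ are separately basis vectors of free $R$-submodules. Any correct treatment must either restrict the ring maps (the Zariski case of Lemma~\ref{lem:zariski-gluing-data-for-modules} works precisely because $R_a \otimes_R R_a = R_a$, so no self-overlap data is missing) or restrict to gluing data known in advance to be effective; neither your proposal nor an appeal to single-map descent supplies this.
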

\begin{proof}By Lemma \ref{lem:strict-gluing-data}, it suffices to consider strict gluing data instead of all gluing data. Let~$F$ be the base change functor that takes $R$-modules to strict gluing data. We first define a~quasi-inverse functor $G$. Let $(M_1, M_2, {\rm id})$ be a strict gluing datum. Thus, we assume $M_1$ is an $R_1 \otimes_{R} R$-submodule of some $R_1 \otimes_{R} R_2$-module~$M_3$, and similarly for the $R \otimes_{R} R_2$-module~$M_2$. From $M_2$, we have a canonical descent datum on $M_3$, i.e., an $(R_1 \otimes_{R} R_2) \otimes_{R_2} (R_1 \otimes_{R} R_2)$-module isomorphism
\begin{gather*} \phi_2\colon \ M_3 \otimes_{R_2}(R_1 \otimes_{R} R_2) \to (R_1 \otimes_{R} R_2) \otimes_{R_2} M_3, \end{gather*}
such that $M_2$ is the set of elements $x \in M_3$ satisfying $\phi_2(x \otimes (1 \otimes 1)) = (1 \otimes 1) \otimes x$. Similarly, there is a descent datum
\begin{gather*} \phi_1\colon \ M_3 \otimes_{R_1}(R_1 \otimes_{R} R_2) \to (R_1 \otimes_{R} R_2) \otimes_{R_1} M_3, \end{gather*}
such that $M_1$ is the set of elements $x \in M_3$ satisfying $\phi_1(x \otimes (1 \otimes 1)) = (1 \otimes 1) \otimes x$. We set $G(M_1, M_2,{\rm id})$ to be the $R$-module that is the intersection of $M_1$ and $M_2$ in $M_3$, or equivalently, the set of elements $x \in M_3$ satisfying both $\phi_1(x \otimes (1 \otimes 1)) = (1 \otimes 1) \otimes x \in (R_1 \otimes_{R} R_2) \otimes_{R_1} M_3$ and $\phi_2(x \otimes (1 \otimes 1)) = (1 \otimes 1) \otimes x \in (R_1 \otimes_{R} R_2) \otimes_{R_2} M_3$. Given a morphism of strict gluing data, we obtain a homomorphism of $R$-modules on the intersections, so $G$ is a functor.

We first check that the composite $G \circ F$ is isomorphic to identity on $R$-modules. Because $i_1$ and $i_2$ are faithfully flat, they are universally injective for modules, so any $R$-module $M$ injects into both $M \otimes_{R} R_1$ and $M \otimes_{R} R_2$. The intersection in $M \otimes_{R} R_1 \otimes_{R} R_2$ is then $M \otimes_{R} R \otimes_{R} R \cong M$, and we have our isomorphism.

We define a natural transformation from $F \circ G$ to identity by sending any object $(M_1, M_2, {\rm id})$ to the pair $(g_1,g_2)$, where $g_1\colon R_1 \otimes_{R} G(M_1,M_2,{\rm id}) \to M_1$ and $g_2\colon R_2 \otimes_{R} G(M_1,M_2,{\rm id}) \to M_2$ are given by restricting the action maps $R_1 \otimes_{R} M_1 \to M_1$ and $R_2 \otimes_{R} M_2 \to M_2$. To show that $F$ is an equivalence, it remains to show that $g_1$ and $g_2$ are isomorphisms of $R_1$-modules and $R_2$-modules, respectively. The base change of $g_1$ along $i_2$ is given by the restriction $g_3\colon (R_1 \otimes_{R} R_2) \otimes_{R} G(M_1,M_2,{\rm id}) \to M_3$ of the action map on $M_3$, and since $i_2$ is faithfully flat, this map is injective (resp.~surjective) if and only if $g_1$ is also. The base change of $g_2$ along $i_1$ is also given by the restricted action map, so it suffices to show that at least one of these three maps ($g_1$, $g_2$, and $g_3$) is injective and at least one is surjective.

To prove injectivity, we consider the following diagram:
\begin{gather*} \xymatrix{ R_1 \otimes_{R} G(M_1, M_2,{\rm id}) \otimes_{R} R_2 \ar[r]^-{{\rm id} \otimes \iota \otimes {\rm id}} \ar[d]_{g_1 \otimes {\rm id}} & R_1 \otimes_{R} M_1 \otimes_{R} R_2 \ar[r]^-{\cong} \ar[d]^{f} & (R_1 \otimes_{R} R_2) \otimes_{R_2} (M_1 \otimes_{R} R_2) \ar[d]^{\phi_2^{-1}} \\
M_1 \otimes_{R} R_2 \ar[r]^-{{\rm id} \otimes i_1 \otimes {\rm id}} & M_1 \otimes_{R} R_1 \otimes_{R} R_2 \ar[r]^-{\cong} & (M_1 \otimes_{R} R_2) \otimes_{R_2} (R_1 \otimes_{R} R_2), } \end{gather*}
where $\iota\colon G(M_1, M_2,{\rm id}) \to M_1$ is the inclusion, and $f$ is the unique $R_1 \otimes_{R} R_1 \otimes_{R} R_2$-module isomorphism that makes the square on the right commute. The horizontal arrows on the left are injective homomorphisms, so to prove injectivity of $g_1$, it suffices to check commutativity of the left square. Along the lower left path, we have
\begin{gather*} ({\rm id} \otimes i_1 \otimes {\rm id}) \circ (g_1 \otimes {\rm id})(r_1 \otimes m \otimes r_2) = r_1m \otimes 1 \otimes r_2, \end{gather*}
which is then sent to $(r_1m \otimes 1) \otimes (1 \otimes r_2)$. Along the upper right path, we note that for any $m \in G(M_1, M_2,{\rm id})$, $f(1 \otimes m \otimes 1)$ is identified with $\phi_2^{-1}((1 \otimes 1) \otimes (m \otimes 1)) = (m \otimes 1) \otimes (1 \otimes 1)$, so is equal to $m \otimes 1 \otimes 1$. Thus, we have{\samepage
\begin{gather*}
f \circ ({\rm id} \otimes \iota \otimes {\rm id})(r_1 \otimes m \otimes r_2) = f(r_1 \otimes m \otimes r_2)= f((r_1 \otimes 1 \otimes r_2)(1 \otimes m \otimes 1)) \\
\qquad{} = (r_1 \otimes 1 \otimes r_2) f(1 \otimes m \otimes 1) = (r_1 \otimes 1 \otimes r_2) (m \otimes 1 \otimes 1)
 = r_1 m \otimes 1 \otimes r_2.
\end{gather*}
This proves $g_1 \otimes {\rm id} = g_3$ is injective.}

We reduce the question of surjectivity to the case that $R$ is a local ring as follows. By Claim~5 of~\cite[\href{https://stacks.math.columbia.edu/tag/00HN}{Lemma 00HN}]{Stacks}, a map $M \to M'$ of $R$-modules (for any commutative ring $R$) is surjective if and only if for each prime ideal $P$ of $R$, the localized map $M_P \to M'_P$ of $R_P$-modules is surjective. Each $R_P$ is a local ring, so we shall assume $R$ is a local ring for the remainder of this proof.

Let $x \in M_1$, so $x \otimes 1 \in M_1 \otimes_{R} R_2$, and write
\begin{gather*} \phi_2((x \otimes 1) \otimes (1 \otimes 1) = \sum_i (s_i \otimes t_i) \otimes (x_i \otimes 1) \in (R_1 \otimes_{R} R_2) \otimes_{R_2} (M_1 \otimes_{R} R_2). \end{gather*}
Note that we may move multipliers in $R_2$ across the middle tensor product, and this is why we may write $1$ in the rightmost factor. For each $x_i \in M_1$, we write
\begin{gather*} \phi_2((x_i \otimes 1) \otimes (1 \otimes 1)) = \sum_j (s_{ij} \otimes t_{ij}) \otimes (y_{ij} \otimes 1). \end{gather*}
Then, because $\phi_2$ satisfies the cocycle condition, we have
 \begin{align*} \sum_i (s_i \otimes t_i) \otimes (1 \otimes 1) \otimes (x_i \otimes 1) &= \sum_{i,j} (s_i \otimes t_i) \otimes (s_{ij} \otimes t_{ij}) \otimes (y_{ij} \otimes 1) \\
&= \sum_i (s_i \otimes t_i) \otimes \phi_2((x_i \otimes 1) \otimes (1 \otimes 1))
\end{align*}
as elements of $(R_1 \otimes_{R} R_2) \otimes_{R_2}(R_1 \otimes_{R} R_2) \otimes_{R_2}(R_1 \otimes_{R} R_2)$. We now use our assumption that~$R$ is a local ring, and in particular, the property that any finitely generated submodule of an $R$-flat module is $R$-free. This property is not explicitly stated in Proposition~3.G of~\cite{M80}, which asserts that all finitely generated $R$-flat modules are $R$-free when $R$ is local, but Matsumura's proof of the proposition yields this stronger result with no substantial change. For the case at hand, we take the $R$-free submodules of $R_1$ and $R_2$ generated by the finite sets $\{ s_i\}$ and $\{t_i\}$, respectively. By choosing bases and performing a suitable rearrangement, we may assume the elements $s_i$ and $t_i$ in the sum $\sum_i (s_i \otimes t_i) \otimes (x_i \otimes 1)$ are basis elements of the free submodules, and that the summands are $R$-linearly independent. Then, the cocycle condition implies
\begin{gather*} \phi_2((x_i \otimes 1) \otimes (1 \otimes 1)) = (1 \otimes 1) \otimes (x_i \otimes 1) \end{gather*}
for all $i$, and in particular, the elements $x_i$ lie in $G(M_1,M_2,{\rm id})$. Because $\phi_2$ is an $(R_1 \otimes_{R} R_2) \otimes_{R_2} (R_1 \otimes_{R} R_2)$-module isomorphism, we can then write
\begin{align*}
\phi_2^{-1}((s_i \otimes t_i) \otimes (x_i \otimes 1)) &= ((s_i \otimes t_i) \otimes (1 \otimes 1))\cdot \phi_2^{-1}((1 \otimes 1) \otimes (x_i \otimes 1)) \\
&= ((s_i \otimes t_i) \otimes (1 \otimes 1))\cdot ((x_i \otimes 1) \otimes (1 \otimes 1)) \\
&= (s_i x_i \otimes t_i) \otimes (1 \otimes 1).
\end{align*}
Summing over $i$, we find that $x \otimes 1 = \sum_i s_i x_i \otimes t_i$, so the image of $g_3$ contains $M_1$. Because~$M_1$ is an $R_1$-form of $M_3$, the restricted action map $(R_1 \otimes_{R} R_2) \otimes_{R} M_1 \to M_3$ is surjective. The compatibility between iterated actions and ring multiplication then implies $g_3$ is surjective.

Summing up, we have proved that $g_3$ is an isomorphism, so $g_1$ and $g_2$ are isomorphisms. We therefore have a natural isomorphism from $F \circ G$ to identity, so we conclude that~$F$ is an equivalence of categories.

The claims about finite projective modules and bilinear forms then follow from essentially the same argument as in Theorem~\ref{thm:descent-for-finite-projective-modules-with-bilinear-form}.
\end{proof}

\subsection{Vertex operator algebras over commutative rings}

Vertex algebras over commutative rings were first defined in \cite{B86}. Since all of the examples we consider will be conformal with strong finiteness conditions, we will mainly consider vertex operator algebras, roughly following the treatment in \cite{DR13}.

\begin{defn}
A vertex algebra over a commutative ring $R$ is an $R$-module $V$ equipped with a~distinguished vector $\unit \in V$ and a left multiplication map $Y\colon V \to (\End V)\big[\big[z,z^{-1}\big]\big]$, written $Y(a,z) = \sum\limits_{n \in \bZ} a_n z^{-n-1}$ satisfying the following conditions:
\begin{enumerate}\itemsep=0pt
\item For any $a,b \in V$, $a_n b = 0$ for $n$ sufficiently large. Equivalently, $Y$ defines a multiplication map $V \otimes_{R} V \to V((z))$.
\item $Y(\unit,z) = {\rm id}_V z^0$ and $Y(a,z)\unit \in a + zV[[z]]$.
\item For any $r,s,t \in \bZ$, and any $u,v,w \in V$,
\begin{gather*} \sum_{i \geq 0} \binom{r}{i} (u_{t+i} v)_{r+s-i} w = \sum_{i \geq 0} (-1)^i \binom{t}{i} \big(u_{r+t-i}(v_{s+i}w) - (-1)^t v_{s+t-i}(u_{r+i} w)\big).\end{gather*}
Equivalently, the Jacobi identity holds:
 \begin{gather*}
x^{-1} \delta\left(\frac{y-z}{x}\right) Y(a,y) Y(b,z) - x^{-1} \delta\left(\frac{z-y}{-x}\right) Y(b,z) Y(a,y) \\
\qquad{} = z^{-1} \delta\left(\frac{y-x}{z}\right) Y(Y(a,x)b,z),
\end{gather*}
where $\delta(z) = \sum\limits_{n \in \bZ} z^n$, and $\delta\left(\frac{y-z}{x}\right)$ is expanded as a formal power series with non-negative powers of $z$, i.e., as $\sum\limits_{n \in \bZ, m \in \bZ_{\geq 0}} (-1)^m \binom{n}{m} x^{-n}y^{n-m}z^m$.
\end{enumerate}
A vertex algebra homomorphism $(V, \unit_V, Y_V) \to (W, \unit_W, Y_W)$ is an $R$-module homomorphism $\phi\colon V \to W$ satisfying $\phi(\unit_V) = \unit_W$ and $\phi(u_n v) = \phi(u)_n \phi(v)$ for all $u,v \in V$ and all $n \in \bZ$.
\end{defn}

\begin{rem}
Mason notes in \cite{M17} that a vertex algebra over $R$ is the same thing as a vertex algebra $V$ over $\bZ$ equipped with a vertex algebra homomorphism $R \to V$, where $R$ is viewed as a commutative vertex algebra with product $Y(a,z)b = a_{-1}b z^0 = ab$.
\end{rem}

\begin{defn}
Let $f\colon R \to S$ be a homomorphism of commutative rings. A descent datum for vertex algebras with respect to $f$ is a pair $(V,\phi)$, where $V$ is a vertex algebra over $S$, and $\phi\colon V \otimes_{R} S \to S \otimes_{R} V$ is an isomorphism of vertex algebras over $S \otimes_{R} S$ such that the cocycle condition holds, i.e., the following diagram of isomorphisms of vertex algebras over $S \otimes_{R} S \otimes_{R} S$ commutes:
\begin{gather*} \xymatrix{ V \otimes_{R} S \otimes_{R} S \ar[rr]^{\phi_{01} = \phi \otimes 1} \ar[rrd]_{\phi_{02}} & & S \otimes_{R} V \otimes_{R} S \ar[d]^{\phi_{12} = 1 \otimes \phi} \\ & & S \otimes_{R} S \otimes_{R} V.} \end{gather*}
Here, the maps $\phi_{i,j}$ are defined as in Definition \ref{defn:descent-datum-for-modules}. A morphism of descent data from $(V, \phi)$ to $(V', \phi')$ is a homomorphism $\psi\colon V \to V'$ of vertex algebras over $S$ such that the following diagram commutes:
\begin{gather*} \xymatrix{ V \otimes_{R} S \ar[r]^{\phi} \ar[d]_{\psi \otimes 1} & S \otimes_{R} V \ar[d]^{1 \otimes \psi} \\
V' \otimes_{R} S \ar[r]^{\phi'} & S \otimes_{R} V'. } \end{gather*}
\end{defn}

\begin{prop} \label{prop:descent-for-vertex-algebras}Faithfully flat descent of vertex algebras is effective. That is, if we are given a faithfully flat ring homomorphism $R \to S$, then there is an equivalence between the category of vertex algebras $\tilde{V}$ over $R$ and the category of vertex algebras $V$ over $S$ equipped with descent data $\phi$, given in one way by $\tilde{V} \mapsto (\tilde{V} \otimes_{R} S, \phi((v \otimes s) \otimes s') = s \otimes (v \otimes s'))$ and the other way by $(V, \phi) \mapsto \tilde{V} = \{v \in V \,|\, 1 \otimes v = \phi(v \otimes 1)\}$.
\end{prop}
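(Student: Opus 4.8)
The plan is to reduce the statement about vertex algebras to the module-level descent already established in Theorem~\ref{thm:descent-for-modules}, and then check that the extra vertex-algebra structure (the vacuum vector and the products $a_n b$) is compatible with the descent equivalence. First I would observe that a vertex algebra over $S$ is in particular an $S$-module, and a vertex algebra homomorphism is in particular an $S$-module homomorphism, so Theorem~\ref{thm:descent-for-modules} gives the underlying additive equivalence: to $(V,\phi)$ we associate the $R$-module $\tilde V = \{v \in V \,|\, 1 \otimes v = \phi(v \otimes 1)\}$, and conversely base change along $R \to S$ with its canonical descent datum. The task is then to transport the remaining data across this equivalence.

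The key steps, in order, are as follows. First I would show that the descended module $\tilde V$ inherits a vacuum vector: since $\phi$ is an isomorphism of vertex algebras over $S \otimes_R S$, it sends the vacuum $\unit_{V} \otimes 1$ to $1 \otimes \unit_{V}$, so $\unit_V$ satisfies the defining condition $1 \otimes \unit_V = \phi(\unit_V \otimes 1)$ and therefore lies in $\tilde V$. Second, and this is the main point, I would descend the multiplication map. For each $n \in \bZ$ the assignment $(a,b) \mapsto a_n b$ is an $S$-bilinear map $V \otimes_S V \to V$; because $\phi$ is a vertex algebra isomorphism, it intertwines these products, i.e.\ $\phi$ is compatible with the $n$-th product on both sides, which is exactly the statement that $(\,\cdot_n\,, \phi, \phi, \phi)$ is a bilinear map with descent datum in the sense preceding Lemma~\ref{lem:descent-for-bilinear-maps}. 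Applying that lemma (once for each $n$) yields a unique $R$-bilinear product $\tilde a_n \tilde b$ on $\tilde V$ whose base change recovers $a_n b$. Equivalently, one checks directly that if $a,b \in \tilde V$ then $a_n b \in \tilde V$, using that $\phi$ respects the products, so that $\tilde V$ is closed under all the operations and the structure maps simply restrict.

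Third, I would verify that the resulting data $(\tilde V, \unit_V, Y)$, with $Y(a,z) = \sum_n a_n z^{-n-1}$ and $a_n$ the restricted products, satisfies the vertex algebra axioms over $R$. Each axiom---the truncation condition, the vacuum identities $Y(\unit,z)=\mathrm{id}\,z^0$ and $Y(a,z)\unit \in a + zV[[z]]$, and the Borcherds/Jacobi identity---is a system of polynomial identities among the products $a_n b$. These hold for the base-changed structure over $S$, and because the inclusion $\tilde V \otimes_R S \simto V$ is an isomorphism of $S$-modules identifying the products (the content of the previous step), faithful flatness of $R \to S$ lets me conclude each identity over $R$ from its validity over $S$: an identity of $R$-module maps $\tilde V^{\otimes k} \to \tilde V((z))$ vanishes if and only if it vanishes after applying the faithfully flat $-\otimes_R S$. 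Finally I would check the two functors are mutually quasi-inverse: one composite is handled exactly as in Theorem~\ref{thm:descent-for-modules}, and the vertex-algebra morphisms correspond under the equivalence because a morphism of descent data is, at the module level, precisely a morphism descending to an $R$-module map, which I then note automatically preserves $\unit$ and the products by the same intertwining argument.

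The step I expect to be the main obstacle is the second one: carefully formulating the descent of the infinite family of products $\{\,\cdot_n\,\}_{n \in \bZ}$ so that the truncation condition is preserved and the descended $Y$ still lands in $\tilde V((z))$ rather than merely $\tilde V[[z,z^{-1}]]$. Everything else is a routine transport of algebraic identities through the faithfully flat equivalence, but one must ensure that the bound ``$a_n b = 0$ for $n$ sufficiently large'' descends, which follows because $\tilde V \to V$ is injective (universal injectivity) so the vanishing over $S$ forces vanishing over $R$.
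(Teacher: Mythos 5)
Your proposal is correct and follows essentially the same route as the paper: reduce to module descent via Theorem~\ref{thm:descent-for-modules}, descend the vacuum and the coefficient maps $\cdot_n$ as morphisms of descent data (equivalently via Lemma~\ref{lem:descent-for-bilinear-maps}), and deduce the axioms over $R$ from faithful flatness. Your explicit treatment of the truncation condition via universal injectivity is a detail the paper leaves implicit, but it does not change the argument.
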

\begin{proof}\looseness=1 We briefly explain why this follows from the effectiveness of faithfully flat descent of modules. Essentially, faithfully flat descent for modules with additional structure holds as long as that structure is defined by homomorphisms of modules, with conditions given by commutativity of diagrams. Here the additional structure is given by a distinguished map $S \to V$ induced by the unit vector, and the ``$z^{-n-1}$-coefficient maps'' $\cdot_n\colon V \otimes_S V \to V$. The vertex algebra axioms, such as the Jacobi identity, may be interpreted as equality of certain composites of such maps.

Now, suppose we are given a descent datum $(V, \phi)$ for vertex algebras with respect to $f$. The isomorphism $\phi$ induces a descent datum for the underlying $S$-module, so faithfully flat descent for modules, given as Theorem \ref{thm:descent-for-modules}, yields an $R$-module $\tilde{V}$ and a distinguished $S$-module isomorphism $\psi\colon \tilde{V} \otimes_{R} S \cong V$.

For each integer $n$, we have the ``$z^{-n-1}$-coefficient map'' $\cdot_n\colon V \otimes_S V \to V$, which we may now rewrite as $\cdot_n\colon \big(\tilde{V} \otimes_{R} \tilde{V}\big) \otimes_{R} S \to \tilde{V} \otimes_{R} S$ using $\psi$. To show that this is the base change of a map $\tilde{\cdot}_n\colon \tilde{V} \otimes_{R} \tilde{V} \to \tilde{V}$ along $f$, it is necessary and sufficient to show that $\cdot_n$ is a morphism of descent data for modules with respect to $f$.

By our assumption that $\phi$ is a descent datum for vertex algebras, the following diagram commutes:
\begin{gather*} \xymatrix{ \big(\tilde{V} \otimes_{R} \tilde{V}\big) \otimes_{R} S \otimes_{R} S \ar[r]^{\phi \otimes \phi} \ar[d]_{\cdot_n \otimes 1} & S \otimes_{R} \big(\tilde{V} \otimes_{R} \tilde{V}\big) \otimes_{R} S \ar[d]^{1 \otimes \cdot_n} \\
\tilde{V} \otimes_{R} S \otimes_{R} S \ar[r]^{\phi} & S \otimes_{R} \tilde{V} \otimes_{R} S,} \end{gather*}
so we have a map of descent data. Here, the map $\phi \otimes \phi$ along the top uses an identification $\big(\tilde{V} \otimes_{R} \tilde{V}\big) \otimes_{R} S \otimes_{R} S \cong (V \otimes_{R} S) \otimes_S (V \otimes_{R} S)$ induced by $\psi$.

Essentially the same argument shows that the unit map $S \to V$ that takes~$1$ to $\unit$ is the base change of an $R$-module map $R \to \tilde{V}$. The remaining vertex algebra conditions then follow from the fact that base-change along $f$ yields a formulas that hold for~$V$.
\end{proof}

Given a vertex algebra $V$ over $R$ and a vertex algebra $W$ over $S$, we say that $V$ is an $R$-form of $W$ with respect to a ring homomorphism $\phi\colon R \to S$ if $V \otimes_{R,\phi} S$ is isomorphic to~$W$ as a vertex algebra over $S$. If $\phi$ is implicitly fixed, we will simply say that $V$ is an $R$-form of $W$. Thus, the previous proposition amounts to the claim that the construction of an $R$-form with respect to a~faithfully flat map is equivalent to the construction of a descent datum.

\begin{defn}
Let $c\in R$. A conformal vertex algebra over $R$ with half central charge $c$ is a vertex algebra over $R$ equipped with a $\bZ$-grading $V = \bigoplus_{n \in \bZ} V_n$ and a distinguished vector $\omega \in V_2$ such that:
\begin{enumerate}\itemsep=0pt
\item The coefficients of $Y(\omega,z) = \sum\limits_{k \in \bZ} L_k z^{-k-2}$ satisfy the Virasoro relations at half central charge $c$, i.e.,
\begin{gather*} [L_m, L_n] = (m-n)L_{m+n} + c \binom{m+1}{3} \delta_{m+n,0} {\rm id}_V. \end{gather*}
\item The product structure is homogeneous: $u_k v \in V_{m+n-k-1}$ for $u \in V_m$, $v \in V_n$.
\item $L_0 v = nv$ for all $v \in V_n$.
\item $L_{-1}v$ is the $z^1$-term in $Y(v,z)\unit$.
\end{enumerate}
A vertex operator algebra over $R$ with half central charge $c$ is a conformal vertex algebra such that all $V_n$ are finite projective $R$-modules, and $V_n = 0$ for $n \ll 0$.
\end{defn}

\begin{rem}Our definition of vertex operator algebra over $R$ differs slightly from the defi\-nition in \cite{DR13}. We remove the condition that the base ring $R$ be an integral domain in which~2 is invertible, and we replace the condition that each~$V_n$ be a finite free $R$-module with the condition that each~$V_n$ be a finite projective $R$-module. The first change is necessary for us to consider the case $R = \bZ$, and the second change is necessary in order for faithfully flat descent of vertex operator algebras to be effective in general (see Proposition~\ref{prop:faithfully-flat-descent-for-voas}). That is, free modules don't necessarily glue to form free modules, and standard examples come from locally trivial but nontrivial vector bundles. This distinction doesn't affect the strength of our final result, since $\bZ$ is a principal ideal domain.
\end{rem}

\begin{rem}It may be reasonable someday to change the definition of ``vertex operator algebra over $R$'' to require an action of the smooth integral form $U^+(Vir)$ of the universal enveloping algebra of Virasoro constructed in Section~5 of~\cite{B99}. The self-dual integral form of~$V^\natural$ constructed in this paper admits such an action by virtue of the same being true of $(V_\Lambda)_{\bZ}$. Furthermore, the proof of Modular Moonshine for primes greater than~13 in~\cite{B98} assumes the existence of such an action. However, this condition is at the moment too difficult to verify in general to be particularly useful.
\end{rem}

\begin{defn}A vertex algebra homomorphism from $V$ to $W$ is an $R$-linear map $\phi\colon V \to W$ satisfying $\phi(\unit_V) = \unit_W$ and $\phi(Y_V(u,z)v) = Y_W(\phi(u),z)\phi(v)$ for all $u,v \in V$. A vertex operator algebra homomorphism from $V$ to $W$ is a vertex algebra homomorphism $\phi\colon V \to W$ that preserves the $\bZ$-grading, and sends $\omega_V$ to $\omega_W$.
\end{defn}

\begin{rem}The condition that $\phi(\omega_V) = \omega_W$ is sometimes reserved for a special class of homomorphisms, using terms like ``strong'' or ``strict'' or ``conformal'' homomorphism in the literature. This is because it is often fruitful to consider vertex algebra maps between vertex operator algebras of different central charge. We will not need to consider such maps in this paper.
\end{rem}

\begin{prop} \label{prop:faithfully-flat-descent-for-voas}Faithfully flat descent of vertex operator algebras is effective. That is, if we are given a faithfully flat ring homomorphism $R \to S$, then there is an equivalence between the category of vertex operator algebras $\bar{V}$ over $R$ and the category of vertex operator algebras~$V$ over~$S$ equipped with descent data $\phi\colon V \otimes_{R} S \to S \otimes_{R} V$, given in one way by $\bar{V} \mapsto (\bar{V} \otimes_{R} S,$ $\phi((v \otimes s) \otimes s') = s \otimes (v \otimes s'))$ and the other way by $(V, \phi) \mapsto \bar{V} = \{v \in V | 1 \otimes v = \phi(v \otimes 1)\}$.
\end{prop}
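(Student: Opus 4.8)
The plan is to reduce this to the module-level descent statement (Proposition~\ref{prop:descent-for-vertex-algebras}) already established, and then to check that the extra structure carried by a vertex operator algebra---the $\bZ$-grading, the conformal vector $\omega$, and the finite projectivity of the graded pieces---descends as well. First I would invoke Proposition~\ref{prop:descent-for-vertex-algebras} to obtain, from a descent datum $(V,\phi)$ for vertex operator algebras, an underlying $R$-vertex algebra $\bar{V} = \{v \in V \mid 1 \otimes v = \phi(v \otimes 1)\}$ together with a distinguished $S$-module isomorphism $\psi\colon \bar{V} \otimes_R S \simto V$ of vertex algebras. This already handles the unit, the multiplication maps $\cdot_n$, and all the vertex algebra axioms; what remains is purely the conformal and finiteness data.

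Next I would descend the grading. Since $\phi$ is by hypothesis an isomorphism of \emph{vertex operator} algebras over $S \otimes_R S$, it preserves the grading, so for each $n$ the restriction of $\phi$ to $V_n \otimes_R S$ lands in $S \otimes_R V_n$ and constitutes a descent datum for the $S$-module $V_n$. Applying Theorem~\ref{thm:descent-for-modules} to each graded piece yields $R$-modules $\bar{V}_n$ with $\bar{V}_n \otimes_R S \cong V_n$, compatibly with $\psi$; because base change along the faithfully flat map $R \to S$ is exact and commutes with direct sums, $\bigoplus_n \bar{V}_n \otimes_R S \cong \bigoplus_n V_n = V$, so this decomposition recovers $\bar{V}$ and furnishes the required $\bZ$-grading $\bar{V} = \bigoplus_n \bar{V}_n$. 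The homogeneity axiom, the relation $L_0 v = nv$, and the Virasoro relations are all equalities of composites of module maps that hold after the faithfully flat base change $R \to S$, hence hold over $R$ by the injectivity of $M \to M \otimes_R S$ for the relevant modules, exactly as in the proof of Proposition~\ref{prop:descent-for-vertex-algebras}.

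For the conformal vector, I would note that $\omega \in V_2$ is fixed by $\phi$ in the sense that $1 \otimes \omega = \phi(\omega \otimes 1)$, since $\phi$ sends $\omega_{V \otimes S}$ to $\omega_{S \otimes V}$; therefore $\omega$ lies in $\bar{V}_2 = \{v \in V_2 \mid 1 \otimes v = \phi(v \otimes 1)\}$, giving a distinguished vector $\bar{\omega} \in \bar{V}_2$ whose base change is $\omega$. The coefficients $L_k$ of $Y(\bar{\omega},z)$ then satisfy the Virasoro relations at half central charge $c$ because they do so after base change. Finally, the defining condition that each $V_n$ be finite projective over $S$ descends to the statement that each $\bar{V}_n$ is finite projective over $R$ by Proposition~\ref{prop:finite-projective-property-descends}, and the condition $V_n = 0$ for $n \ll 0$ transfers to $\bar{V}_n = 0$ for $n \ll 0$ by faithful flatness. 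The functoriality and the fact that the two constructions are mutually quasi-inverse follow from the corresponding assertions in Proposition~\ref{prop:descent-for-vertex-algebras} together with Theorem~\ref{thm:descent-for-modules}, since a vertex operator algebra homomorphism is just a vertex algebra homomorphism that additionally respects the grading and $\omega$, both of which are preserved by construction.

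The main obstacle I anticipate is not any single deep step but the bookkeeping of compatibilities: one must check that the grading obtained piece-by-piece from Theorem~\ref{thm:descent-for-modules} is genuinely compatible with the vertex algebra structure descended in Proposition~\ref{prop:descent-for-vertex-algebras}---that is, that the $R$-module $\bar{V}$ produced by the vertex-algebra descent coincides with $\bigoplus_n \bar{V}_n$ and that the descended operations respect this decomposition. This is exactly the point where the uniqueness clause of faithfully flat descent is needed: both descents produce $R$-forms of the same $S$-module $V$ with compatible descent data, so they must agree, which pins down all the compatibilities at once.
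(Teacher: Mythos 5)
Your proposal is correct and follows essentially the same route as the paper's proof: reduce to Proposition~\ref{prop:descent-for-vertex-algebras} for the vertex algebra structure, observe that $\omega$ descends because $\phi$ preserves conformal vectors, and invoke descent of the finite projective property (Proposition~\ref{prop:finite-projective-property-descends}). The paper's own proof is just a terser version of this; your extra care with the piecewise descent of the grading and its compatibility with the descended operations is a harmless elaboration of what the paper leaves implicit (the grading being recoverable from $L_0$ once $\omega$ has descended).
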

\begin{proof}By Proposition \ref{prop:descent-for-vertex-algebras}, faithfully flat descent for vertex algebras is effective. It then suffices to show that the conformal vector $\omega$ descends, and that the finite projective property of modules descends. The first assertion is clear from the fact that~$\phi$ sends conformal vectors to conformal vectors. The second assertion is given in \cite[\href{https://stacks.math.columbia.edu/tag/058S}{Proposition 058S}]{Stacks} (or~\cite[Proposition~2.5.2]{EGA4v2}).
\end{proof}

\begin{rem}This result allows us to consider vertex algebras and vertex operator algebras over a broad class of geometric objects, including all schemes and all algebraic stacks.
\end{rem}

\begin{defn}A M\"obius structure on a vertex algebra over $R$ is an integer grading, together with an action of the Kostant integral form $U(\fsl_2)_{\bZ} = \big\langle L_{-1}^{(n)}, \binom{L_0}{n}, L_1^{(n)} \big\rangle_{n \geq 1}$ on the underlying $R$-module, such that the following conditions hold:
\begin{enumerate}\itemsep=0pt
\item $Y(u,z)\unit = \sum\limits_{n \geq 0} L_{-1}^{(n)}u z^n$ for all $u \in V$.
\item If $u$ has weight $k$, then $\binom{L_0}{n}u = \binom{k}{n}u$.
\item The subalgebra generated by $\{ L_1^{(n)} \}_{n \geq 1}$ acts locally nilpotently on $V$.
\item For all $i \in \{-1,0,1\}$, $L_i Y(u,z)v - Y(u,z)(L_i v) = \sum\limits_{j =0}^{i+1} \binom{i+1}{j} z^j Y(L_{i-j} u,v)$.
\end{enumerate}
We note that in the embedding of $U(\fsl_2)_{\bZ}$ in $U(\fsl_2)$, we have $L_i^{(n)} = \frac{L_i^n}{n!}$.
\end{defn}

\begin{defn}Let $V$ be a vertex algebra with M\"obius structure, and let $u \in V$ have weight~$k$. For each integer $n$, we define the operator $(u_n)^* = (-1)^k \sum\limits_{i \geq 0} \big(L_1^{(i)} u\big)_{2k-i-n-2}$. An invariant bilinear form on a vertex algebra $V$ with M\"obius structure is a symmetric bilinear form $(\,,\,)\colon V \times V \to R$ such that
\begin{gather*} (u_n v,w) = (v, (u_n)^* w) \end{gather*}
for all homogeneous vectors $u$ and all vectors $v$ and $w$. That is, $(u_n)^*$ is the adjoint of the operator~$u_n$. We say that a vertex operator algebra is self-dual with respect to an invariant bilinear form if each graded piece $V_n$ is self-dual as a finite projective $R$-module.
\end{defn}

\begin{rem}If $R$ contains $\bZ$ as a subring, then we may write the inner product formula equivalently as $(Y(u,z)v,w) = \big(v, Y\big({\rm e}^{zL_1}\big({-}z^{-2}\big)^{L_0}u, z^{-1}\big)w\big)$. Otherwise, we must replace $L_0$ with an integer indicator of weight.
\end{rem}

\begin{prop} \label{prop:faithfully-flat-descent-for-Mobius-forms-and-self-duality}Faithfully flat descent is effective for both M\"obius structure and invariant bilinear forms, on both vertex algebras and vertex operator algebras. Furthermore, the self-dual property for vertex operator algebras with invariant bilinear forms descends effectively.
\end{prop}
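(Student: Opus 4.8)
The plan is to reduce every claim in Proposition~\ref{prop:faithfully-flat-descent-for-Mobius-forms-and-self-duality} to the descent results already established for modules, bilinear maps, and finite projective modules with non-singular bilinear form (Theorems~\ref{thm:descent-for-modules} and~\ref{thm:descent-for-finite-projective-modules-with-bilinear-form}, Lemma~\ref{lem:descent-for-bilinear-maps}), exactly in the spirit of the proofs of Propositions~\ref{prop:descent-for-vertex-algebras} and~\ref{prop:faithfully-flat-descent-for-voas}. The guiding principle is that all of the extra structure---the M\"obius action and the invariant form---is defined by finitely many $R$-module homomorphisms subject to axioms that are equalities of composites of such homomorphisms, and descent of modules transports both the maps and the commuting diagrams that encode the axioms.

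First I would treat the M\"obius structure. Suppose $R \to S$ is faithfully flat and $(V,\phi)$ is a descent datum for a vertex algebra (or vertex operator algebra) over $S$ carrying a M\"obius structure. By Proposition~\ref{prop:descent-for-vertex-algebras} we already obtain the descended vertex algebra $\tilde{V}$ with $\psi\colon \tilde{V}\otimes_R S \simto V$. The M\"obius structure consists of the operators $L_{-1}^{(n)}, \binom{L_0}{n}, L_1^{(n)}\colon V \to V$, each an $S$-module endomorphism. Since $\phi$ is by hypothesis compatible with the M\"obius action, each such operator is a morphism of descent data for modules with respect to $f$, so by Theorem~\ref{thm:descent-for-modules} it is the base change of a unique $R$-module endomorphism of $\tilde{V}$. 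The four axioms of a M\"obius structure are equalities of $R$-linear (after descent) maps built from these operators and from $Y$; since each holds after base change along the faithfully flat $f$, and base change is faithful, each holds already over $R$. The integer grading descends because it is recorded by the idempotent projections onto the $\binom{L_0}{n}$-eigenpieces, which are themselves morphisms of descent data. Local nilpotence of the $L_1^{(n)}$-subalgebra descends because on each homogeneous piece it is the statement that a fixed finite composite vanishes, an equality of maps.

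Next I would treat the invariant bilinear form. An invariant form is a symmetric $S$-bilinear map $(\,,\,)\colon V \times V \to S$ satisfying $(u_n v, w) = (v, (u_n)^* w)$, where $(u_n)^*$ is the explicit combination of M\"obius operators and vertex operators already shown to descend. Viewing $(\,,\,)$ as an $S$-module map $\bar\kappa\colon V\otimes_S V \to S$ compatible with $\phi$ (i.e.\ the quadruple $((\,,\,),\phi,\phi,\mathrm{id}_S)$ is a bilinear map with descent datum), Lemma~\ref{lem:descent-for-bilinear-maps} yields a unique $R$-bilinear form $\tilde\kappa$ on $\tilde V$ whose base change recovers $(\,,\,)$. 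Symmetry and the invariance identity both descend because each is an equality of the relevant $R$-linear maps that holds after the faithfully flat base change. Finally, self-duality asserts that each $V_n$ is self-dual as a finite projective module, i.e.\ that the restricted form $\kappa|_{V_n}$ is non-singular; by the non-singularity half of Theorem~\ref{thm:descent-for-finite-projective-modules-with-bilinear-form}, the descended form $\tilde\kappa|_{\tilde V_n}$ is non-singular if and only if its base change is, so self-duality descends effectively.

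The one point requiring genuine care---and the step I expect to be the main obstacle---is the interaction of descent with the grading and the attendant infinite direct sum. The operators $\binom{L_0}{n}$ and the adjoint formula for $(u_n)^*$ are honest finite-sum maps on homogeneous pieces, so nothing infinite enters there; but to conclude self-duality I must know that the descended grading $\tilde V = \bigoplus_n \tilde V_n$ matches the eigenspace decomposition and that each $\tilde V_n$ is finite projective with $\tilde V_n \otimes_R S \cong V_n$. This is where I would invoke Proposition~\ref{prop:faithfully-flat-descent-for-voas}: the grading and finite-projectivity of the graded pieces have already been established to descend, so the non-singularity criterion may be applied piece by piece. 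Apart from verifying this compatibility of the descended grading with the eigenspace projections, the proposition is a routine transcription of the module-level descent theorems, and I would simply remark that the argument follows the pattern of the preceding two propositions rather than spelling out each diagram.
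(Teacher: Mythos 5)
Your proposal is correct and follows essentially the same route as the paper's own (much terser) proof: the Möbius operators descend as module maps and automatically satisfy their relations, the invariant form descends via Lemma~\ref{lem:descent-for-bilinear-maps}, and self-duality reduces to graded-piece-by-graded-piece non-singularity handled by Theorem~\ref{thm:descent-for-finite-projective-modules-with-bilinear-form}. The extra care you take with the compatibility of the descended grading is a reasonable elaboration of what the paper leaves implicit via Proposition~\ref{prop:faithfully-flat-descent-for-voas}.
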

\begin{proof}For M\"obius structure, the generating operators descend as module maps, and automatically satisfy the defining relations. For invariant bilinear forms, it suffices to use descent of module maps $V \otimes_{R} V \to R$ as in Lemma~\ref{lem:descent-for-bilinear-maps}. For the self-dual property, we may reduce to the case of self-duality of the finite projective modules~$V_n$, and the isomorphism property of the map $V_n \to \Hom_R(V_n,R)$ descends effectively as in Theorem~\ref{thm:descent-for-finite-projective-modules-with-bilinear-form}.
\end{proof}

We use this to describe gluing explicitly.

\begin{defn}Suppose we are given the following diagram of commutative ring homomorphisms: $R_1 \to R_3 \leftarrow R_2$. A gluing datum for vertex operator algebras over this diagram is a triple $\big(V^1, V^2, f\big)$, where
\begin{enumerate}\itemsep=0pt
\item $V^1$, $V^2$ are vertex operator algebras over $R_1$ and $R_2$, respectively.
\item $f\colon V^1 \otimes_{R_1} R_3 \to V^2 \otimes_{R_1} R_3$ is an isomorphism of vertex operator algebras over $R_3$.
\end{enumerate}
A gluing datum is M\"obius and self-dual if the corresponding vertex operator algebras are M\"obius and self-dual, and the map $f$ preserves the M\"obius structure and bilinear form. A morphism of gluing data from $\big(V^1, V^2, f\big)$ to $\big(V^{\prime,1}, V^{\prime,2}, f'\big)$ is a pair $\big(g_1\colon V^1 \to V^{\prime,1}, g_2\colon V^2 \to V^{\prime,2}\big)$ of vertex algebra homomorphisms such that the following diagram commutes:
\begin{gather*} \xymatrix{ V^1 \otimes_{R_1} R_3 \ar[r]^-{f} \ar[d]_{g_1 \otimes {\rm id}} & V^2 \otimes_{R_2} R_3 \ar[d]^{g_2 \otimes {\rm id}} \\ V^{\prime,1} \otimes_{R_1} R_3 \ar[r]_-{f'} & V^{\prime,2} \otimes_{R_2} R_3. } \end{gather*}
\end{defn}

\begin{lem} \label{lem:zariski-gluing-for-voas}Let $R$ be a commutative ring, and let $a,b \in R$ be coprime elements, i.e., the ideal generated by~$a$ and~$b$ is all of~$R$. Let~$R_a$ and~$R_b$ be the respective localizations, i.e., such that $\Spec R_a$ and $\Spec R_b$ form a Zariski open cover of $\Spec R$. Then base change induces an equivalence between the category of vertex operator algebras over $R$ and the category of gluing data for vertex operator algebras over the diagram $R_a \to R_{ab} \leftarrow R_b$. This also yields equivalences between the corresponding categories of self-dual M\"obius objects.
\end{lem}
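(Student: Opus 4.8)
The plan is to realize the Zariski cover as a single faithfully flat map and then invoke the vertex operator algebra descent results already established. I would set $S = R_a \times R_b$; since $a$ and $b$ are coprime, $\Spec S = \Spec R_a \sqcup \Spec R_b \to \Spec R$ is a flat surjection, so the structure map $R \to S$ is faithfully flat. I would then unwind $S \otimes_{R} S$ using the idempotent nature of the localizations, namely $R_a \otimes_{R} R_a \cong R_a$, $R_b \otimes_{R} R_b \cong R_b$, and $R_a \otimes_{R} R_b \cong R_{ab}$, so that
\begin{gather*} S \otimes_{R} S \cong R_a \times R_{ab} \times R_{ab} \times R_b. \end{gather*}
A vertex operator algebra $V$ over $S$ is exactly a pair $\big(V^1, V^2\big)$ of vertex operator algebras over $R_a$ and $R_b$, and a descent datum $\phi$ over $S \otimes_{R} S$ splits into four components along this decomposition; the two diagonal components are forced to be identities, while the two off-diagonal components over $R_{ab}$ are mutually inverse and encode precisely an isomorphism $f\colon V^1 \otimes_{R_a} R_{ab} \to V^2 \otimes_{R_b} R_{ab}$ of vertex operator algebras over $R_{ab}$.

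Next I would verify that the cocycle condition on $\phi$ over $S \otimes_{R} S \otimes_{R} S$ is automatic. Expanding the triple tensor product into its component rings, every component involving a repeated localization factor collapses by idempotency, so the only surviving constraint is the tautological relation of $f$ to itself, and there is no further compatibility to impose. This is exactly the phenomenon recorded in the proof of Lemma~\ref{lem:zariski-gluing-data-for-modules}, that all components of a descent datum for a Zariski cover are already contained in the gluing datum. Consequently the category of descent data for the faithfully flat map $R \to S$ is equivalent to the category of gluing data for vertex operator algebras over the diagram $R_a \to R_{ab} \leftarrow R_b$, and the same component-wise comparison matches morphisms of descent data with morphisms of gluing data.

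With this identification in place, the equivalence of categories follows at once from Proposition~\ref{prop:faithfully-flat-descent-for-voas}: faithfully flat descent for vertex operator algebras along $R \to S$ gives an equivalence between vertex operator algebras over $R$ and descent data over $S$, which we have matched with gluing data. For the self-dual M\"obius refinement I would invoke Proposition~\ref{prop:faithfully-flat-descent-for-Mobius-forms-and-self-duality}, observing that a M\"obius structure, an invariant bilinear form, and the self-duality property each descend effectively along $R \to S$, and that the requirement in the gluing datum that $f$ preserve the M\"obius structure and the form corresponds to the off-diagonal component of the descended data. The only genuinely delicate point is the bookkeeping in the decomposition of $S \otimes_{R} S$ together with the check that the diagonal descent components are forced to be the identity; everything else is formal. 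Since this computation is essentially identical to the module-level one in Lemma~\ref{lem:zariski-gluing-data-for-modules}, I expect no real obstacle, and the main work reduces to confirming that the vertex operator algebra structure, the M\"obius action, the bilinear form, and self-duality are each transported compatibly through the identification of descent data with gluing data.
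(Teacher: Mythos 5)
Your proposal is correct and follows essentially the same route as the paper: both reduce the Zariski gluing statement to faithfully flat descent along $R \to R_a \times R_b$, using exactly the collapse of $S \otimes_R S$ recorded in the proof of Lemma~\ref{lem:zariski-gluing-data-for-modules}. The only difference is one of bookkeeping --- the paper applies the module-level Lemma~\ref{lem:zariski-gluing-data-for-modules} to the underlying modules and structure maps and reassembles the vertex operator algebra, whereas you identify gluing data with descent data at the level of whole vertex operator algebras and then invoke Propositions~\ref{prop:faithfully-flat-descent-for-voas} and~\ref{prop:faithfully-flat-descent-for-Mobius-forms-and-self-duality} directly; note only that the identity of the diagonal components and the mutual inverseness of the off-diagonal ones are consequences of the cocycle condition rather than facts to be checked independently of it.
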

\begin{proof}We obtain a quasi-inverse functor by applying Lemma~\ref{lem:zariski-gluing-data-for-modules} to obtain the underlying modules and structure maps.
\end{proof}

\begin{lem} \label{lem:faithfully-flat-gluing-data-for-voas}Let $R \to R_1$ and $R \to R_2$ be faithfully flat homomorphisms of commutative rings. Then tensor product induces an equivalence between the category of vertex operator algebras over $R$ and the category of gluing data for vertex operator algebras over the diagram $R_1 \to R_1 \otimes_{R} R_2 \leftarrow R_2$. This also yields equivalences between the corresponding categories of self-dual M\"obius objects.
\end{lem}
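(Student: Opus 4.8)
The plan is to reduce everything to the module-level gluing already established in Lemma~\ref{lem:faithfully-flat-gluing-data-for-flat-modules}, treating the vertex operator algebra structure as a package of module homomorphisms that themselves glue, exactly in the spirit of Proposition~\ref{prop:descent-for-vertex-algebras}. Write $F$ for the tensor-product functor sending a vertex operator algebra $\bar V$ over $R$ to the gluing datum $(\bar V \otimes_R R_1, \bar V \otimes_R R_2, f)$, where $f$ is the canonical comparison isomorphism over $R_1 \otimes_R R_2$; one checks directly that $f$ is a homomorphism of vertex operator algebras, so $F$ is well defined. To build a quasi-inverse $G$, I would start from a gluing datum $(V^1, V^2, f)$ and forget down to the underlying graded modules. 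Since $f$ is in particular a grading-preserving $R_1 \otimes_R R_2$-module isomorphism, each graded piece furnishes a finite projective gluing datum, and Lemma~\ref{lem:faithfully-flat-gluing-data-for-flat-modules} produces finite projective $R$-modules $\tilde V_n$ with $\tilde V_n \otimes_R R_i \cong V^i_n$; set $\tilde V = \bigoplus_n \tilde V_n$.

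Next I would transport the algebraic structure onto $\tilde V$. For each $n$ the coefficient products $\cdot_n\colon V^i \otimes_{R_i} V^i \to V^i$, the unit maps $R_i \to V^i$, the conformal vectors $R_i \to V^i$, and the Kostant--Möbius operators are all module homomorphisms, and because $f$ is an isomorphism of vertex operator algebras they are intertwined by $f$ over $R_1 \otimes_R R_2$. Thus each constitutes a morphism of gluing data for modules, and the equivalence of Lemma~\ref{lem:faithfully-flat-gluing-data-for-flat-modules}—together with the compatibility of the intersection construction with tensor products, exactly as exploited in Lemma~\ref{lem:descent-for-bilinear-maps}—descends each one to a structure map on $\tilde V$. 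This equips $\tilde V$ with a candidate vertex operator algebra structure over $R$.

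It then remains to verify the axioms and that $F$ and $G$ are mutually quasi-inverse. All of the defining identities—the Jacobi identity, the Virasoro relations, the grading compatibility $u_k v \in V_{m+n-k-1}$, and the $U(\fsl_2)_\bZ$ relations—are equalities between composites of the descended module maps, so it suffices to check them after the faithfully flat base change $R \to R_1$. There they hold because $V^1$ is a vertex operator algebra, and faithful flatness (equivalently, injectivity of $\tilde V \hookrightarrow \tilde V \otimes_R R_1$) propagates the equalities back to $R$. The natural isomorphisms $G \circ F \cong \mathrm{id}$ and $F \circ G \cong \mathrm{id}$ are inherited from the module statement, since the added structure is uniquely pinned down by the module-level gluing. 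For the self-dual Möbius refinement I would invoke Proposition~\ref{prop:faithfully-flat-descent-for-Mobius-forms-and-self-duality}: the Möbius operators and the invariant bilinear form glue just as the products do, and the self-duality of each $\tilde V_n$ reduces to the isomorphism property of $\tilde V_n \to \Hom_R(\tilde V_n, R)$, which is detected by faithfully flat base change precisely as in Theorem~\ref{thm:descent-for-finite-projective-modules-with-bilinear-form}.

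The step I expect to be the main obstacle is the tensor-product compatibility used when descending the products $\cdot_n$: one must genuinely know that $\tilde V \otimes_R \tilde V$ glues to $V^i \otimes_{R_i} V^i$, so that $\cdot_n$ is a morphism of module gluing data rather than merely a map between the two ends. This is exactly what forces the use of Lemma~\ref{lem:descent-for-bilinear-maps} and the finite projective hypothesis, and it is the only point in the argument where more than formal bookkeeping is required; everything else is a routine transcription of the module-level equivalence.
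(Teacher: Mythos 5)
Your proposal is correct and follows essentially the same route as the paper: the paper likewise obtains the underlying modules and structure maps from Lemma~\ref{lem:faithfully-flat-gluing-data-for-flat-modules}, verifies the vertex operator algebra axioms by the base-change argument packaged in Proposition~\ref{prop:faithfully-flat-descent-for-voas}, and handles the self-dual M\"obius refinement via Proposition~\ref{prop:faithfully-flat-descent-for-Mobius-forms-and-self-duality}. The tensor-product compatibility you flag as the main obstacle is indeed the crux, and it is resolved exactly as you suggest, by the identification $(\tilde V \otimes_R \tilde V) \otimes_R R_i \cong V^i \otimes_{R_i} V^i$ underlying the descent of the coefficient maps.
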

\begin{proof}We first note that faithful flatness implies the tensor product functor preserves the finite projective property of graded pieces, so it takes vertex operator algebras over $R$ to gluing data for vertex operator algebras over the diagram $R_1 \to R_1 \otimes_{R} R_2 \leftarrow R_2$. Similarly, tensor pro\-duct preserves the self-dual M\"obius structure. We obtain the quasi-inverse functor by applying Lemma~\ref{lem:faithfully-flat-gluing-data-for-flat-modules} to obtain the underlying modules and the structure maps. It remains to check that the conformal vertex algebra over~$R$ that we obtain is in fact a vertex operator algebra over $R$, and this follows from Proposition~\ref{prop:faithfully-flat-descent-for-voas}. For self-dual M\"obius structure, the claim follows from Proposition~\ref{prop:faithfully-flat-descent-for-Mobius-forms-and-self-duality}.
\end{proof}

\begin{lem} \label{lem:unique-voa-from-descent}Let $R \to R_1$ and $R \to R_2$ be homomorphisms of commutative rings of the form given in either Lemma~{\rm \ref{lem:zariski-gluing-for-voas}} or Lemma~{\rm \ref{lem:faithfully-flat-gluing-data-for-voas}}. Let $V^1$ and $V^2$ be vertex operator algebras over~$R_1$ and~$R_2$, respectively, such that $V^1 \otimes_{R} R_2$ and $V^2 \otimes_{R} R_1$ are isomorphic as vertex operator algebras over $R_1 \otimes_{R} R_2$. Any choice of gluing datum induces injective group homomorphisms $\Aut_{R_1} V^1 \to \Aut_{R_1 \otimes_{R} R_2} \big(V^1 \otimes_{R} R_2\big)$ and $\Aut_{R_2} V^2 \to \Aut_{R_1 \otimes_{R} R_2} \big(V^1 \otimes_{R} R_2\big)$, and if the corresponding double coset space is a singleton, then there is a unique isomorphism type of vertex operator algebra~$V$ over~$R$ such that $V^1 \cong V \otimes_{R} R_1$ and $V^2 \cong V \otimes_{R} R_2$. In particular, if the inclusions are isomorphisms, then $V$ is unique, and the natural inclusions of $\Aut_R V$ into $\Aut_{R_1} V^1$ and $\Aut_{R_2} V^2$ are isomorphisms. The corresponding statement also holds for M\"obius self-dual vertex operator algebras.
\end{lem}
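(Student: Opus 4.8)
The plan is to translate the statement into the language of gluing data and then reduce the classification of glued objects to a double-coset computation in the automorphism group of the common base change. Write $R_3 := R_1 \otimes_{R} R_2$ and $W := V^1 \otimes_{R} R_2 \cong V^1 \otimes_{R_1} R_3$, and invoke the equivalence between vertex operator algebras over $R$ and gluing data over the diagram $R_1 \to R_3 \leftarrow R_2$ furnished by Lemma~\ref{lem:zariski-gluing-for-voas} (Zariski case) or Lemma~\ref{lem:faithfully-flat-gluing-data-for-voas} (faithfully flat case); in the self-dual M\"obius setting I would use the self-dual M\"obius versions of those same equivalences, which are part of each lemma. Under this equivalence, an isomorphism class of vertex operator algebra $V$ over $R$ with $V \otimes_{R} R_1 \cong V^1$ and $V \otimes_{R} R_2 \cong V^2$ corresponds to an isomorphism class of gluing datum whose component algebras are isomorphic to $V^1$ and $V^2$. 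Transporting structure, every such datum is isomorphic to one of the form $\big(V^1, V^2, f\big)$ with $f\colon W \simto V^2 \otimes_{R} R_1$ an isomorphism over $R_3$, and the hypothesis guarantees that at least one such $f$ exists. So the problem is to classify the isomorphism classes of gluing data $\big(V^1, V^2, f\big)$ as $f$ varies.

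Next I would set $G := \Aut_{R_3} W$ and define the two homomorphisms. The first, $\gamma_1\colon \Aut_{R_1} V^1 \to G$, is base change $\alpha \mapsto \alpha \otimes {\rm id}$. The second, $\gamma_2\colon \Aut_{R_2} V^2 \to G$, is $\beta \mapsto f^{-1} \circ (\beta \otimes {\rm id}) \circ f$, which uses the chosen gluing datum $f$. For injectivity I would use that each graded piece $V^i_n$ is finite projective, so the canonical map $\End_{R_i}\big(V^i_n\big) \otimes_{R_i} R_3 \to \End_{R_3}\big(V^i_n \otimes_{R_i} R_3\big)$ is an isomorphism; since an automorphism is determined by its grading-preserving action on the pieces, injectivity of $\gamma_1$ reduces to injectivity of $\End_{R_1}\big(V^1_n\big) \to \End_{R_1}\big(V^1_n\big) \otimes_{R_1} R_3$. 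In the faithfully flat case the base change $R_1 \to R_3$ is itself faithfully flat (faithful flatness is stable under base change), hence universally injective for modules; in the Zariski case the relevant rings occurring in this paper are integral domains and the localization $R_a \to R_{ab}$ is injective on the torsion-free modules $\End_{R_a}\big(V^1_n\big)$. Conjugation by $f$ is a bijection, so $\gamma_2$ is injective as well. Write $H_i := \gamma_i\big(\Aut_{R_i} V^i\big) \le G$.

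Then, taking the chosen $f$ as a base point, I would write an arbitrary isomorphism $W \simto V^2 \otimes_{R} R_1$ over $R_3$ as $f \circ a$ with $a \in G$. Unwinding the definition of a morphism of gluing data, $\big(V^1, V^2, f a\big)$ and $\big(V^1, V^2, f a'\big)$ are isomorphic if and only if there exist $g_1 \in \Aut_{R_1} V^1$ and $g_2 \in \Aut_{R_2} V^2$ with $f a' \circ \gamma_1(g_1) = (g_2 \otimes {\rm id}) \circ f a$; since $(g_2 \otimes {\rm id}) f = f \gamma_2(g_2)$, cancelling $f$ yields $a' = \gamma_2(g_2)\, a\, \gamma_1(g_1)^{-1}$, that is, $a' \in H_2 a H_1$. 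Thus $f a \mapsto H_2 a H_1$ is a bijection between isomorphism classes of gluing data with components $V^1, V^2$ and the double-coset space $H_2 \backslash G / H_1$. Combined with the first paragraph, the isomorphism classes of the desired $V$ biject with $H_2 \backslash G / H_1$, and every double coset is realized by an actual $V$ through the equivalence of categories; hence a singleton double-coset space forces $V$ to be unique up to isomorphism.

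Finally, for the ``in particular'' I would observe that if $\gamma_1$ and $\gamma_2$ are isomorphisms onto $G$, i.e., $H_1 = H_2 = G$, then $H_2 \backslash G / H_1$ is a single point, giving uniqueness of $V$. Computing $\Aut_R V$ as the automorphism group of the corresponding gluing datum gives $\Aut_R V \cong \big\{(g_1, g_2) \,|\, \gamma_1(g_1) = \gamma_2(g_2)\big\}$, the fibre product $\Aut_{R_1} V^1 \times_G \Aut_{R_2} V^2$, where the two projections are precisely the natural restriction maps of an automorphism of $V$ to $V \otimes_{R} R_1 = V^1$ and $V \otimes_{R} R_2 = V^2$. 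Since $\gamma_1, \gamma_2$ are injective with common image $G = H_1 = H_2$, this fibre product is carried isomorphically onto $H_1 \cap H_2 = G$ by either projection, so both restriction maps $\Aut_R V \to \Aut_{R_1} V^1$ and $\Aut_R V \to \Aut_{R_2} V^2$ are isomorphisms. The whole argument is insensitive to whether one works with plain or self-dual M\"obius vertex operator algebras, since in the latter case automorphisms preserve the M\"obius structure and bilinear form by definition and the gluing equivalences already transport these. The main obstacle is the bookkeeping in the third paragraph: matching the diagrammatic morphism condition for gluing data with the double-coset relation in the correct order for $H_1$ and $H_2$, and pinning down injectivity uniformly across the faithfully flat and Zariski settings.
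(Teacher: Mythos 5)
Your proposal is correct and follows essentially the same route as the paper's proof: translate to gluing data via the equivalences of Lemmas~\ref{lem:zariski-gluing-for-voas} and~\ref{lem:faithfully-flat-gluing-data-for-voas}, use a fixed $f$ as a base point to identify the set of gluing data with $\Aut_{R_1\otimes_R R_2}(V^1\otimes_R R_2)$, and identify isomorphism classes with double cosets, the singleton case giving uniqueness and the surjectivity case giving descent of automorphisms. The only substantive addition is your explicit injectivity argument for $\gamma_1,\gamma_2$ (which the paper leaves implicit in calling them inclusions), and that argument is fine for the rings actually used here.
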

\begin{proof}There is a natural action of $\Aut_{R_1} V^1 \times \Aut_{R_2} V^2$ on the set of gluing data by isomorphisms, given by
\begin{gather*} f \mapsto (g_2 \otimes {\rm id}) \circ f \circ (g_1 \otimes {\rm id})^{-1} = f', \end{gather*}
because such isomorphisms are defined by the condition that the following diagram commutes:
\begin{gather*} \xymatrix{ V^1 \otimes_{R_1} R_3 \ar[r]^-{f} \ar[d]_{g_1 \otimes {\rm id}} & V^2 \otimes_{R_2} R_3 \ar[d]^{g_2 \otimes {\rm id}} \\ V^1 \otimes_{R_1} R_3 \ar[r]_-{f'} & V^2 \otimes_{R_2} R_3 . } \end{gather*}
Because isomorphisms of gluing data correspond bijectively to isomorphisms of vertex operator algebras over $R$, we get a unique vertex operator algebra over $R$ if and only if the set of orbits is a singleton. For any two choices of gluing data $f$ and $f'$, there exists a unique $h \in \Aut_{R_1 \otimes_{R} R_2} \big(V^1 \otimes_{R} R_2\big)$ such that $h \circ f = f'$. This establishes a non-canonical bijection between gluing data and elements of $\Aut_{R_1 \otimes_{R} R_2} \big(V^1 \otimes_{R} R_2\big)$. Fixing some $f$ as a base point induces inclusion maps $\phi_1\colon \Aut_{R_1} V^1 \to \Aut_{R_1 \otimes_{R} R_2} \big(V^1 \otimes_{R} R_2\big)$ and $\phi_2\colon \Aut_{R_2} V^2 \to \Aut_{R_1 \otimes_{R} R_2} \big(V^1 \otimes_{R} R_2\big)$, and identifies the action with $h \mapsto \phi_2(g_2) h \phi_1(g_1)^{-1}$. This identifies orbits with double cosets in $\phi_2\big(\Aut_{R_2} V^2\big) \backslash \Aut_{R_1 \otimes_{R} R_2}\big(V^1 \otimes_{R} R_2\big) / \phi_1\big(\Aut_{R_1} V^1\big)$.

For the special case that $\phi_1$ and $\phi_2$ are isomorphisms, there is clearly a single double coset, hence a single isomorphism type of vertex operator algebra $V$ over $R$. Furthermore, each $R_1 \otimes_{R} R_2$-automorphism of $\big(V^1 \otimes_{R} R_2\big)$ descends to an $R$-automorphism of $V$, by effective descent of morphisms given by the equivalences of categories in Lemmas~\ref{lem:zariski-gluing-for-voas} and~\ref{lem:faithfully-flat-gluing-data-for-voas}
\end{proof}

The following fact is rather elementary, but we will use it a lot.

\begin{lem} \label{lem:decomposition-is-still-self-dual}Let $V$ be a vertex operator algebra over a ring $R$ with M\"obius structure and an invariant bilinear form, and suppose $V$ is self-dual. Let $g$ be an automorphism of order $n \in \bZ_{\geq 1}$ that preserves the bilinear form. If~$n$ is invertible in $R$ and $\mu_n(R) \cong \bZ/n\bZ$, then~$V$ is isomorphic to the direct sum of its $g$-eigenspaces $\bigoplus_{\zeta \in \mu_n(R)} V^{g = \zeta}$, as an $R$-module with bilinear form. Furthermore, each eigenspace is a module for the fixed-point vertex operator subalgebra~$V^{g=1}$, and the bilinear form induces a homogeneous perfect pairing between $V^{g = \zeta}$ and~$V^{g = 1/\zeta}$ for all $\zeta \in \mu_n(R)$.
\end{lem}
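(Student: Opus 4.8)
The plan is to produce the eigenspace decomposition from explicit idempotents in $R[g]$, and then transport the self-duality of each graded piece across it. The key numerical input is that distinct elements of $\mu_n(R)$ differ by a unit. Indeed, writing $1 + x + \dots + x^{n-1} = \prod_{\eta \in \mu_n(R),\, \eta \neq 1}(x - \eta)$ and evaluating at $x = 1$ gives $\prod_{\eta \neq 1}(1 - \eta) = n$; since $n$ is a unit, so is each factor $1 - \eta$, and hence $\zeta - \zeta'$ is a unit for any distinct $\zeta, \zeta' \in \mu_n(R)$. Fixing a generator $\zeta_1$ of $\mu_n(R)$ and using that $n$ is invertible, I set $e_\zeta = \frac{1}{n}\sum_{k=0}^{n-1}\zeta^{-k} g^k$ for each $\zeta \in \mu_n(R)$. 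The identity $(\alpha - 1)\sum_{j=0}^{n-1}\alpha^j = \alpha^n - 1 = 0$, applied with $\alpha = \zeta_1^{-k}$, shows that $\sum_{\zeta \in \mu_n(R)} \zeta^{-k}$ equals $n$ when $n \mid k$ and $0$ otherwise; from this one checks that the $e_\zeta$ are orthogonal idempotents summing to the identity, with $g e_\zeta = \zeta e_\zeta$. Therefore $V = \bigoplus_\zeta V^{g=\zeta}$ with $V^{g=\zeta} = e_\zeta V$, and since each $e_\zeta$ commutes with $L_0$ this decomposition is homogeneous, exhibiting each $(V^{g=\zeta})_m = e_\zeta V_m$ as a direct summand of the finite projective module $V_m$, hence finite projective.

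Next I would treat the bilinear form and the module structure. Since $g$ preserves the form, for homogeneous $u \in V^{g=\zeta}$ and $v \in V^{g=\zeta'}$ of equal weight one has $(u,v) = (gu, gv) = \zeta\zeta'(u,v)$, so $\big(1 - \zeta\zeta'\big)(u,v) = 0$; when $\zeta' \neq 1/\zeta$ the element $1 - \zeta\zeta'$ is a unit by the first step, forcing $(u,v) = 0$. Thus the form pairs $V^{g=\zeta}$ only against $V^{g=1/\zeta}$, so the decomposition is compatible with the bilinear form and yields an induced homogeneous pairing $V^{g=\zeta} \times V^{g=1/\zeta} \to R$. For the module assertion, $V^{g=1}$ is a vertex operator subalgebra because $g\unit = \unit$ and $g\omega = \omega$, and the automorphism identity $g(u_n v) = (gu)_n(gv)$ shows $u_n v \in V^{g=\zeta}$ whenever $u \in V^{g=1}$ and $v \in V^{g=\zeta}$, so each eigenspace is a $V^{g=1}$-module.

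Finally I would upgrade these pairings to perfect pairings using self-duality. Fix a weight $m$ and write $W_\zeta = (V^{g=\zeta})_m$, so that $V_m = \bigoplus_\zeta W_\zeta$ with each $W_\zeta$ finite projective. Self-duality furnishes an isomorphism $\Phi\colon V_m \simto \Hom_R(V_m, R)$, $w \mapsto (w, -)$. Identifying $\Hom_R(V_m, R) = \bigoplus_\eta \Hom_R(W_\eta, R)$ (a finite direct sum, so the dual of the sum is the sum of the duals), the vanishing $(W_\zeta, W_\eta) = 0$ for $\eta \neq 1/\zeta$ shows $\Phi(W_\zeta) \subseteq \Hom_R(W_{1/\zeta}, R)$. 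Since $\zeta \mapsto 1/\zeta$ permutes $\mu_n(R)$, the map $\Phi$ is the direct sum of its restrictions $\Phi_\zeta\colon W_\zeta \to \Hom_R(W_{1/\zeta}, R)$, and a direct sum of module maps is an isomorphism precisely when each summand is; hence each $\Phi_\zeta$ is an isomorphism. This is exactly the perfectness of the pairing $W_\zeta \times W_{1/\zeta} \to R$, and assembling over all $m$ gives the desired homogeneous perfect pairing between $V^{g=\zeta}$ and $V^{g=1/\zeta}$.

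I expect the only real subtlety to lie in this last step, namely in passing from global self-duality to perfectness of each block: one must use that $\Hom_R(-, R)$ converts the finite direct sum $V_m = \bigoplus_\zeta W_\zeta$ into $\bigoplus_\zeta \Hom_R(W_\zeta, R)$ (which needs the $W_\zeta$ finite projective, secured in the first step) and that an isomorphism matching two such decompositions through the involution $\zeta \mapsto 1/\zeta$ must restrict to isomorphisms on blocks. Everything else is routine once the invertibility of the differences $1 - \eta$ is available, and it is exactly the hypothesis $\mu_n(R) \cong \bZ/n\bZ$, via $\prod_{\eta \neq 1}(1 - \eta) = n$, that supplies this invertibility rather than letting us assume it.
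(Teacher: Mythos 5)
This is correct and is essentially a fully written-out version of the paper's three-sentence sketch, which likewise invokes the hypotheses on $R$ to decompose $V$ into eigenspaces, notes the $V^{g=1}$-module structure, and derives the reciprocal pairing from self-duality. One caveat you share with the paper's own proof: your opening identity $1+x+\cdots+x^{n-1}=\prod_{\eta\neq 1}(x-\eta)$, and hence the unit property of $1-\eta$ and the vanishing of $\sum_{\zeta}\zeta^{-k}$, is not automatic for an arbitrary commutative ring satisfying only ``$n$ invertible and $\mu_n(R)\cong\bZ/n\bZ$'' (for instance it fails in $\bF_7\times\bF_5$ with $n=3$, where $1-\eta$ is a zero divisor for the nontrivial cube roots of unity), but it does hold whenever $R$ is an integral domain, which covers every application of the lemma in the paper.
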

\begin{proof}The hypotheses on $R$ imply any action of $\bZ/n\bZ$ on an $R$-module gives a unique decomposition into eigenspaces for $n$-th roots of unity. For $V$, the decomposition yields modules for the fixed-point vertex operator subalgebra. Self-duality of $V$ implies the eigenspaces with reciprocal eigenvalues are paired.
\end{proof}

\subsection{Abelian intertwining algebras over subrings of complex numbers}

We suspect that with enough work, one may extend the definition of abelian intertwining algebra, introduced in \cite{DL93}, to allow definition over arbitrary commutative rings with suitable divisibility properties and equipped with distinguished roots of unity. However, the results in this paper only require us to consider base rings $R$ that lie inside $\bC$, and the formalism becomes somewhat more manageable in this case.

Recall from \cite{M52} that if we are given abelian groups $(A,+)$ and $(B,\cdot)$, an abelian 3-cocycle on $A$ with coefficients in $B$ is a pair $(F\colon A \times A \times A \to B, \Omega\colon A \times A \to B)$ of functions satisfying the Eilenberg--MacLane cocycle conditions:
\begin{enumerate}\itemsep=0pt
\item[1)] $F(i,j,k) \cdot F(i,j+k,\ell) \cdot F(j,k,\ell) = F(i+j,k,\ell) \cdot F(i,j,k+\ell)$ for all $i,j,k,\ell \in A$,
\item[2)] $F(i,j,k)^{-1} \cdot \Omega(i,j+k) \cdot F(j,k,i)^{-1} = \Omega(i,j) \cdot F(j,i,k)^{-1} \cdot \Omega(i,k)$,
\item[3)] $F(i,j,k) \cdot \Omega(i+j,k) \cdot F(k,i,j) = \Omega(j,k) \cdot F(i,k,j) \cdot \Omega(i,k)$,
\end{enumerate}
and the restriction of $\Omega$ to the diagonal $i \mapsto \Omega(i,i)$ gives a bijection from abelian cohomology classes to $B$-valued quadratic functions on~$A$. We say that an abelian 3-cocycle is normalized if $F(i,j,0) = F(i,0,k) = F(0,j,k) = 1$ and $\Omega(i,0) = \Omega(0,j) = 1$ for all $i,j,k \in A$, and we note that any abelian cohomology class admits a normalized representative.

\begin{defn}
Let $N \in \bZ_{\geq 1}$, let $R$ be a commutative subring of $\bC$ containing $\frac{1}{N}$ and $e(1/2N)$. Let $A$ be an abelian group, let $(F,\Omega)$ be a normalized abelian 3-cocycle on $A$ with coefficients in $R^\times$, and assume $\Omega(a,a)^N = 1$ for all $a \in A$. We then let $q_\Omega$ be the unique $\frac{1}{N}\bZ/\bZ$-valued quadratic form on $A$ such that $\Omega(a,a) = e(q_\Omega(a))$, and let $b_\Omega\colon A \times A \to \frac{1}{N}\bZ$ be any fixed function that reduces to the bilinear form attached to $q_\Omega$, i.e., $b_\Omega(a,b) \equiv q_\Omega(a+b) - q_\Omega(a) - q_\Omega(b) \mod \bZ$. Then an abelian intertwining algebra over $R$ of level $N$ and half central charge $c \in R$ associated to the data $(A, F, \Omega)$ is an $R$-module $V$ equipped with
\begin{enumerate}\itemsep=0pt
\item[1)] a $\frac{1}{N}\bZ \times A$-grading $V = \bigoplus_{n \in \frac{1}{N} \bZ} V_n = \bigoplus_{i \in A} V^i = \bigoplus_{n \in \frac{1}{N} \bZ, i \in A} V_n^i$,
\item[2)] a left-multiplication operation $Y\colon V \to (\End V)\big[\big[z^{1/N},z^{-1/N}\big]\big]$ that we expand as $Y(a,z)$ $= \sum\limits_{n \in \frac{1}{N}\bZ} a_n z^{-n-1}$, and
\item[3)] distinguished vectors $\unit \in V_0^0$ and $\omega \in V_2^0$,
\end{enumerate}
satisfying the following conditions for any $i,j,k \in A$, $a \in V^i$, $b \in V^j$, $u \in V^k$, and $n \in \frac{1}{N}\bZ$:
\begin{enumerate}\itemsep=0pt
\item[1)] $a_n b \in V^{i+j}$,
\item[2)] $a_n b = 0$ for $n$ sufficiently large,
\item[3)] $Y(\unit, z)a = a$,
\item[4)] $Y(a,z)\unit \in a + zV[[z]]$,
\item[5)] the coefficients of $Y(\omega,z) = \sum\limits_{k \in \bZ} L_k z^{-k-2}$ satisfy the Virasoro relations at half central charge $c$,
\item[6)] $L_0 v = nv$ for all $v \in V_n$,
\item[7)] $\frac{{\rm d}}{{\rm d}z} Y(a,z) = Y(L_{-1}a,z)$ for all $a \in V$,
\item[8)] $Y(a,z)b = \sum\limits_{k \in b_\Omega(i,j) + \bZ} a_k b z^{-k-1}$,
\item[9)] the Jacobi identity:
 \begin{gather*}
 x^{-1} \left(\frac{y-z}{x} \right)^{b_\Omega(i,j)} \delta\left(\frac{y-z}{x}\right) Y(a,y) Y(b,z) u \\
\qquad{}- B(i,j,k) x^{-1} \left(\frac{z-y}{-x} \right)^{b_\Omega(i,j)} \delta\left(\frac{z-y}{-x}\right) Y(b,z) Y(a,y) u \\
 \qquad\quad{} = F(i,j,k) z^{-1} \delta\left(\frac{y-x}{z}\right) Y(Y(a,x)b,z) \left(\frac{y-x}{z} \right)^{-b_\Omega(i,k)} u,
\end{gather*}
where $B(i,j,k) = \frac{\Omega(i,j)F(i,j,k)}{F(j,i,k)}$, and $(-x)^{k/N}$ is interpreted as $e(k/2N)x^{k/N}$.
\end{enumerate}
We say an abelian intertwining algebra $V$ is well-graded if each piece $V_n^i$ is a finite projective $R$-module. A M\"obius structure on $V$ is an $A$-homogeneous action of $U(\fsl_2)_{\bZ} = \big\langle L_{-1}^{(n)}, \binom{L_0}{n}, L_1^{(n)} \big\rangle_{n \geq 1}$ on the underlying $R$-module, such that the following conditions hold:
\begin{enumerate}\itemsep=0pt
\item[1)] $Y(u,z)\unit = \sum\limits_{n \geq 0} L_{-1}^{(n)}u z^n$ for all $u \in V$,
\item[2)] if $u$ has weight $k$, then $\binom{L_0}{n}u = \binom{k}{n}u$,
\item[3)] the subalgebra generated by $\{ L_1^{(n)} \}_{n \geq 1}$ acts locally nilpotently on $V$,
\item[4)] for all $i \in \{-1,0,1\}$, $L_i Y(u,z)v - Y(u,z)(L_i v) = \sum\limits_{j =0}^{i+1} \binom{i+1}{j} z^j Y(L_{i-j} u,v)$.
\end{enumerate}
\end{defn}

We will need the following skew-symmetry property, which can be (more or less) found in the proof of Proposition~2.6 of~\cite{GL00}.

\begin{lem} \label{lem:skew-symmetry-for-AIA}If $V$ is an abelian intertwining operator algebra, then $V$ satisfies the following skew-symmetry property:
\begin{gather*} Y(a,y)b = \Omega(i,j){\rm e}^{yL_{-1}}Y\big(b,{\rm e}^{\pi {\rm i}}y\big)a \end{gather*}
for $a \in V^i$ and $b \in V^j$. In particular, for any $n \in b_\Omega(i,j) + \bZ$, $a_n b$ is an $R$-linear combination of terms of the form $b'_k a'$ for $b' \in V^j$ and $a' \in V^i$.
\end{lem}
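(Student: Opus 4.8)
The plan is to derive the identity from the Jacobi identity (item~9 of the definition) applied to the vacuum vector, exactly as one derives skew-symmetry for ordinary vertex algebras, while keeping track of the abelian cocycle data and the branch convention. Before starting I would record two standard consequences of the M\"obius structure and the vacuum axioms. From $Y(c,z)\unit \in c + zV[[z]]$ (item~4) together with the translation property $\frac{{\rm d}}{{\rm d}z}Y(c,z) = Y(L_{-1}c,z)$ (item~7) one obtains the \emph{creation formula} $Y(c,z)\unit = \sum_{n \geq 0} L_{-1}^{(n)} c\, z^n = {\rm e}^{zL_{-1}}c$, where the divided powers $L_{-1}^{(n)}$ are those of the M\"obius structure, so the formula is valid integrally. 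The same commutator $[L_{-1},Y(c,z)] = Y(L_{-1}c,z) = \frac{{\rm d}}{{\rm d}z}Y(c,z)$ yields the \emph{translation-covariance} relation ${\rm e}^{wL_{-1}}Y(c,z){\rm e}^{-wL_{-1}} = Y(c,z+w)$.

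Next I would put $u = \unit$, so $k = 0$, in the Jacobi identity. By normalization of the abelian $3$-cocycle, $F(i,j,0) = 1$, $b_\Omega(i,0) = 0$, and $B(i,j,0) = \Omega(i,j)F(i,j,0)/F(j,i,0) = \Omega(i,j)$, so the only surviving cocycle scalar is $\Omega(i,j)$, and the extra factor $(\frac{y-x}{z})^{-b_\Omega(i,0)}$ on the right becomes $1$. Using the creation formula and translation-covariance, the three operator products become ${\rm e}^{zL_{-1}}Y(a,y-z)b$, ${\rm e}^{yL_{-1}}Y(b,z-y)a$, and ${\rm e}^{zL_{-1}}Y(a,x)b$, respectively, so the Jacobi identity collapses to a relation involving only $Y(a,\cdot)b$ and $Y(b,\cdot)a$.

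The extraction uses the shifted three-term $\delta$-function identity
\begin{gather*} x^{-1}\left(\tfrac{y-z}{x}\right)^{b_\Omega(i,j)}\delta\left(\tfrac{y-z}{x}\right) - x^{-1}\left(\tfrac{z-y}{-x}\right)^{b_\Omega(i,j)}\delta\left(\tfrac{z-y}{-x}\right) = z^{-1}\delta\left(\tfrac{y-x}{z}\right), \end{gather*}
where $(-x)^{k/N}$ is read as $e(k/2N)x^{k/N}$. On the support of the first $\delta$ one has $x = y-z$, and there ${\rm e}^{zL_{-1}}Y(a,x)b = {\rm e}^{zL_{-1}}Y(a,y-z)b$, so the first term of the rewritten Jacobi identity agrees with its right-hand side and they cancel. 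What remains, after applying $\Res_x$ (each shifted $\delta$ having residue $1$), is
\begin{gather*} \Omega(i,j)\,{\rm e}^{yL_{-1}}Y(b,z-y)a = {\rm e}^{zL_{-1}}Y(a,y-z)b. \end{gather*}
Setting $z = 0$, where the branch convention turns $(z-y)^{k/N}|_{z=0}$ into $({\rm e}^{\pi {\rm i}}y)^{k/N} = e(k/2N)y^{k/N}$, gives precisely $Y(a,y)b = \Omega(i,j)\,{\rm e}^{yL_{-1}}Y(b,{\rm e}^{\pi {\rm i}}y)a$. I expect the main obstacle to be this last piece of bookkeeping: justifying the shifted $\delta$-identity and the residue computations in the presence of the fractional exponents $b_\Omega(i,j)$, checking that the branch $e(k/2N)$ is consistently the one producing ${\rm e}^{\pi {\rm i}}$, and verifying legitimacy of the specialization $z = 0$ for the chosen expansions. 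This is exactly the content handled in the proof of Proposition~2.6 of \cite{GL00}.

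For the final assertion I would take $\Res_y\, y^n(\cdot)$ of the skew-symmetry identity, giving $a_n b = \Omega(i,j)\Res_y\, y^n\, {\rm e}^{yL_{-1}}Y(b,{\rm e}^{\pi {\rm i}}y)a$. Expanding ${\rm e}^{yL_{-1}} = \sum_{m \geq 0} L_{-1}^{(m)} y^m$ and $Y(b,{\rm e}^{\pi {\rm i}}y)a = \sum_{\ell} e\bigl(-(\ell+1)/2\bigr)(b_\ell a)\, y^{-\ell-1}$ and reading off the residue leaves a \emph{finite} sum (finite since $b_\ell a = 0$ for $\ell$ large) of terms $L_{-1}^{(m)}(b_{n+m}a)$ with coefficients in $R$, these being products of $\Omega(i,j) \in R^\times$ and roots of unity $e(\cdot) \in R$. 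Finally the divided-power Leibniz rule $L_{-1}^{(m)}(b_\ell a) = \sum_{p=0}^{m} \big(L_{-1}^{(p)}b\big)_\ell \big(L_{-1}^{(m-p)}a\big)$, valid in the Kostant form, rewrites each summand as a combination of terms $b'_k a'$ with $b' = L_{-1}^{(p)}b \in V^j$ and $a' = L_{-1}^{(m-p)}a \in V^i$, using that $\omega \in V^0$ forces $L_{-1}$ to preserve the $A$-grading. This establishes the ``in particular'' claim.
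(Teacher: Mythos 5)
Your overall strategy (specialize the Jacobi identity at $u=\unit$, use the creation formula and translation covariance, then set $z=0$ with the branch bookkeeping) is the right one, and your treatment of the normalized cocycle, of the branch convention at $z=0$, and of the final ``in particular'' claim via the divided-power Leibniz rule are all fine. But the central extraction step is broken. The ``shifted three-term $\delta$-function identity'' you invoke is false whenever $b_\Omega(i,j)\notin\bZ$: its left-hand side contains the fractional powers $x^{-n-b_\Omega(i,j)-1}$ while its right-hand side contains only integer powers of $x$, so the two sides do not even live in the same space of formal series. For the same reason the claim that ``each shifted $\delta$ has residue $1$'' fails: $\Res_x\, x^{-1}\big(\tfrac{y-z}{x}\big)^{b}\delta\big(\tfrac{y-z}{x}\big)=\Res_x\sum_n(y-z)^{n+b}x^{-n-b-1}$ has no $x^{-1}$ term at all when $b\notin\bZ$, so the residue is $0$, not $1$. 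Finally, the cancellation logic is internally inconsistent: if the first term of the Jacobi identity really equalled the right-hand side, the conclusion would be that the second term vanishes, i.e., $\Omega(i,j)\,{\rm e}^{yL_{-1}}Y(b,z-y)a=0$, not the identity you want; and the intermediate identity $\Omega(i,j)\,{\rm e}^{yL_{-1}}Y(b,z-y)a={\rm e}^{zL_{-1}}Y(a,y-z)b$ is not a correct formal-series statement in any case, since the two sides involve expansions in opposite regions that can only be compared after multiplication by a clearing factor.

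The paper's proof repairs exactly this step: multiply the Jacobi identity by $x^{b_\Omega(i,j)+N}$ for $N\gg 0$ \emph{before} taking $\Res_x$. This simultaneously clears the fractional powers of $x$ and annihilates the entire right-hand side (the iterate $Y(a,x)b$ has $x$-powers bounded below), leaving the two-term identity $(y-z)^{b_\Omega(i,j)+N}Y(a,y){\rm e}^{zL_{-1}}b=\Omega(i,j){\rm e}^{-\pi{\rm i}(b_\Omega(i,j)+N)}(z-y)^{b_\Omega(i,j)+N}{\rm e}^{yL_{-1}}Y(b,z-y)a$. One then observes that the left side has no negative powers of $z$ and the right side none of $y$, so both lie in $V^{i+j}[[y,z]]$ and the specialization $z=0$ is legitimate; the ${\rm e}^{\pm\pi{\rm i}(b_\Omega(i,j)+N)}$ factors cancel against $({\rm e}^{\pi{\rm i}}y)^{b_\Omega(i,j)+N}$ and one divides by $y^{b_\Omega(i,j)+N}$. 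You would need to replace your $\delta$-function cancellation by this argument (or quote the corresponding step of Proposition~2.6 of Gao--Li in full) for the proof to go through.
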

\begin{proof}We consider the Jacobi identity with vectors $a$, $b$, $\unit$. Multiplying both sides by $x^{b_\Omega(i,j)+N}$ for some $N \in \bZ_{>0}$ and taking $\Res_x$ yields:
\begin{gather*} (y-z)^{b_\Omega(i,j)+N}Y(a,y) Y(b,z) \unit - B(i,j,0) {\rm e}^{(-\pi {\rm i})(b_\Omega(i,j)+N)}(z-y)^{b_\Omega(i,j)+N} Y(b,z) Y(a,y) \unit \end{gather*}
on the left side, and some power series with powers of $x$ uniformly bounded below times $Y(Y(a,x)b,z)$ on the right. When $N$ is sufficiently large, the right side therefore vanishes. Because our cocycle is normalized, we have the following equality in $V^{i+j}\big[\big[y^{\pm 1},z^{\pm 1}\big]\big]$:
\begin{gather*} (y-z)^{b_\Omega(i,j)+N}Y(a,y) {\rm e}^{zL_{-1}}b = \Omega(i,j) {\rm e}^{(-\pi {\rm i})(b_\Omega(i,j)+N)}(z-y)^{b_\Omega(i,j)+N} Y(b,z) {\rm e}^{yL_{-1}}a. \end{gather*}
This is essentially a version of the ``operator-valued rational function'' statement in Remark~12.31 of~\cite{DL93}. The left side has no negative powers of~$z$, and the right side has no negative powers of~$y$, so this is an equality in $V^{i+j}[[y,z]]$. By the exponential translation formula, we find that
\begin{gather*} (y-z)^{b_\Omega(i,j)+N}Y(a,y) {\rm e}^{zL_{-1}}b = \Omega(i,j) {\rm e}^{(-\pi {\rm i})(b_\Omega(i,j)+N)}(z-y)^{b_\Omega(i,j)+N} {\rm e}^{yL_{-1}} Y(b,z-y) a. \end{gather*}
Because this is an equality in $V^{i+j}[[y,z]]$, we may set $z=0$ to get the answer we want, after substituting ${\rm e}^{\pi {\rm i}}y + z$ for $z-y$.
\end{proof}

We also need an associativity result, adapted from Lemma~3.12 of~\cite{L99}. Li credits~\cite{DLM96} with this result, and I suspect he is referring to an argument leading up to Remark~3.3 in that paper.

\begin{lem} \label{lem:shorten-words}
Let $a \in V^i$, $b \in V^j$, $u \in V^k$ be homogeneous elements of an abelian intertwining algebra. Let $r \in b_\Omega(i,j+k)+\bZ$ and $s \in b_\Omega(j,k) + \bZ$. Let $l \in b_\Omega(i,k) + \bZ_{\geq 0}$ be such that $z^l Y(a,z) u \in V[[z]]$, and let $m \in \bZ_{\geq 0}$ be such that $z^{m+s} Y(b,z)u \in V[[z]]$. Then
\begin{gather*} a_r b_s u = F(i,j,k) \sum_{t=0}^m \sum_{n \in \bZ_{\geq 0}} \binom{r-l}{t} \binom{l}{n} (a_{r-l-t+n}b)_{s+l+t-n} u.\end{gather*}
In particular $($since this is always a finite sum$)$, $a_r b_s u$ is an $R$-linear combination of elements of the form $(a_p b)_q u$.
\end{lem}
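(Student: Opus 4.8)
The tool I would build first is the abelian analogue of the ``iterate'' identity that appears as condition (3) in the definition of a vertex algebra. Concretely, I would multiply the Jacobi identity (condition (9)) by $x^{b_\Omega(i,j)+p}$ for $p \in \bZ$ and apply $\Res_x$, thereby converting the operator identity into a relation among modes. The right-hand side of the Jacobi identity is $F(i,j,k)z^{-1}\delta\big(\tfrac{y-x}{z}\big)Y(Y(a,x)b,z)\big(\tfrac{y-x}{z}\big)^{-b_\Omega(i,k)}u$, and since $Y(Y(a,x)b,z)=\sum_p Y(a_pb,z)x^{-p-1}$ this is exactly the source of the iterate modes $(a_pb)_qu$, carrying the overall cocycle factor $F(i,j,k)$. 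The two left-hand terms are built from $Y(a,y)Y(b,z)u$ (which produces the $a$-then-$b$ products $a_rb_su$) and from the reverse-ordered $Y(b,z)Y(a,y)u$ weighted by $B(i,j,k)$ (the $b$-then-$a$ products). The resulting mode identity thus equates a sum of iterates against a combination of $a_rb_su$-type terms and $B(i,j,k)$-weighted reverse terms.

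With this identity in hand, the strategy is to solve for $a_rb_su$ by choosing the free integer parameter so that every reverse-ordered product that appears carries an $a$-mode of index $\geq l$ acting directly on $u$. By the hypothesis $z^lY(a,z)u\in V[[z]]$, that is $a_nu=0$ for $n\geq l$, all such terms vanish, which is the entire role of the parameter $l \in b_\Omega(i,k)+\bZ_{\geq 0}$. What remains is a relation expressing $a_rb_su$ purely through iterates. Re-expanding the two relevant fractional/ power factors by the binomial theorem — the shift by $x^l$ contributing $\binom{l}{n}$ and the factor $\big(\tfrac{y-x}{z}\big)^{-b_\Omega(i,k)}$ together with the degree-$r$ extraction contributing $\binom{r-l}{t}$ — produces the announced double sum, with the index shift $p=r-l-t+n$, $q=s+l+t-n$ manifestly conserving $p+q=r+s$ (hence the weights match term by term).

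Finiteness of the two summations then comes from the two remaining truncations. The $t$-summation is cut off at $m$ by the hypothesis $z^{m+s}Y(b,z)u\in V[[z]]$, i.e.\ $b_nu=0$ for $n\geq m+s$, and the $n$-summation is finite because $a_pb=0$ for $p$ sufficiently large (the truncation axiom (2)), so that $a_{r-l-t+n}b$ eventually vanishes. This yields exactly the stated finite double sum, and the final ``in particular'' assertion is then immediate, since every surviving term is a scalar multiple of an iterate $(a_pb)_qu$.

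The step I expect to be the real obstacle is not conceptual but the cocycle and fractional-exponent bookkeeping: the argument is structurally the same as in the integer-graded vertex algebra case treated by Li and by Dong--Li--Mason, but here one must track how the normalization $(-x)^{k/N}=e(k/2N)x^{k/N}$ and the factors $B(i,j,k)$ and $F(i,j,k)$ propagate through all three $\delta$-function expansions, and verify that after the $l$-truncation removes the $B(i,j,k)$ term exactly the single factor $F(i,j,k)$ survives, with the $b_\Omega$-shifts recombining so that the coefficients come out as $\binom{r-l}{t}\binom{l}{n}$ rather than some twisted variant. The skew-symmetry property (Lemma~\ref{lem:skew-symmetry-for-AIA}) provides a useful consistency check on the signs and cocycle factors at the end of this computation.
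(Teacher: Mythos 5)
Your proposal gets the shape of the result right: the single surviving cocycle factor $F(i,j,k)$, the role of the hypothesis $z^lY(a,z)u\in V[[z]]$ in eliminating the reverse-ordered term, and the two sources of finiteness (the bound $t\leq m$ coming from $z^{m+s}Y(b,z)u\in V[[z]]$, and the truncation $a_pb=0$ for $p\gg0$ bounding the $n$-sum) all match the paper. But the central derivation step, as you describe it, fails. Multiplying the Jacobi identity by $x^{b_\Omega(i,j)+p}$ and applying $\Res_x$ produces the component (Borcherds) identity, and in the coefficient of $y^{-r-1}z^{-s-1}$ the reverse-ordered products that survive are, up to cocycle factors and $b_\Omega$-shifts, of the form $b_{s+t-\nu}\big(a_{r+\nu}u\big)$ with $\nu\geq 0$: the $a$-modes acting directly on $u$ have indices $r, r+1, r+2,\dots$, which are determined by the target index $r$ and are completely independent of the free exponent. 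So no choice of that parameter forces these indices to be $\geq l$ unless $r\geq l$ to begin with --- and the case $r<l$ is exactly where the lemma has content (for $r\geq l$ the commutator-type instance already expresses $a_rb_su$ as a finite sum of iterates). As written, your plan only proves the statement in the degenerate range.

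The repair is to take the residue in the other variable: multiply the Jacobi identity by the appropriate power of $y$ --- the variable attached to $Y(a,y)$ --- namely $y^{l}$ up to the fractional normalization, and apply $\Res_y$. The entire second term then has vanishing $y$-residue, because $y^lY(a,y)u\in V[[y]]$ while $\delta\big(\tfrac{z-y}{-x}\big)$ and $\big(\tfrac{z-y}{-x}\big)^{b_\Omega(i,j)}$ are by convention expanded in non-negative powers of $y$; the first and third terms become the two sides of the associativity rule $(x+z)^lY(a,x+z)Y(b,z)u=F(i,j,k)(z+x)^lY(Y(a,x)b,z)u$. This is precisely the statement the paper imports from Proposition~2.5 of~\cite{L99a} (ultimately Remark~12.31 of~\cite{DL93}) rather than re-deriving. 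From there the paper writes $a_rb_su=\Res_x\Res_z\,(x+z)^{r-l}z^s\big[(x+z)^lY(a,x+z)Y(b,z)u\big]$, substitutes the associativity rule, and expands $(x+z)^{r-l}$ (truncated at $t\leq m$ because the $z$-powers on the iterate side are bounded below by $-m-s$) and $(z+x)^l$ to obtain the coefficients $\binom{r-l}{t}\binom{l}{n}$. That final bookkeeping is the part you anticipated and is indeed routine; the missing ingredient is the $\Res_y$ manipulation that produces the associativity identity in the first place.
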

\begin{proof}By Proposition 2.5 of \cite{L99a} (which is essentially the part of Remark~12.31 in~\cite{DL93} that applies to ``generalized rationality of iterates''), the following associativity rule holds:
\begin{gather*} (x + z)^l Y(a, x + z)Y(b,z)u = F(i,j,k)(z + x)^l Y(Y(a, x)b, z)u \end{gather*}
as elements of $z^{-m-s}x^{-b_\Omega(i,j)-N}V^{i+j+k}[[z,x]]$ for some $N \in \bZ$.
To extract $a_r b_s u$ from the left side, we rewrite:
\begin{align*}
a_r b_s u &= \Res_{y} \Res_{z} y^r z^s Y(a,y) Y(b,z)u \\
&= \Res_{x} \Res_{y} \Res_{z} x^{-1} \delta\left(\frac{y-z}{x} \right) y^r z^s Y(a,y) Y(b,z)u \\
&= \Res_{x} \Res_{y} \Res_{z} y^{-1} \delta\left(\frac{x+z}{y} \right) y^r z^s Y(a,y) Y(b,z)u \\
&= \Res_{x} \Res_{y} \Res_{z} y^{-1} \delta\left(\frac{x+z}{y} \right) (x+z)^r z^s Y(a,x + z) Y(b,z)u \\
&= \Res_{x} \Res_{z} (x+z)^r z^s Y(a,x + z) Y(b,z)u.
\end{align*}
Combining this with the associativity rule, we find that
 \begin{gather*}\begin{split}&
a_r b_s u = \Res_{x} \Res_{z} (x+z)^{r-l} z^s \big[(x+z)^l Y(a,x + z) Y(b,z)u \big] \\
& \hphantom{a_r b_s u }{}= F(i,j,k) \Res_{x} \Res_{z} (x+z)^{r-l} z^s \big[(z+x)^l Y(Y(a, x)b, z)u \big].\end{split}
\end{gather*}
The powers of $z$ in the last expression are bounded below, so the power series $(x+z)^{r-l}$ only contributes finitely many terms to the residue with respect to $x$. That is, we may truncate it as
\begin{gather*} p(x,z) = \sum_{t=0}^m \binom{r-l}{t} x^{r-l-t} z^t.\end{gather*}
We then obtain
\begin{gather*} a_r b_s u = F(i,j,k) \Res_{x} \Res_{z} p(x,z) z^s (z+x)^l Y(Y(a, x)b, z)u ,\end{gather*}
which immediately yields the answer we want.
\end{proof}

\begin{defn}Let $V$ be a M\"obius abelian intertwining algebra over $R$ associated to the data $(A,F,\Omega)$. Then an invariant bilinear form on $V$ is an inner product such that $(u_n v,w) = (v, e(k/2) \sum\limits_{i \geq 0} \big(L_1^{(i)} u\big)_{2k-i-n-2}w)$ whenever $u$ has weight $k$, and $(u,v) = 0$ if $u \in V^a$ and $v \in V^b$ for $a+b \neq 0$.
\end{defn}

\begin{lem}Let $V$ be a well-graded M\"obius abelian intertwining algebra over $R$ associated to the data $(A,F,\Omega)$, and suppose $V$ admits an invariant bilinear form. Then this form is self-dual if and only if for each $a \in A$ the form induces $V^0$-module isomorphisms $V^a \cong (V^{-a})^\vee$.
\end{lem}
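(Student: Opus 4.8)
The plan is to reduce the global self-duality of $V$ to a collection of perfect-pairing statements, one for each pair of homogeneous components, and then to recognize each such statement as precisely the assertion that a form-induced $V^0$-module map is an isomorphism. The key preliminary observation is that the invariant form is \emph{bi-homogeneous}: it vanishes on $V^a \times V^b$ whenever $a+b \neq 0$ by definition, and I claim it also vanishes on $V_m \times V_n$ whenever $m \neq n$. To establish the latter I would feed the invariance identity the conformal vector $u = \omega$ (weight $2$) with $n = 1$, so that $u_1 = L_0$; the adjoint is then $(\omega_1)^* = e(1)\sum_{i \geq 0}(L_1^{(i)}\omega)_{1-i}$, and since $L_1\omega = \omega_2\omega = 0$ only the $i=0$ term survives, giving $(\omega_1)^* = L_0$. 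Thus $L_0$ is self-adjoint, so for $v \in V_m$, $w \in V_n$ one has $(m-n)(v,w)=0$; because $R$ is a subring of $\bC$ and $m-n \in \frac{1}{N}\bZ \subseteq R$ is a nonzero non-zero-divisor when $m \neq n$, this forces $(v,w)=0$. Hence the form pairs $V_n^a$ only against $V_n^{-a}$.

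Next I would carry out the block decomposition in each fixed weight $n$. The module $V_n = \bigoplus_{a} V_n^a$ is a finite direct sum (this finiteness is exactly what makes $V_n$ finite projective, i.e.\ what makes self-duality meaningful), so $\Hom_R(V_n,R) = \bigoplus_a (V_n^{-a})^{*}$, where I abbreviate $(V_n^{-a})^{*} = \Hom_R(V_n^{-a},R)$. By bi-homogeneity the comparison map $\bar\kappa\colon V_n \to \Hom_R(V_n,R)$ sends $V_n^a$ into $(V_n^{-a})^{*}$, so $\bar\kappa$ is the direct sum of the maps $\bar\kappa_a\colon V_n^a \to (V_n^{-a})^{*}$ attached to the restricted pairings $V_n^a \times V_n^{-a} \to R$. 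A direct sum of module maps is an isomorphism if and only if each summand is, so $V_n$ is self-dual if and only if every pairing $V_n^a \times V_n^{-a} \to R$ is perfect.

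Finally I would match this with the module condition. Restricting the form to $V^a \times V^{-a}$ produces a graded, degree-$0$ map $\Phi_a\colon V^a \to (V^{-a})^\vee$ into the graded dual, and the invariance identity restricted to $u \in V^0$ of weight $k$ --- where the adjoint $(u_n)^{*} = e(k/2)\sum_{i \geq 0}(L_1^{(i)}u)_{2k-i-n-2}$ is exactly the coefficient describing the contragredient $V^0$-action --- shows that $\Phi_a$ is a homomorphism of $V^0$-modules, not merely of $R$-modules. Since the weight-$n$ component of $\Phi_a$ is precisely the map $\bar\kappa_a$ of the previous step, $\Phi_a$ is an isomorphism if and only if $\bar\kappa_a$ is an isomorphism for every $n$, i.e.\ if and only if every pairing $V_n^a \times V_n^{-a} \to R$ is perfect. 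Chaining the two equivalences gives $V$ self-dual $\iff$ each $V_n$ self-dual $\iff$ each $\Phi_a\colon V^a \to (V^{-a})^\vee$ a $V^0$-module isomorphism, which is the assertion.

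The computations here are light; I expect the only real obstacle to be the bi-homogeneity step, where I must verify that self-adjointness of $L_0$ can be cashed in using nothing more than the fact that $R$ is a torsion-free subring of $\bC$, together with the clean identification of the adjoint appearing in the definition of the invariant form with the contragredient $V^0$-module structure, so that $\Phi_a$ is genuinely $V^0$-linear.
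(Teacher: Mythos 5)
Your proposal is correct and follows essentially the same route as the paper's (two-sentence) proof: the form induces $V^0$-module maps $V^a \to (V^{-a})^\vee$, and orthogonality of $V^a$ and $V^b$ for $a+b \neq 0$ reduces global self-duality to these maps being isomorphisms. The extra detail you supply --- weight-orthogonality via self-adjointness of $L_0 = (\omega_1)^*$ and the blockwise decomposition of $\bar\kappa$ in each weight --- is left implicit in the paper but is a correct and worthwhile elaboration.
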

\begin{proof}Any invariant form induces a $V^0$-module map $\phi_a\colon V^a \to (V^{-a})^\vee$ by setting $\phi_a(v)(w) = (v,w)$, because compatibility with the $V^0$-action follows from the defining relation applied to homogeneous vectors $u \in V^0$. By the orthogonality of $V^a$ and $V^b$ for $a+b \neq 0$, self-duality is then equivalent to~$\phi_a$ being an isomorphism for all~$a$.
\end{proof}

The following two results will form a primary engine behind our construction.

\begin{prop} \label{prop:existence-of-R-form-of-abelian-intertwining-algebra} Let $R$ be a subring of $\bC$ containing $1/N$ and ${\rm e}^{2\pi {\rm i}/N}$. Let $V$ be an abelian intertwining algebra over $\bC$ associated to the data $(A,F,\Omega)$, equipped with an invariant bilinear form. Suppose the following properties hold:
\begin{enumerate}\itemsep=0pt
\item[$1.$] $V$ is self-dual with respect to the invariant form.
\item[$2.$] Each $V^a$ is an irreducible $V^0$-module.
\item[$3.$] $V$ is generated by abelian intertwining subalgebras $\big\{V^{A_i}\big\}_{i \in I}$, where $A_i$ range over a set of subgroups of $A$ that generate $A$, and $V^{A_i} = \bigoplus_{a \in A_i} V^a$.
\item[$4.$] We are given M\"obius $R$-forms $V^{A_i}_R$ of each $V^{A_i}$, which coincide on pairwise intersections of the subalgebras $V^{A_i}$.
\end{enumerate}
Then the abelian intertwining subalgebra $V_R$ of $V$ over $R$ generated by $\big\{ V^{A_i}_R \big\}_{i \in I}$ is a M\"obius $R$-form $V_R$ of~$V$, i.e., $V_R \otimes_{R} \bC \cong V$.
\end{prop}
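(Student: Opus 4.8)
The plan is to construct $V_R$ as an explicit $R$-submodule of $V$ and to check that the natural map $V_R \otimes_R \bC \to V$ is an isomorphism one graded piece at a time. Since $V_R$ is by definition the smallest $R$-submodule of $V$ containing every $V^{A_i}_R$ and closed under the mode maps $x \otimes y \mapsto x_n y$ and under the M\"obius operators, it is automatically an abelian intertwining algebra over $R$ carrying the same data $(A,F,\Omega)$ --- the relevant values of $F$ and $\Omega$ lie in $R^\times$ by the hypotheses on $R$ --- and it inherits a M\"obius structure. The inclusion $V_R \hookrightarrow V$ induces a homomorphism $V_R \otimes_R \bC \to V$ of abelian intertwining algebras over $\bC$, and since $\bC$ is flat over the domain $R$, it suffices to prove for each $a \in A$ and each weight $n$ that the induced map $\big(V_R \cap V^a_n\big) \otimes_R \bC \to V^a_n$ on finite-dimensional pieces is an isomorphism.

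Surjectivity is the easy half, and this is where hypothesis~3 enters. The $\bC$-linear span of $V_R$ inside $V$ is closed under the ($\bC$-bilinear) mode maps and under the operators $L_{\pm 1}^{(m)}$, so it is a sub--abelian intertwining algebra of $V$ over $\bC$; it contains each $V^{A_i} = \bC \cdot V^{A_i}_R$, hence equals $V$ by hypothesis~3. Thus $V_R$ spans $V$ over $\bC$ in every graded piece, giving surjectivity.

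The reverse bound is the step I expect to be the main obstacle, and the point is subtle: $R$-linearly independent elements of $V^a_n$ need not be $\bC$-linearly independent, so a priori $\big(V_R \cap V^a_n\big)\otimes_R \bC$ could be strictly larger than $V^a_n$ and the map could fail to be injective. I would rule this out with the invariant bilinear form, first showing the $\bC$-valued form on $V$ takes values in $R$ on $V_R \times V_R$. For a pairing $(x_n y, z)$ with $x,y,z \in V_R$, invariance rewrites it as $(y,(x_n)^* z)$, where $(x_n)^* z \in V_R$ because $V_R$ is closed under the M\"obius operators and the modes; iterating this, together with the symmetry of the form, peels mode operators off both arguments and reduces any pairing to one between generating vectors. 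The skew-symmetry of Lemma~\ref{lem:skew-symmetry-for-AIA} and the word-shortening associativity of Lemma~\ref{lem:shorten-words} are exactly what keep these reductions inside $V_R$ and make them terminate. For the base case, a pairing of $g \in V^{A_i}_R \cap V^a$ with $g' \in V^{A_j}_R \cap V^b$ vanishes unless $a+b=0$, and when $b=-a$ the subgroup property forces $a \in A_i \cap A_j$, so $g,g'$ lie in the common form $V^{A_i \cap A_j}_R$ of hypothesis~4, on which the form is $R$-valued (the generating forms being self-dual).

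Granting $R$-valuedness, I would finish as follows. Fix $V^a_n$ of dimension $d$. By self-duality of $V$ (hypothesis~1), the form restricts to a perfect $\bC$-pairing $V^a_n \times V^{-a}_n \to \bC$. Since $V_R \cap V^{-a}_n$ spans $V^{-a}_n$ by the surjectivity step, choose inside it a $\bC$-basis $e_1,\dots,e_d$; then $x \mapsto ((x,e_1),\dots,(x,e_d))$ is a $\bC$-linear isomorphism $V^a_n \simto \bC^d$ carrying $V_R \cap V^a_n$ into $R^d$, because the form is $R$-valued on $V_R$. By flatness of $\bC$ over $R$ this yields $\big(V_R \cap V^a_n\big)\otimes_R \bC \hookrightarrow \bC^d$, so its dimension is at most $d$; combined with surjectivity it equals $d$, and the map is an isomorphism. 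Hypothesis~2 enters here to identify $V_R \cap V^a$ as a genuine $V^0_R$-module form and, via irreducibility, to pin down the normalization of the pairing, which simultaneously shows each $V_R \cap V^a_n$ is a full lattice, hence finite projective over the rings in play, so that $V_R$ is a well-graded M\"obius $R$-form. Assembling the graded pieces gives $V_R \otimes_R \bC \cong V$ as M\"obius abelian intertwining algebras.
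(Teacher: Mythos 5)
Your setup and your surjectivity argument match the paper's, but your injectivity step rests on a fact that is not available under the hypotheses of this proposition: that the invariant bilinear form takes values in $R$ (or even in the fraction field of $R$) on $V_R \times V_R$. Your reduction via adjointness and skew-symmetry does correctly peel everything down to pairings inside a single generating subalgebra $V^{A_i}_R$, but at that point you invoke ``the generating forms being self-dual.'' Self-duality of the $V^{A_i}_R$ --- indeed, even $R$-valuedness of the restricted form on them --- is \emph{not} a hypothesis here; it is exactly the extra hypothesis added in Proposition~\ref{prop:extension-of-self-dual-form}, whose conclusion (an $R$-valued form for which $V_R$ is self-dual) is what your argument needs as input. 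A M\"obius $R$-form in the sense of hypothesis~4 carries no a priori control on the values of the bilinear form, so the embedding $V_R \cap V^a_n \hookrightarrow R^d$ cannot be established, and your dimension bound collapses. (Hypothesis~2, which you use only to ``pin down the normalization,'' is in fact the load-bearing hypothesis for injectivity.)

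The paper closes injectivity by a different mechanism using only hypotheses 1, 2 and 4: it first shows, via Lemmas~\ref{lem:skew-symmetry-for-AIA} and~\ref{lem:shorten-words} (the same tools you cite), that $V_R$ is $R$-spanned by length-two products $u_r v$ with $u \in V^{A_1}_R$, $v \in V^{A_2}_R$; then, given a minimal $\bC$-linear dependence among such products, it applies $Y\big(v^1,z\big)$ for $v^1 \in V^{-a_1}_R$ and invokes Proposition~11.9 of \cite{DL93} --- a nonzero intertwining operator among irreducible $V^0$-modules is injective --- together with the fact that $z^sV^{a_2}_R((z))$ is an honest $R$-form, to conclude that the coefficients of the dependence lie in the fraction field of $R$; clearing denominators produces an $R$-linear dependence, which is what injectivity of $V^a_R \otimes_R \bC \to V^a$ requires. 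If you want to keep your pairing argument, you must either add the self-duality/$R$-valuedness hypotheses on the $V^{A_i}_R$ (at which point you are proving Proposition~\ref{prop:extension-of-self-dual-form} rather than this one), or supply an independent proof that the form on $V_R$ lands in a rank-one $R$-submodule of $\bC$, which does not follow from what is given.
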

\begin{proof}By induction, it suffices to consider the case where $I = \{1,2\}$, and $A_1 \cap A_2 = \{0\}$. As it happens, this is the only case that we will use in this paper.

We define $V_R$ to be the sub-$R$-module of $V$ generated by products of elements in $V^{A_i}_R$. We then define $V^a_R$ to be the part of $V_R$ of degree $a$. In the course of this proof, we will show that if $a \in A_i$, then $V^a_R$ is equal to the degree $a$ part of $V^{A_i}_R$, but before we prove this, we will write~$\big(V^{A_i}_R\big)^a$ for the latter space to distinguish them.

To show that $V_R$ is an $R$-form of $V$, we fix $a \in A$ and consider the base change map $\phi\colon V^a_R \otimes_{R} \bC \to V^a$. It suffices to show that $\phi$ is an isomorphism. The map $\phi$ is surjective, because we may write any element of $V^a$ as a $\bC$-linear combination of products of elements of~$V^{A_i}$, and these elements in turn are $\bC$-linear combinations of elements of $V^{A_i}_R$.

To show that $\phi$ is injective, it suffices to show that any $R$-linearly independent set in $V^a_R$ is $\bC$-linearly independent. We first show that any element of $V_R$ is an $R$-linear combination of products $u_r v$, where $u \in V^{A_1}_R$ and $v \in V^{A_2}_R$. By skew-symmetry as in Lemma~\ref{lem:skew-symmetry-for-AIA}, $v_s u$ is an $R$-linear combination of elements of the desired form, so we may switch $A_1$ and $A_2$. By induction on word length, it suffices to show that any length 3 words of the form $a_r b_s w$ and $(a_r b)_s w$ have this form, where $a,b,w \in V^{A_1}_R \cup V^{A_2}_R$. Again by skew-symmetry, we do not need to consider the case $(a_r b)_s w$ separately. Under this reduction, if $b$ and $w$ are both in either $V^{A_1}_R$ or $V^{A_2}_R$, then there is nothing to show. We therefore are reduced to considering $a_r b_s w$ in the following cases:
\begin{enumerate}\itemsep=0pt
\item[1)] $a,b \in V^{A_1}_R$ and $w \in V^{A_2}_R$,
\item[2)] $b \in V^{A_1}_R$ and $a, w \in V^{A_2}_R$.
\end{enumerate}
These cases are taken to each other by applying skew-symmetry to $b$ and $w$ and swit\-ching~$A_1$ with~$A_2$. It therefore suffices to consider the first case, which is handled by Lemma~\ref{lem:shorten-words}. Thus, $V_R$ is $R$-spanned by products of the form $u_r v$, where $u \in V^{A_1}_R$ and $v \in V^{A_2}_R$. It follows immediately that $V^a_R = \big(V^{A_i}_R\big)^a$ whenever $a \in A_i$.

To show that any $R$-linearly independent set in $V^a_R$ is $\bC$-linearly independent, we consider the contrapositive. Take a finite subset $\big\{ u^{1,1}_{r_1}u^{2,1},\ldots, u^{1,k}_{r_k} u^{2,k} \big\} \subset V^a_R$, and suppose $c_1 u^{1,1}_{r_1}u^{2,1} + \cdots + c_k u^{1,k}_{r_k} u^{2,k} = 0$ for some coefficients $c_1,\ldots,c_k \in \bC$, not all zero. We want to show that $r_1 u^{1,1}_{r_1}u^{2,1} + \cdots + r_k u^{1,k}_{r_k} u^{2,k} = 0$ for some $r_1,\ldots,r_k \in R$, not all zero. We may assume that this set of vectors is minimal with respect to $\bC$-linear dependence, and in particular, that $\big\{ u^{1,1}_{r_1}u^{2,1},\ldots, u^{1,k-1}_{r_{k-1}} u^{2,k-1} \big\}$ is a $\bC$-linearly independent set. By rescaling, we may assume
\begin{gather*} u^{1,k}_{r_k} u^{2,k} = c_1 u^{1,1}_{r_1}u^{2,1} + \cdots + c_{k-1} u^{1,k-1}_{r_{k-1}} u^{2,k-1} \end{gather*}
for uniquely defined $c_1,\ldots, c_{k-1} \in \bC$.

Proposition~11.9 of~\cite{DL93} asserts that if $w^1$ and $w^2$ are nonzero elements of irreducible mo\-dules~$M_1$ and~$M_2$ of a generalized vertex algebra, and $\cY$ is an intertwining operator, then $\cY\big(w^1,z\big)w^2 = 0$ implies~$\cY$ is identically zero. Here, our hypothesis is that $V^a$ and $V^{-a_1}$ are irreducible $V^0$-modules, and the self-duality of the invariant bilinear form on $V$ implies the multiplication operation is nonzero. Thus, for any nonzero $v^1 \in V^{-a_1}_R$, $Y\big(v^1,z\big)$ is an injection to $z^s V^{a_2}((z))$ for some $s \in \bQ$. Thus,
\begin{gather*} \big\{ Y\big(v^1,z\big) u^{1,1}_{r_1}u^{2,1}, \ldots, Y\big(v^1,z\big) u^{1,k-1}_{r_{k-1}} u^{2,k-1} \big\} \end{gather*}
is a $\bC$-linearly independent set in $z^s V^{a_2}((z))$, so
\begin{gather*} Y\big(v^1,z\big) u^{1,k}_{r_k} u^{2,k} = b_1 Y\big(v^1,z\big) u^{1,1}_{r_1}u^{2,1} + \cdots + b_{k-1} Y\big(v^1,z\big) u^{1,k-1}_{r_{k-1}} u^{2,k-1} \end{gather*}
holds if and only if $b_i = c_i$ for all $i \in \{1,\ldots,k-1\}$. However, both $Y\big(v^1,z\big) u^{1,k}_{r_k} u^{2,k}$ and each summand $Y\big(v^1,z\big) u^{1,i}_{r_i}u^{2,i}$ lie in $z^sV^{a_2}_R((z))$, and by hypothesis, this space is an $R$-form of some subspace of $z^sV^{a_2}((z))$. We conclude that all $c_i$ lie in the fraction field of $R$. By clearing denominators, we see that there is some nonzero $x \in R$ such that
\begin{gather*} xc_1 u^{1,1}_{r_1}u^{2,1} + \cdots + xc_{k-1} u^{1,k-1}_{r_{k-1}} u^{2,k-1} - x u^{1,k}_{r_k} u^{2,k} = 0 \end{gather*}
and in particular, the set $\big\{ u^{1,1}_{r_1}u^{2,1},\ldots, u^{1,k}_{r_k} u^{2,k} \big\}$ is $R$-linearly dependent. Thus, $\phi$ is injective, and $V_R \otimes_{R} \bC \cong V$ as abelian intertwining algebras.

The M\"obius structure on $V_R$ follows from the formula describing the action of $L_1^{(n)}$ on products of generators.
\end{proof}

We now refine the previous result, showing that $V_R$ is self-dual and well-graded.

\begin{prop} \label{prop:extension-of-self-dual-form}Let $R$ be a subring of $\bC$ containing $1/N$ and ${\rm e}^{2\pi {\rm i}/N}$. Let $V$ be an abelian intertwining algebra over $\bC$ associated to the data $(A,F,\Omega)$, equipped with an invariant bilinear form. Suppose the following properties hold:
\begin{enumerate}\itemsep=0pt
\item[$1.$] $V$ is self-dual with respect to the invariant form.
\item[$2.$] Each $V^a$ is an irreducible $V^0$-module.
\item[$3.$] $V$ is generated by abelian intertwining subalgebras $\big\{V^{A_i}\big\}_{i \in I}$, where $A_i$ range over a set of subgroups of $A$ that generate $A$, and $V^{A_i} = \bigoplus_{a \in A_i} V^a$.
\item[$4.$] We are given M\"obius $R$-forms $V^{A_i}_R$ of each $V^{A_i}$ such that the restriction of the invariant bilinear form takes values in $R$, and $V^{A_i}_R$ is self-dual with respect to this bilinear form.
\item[$5.$] The $R$-forms and invariant bilinear forms coincide on pairwise intersections of the subalgebras $V^{A_i}$.
\end{enumerate}
Then, the following holds:
\begin{enumerate}\itemsep=0pt
\item[$1.$] The invariant bilinear form on $V$ restricts to an $R$-valued invariant bilinear form on the $R$-form $V_R$ of $V$ given in Proposition~{\rm \ref{prop:existence-of-R-form-of-abelian-intertwining-algebra}}.
\item[$2.$] $V_R$ is self-dual with respect to this bilinear form.
\item[$3.$] $V_R$ is the unique $R$-form of $V$ whose intersection with $V^{A_i}$ is $V^{A_i}_R$, such that the invariant bilinear form on $V$ restricts to an $R$-valued invariant bilinear form.
\item[$4.$] $V_R$ is well-graded as an $R$-module, i.e., the graded components $(V_R)^a_n$ are finite projective $R$-modules.
\end{enumerate}
\end{prop}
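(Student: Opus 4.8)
The plan is to prove the four assertions in the order (1), (4), (2), (3), since self-duality presupposes the finite projective property, and the uniqueness in (3) follows formally once self-duality is in hand. Throughout I work in the base case of Proposition~\ref{prop:existence-of-R-form-of-abelian-intertwining-algebra}, where $I = \{1,2\}$ and $A = A_1 \oplus A_2$, so that each $a \in A$ has a unique decomposition $a = a_1 + a_2$ with $a_i \in A_i$; I recall from that proof that $V_R$ is $R$-spanned by products $u_r v$ with $u \in V^{A_1}_R$ and $v \in V^{A_2}_R$, that $V_R$ is closed under all product operations, and that $(V_R)^a = \big(V^{A_i}_R\big)^a$ whenever $a \in A_i$. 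For claim (1), I would check $R$-integrality on this spanning set. Since $R$ contains $e(1/2N)$ and every weight lies in $\frac1N\bZ$, the scalar $e(k/2)$ occurring in the adjoint $(u_n)^*$ lies in $R$, so for $u \in V^{A_i}_R$ the operator $(u_n)^*$ has $R$-coefficients and carries $V_R$ into $V_R$. Given $x = u_r v$ with $v \in \big(V^{A_2}_R\big)^{a_2}$ homogeneous and any $y \in V_R$, invariance gives $(x,y) = (v,(u_r)^* y)$; by the $A$-grading only the $(V_R)^{-a_2}$-component of $(u_r)^* y$ contributes, and since $-a_2 \in A_2$ that component lies in $\big(V^{A_2}_R\big)^{-a_2}$, where the form is $R$-valued by hypothesis~4. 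This proves (1).

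For claims (4) and (2) I would exploit the simple-current structure forced by hypotheses 1 and 2: irreducibility of each $V^a$ together with $a_n b \in V^{i+j}$ makes the fusion multiplicity-free, so over $\bC$ the multiplication realizes $V^a$ as the fusion product of $V^{a_1}$ and $V^{a_2}$ over $V^0$, and the product operation gives a surjection $\big(V^{A_1}_R\big)^{a_1}\otimes_R\big(V^{A_2}_R\big)^{a_2} \to (V_R)^a$. Using skew-symmetry (Lemma~\ref{lem:skew-symmetry-for-AIA}) and associativity (Lemma~\ref{lem:shorten-words}), I would reduce the pairing $(V_R)^a \times (V_R)^{-a} \to R$ to an expression assembled from the self-dual pairings on the factors $\big(V^{A_1}_R\big)^{\pm a_1}$ and $\big(V^{A_2}_R\big)^{\pm a_2}$ supplied by hypothesis~4, modelled on the split case, where $(V_R)^a = \big(V^{A_1}_R\big)^{a_1} \otimes_R \big(V^{A_2}_R\big)^{a_2}$ and the form is the tensor of the factor forms. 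Granting that this identification is an isomorphism of $R$-modules with form, claim~(4) follows because a tensor product of finite projective modules is finite projective, and claim~(2) follows because a tensor product of self-dual forms is self-dual; equivalently, $\phi_a$ is the tensor of the two isomorphisms $\big(V^{A_i}_R\big)^{\pm a_i} \cong \big(\big(V^{A_i}_R\big)^{\mp a_i}\big)^\vee$.

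The main obstacle is exactly this identification over $R$. The multiplicity-one fusion makes the product map an isomorphism after $\otimes_R \bC$, and the factor forms are perfect over $R$, but one must verify that the non-trivial operator products, encoded by the cocycle $(F,\Omega)$ and the higher modes, introduce only \emph{integral} and \emph{unimodular} corrections, so that the product pairing is genuinely perfect over $R$ rather than merely $R$-valued and $\bC$-nondegenerate. Concretely I would show that $\phi_a\colon (V_R)^a \to \big((V_R)^{-a}\big)^\vee$, which is injective by (1) and an isomorphism after $\otimes_R \bC$, has vanishing (necessarily torsion) cokernel, by pairing a set of integral products spanning $(V_R)^a$ against an $R$-dual spanning set of $(V_R)^{-a}$ manufactured from the factor dualities. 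The finite generation of the graded pieces needed to run this argument comes from generation of $V^a$ over $V^0$ by finitely many low-weight products, together with finite projectivity of the graded pieces of $(V_R)^0$ and of the factor forms.

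Finally, claim (3) is formal once (1) and (2) are known. If $V'_R$ is any $R$-form of $V$ with $V'_R \cap V^{A_i} = V^{A_i}_R$ on which the form is $R$-valued, then $V'_R$ contains the subalgebra generated by the $V^{A_i}_R$, namely $V_R$, so $V_R \subseteq V'_R$. Taking dual lattices component-wise in the $A$- and weight-gradings and using that $V_R$ is self-dual by (2), we get $(V'_R)^\# \subseteq (V_R)^\# = V_R$; since the form is $R$-valued on $V'_R$ we have $V'_R \subseteq (V'_R)^\#$, whence $V'_R \subseteq V_R$ and therefore $V'_R = V_R$.
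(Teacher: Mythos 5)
Your treatment of claim (1) is sound and essentially the paper's argument viewed from the other side: you move $u_r$ across with the adjoint and land the relevant component in $(V_R)^{-a_2}=\big(V^{A_2}_R\big)^{-a_2}$, whereas the paper moves $v^1_s$ across and invokes Lemma~\ref{lem:shorten-words}; both reduce to the $R$-valued pairing on $V^{A_2}_R$. Your claim (3) argument (dual lattices: $V_R\subseteq V'_R\subseteq (V'_R)^{\#}\subseteq (V_R)^{\#}=V_R$) is also fine and is what the paper's terse remark amounts to.

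The genuine gap is in your proof of (2) and (4), which is the heart of the proposition. Your strategy rests on identifying $(V_R)^a$ with $\big(V^{A_1}_R\big)^{a_1}\otimes_R\big(V^{A_2}_R\big)^{a_2}$ and the pairing with the tensor product of the factor pairings, ``modelled on the split case.'' But the situation here is not split: $V^a$ is the fusion product of $V^{a_1}$ and $V^{a_2}$ over $V^0$, not their underlying tensor product, and in particular the weight-graded pieces of $(V_R)^a$ are spanned by products $u_rv$ over \emph{all} modes $r$ with $\wt u+\wt v-r-1$ fixed; there is no single mode giving a surjection from the tensor product of graded pieces, let alone an isomorphism of modules with form. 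Multiplicity-one fusion only tells you the space of intertwining operators is one-dimensional over $\bC$; it does not produce an $R$-module identification of graded pieces, and the ``integral and unimodular corrections'' you would need to control are exactly the content of the theorem. You acknowledge this as ``the main obstacle'' and the proposed fix (exhibiting an $R$-dual spanning set manufactured from the factor dualities) is not carried out and is not obviously executable. The paper's actual argument avoids this entirely: given an $R$-functional $f$ on $\big(V^{-a}_R\big)_r$, self-duality of $V$ over $\bC$ produces a unique representing vector $u\in V^a_r$; precomposing $f$ with the adjoints $\big(v^1_t\big)^*$ for $v^1\in V^{-a_1}_R$ and using self-duality of $V^{A_2}_R$ shows that every mode $v^1_t$ sends $u$ into $V^{a_2}_R$; then, writing $\unit$ as a finite $R$-combination of products $v^1_ku^1$ with $u^1\in V^{a_1}_R$, $v^1\in V^{-a_1}_R$ (using self-duality of $V^{A_1}_R$), one gets $u=u_{-1}\unit$ as an $R$-combination of $\big(u_sv^1\big)_tu^1$ via Lemma~\ref{lem:shorten-words}, hence $u\in V_R$. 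Finite projectivity of the graded pieces is then deduced \emph{from} self-duality (dualizable $=$ finite projective), so your proposed order, proving (4) before (2), also inverts the only available logical dependency.
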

\begin{proof}
We begin by considering the invariant inner product. For $u^1_r u^2 \in V^a_R$ and $v^1_s v^2 \in V^{-a}_R$, we apply Lemma \ref{lem:shorten-words} to see that the inner product is given by
\begin{gather*}
\big(v^1_s v^2, u^1_r u^2\big) = \bigg(v^2, e(k/2) \sum_{i \geq 0} \big(L_1^{(i)} v^1\big)_{2k-i-s-2} u^1_r u^2\bigg) \\
\hphantom{\big(v^1_s v^2, u^1_r u^2\big)}{} = e(k/2) \sum_{i \geq 0} \sum_{t=0}^m \sum_{n \in \bZ_{\geq 0}} \binom{2k-i-s-2-l}{t} \times \\
\hphantom{\big(v^1_s v^2, u^1_r u^2\big)=}{} \times \binom{l}{n} \big(v^2, \big(\big(L_1^{(i)} v^1\big)_{2k-i-s-2-l-t+n}u^1\big)_{r+l+t-n} u^2\big),
\end{gather*}
which is an $R$-linear combination of inner products of vectors in $V^{A_2}_R$. By hypothesis, this is an element of $R$, so the inner product on $V_R$ is $R$-valued. We therefore find that for each $a \in A$ and $r \in \bQ$, there is a canonical map $\big(V^a_R\big)_r \to \Hom_R\big(\big(V^{-a}_R\big)_r, R\big)$ of $R$-modules induced by the inner product, and by self-duality of the base change to~$\bC$, this map is injective. To show that~$V_R$ is self-dual, it suffices to show that this map is surjective.

Let $f\colon \big(V^{-a}_R\big)_r \to R$ be an $R$-module map. Because $V$ is self-dual, $f$ is induced by taking the inner product with a unique element $u \in V^a_r$. We shall show that $u \in \big(V^a_R\big)_r$.

By precomposing $f$ with any $\big(v^1_t\big)^*$ for homogeneous $v^1 \in V^{-a_1}_R$, we obtain an $R$-linear map from some homogeneous piece $\big(V^{-a_2}_R\big)_{r+\wt(v^1)-t-1}$ to~$R$. By self-duality of $V^{A_2}_R$, this is necessarily given by the inner product with a homogeneous vector $v' \in \big(V^{a_2}_R\big)_{r+\wt(v^1)-t-1}$. That is,
\begin{gather*} \big(v^1_t u, -\big) = \big(u, \big(v^1_t\big)^* -\big) = f\big(\big(v^1_t\big)^* -\big) = (v',-). \end{gather*}
In other words, all $v^1_t$ operators take $u$ to elements of $V^{a_2}_R$. This implies that for any $v^1 \in V^{-a_1}_R$, $Y\big(v^1,z\big)u \in z^s V^{a_2}_R((z))$ for some $s \in \bQ$. By the skew-symmetry Lemma \ref{lem:skew-symmetry-for-AIA}, $Y(u,z)$ takes elements of $V^{-a_1}_R$ to elements of $z^sV^{a_2}_R((z))$.

Because $V^{A_1}_R$ is self-dual, we may present $\unit$ as a finite $R$-linear combination of products of the form $v^1_k u^1$, where $u^1 \in V^{a_1}_R$ and $v^1 \in V^{-a_1}_R$. Then $u_{-1} \unit = u$ is a finite $R$-linear combination of products of the form $u_{-1} v^1_k u^1$. By Lemma~\ref{lem:shorten-words}, this is an $R$-linear combination of products of the form $\big(u_s v^1\big)_t u^1$. Since each $u_s v^1 \in V^{a_2}_R$, we find that $u \in V^a_R$. Thus, the inner product on~$V_R$ is self-dual.

Self-duality implies each of the graded pieces $(V^a_R)_r$ are finite projective $R$-modules, because finite projective $R$-modules are precisely the dualizable $R$-modules. That is, $V_R$ is well-graded as an $R$-module. Furthermore (as an anonymous referee has pointed out), self-duality of $V_R$ implies uniqueness of $R$-forms containing the subalgebras $V^{A_i}_R$ with $R$-valued inner product. This is because $V_R$ is generated by those subalgebras, so any other such form must strictly contain $V_R$, contradicting self-duality of $V_R$.
\end{proof}

\begin{cor} \label{cor:voa-generated-by-subvoas}Let $A$ be a finite abelian group, and let $A_1$, $A_2$ be subgroups that generate~$A$. Let $n$ be the exponent of~$A$, and let $R$ be a subring of $\bC$ containing $1/n$ and $e(1/2n)$. Let~$V$ be a simple vertex operator algebra over $\bC$, and suppose $V$ admits an $A$-grading $($i.e., given by an action of the Pontryagin dual group~$A^*)$. If we are given self-dual $R$-forms $V_R^1$ and $V_R^2$ of the vertex operator subalgebras of $V$ given by the parts graded by~$A_1$ and~$A_2$, such that the $A_1 \cap A_2$-graded subalgebras of~$V_1$ and~$V_2$ are isomorphic, then there exists a unique $R$-form of $V$ whose $A_1$-graded subalgebra is $V_1$ and whose $A_2$-graded subalgebra is~$V_2$.
\end{cor}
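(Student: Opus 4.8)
The plan is to recognize that a simple $A$-graded vertex operator algebra is nothing but an abelian intertwining algebra with trivial cocycle, and then to invoke Propositions~\ref{prop:existence-of-R-form-of-abelian-intertwining-algebra} and~\ref{prop:extension-of-self-dual-form} essentially verbatim. Concretely, I would regard the $A$-grading as a faithful action of the Pontryagin dual $A^*$ by vertex operator algebra automorphisms, with $V^a$ the $a$-eigenspace, and present $V$ as a level-$n$ abelian intertwining algebra over $\bC$ attached to the data $(A,F,\Omega)$ with $F\equiv\Omega\equiv 1$ and $b_\Omega\equiv 0$. With this trivial cocycle the fractional powers of $z$ disappear, the factor $e(k/2)$ in the invariant form is merely $\pm 1$, and conditions~8 and~9 of the abelian intertwining algebra definition collapse to the usual vertex operator algebra axioms. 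The hypotheses $1/n, e(1/2n)\in R$ are exactly those demanded by the level-$n$ abelian intertwining algebra formalism, and (via $e(1/n)=e(1/2n)^2$) they force $\mu_n(R)\cong\bZ/n\bZ$, so that Lemma~\ref{lem:decomposition-is-still-self-dual} applies and the $A$-grading descends to any $R$-form.

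It then remains to check the five hypotheses of Proposition~\ref{prop:extension-of-self-dual-form}. For self-duality of $V$ over $\bC$ (hypothesis~1), I would use that a simple vertex operator algebra carries an invariant bilinear form unique up to scaling; its radical is a proper ideal, hence vanishes, so the form is non-singular. Uniqueness forces the form to be $A^*$-invariant, whence $(V^a,V^b)=0$ unless $a+b=0$, and non-singularity of the total form then yields a perfect pairing $V^a\times V^{-a}\to\bC$, i.e.\ self-duality. For irreducibility of each $V^a$ as a $V^0$-module (hypothesis~2), I would appeal to the quantum Galois theory of Dong and Mason applied to the finite abelian group $A^*$ acting on the simple algebra $V$: the eigenspaces $V^a$ are nonzero, irreducible, pairwise inequivalent modules over the fixed-point subalgebra $V^0=V^{A^*}$. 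Hypothesis~3 (generation) holds because $A_1$ and $A_2$ generate $A$: for $a=a_1+a_2$ the products of $V^{A_1}$ and $V^{A_2}$ land in $V^a$ and span a nonzero $V^0$-submodule—nonzero by non-singularity of the form together with skew-symmetry (Lemma~\ref{lem:skew-symmetry-for-AIA})—which by irreducibility is all of $V^a$, so the two subalgebras generate $V$. Hypotheses~4 and~5 are precisely the given data: $V_1=V^{A_1}_R$ and $V_2=V^{A_2}_R$ are M\"obius self-dual $R$-forms with $R$-valued invariant form, and the assumed isomorphism of their $A_1\cap A_2$-graded subalgebras is used to realize both inside $V$ so that they coincide on the overlap.

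With these hypotheses in place, Proposition~\ref{prop:extension-of-self-dual-form} produces a well-graded $R$-form $V_R$ of $V$ that is self-dual and whose intersection with $V^{A_i}$ is $V^{A_i}_R$; since a trivial-cocycle abelian intertwining algebra is just a vertex operator algebra with $\bZ$-graded conformal weight, $V_R$ is exactly an $R$-form of the vertex operator algebra $V$ whose $A_1$- and $A_2$-graded subalgebras are $V_1$ and $V_2$. For uniqueness I would argue that any $R$-form $W$ with the prescribed graded subalgebras is itself generated by $V_1$ and $V_2$, so contains $V_R$, and its invariant form is $R$-valued by the same reduction via Lemma~\ref{lem:shorten-words} used in the proof of Proposition~\ref{prop:extension-of-self-dual-form}; then $W\subseteq W^\vee\subseteq V_R^\vee=V_R\subseteq W$ by self-duality of $V_R$, forcing $W=V_R$. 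This is precisely part~(3) of Proposition~\ref{prop:extension-of-self-dual-form}.

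The step I expect to be the main obstacle is hypothesis~2, the irreducibility of each graded piece $V^a$ over $V^0$. This is the one place requiring genuine structural input—quantum Galois theory, together with faithfulness of the $A^*$-action (guaranteed by simplicity) and the root-of-unity hypotheses on $R$—whereas the remaining hypotheses are either formal translations of the definitions or immediate consequences of simplicity and the supplied self-dual $R$-forms.
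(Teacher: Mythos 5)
Your proposal is correct and follows essentially the same route as the paper: the paper's proof likewise treats the $A$-graded simple algebra as an abelian intertwining algebra with trivial $(F,\Omega)$, cites Dong--Mason's quantum Galois theory for irreducibility of the graded pieces over $V^0$, and then invokes Proposition~\ref{prop:extension-of-self-dual-form}. You have simply spelled out the verification of the remaining hypotheses (self-duality over $\bC$, generation, and the uniqueness argument) that the paper leaves implicit.
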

\begin{proof}
By Theorem 3 of \cite{DM94a} our assumption that $V$ is simple implies the graded pieces of $V$ are simple $V^0$-modules. Then the hypotheses of Proposition \ref{prop:extension-of-self-dual-form} are satisfied, where $(F,\Omega)$ is trivial.
\end{proof}

\subsection{The standard form for a lattice vertex operator algebra}

A description of an integral form for a lattice vertex algebra is given in the original paper \cite{B86} where vertex algebras are defined, and more properties are established in \cite{B99}. We will follow the treatments in \cite{DG12} and \cite{M14}, because the proofs are somewhat more detailed.

Let $L$ be an even integral lattice, i.e., a finite rank free abelian group equipped with a $\bZ$-valued bilinear form that is even on the diagonal. Then there is a double cover $\hat{L}$, written as a nontrivial central extension by $ \langle \kappa \rangle \cong \{\pm 1\}$, that is unique up to non-unique isomorphism. Choosing lifts $\{ e_a \}_{a\in L}$ of lattice vectors, the cocycle defining the central extension is determined up to equivalence by the signs relating $e_a e_b$ to $e_b e_a$, and in our case it is by $e_a e_b = (-1)^{(a,b)} e_b e_a$. The twisted group ring $\bC\{L\}$ is then the quotient of $\bC\big[\hat{L}\big]$ by the ideal $(\kappa + 1)$, and we write~$\iota(e_a)$ for the image of $e_a$. The vertex algebra $V_L$ is given by the tensor product of $\bC\{L\}$ with a~Heisenberg vertex algebra $M(1)$ which we will not describe further.

If we choose a basis $\{ \gamma_1,\ldots,\gamma_d\}$ of $L$, then for any $\alpha \in L^\vee$, we set
\begin{gather*} E^-(-\alpha,z) = \exp \left( \sum_{n > 0} \frac{\alpha(-n)}{n}z^n \right) = \sum_{n \geq 0} s_{\alpha, n} z^n. \end{gather*}
Note that $E^-(-\alpha,z)E^-(\alpha,z) = 1$ for all $\alpha \in L$, i.e., $\sum\limits_{i=0}^n s_{\alpha,i} s_{-\alpha,n-i} = \delta_{n,0}$ for all $n \geq 0$. Define $(V_L)_{\bZ}$ to be the $\bZ$-span of $s_{\alpha_1,n_1} \cdots s_{\alpha_k,n_k} {\rm e}^\alpha$ for $\alpha_i \in \{ \gamma_1,\ldots,\gamma_d\}$, $n_1 \geq \cdots \geq n_k$, $k \geq 0$, and $\alpha \in L$. Here, ${\rm e}^\alpha$ denotes the image of $\iota(e_\alpha)$ for some lift $e_\alpha$ -- this choice of notation has an ambiguous sign, but in this paper we will not do calculations where the choice of sign is important. By Proposition~3.6 of~\cite{DG12}, $(V_L)_{\bZ}$ is an integral form of $V_L$, generated by ${\rm e}^{\pm\gamma_i}$, and if~$L$ is positive definite and unimodular, then $(V_L)_{\bZ}$ is a direct sum of positive definite unimodular lattices under its usual invariant bilinear form. By Proposition~5.8 of~\cite{M14}, there is a conformal element $\omega$ in an integral form for~$V_L$ if and only if~$L$ is unimodular, and the central charge is equal to the rank.

\begin{defn}Let $L$ be a positive definite even unimodular lattice, and let $R$ be a commutative ring. We call the base change $(V_L)_{\bZ} \otimes_{\bZ} R$ the standard $R$-form of~$V_L$, and denote it by~$(V_L)_R$.
\end{defn}

\begin{lem} \label{lem:standard-R-form}Let $L$ be a positive definite even unimodular lattice, and let $R$ be a commutative ring. The standard $R$-form $(V_L)_R$ is a M\"obius vertex operator algebra over $R$, and admits an invariant bilinear form for which it is self-dual.
\end{lem}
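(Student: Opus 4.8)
The plan is to prove everything first over $\bZ$---that is, to show that $(V_L)_{\bZ}$ is itself a self-dual M\"obius vertex operator algebra over $\bZ$---and then to obtain the statement for general $R$ by applying the base-change functor $-\otimes_{\bZ} R$. This reduction is the natural one because each ingredient (the modes $\cdot_n$, the conformal vector, the M\"obius operators, the invariant form) is $\bZ$-linear data subject to identities, and such data is carried to the corresponding data over $R$ by $-\otimes_{\bZ} R$; the only genuine content lies in the $\bZ$-statement together with checking that self-duality and finite projectivity survive base change.

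First I would assemble the structure that comes for free. By Proposition~3.6 of~\cite{DG12}, $(V_L)_{\bZ}$ is a vertex algebra over $\bZ$ whose weight spaces are finite free $\bZ$-modules vanishing in negative weight, and, since $L$ is positive definite even unimodular, each weight space is a positive definite unimodular lattice under the usual invariant form. By Proposition~5.8 of~\cite{M14} the conformal vector $\omega$ already lies in $(V_L)_{\bZ}$, with half central charge equal to half the rank of $L$, an integer because the rank is divisible by $8$. This immediately supplies the Virasoro action, the $L_0$-grading, and the remaining conformal axioms over $\bZ$: each is an identity valid inside $V_L$ over $\bC$ involving only modes that preserve $(V_L)_{\bZ}$, so it restricts.

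The hard part is the M\"obius structure over $\bZ$, i.e.\ producing an action of $U(\fsl_2)_{\bZ} = \big\langle L_{-1}^{(n)}, \binom{L_0}{n}, L_1^{(n)}\big\rangle$ that preserves $(V_L)_{\bZ}$. The operators $\binom{L_0}{n}$ act by the integers $\binom{k}{n}$ on weight $k$, and $L_{-1}^{(n)}$ acts by $u \mapsto u_{-n-1}\unit$, which is integral since $(V_L)_{\bZ}$ is a vertex algebra over $\bZ$. The delicate point is that the divided powers $L_1^{(n)} = L_1^n/n!$ preserve $(V_L)_{\bZ}$, and here I would use self-duality rather than the smooth Virasoro form of~\cite{B99}. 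Since $\omega$ is quasi-primary ($L_1\omega = 0$), the adjoint formula for the invariant form gives $(\omega_n)^* = \omega_{2-n}$, hence $(L_m)^* = L_{-m}$ and therefore $(L_{-1}^{(n)})^* = L_1^{(n)}$. Now $L_{-1}^{(n)}\colon (V_L)_{\bZ,k} \to (V_L)_{\bZ,k+n}$ is integral and both weight spaces are unimodular lattices, so its adjoint $L_1^{(n)}$ is forced to be integral: for integral $w$ and every integral $u$ one has $\big(u, L_1^{(n)} w\big) = \big(L_{-1}^{(n)} u, w\big) \in \bZ$, so $L_1^{(n)} w$ lies in the self-dual lattice $(V_L)_{\bZ,k}$. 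Local nilpotence of the $\langle L_1^{(n)}\rangle$-action and the commutator axiom hold over $\bC$ and restrict.

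Finally I would record the invariant form and base change. The usual form on $V_L$ is symmetric, takes $\bZ$-values on $(V_L)_{\bZ}$, and makes each weight space self-dual by~\cite{DG12}; the adjunction $(u_n v,w) = (v,(u_n)^* w)$ holds over $\bC$ and restricts. Thus $(V_L)_{\bZ}$ is a self-dual M\"obius vertex operator algebra over $\bZ$. Applying $-\otimes_{\bZ} R$, the modes, conformal vector, and M\"obius operators base change to the corresponding structure over $R$; each weight space $(V_L)_{\bZ,n}\otimes_{\bZ} R$ is finite free, hence finite projective; and because $(V_L)_{\bZ,n}$ is unimodular, the isomorphism $(V_L)_{\bZ,n} \to \Hom_{\bZ}\big((V_L)_{\bZ,n},\bZ\big)$ base changes---using that the canonical map $\Hom_{\bZ}(M,\bZ)\otimes_{\bZ} R \to \Hom_R(M\otimes_{\bZ} R, R)$ is an isomorphism for finite free $M$---to an isomorphism exhibiting $(V_L)_R$ as self-dual. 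I expect the integrality of the divided powers $L_1^{(n)}$ to be the only real obstacle; the adjointness-and-unimodularity argument is what settles it cleanly.
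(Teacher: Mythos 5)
Your proof is correct, but it takes a genuinely different route from the paper on the one nontrivial point. The paper disposes of the M\"obius claim by citing Lemma~5.6 of~\cite{B99}, where Borcherds constructs an action of a smooth integral form of the whole universal enveloping algebra of the Virasoro algebra on $(V_L)_{\bZ}$ (a statement valid for any even lattice, not just unimodular ones), and then cites \cite{B86} and Proposition~3.6 of~\cite{DG12} for the existence and self-duality of the invariant form; base change to general $R$ is left implicit. You instead derive the only delicate point --- integrality of the divided powers $L_1^{(n)}$ --- from unimodularity itself: since $\omega$ is quasi-primary, $(L_{-1}^{(n)})^* = L_1^{(n)}$, so $L_1^{(n)}w$ lies in the rational span of a graded piece (as $L_1^n w$ is integral) and pairs integrally against the whole self-dual lattice $(V_L)_{\bZ,k-n}$, hence lies in it. This is a clean, self-contained argument, and there is no circularity since the adjoint formula for $\omega_n$ involves no divided powers. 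What it buys is independence from the heavier machinery of~\cite{B99}; what it costs is generality --- it is tied to the unimodular case (exactly the hypothesis here, so no loss for this lemma) and yields only the $U(\fsl_2)_{\bZ}$-action rather than the full integral Virasoro action mentioned in the paper's later remark. Your explicit treatment of base change (finite free modules stay finite projective, and $\Hom_{\bZ}(M,\bZ)\otimes_{\bZ}R \to \Hom_R(M\otimes_{\bZ}R,R)$ is an isomorphism for finite free $M$, so self-duality persists) is a detail the paper's proof omits but uses.
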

\begin{proof}The M\"obius claim is a special case of Lemma~5.6 of~\cite{B99}, and in fact Borcherds proves the claim for an action of an integral form of the universal enveloping algebra of the Virasoro algebra. The existence of the bilinear form is asserted in~\cite{B86}, and self-duality is proved as Proposition~3.6 of~\cite{DG12}.
\end{proof}

We describe some finer details of automorphisms.

\begin{prop} \label{prop:automorphism-group-of-standard-R-form}Let $L$ be a positive definite even unimodular lattice of rank $d$ with no roots $($i.e., no vectors of norm~$2)$. The contravariant functor that sends an affine scheme $\Spec R$ to the automorphism group of $(V_L)_R$ is represented by a finite type affine group scheme $\uAut (V_L)_\bZ$ over $\bZ$. This group scheme has the form $T O\big(\hat{L}\big)$, where $T$ is a normal subgroup scheme that is a split torus over $\bZ$, and $O\big(\hat{L}\big)$ is the finite flat group scheme of isometries of the double cover of $L$, and lies in a canonical exact sequence
\begin{gather*} 1 \to \uHom(L,\mu_2) \to O\big(\hat{L}\big) \to O(L) \to 1. \end{gather*}
The torus $T$ is given as the diagonalizable group $D(L) = \Spec \bZ[L]$, isomorphic to $\bG_m^d$. The intersection between $T$ and $O\big(\hat{L}\big)$ is $\uHom(L,\mu_2)$, which is isomorphic to the constant group scheme $\{\pm 1\}^d$ over any field of characteristic not equal to~$2$.
\end{prop}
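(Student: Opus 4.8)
The plan is to reduce the whole computation to the action of an automorphism on the weight-one subspace and on the group-like vectors ${\rm e}^\alpha$, exploiting crucially that $L$ has no roots. Write $\mathfrak{h}_R = L \otimes_\bZ R$ for the weight-one space of $(V_L)_R$; since $L$ has no vectors of norm $2$, this is exactly the Cartan subspace spanned by $\{\gamma_i(-1)\unit\}$, it carries the invariant form restricting to the form on $L$, and for each $\alpha \in \mathfrak{h}_R$ the zero-mode $\alpha(0)$ acts on ${\rm e}^\beta$ as multiplication by $(\alpha,\beta)$.

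First I would prove representability. A vertex operator algebra automorphism fixes $\unit$ and $\omega$ and preserves the $\bZ$-grading, so an $R$-point $g$ of the functor is determined by the finitely many images $g\big({\rm e}^{\pm\gamma_i}\big)$, which lie in the finite free graded piece of $(V_L)_R$ in weight $(\gamma_i,\gamma_i)/2$, because $(V_L)_\bZ$ is generated by $\big\{{\rm e}^{\pm\gamma_i}\big\}$. The conditions that such an assignment extend to an endomorphism are equalities of products, hence polynomial in the coordinates of the images; together with invertibility (recorded by also prescribing the images under $g^{-1}$) these cut out a closed group subfunctor of a finite-dimensional affine space, so the functor is represented by a finite type affine group scheme $\uAut(V_L)_\bZ$.

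Next I would construct a homomorphism $\pi\colon \uAut(V_L)_\bZ \to O(L)$ to the constant group scheme of isometries of $L$ and identify its kernel. For any automorphism $g$ the identity $g\,\alpha(0)\,g^{-1} = (g\alpha)(0)$ shows that $g$ permutes the common eigenspaces of the commuting family $\{\alpha(0)\}$; since the support of $V_L$ is exactly $L$ and $g$ preserves the form, $g$ permutes the group-like vectors $\{{\rm e}^\beta\}_{\beta \in L}$ compatibly with the form and hence induces a genuine lattice isometry, giving $\pi$. If $g \in \ker\pi$ then $g$ fixes $\mathfrak{h}_R$ pointwise, so it commutes with every $\alpha(n)$ and is $M(1)$-linear; as $g$ preserves weight and the lowest-weight space of $V^\beta = M(1)\cdot{\rm e}^\beta$ is $R\cdot{\rm e}^\beta$, we get $g\big({\rm e}^\beta\big) = \chi(\beta)\,{\rm e}^\beta$, and compatibility with the products $\big({\rm e}^\alpha\big)_n {\rm e}^\beta \in V^{\alpha+\beta}$ forces $\chi$ to be a homomorphism $L \to \bG_m$. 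Hence $\ker\pi = \uHom(L,\bG_m) = D(L) = \Spec\bZ[L] \cong \bG_m^d =: T$, a split torus over $\bZ$, normal because conjugation sends $\chi \mapsto \chi \circ \big(g|_{\mathfrak{h}}\big)^{-1}$.

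Finally I would realize the image of $\pi$ as $O\big(\hat{L}\big)$ and assemble the product decomposition. An isometry $\sigma$ of $L$ acts naturally on $M(1)$, but to act on the twisted group ring $R\{L\}$ it must be lifted to an automorphism of the double cover $\hat{L}$ fixing $\kappa$; the group of such lifts is $O\big(\hat{L}\big)$, and the sign ambiguities ${\rm e}_a \mapsto \pm{\rm e}_a$ give the exact sequence $1 \to \uHom(L,\mu_2) \to O\big(\hat{L}\big) \to O(L) \to 1$. As an extension of the constant group scheme $O(L)$ by the finite flat group scheme $\uHom(L,\mu_2) \cong \mu_2^d$, the scheme $O\big(\hat{L}\big)$ is finite flat over $\bZ$, and each lift defines an automorphism of $(V_L)_R$ acting diagonally on $M(1) \otimes R\{L\}$, yielding an inclusion $O\big(\hat{L}\big) \hookrightarrow \uAut(V_L)_\bZ$ that recovers $\pi$. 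Given $g$ with $\pi(g) = \sigma$, an fppf-local lift $\hat{\sigma} \in O\big(\hat{L}\big)$ satisfies $g\hat{\sigma}^{-1} \in \ker\pi = T$, so $g \in T\,O\big(\hat{L}\big)$; thus $\uAut(V_L)_\bZ = T\,O\big(\hat{L}\big)$, with $T \cap O\big(\hat{L}\big) = \ker\big(O\big(\hat{L}\big) \to O(L)\big) = \uHom(L,\mu_2)$, which becomes $\{\pm1\}^d$ over any field of characteristic $\neq 2$. The hard part will be making the lifting and the surjectivity of $\pi$ functorial over $\bZ$: over $\bC$ this decomposition is the classical Dong--Nagatomo computation, but integrally one must verify that $O\big(\hat{L}\big) \to O(L)$ is an fppf surjection, with fibers $\uHom(L,\mu_2)$-torsors, and descend the product decomposition $g = \big(g\hat{\sigma}^{-1}\big)\hat{\sigma}$, taking care of the non-reduced behavior of $\mu_2$ in characteristic $2$.
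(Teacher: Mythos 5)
Your proposal is correct and follows essentially the same route as the paper's proof: representability by closed conditions on a finite generating piece, restriction to the weight-one space $\ft_R \cong L \otimes R$ (using the no-roots hypothesis) to produce the map to $O(L)$, identification of the kernel with the torus $D(L) = \uHom(L,\bG_m)$ acting through the $L$-grading, and lifting through $O\big(\hat{L}\big)$ to obtain the decomposition $T\,O\big(\hat{L}\big)$ with intersection $\uHom(L,\mu_2)$. The only cosmetic differences are that the paper cites Dong--Griess for representability and Dong--Nagatomo (Lemma~2.5) for the step that a grading-preserving automorphism fixing $\ft_R$ pointwise lies in the torus, where you argue both directly, and your closing worry about fppf-local lifting is unnecessary: the extension $1 \to \Hom(L,\pm 1) \to O\big(\hat{L}\big) \to O(L) \to 1$ of finite groups is already set-theoretically split over $\bZ$, so a global lift exists, which is all the paper uses.
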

\begin{proof}In the beginning of Section~2 of~\cite{DG01}, we have a description of the automorphism group of any finitely generated vertex operator algebra $V$ over $\bC$ as a closed subgroup of ${\rm GL}(U)$ for $U = \bigoplus_{m=0}^k V_m$ a generating subspace. The arguments given there extend straightforwardly to any finitely generated vertex operator algebra over a commutative ring in a way that commutes with base change, so the automorphism functor is represented by a finite type affine group scheme.

Let $\ft_R$ denote the weight 1 subspace of $(V_L)_R$. By our assumption that $L$ has no roots, $\ft$ is isomorphic to $L \otimes R$ as an $R$-module with inner product given by $(a,b)\unit = a_1 b$. By~\cite{B86}, $\ft_R$ has a canonical Lie algebra structure given by $[a,b] = a_0 b$, and in this case it is abelian. Consider an automorphism $\sigma$ of $(V_L)_R$. Because $\sigma$ respects the weight grading, it acts as an $R$-linear isometry on $\ft_R$ that induces an isometry on the $L$-grading of~$(V_L)_R$. Thus, the isometry on~$\ft_R$ is defined over $\bZ$, i.e., the restriction is given by some $\bar{\sigma} \in O(L)$.

The group $O\big(\hat{L}\big)$ acts by permutations on the set $\{ \epsilon {\rm e}^\alpha \}_{\epsilon \in \mu_2(R), \alpha \in L}$, and this action induces vertex operator algebra automorphisms on $(V_L)_R$ that act by $O(L)$ on the $L$-grading. There exists a lift $\tau$ of $\bar{\sigma}$ to $O\big(\hat{L}\big)$, so $\psi = \sigma \tau^{-1}$ is an automorphism of $(V_L)_R$ that fixes $\ft_R$ pointwise and hence fixes the $L$-grading. Because the generators ${\rm e}^{\pm \gamma_i}$ are minimal weight in their respective $L$-graded components, $\psi$ must act on these generators by scalars. Furthermore, Lemma 2.5 in \cite{DN98} (which is proved for the case $R = \bC$ but extends without change) implies this $\psi$ is necessarily an element of the torus $\Hom(L, R^\times)$, which is the group of $R$-points of $D(L)$. The $L$-grading given by assigning degree $\alpha$ to ${\rm e}^\alpha$ endows $(V_L)_\bZ$ with a $\bZ[L]$-comodule structure, so the automorphism group contains the rank~$d$ split torus~$D(L)$. We conclude that $\uAut(V_L)_\bZ = T O\big(\hat{L}\big)$.

Because $T$ acts trivially on $\ft_R$, it is clear that the intersection of $T(R)$ with $O\big(\hat{L}\big)$ is the preimage of the identity element of~$O(L)$, and this is precisely $\Hom(L,\mu_2(R))$.
\end{proof}

\begin{lem} \label{lem:lifts-of-commuting-pairs}
Let $L$ be a positive definite even unimodular lattice of rank $d$ with no roots, and let $\bar{g}$ and $\bar{h}$ be commuting automorphisms of $L$. Then for any lifts $\hat{g}$ of $\bar{g}$ and $\hat{h}$ of $\bar{h}$ to $\Aut (V_L)_R$, we have $\hat{g}\hat{h} = c_{\hat{g},\hat{h}} \hat{h} \hat{g}$ for some $c_{\hat{g},\hat{h}} \in T(R)$. Furthermore, if $\tilde{g} = \gamma \hat{g}$ and $\tilde{h} = \delta \hat{h}$ are different lifts, where $\gamma, \delta \in T(R)$, then
\begin{gather*} c_{\tilde{g},\tilde{h}} = \frac{\bar{g} \cdot \delta}{\delta} \frac{\gamma}{\bar{h} \cdot \gamma} c_{\hat{g},\hat{h}}, \end{gather*}
where $\bar{g} \cdot \delta$ denotes the image of $\delta$ under the canonical action of $\Aut L$ on $T(R) = \Hom(L, R^\times)$.
\end{lem}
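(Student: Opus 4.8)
The plan is to recognize $c_{\hat g,\hat h}$ as a group commutator in $\Aut (V_L)_R$ and to work inside the short exact sequence furnished by Proposition~\ref{prop:automorphism-group-of-standard-R-form}. Restriction of an automorphism to the weight~$1$ space $\ft_R \cong L \otimes R$ defines a homomorphism $\Aut (V_L)_R \to O(L)$, and the proof of that proposition shows its kernel is exactly the torus $T(R) = \Hom(L, R^\times)$: the torus acts trivially on $\ft_R$, and conversely any automorphism fixing $\ft_R$ pointwise (hence fixing the $L$-grading) must lie in $T(R)$. So I would first record the exact sequence $1 \to T(R) \to \Aut (V_L)_R \to O(L) \to 1$, under which $\hat g$ and $\hat h$ map to $\bar g$ and $\bar h$.

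To produce $c_{\hat g,\hat h}$, I would look at the commutator $\hat g \hat h \hat g^{-1} \hat h^{-1}$. Its image in $O(L)$ is $\bar g \bar h \bar g^{-1} \bar h^{-1} = 1$ because $\bar g$ and $\bar h$ commute, so the commutator lies in the kernel $T(R)$. Setting $c_{\hat g,\hat h} := \hat g \hat h \hat g^{-1} \hat h^{-1} \in T(R)$ then gives exactly the relation $\hat g \hat h = c_{\hat g,\hat h}\, \hat h \hat g$.

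The key computational ingredient for the transformation rule is the conjugation action of a lift on $T(R)$. An element $t \in T(R) = \Hom(L,R^\times)$ acts on the degree-$\alpha$ component of $(V_L)_R$ by the scalar $t(\alpha)$, while $\hat g$ carries the degree-$\alpha$ component to the degree-$\bar g(\alpha)$ component. Chasing a homogeneous vector through $\hat g^{-1}$, then $t$, then $\hat g$ shows that $\hat g t \hat g^{-1}$ scales degree $\alpha$ by $t(\bar g^{-1}\alpha)$, i.e.\ $\hat g t \hat g^{-1} = \bar g \cdot t$ is precisely the canonical action of $\bar g$ on $\Hom(L,R^\times)$. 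This is independent of the chosen lift, since any two lifts of $\bar g$ differ by an element of the abelian group $T(R)$.

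Finally I would substitute $\tilde g = \gamma \hat g$ and $\tilde h = \delta \hat h$ into $c_{\tilde g,\tilde h} = \tilde g \tilde h \tilde g^{-1}\tilde h^{-1} = \gamma \hat g \delta \hat h \hat g^{-1}\gamma^{-1}\hat h^{-1}\delta^{-1}$ and move all torus elements to the front, using $\hat g \delta = (\bar g \cdot \delta)\hat g$, the relation $\hat g \hat h \hat g^{-1} = c_{\hat g,\hat h}\hat h$ from the previous step, and $\hat h \gamma^{-1}\hat h^{-1} = (\bar h \cdot \gamma)^{-1}$. Since $T(R)$ is abelian, the leftover factors rearrange to $\tfrac{\bar g \cdot \delta}{\delta}\tfrac{\gamma}{\bar h \cdot \gamma}\, c_{\hat g,\hat h}$, which is the claimed formula. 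The argument is entirely formal once the exact sequence and the conjugation action are in place; I expect the only real pitfall to be bookkeeping, in particular fixing the direction of the natural action so that one obtains $\bar g \cdot \delta$ rather than $\bar g^{-1}\cdot \delta$, which the explicit grading computation above is designed to settle.
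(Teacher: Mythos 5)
Your proposal is correct and is essentially the paper's own argument with the details written out: the paper also deduces the first claim from the structure $\uAut(V_L) = T\,O\big(\hat L\big)$ of Proposition~\ref{prop:automorphism-group-of-standard-R-form} (the commutator maps to $1$ in $O(L)$, hence lies in $T(R)$) and derives the transformation rule from the same relation $\hat g \delta = (\bar g\cdot\delta)\hat g$ that you use. Your explicit grading computation confirming that conjugation by $\hat g$ realizes the canonical action $(\bar g\cdot t)(\alpha)=t\big(\bar g^{-1}\alpha\big)$ is a welcome check of the sign/direction convention, which the paper leaves implicit.
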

\begin{proof}
The first claim is straightforward from the description of the automorphism group of~$(V_L)_R$ given in Proposition \ref{prop:automorphism-group-of-standard-R-form}. The second claim follows from a short calculation, essentially using the fact that $\hat{g} \delta = (\bar{g} \cdot \delta) \hat{g}$.
\end{proof}

\begin{lem} \label{lem:fixed-point-free-automorphisms}Let $L$ be a positive definite even unimodular lattice of rank~$d$ with no roots, let~$\bar{g}$ be a fixed-point free automorphism of $L$ such that all nontrivial powers are also fixed-point free, and let $n$ be its order. Then the set of automorphisms $g$ of $(V_L)_R$ that map to $\bar{g}$ in $O(L)$ is a torsor under~$T(R)$, and each such lift has order $n$. Given any pair $\hat{g}$ and $\tilde{g}$ of lifts of $\bar{g}$ to $\Aut (V_L)_R$, there exists an extension $R'$ of $R$ given by adjoining finitely many roots of units, such that $\tilde{g}$ and $\hat{g}$ are conjugate in $\Aut (V_L)_{R'}$, in fact by an element of $T(R')$. In particular, if~$R^\times$ is $n$-divisible $($i.e., each unit has an $n$th root$)$, then all lifts of $\bar{g}$ are conjugate in $\Aut (V_L)_R$, and if $R$ is a subring of $\bC$ containing $e(1/2n)$, then all lifts of $\bar{g}$ in $\Aut (V_L)_\bZ$ are conjugate in $\Aut (V_L)_R$. Finally, if $R$ is a subring of $\bC$ containing $\frac{1}{n}$ and $e(1/n)$, then for any fixed lift $g$ of $\bar{g}$, $(V_L)_R$ splits into a direct sum of irreducible $(V_L)_R^g$-modules $\big\{ (V_L)_R^{g = e(k/n)} \big\}_{k=0}^{n-1}$, and the invariant bilinear form induces a homogeneous perfect pairing between $(V_L)_R^{g = e(k/n)}$ and $(V_L)_R^{g = e(-k/n)}$.
\end{lem}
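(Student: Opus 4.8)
The plan is to build everything on the explicit description $\Aut (V_L)_R = T(R)\,O\big(\hat{L}\big)(R)$ from Proposition~\ref{prop:automorphism-group-of-standard-R-form}. First I would record that the forgetful map $\Aut (V_L)_R \to O(L)$ has kernel exactly the torus $T$, since $T$ acts trivially on $\ft_R$ and $\uHom(L,\mu_2) \subseteq T$, and that $\bar g$ admits at least one lift, namely a permutation lift $\hat g \in O\big(\hat L\big)$ already defined over $\bZ$. Hence the fibre over $\bar g$ is a torsor under $T(R)$. Since every element of $T$ and of $O\big(\hat L\big)$ preserves the invariant bilinear form (a character $t$ multiplies ${\rm e}^\alpha$ and ${\rm e}^{-\alpha}$ by reciprocal units), every lift preserves the form.

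For the order, I would write a general lift as $g = t\hat g$ with $t \in T(R)$ and use $\hat g^k t \hat g^{-k} = \bar g^k\cdot t$ to get $g^n = \big(\prod_{k=0}^{n-1}\bar g^k\cdot t\big)\hat g^n$. The product character sends $\alpha \mapsto t\big(\sum_{k=0}^{n-1}\bar g^{-k}\alpha\big)$, and since $\bar g$ is fixed-point free the averaging operator $\sum_{k=0}^{n-1}\bar g^{-k}$ vanishes on $L$; thus $g^n = \hat g^n$ is independent of $t$, and it suffices to compute the order of the single lift $\hat g$. Now $\hat g^n$ lies in $\ker\big(O(\hat L)\to O(L)\big) = \uHom(L,\mu_2)$, so it is a sign character $\epsilon$ which, commuting with $\hat g$, is $\bar g$-invariant and therefore factors through $L/(1-\bar g)L$. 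The identity $(1-\bar g)\sum_{k=0}^{n-1}(n-k)\bar g^k = n$ on $L$ (valid because $\sum_k \bar g^k = 0$) shows $nL \subseteq (1-\bar g)L$, so this quotient has exponent dividing $n$; when $n$ is odd this alone forces the order-two character $\epsilon$ to be trivial. For even $n$ the vanishing of $\epsilon$ is the one genuinely delicate point, and I would settle it via the explicit cocycle formula for the standard lift together with the hypothesis that every nontrivial power $\bar g^k$ is also fixed-point free, which is exactly the input that rules out the order-doubling phenomenon. Granting $\epsilon = 1$, every lift has order exactly $n$.

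Next I turn to conjugacy. A short computation as in Lemma~\ref{lem:lifts-of-commuting-pairs} gives $s\hat g s^{-1} = \big(s/(\bar g\cdot s)\big)\hat g$ for $s \in T(R)$, so $\hat g$ and $t\hat g$ are conjugate by $s$ precisely when $s\circ(1-\bar g^{-1}) = t$. Because $1-\bar g^{-1}$ is injective on $L$ with cokernel of exponent dividing $n$ (the same identity, applied to $\bar g^{-1}$), the value of $s$ is forced on the finite-index sublattice $(1-\bar g^{-1})L$ and must then be extended across a quotient of exponent dividing $n$; this extension requires only $n$-th roots of the units $t(\beta)$. Adjoining these finitely many roots produces $R'$ and conjugates the two lifts there. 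If $R^\times$ is $n$-divisible the roots already lie in $R$; and for two lifts defined over $\bZ$, where $t$ is valued in $\{\pm 1\}$, the required roots are $2n$-th roots of unity, all present once $e(1/2n) \in R$, so conjugacy holds over $R$ in these cases.

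Finally, the eigenspace decomposition is a direct application of Lemma~\ref{lem:decomposition-is-still-self-dual}: the lift $g$ has order $n$, preserves the form, $n$ is invertible in $R$, and $\mu_n(R) = \langle e(1/n)\rangle \cong \bZ/n\bZ$; hence $(V_L)_R = \bigoplus_{k=0}^{n-1} (V_L)_R^{g=e(k/n)}$ with a homogeneous perfect pairing between the $e(k/n)$- and $e(-k/n)$-eigenspaces, each a module over $(V_L)_R^{g}$. Irreducibility of these eigenspaces I would deduce from the simplicity of $(V_L)_\bC$ via Theorem~3 of~\cite{DM94a} applied to the $\bZ/n\bZ$-grading dual to $g$, the $R$-forms inheriting the statement by base change. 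The main obstacle throughout is the sign computation showing $\hat g^n = 1$ for even $n$; the remainder is torsor bookkeeping together with the elementary lattice identity $nL \subseteq (1-\bar g)L$.
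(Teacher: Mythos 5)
Your overall architecture matches the paper's: the torsor claim comes from Proposition~\ref{prop:automorphism-group-of-standard-R-form}, conjugacy is achieved by solving a twisted-coboundary equation in $T$ after adjoining $n$-th roots of finitely many units, the $\bZ$-lift and $n$-divisible cases are read off from $T(\bZ)=\Hom(L,\pm1)$, and the eigenspace decomposition is Lemma~\ref{lem:decomposition-is-still-self-dual}. Two of your variations are genuine improvements in transparency: the identity $g^n=\hat g^n$ for $g=t\hat g$ (via vanishing of the norm map $\sum_k\bar g^{-k}$ on $L$) reduces the order claim for \emph{all} lifts to the single standard lift directly, whereas the paper gets this only implicitly by combining one good lift with the conjugacy statement; and your formulation of conjugacy as solving $s\circ(1-\bar g^{-1})=t$ on the finite-index sublattice $(1-\bar g^{-1})L$, using $nL\subseteq(1-\bar g)L$, is equivalent to the paper's explicit product formula $\delta=\gamma'(\bar g\cdot\gamma')^2\cdots(\bar g^{n-1}\cdot\gamma')^n$ but explains \emph{why} only $n$-th roots are needed. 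You also supply an irreducibility argument (via Theorem~3 of~\cite{DM94a}) that the paper's one-line appeal to Lemma~\ref{lem:decomposition-is-still-self-dual} elides.

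However, there is one unfinished step, and it is the only nontrivial content of the order claim: you leave the vanishing of the sign character $\epsilon=\hat g^n$ for even $n$ as something you ``would settle via the explicit cocycle formula.'' That is a promissory note, not an argument, and it is exactly where the hypothesis on fixed-point-free powers must be used. The paper closes it by citing Lemma~12.1 of~\cite{B92}: an order-$n$ isometry $\bar g$ admits an order-$n$ lift to $O\big(\hat L\big)$ unless $n$ is even and $\big(\bar g^{n/2}v,v\big)$ is odd for some $v\in L$. Since $\bar g^{n/2}$ is a fixed-point free involution, it acts as $-1$ on $L$, so $\big(\bar g^{n/2}v,v\big)=-(v,v)\in 2\bZ$ because $L$ is even, and the obstruction vanishes. (Note also that your odd-$n$ argument via the exponent of $L/(1-\bar g)L$, while correct, is subsumed by this criterion.) With that one computation supplied, your proof is complete and slightly more self-contained than the paper's.
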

\begin{proof}
The parametrization of lifts of $\bar{g}$ by a $T(R)$-torsor is given in Proposition \ref{prop:automorphism-group-of-standard-R-form}. To show that each lift has order $n$, we first show that there is a lift to $O\big(\hat{L}\big)$ with order $n$. The obstruction to the existence of an order $n$ lift of any order $n$ automorphism is described in \cite[Lemma~12.1]{B92} in the case of the Leech lattice, but the argument applies in general. Namely, if $n$ is odd, there is no obstruction, and if $n$ is even, there is an obstruction if and only if $\big(\bar{g}^{n/2}v,v\big)$ is an odd integer for some $v \in L$. Since we assume $\bar{g}^{n/2}$ is a fixed-point free element of order~2, all of its eigenvalues are~$-1$, so $\big(\bar{g}^{n/2}v,v\big) = (-v,v) \in 2\bZ$, and the obstruction vanishes.

For the general problem of conjugation, we consider lifts $\hat{g}$ and $\tilde{g}$ of $\bar{g}$, and note that they satisfy $\gamma \hat{g} = \tilde{g}$ for a unique $\gamma \in T(R)$. If there is some $\delta \in T(R')$ such that $\delta^{-1} \hat{g} \delta = \tilde{g}$ for some extension $R'$ of $R$, then by Lemma~\ref{lem:lifts-of-commuting-pairs}, we have $\delta^{-1}(\bar{g} \cdot \delta) = \gamma$. We note that by the identification $T(R) = \Hom(L,R^\times)$, we may define $\gamma$ by the values in $R^\times$ it takes on a basis of~$L$. Then, if we extend~$R$ to~$R'$ by adjoining $n$-th roots of those values, we find that $\gamma$ has an $n$-th root $\gamma' \in T(R')$. Then by setting
\begin{gather*} \delta = \gamma' (\bar{g} \cdot \gamma')^2 \cdots \big(\bar{g}^{n-1} \cdot \gamma'\big)^n \in T(R'), \end{gather*}
we find that $\delta^{-1}(\bar{g} \cdot \delta) = (\gamma')^n = \gamma$. Thus,
\begin{gather*} \delta^{-1} \hat{g} \delta = \delta^{-1} (\bar{g} \cdot \delta) \hat{g} = \gamma \hat{g} = \tilde{g}, \end{gather*}
so $\hat{g}$ and $\tilde{g}$ are conjugate by an element of $T(R')$.

For the claim about $n$-divisible $R^\times$, the identification $T(R) = \Hom(L,R^\times)$ implies $T(R)$ is also $n$-divisible. For the claim about subrings of $\bC$, we note that $\Aut(V_L)_\bZ = O\big(\hat{L}\big)$, so any discrepancy $\gamma$ of lifts of $\bar{g}$ necessarily lies in $T(\bZ) = \Hom(L,\pm 1)$. Thus, if $R$ contains $e(1/2n)$, then $T(R)$ contains all $n$-th roots of elements of $T(\bZ)$.

The last claim follows immediately from Lemma \ref{lem:decomposition-is-still-self-dual}.
\end{proof}

\begin{prop} \label{prop:centralizers-for-Leech}
Let $\Lambda$ be the Leech lattice, i.e., the unique positive definite even uni\-mo\-dular lattice of rank~$24$ with no roots, and let $\bar{g}$ be a fixed-point free automorphism of prime order~$p$ $($therefore, an element in one of the classes~$2a$, $3a$, $5a$, $7a$, $13a$ according to the notation of~{\rm \cite{GAP})}. Let $R$ be a subring of $\bC$ that contains $e(1/p)$, and let $g$ be a lift of $\bar{g}$ to $(V_\Lambda)_R$. Then $C_{\Aut (V_\Lambda)_R}(g) \cong C_{\Aut V_\Lambda}(g)$, and in particular, has the form $p^{24/(p-1)}.C_{Co_0}(\bar{g})$.
\end{prop}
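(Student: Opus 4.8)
The plan is to read the centralizer off the structure of $\uAut (V_\Lambda)_\bZ = T\,O\big(\hat{L}\big)$ from Proposition~\ref{prop:automorphism-group-of-standard-R-form}, comparing $R$-points with $\bC$-points factor by factor. Since $R$ is a domain, $\mu_2(R) = \mu_2(\bC) = \{\pm 1\}$, so the sign-permutation lifts generating $O\big(\hat{L}\big)$ are defined over $\bZ$ and $O\big(\hat{L}\big)(R) = O\big(\hat{L}\big)(\bC)$. Hence the inclusion $\Aut (V_\Lambda)_R \hookrightarrow \Aut V_\Lambda$ differs only in the torus, $T(R) \hookrightarrow T(\bC)$, and it restricts to an injection $C_{\Aut (V_\Lambda)_R}(g) \hookrightarrow C_{\Aut V_\Lambda}(g)$. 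The whole content is to prove this is onto, equivalently that every $\bC$-automorphism commuting with $g$ is already defined over $R$. I would control the two ends of $1 \to T \to \Aut(V_\Lambda) \to O(\Lambda) \to 1$ separately: the intersection $C(g)\cap T$ and the image of $C(g)$ in $O(\Lambda) = Co_0$.

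For the torus end, $g$ acts on $T = \Hom(\Lambda,\bG_m)$ through $\bar g$, so $C(g) \cap T = T^{\bar g} = \Hom(\Lambda/(\bar g - 1)\Lambda, \bG_m)$. Because $\bar g$ is fixed-point free of prime order, the cyclotomic relation $\Phi_p(\bar g) = 0$ makes $\Lambda$ a module over the Dedekind ring $\bZ[\zeta_p]$ with $\bar g$ acting as $\zeta_p$; writing $\Lambda$ as a direct sum of fractional ideals shows $Q := \Lambda/(\bar g - 1)\Lambda \cong \bF_p^{24/(p-1)}$ is elementary abelian of order $p^{24/(p-1)}$. Thus $T^{\bar g}(R) = \Hom(Q, \mu_p(R))$, and since $e(1/p) \in R$ gives $\mu_p(R) = \mu_p(\bC) \cong \bZ/p$, we get $T^{\bar g}(R) = T^{\bar g}(\bC) \cong Q$, the normal $p^{24/(p-1)}$ subgroup in the assertion. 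The image of $C(g)$ in $O(\Lambda)$ visibly lands in $C_{O(\Lambda)}(\bar g) = C_{Co_0}(\bar g)$, so it remains to produce, for each $\bar\sigma \in C_{Co_0}(\bar g)$, a commuting lift defined over $R$.

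For odd $p$ I would first invoke Lemma~\ref{lem:fixed-point-free-automorphisms} (applicable since $e(1/p)\in R$ forces $e(1/2p)\in R$) to replace $g$ by a conjugate lift $\theta \in O\big(\hat{L}\big)(R)$ of order $p$, which changes neither centralizer up to conjugacy. Given $\bar\sigma$, pick any $\tau \in O\big(\hat{L}\big)(R)$ over it; then $c := \tau\theta\tau^{-1}\theta^{-1}$ lies in $\ker\big(O\big(\hat{L}\big) \to O(\Lambda)\big) = \Hom(\Lambda,\mu_2)$, and by Lemma~\ref{lem:lifts-of-commuting-pairs} the element $t\tau$ commutes with $\theta$ exactly when $(\bar g \cdot t)\,t^{-1} = c$. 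Since $\Phi_p(1) = p$ is odd, $\bar g - 1$ is invertible on $\Lambda/2\Lambda$, so the induced operator is invertible on $\Hom(\Lambda,\mu_2)$ and yields a $\mu_2$-valued solution $t$; then $t\tau \in \Aut (V_\Lambda)_R$ is the desired commuting lift. (Conceptually, every complex solution lies in the coset $t\cdot T^{\bar g}(\bC) \subseteq \Hom(\Lambda,\mu_{2p})$, and $\mu_{2p}(R) = \mu_{2p}(\bC)$, so all commuting automorphisms are $R$-rational.) Surjectivity onto $C_{Co_0}(\bar g)$ together with $T^{\bar g}(R) = T^{\bar g}(\bC)$ forces $C_{\Aut (V_\Lambda)_R}(g) = C_{\Aut V_\Lambda}(g)$, an extension of $C_{Co_0}(\bar g)$ by $p^{24/(p-1)}$.

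The hard case, and the main obstacle, is $p = 2$: fixed-point freeness forces $\bar g = -1$, so $C_{Co_0}(\bar g) = Co_0$, while $\bar g - 1 = -2$ is no longer invertible modulo $2$ and the root-of-unity argument would demand $i = e(1/4) \in R$, which is unavailable. Here I would instead exploit that $-1$ is central in $Co_0$: the task becomes choosing an order-$2$ lift $\theta$ of $-1$ that is central in $O\big(\hat{L}\big)$, equivalently showing the cocycle $\bar\sigma \mapsto [\theta,\tau] \in \Hom(\Lambda,\mu_2)$ is a coboundary so $\theta$ can be corrected to a central element. With such a $\theta$, all of $O\big(\hat{L}\big)(R)$ commutes with it, and since $T^{\bar g} = \Hom(\Lambda,\mu_2) \subseteq O\big(\hat{L}\big)$ when $p=2$, one gets $C_{\Aut (V_\Lambda)_R}(\theta) = O\big(\hat{L}\big)(R) = O\big(\hat{L}\big)(\bC) = C_{\Aut V_\Lambda}(\theta) \cong 2^{24}.Co_0$ directly, using only $\mu_2 \subseteq R$. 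Verifying the existence of this central lift from the explicit sign cocycle defining $\hat{L}$, and reconciling an arbitrary order-$2$ lift with it, is the delicate point on which the $p=2$ case—the last open prime in modular moonshine—turns.
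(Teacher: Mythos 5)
Your torus analysis is correct and lands on the same conclusion as the paper by a different route: the paper decomposes $T$ equivariantly using Diederichsen's classification of indecomposable $\bZ[\bZ/p\bZ]$-lattices and checks directly that $\bar g$-fixed points have coordinates in $\mu_p(\bC)$, whereas you compute $T^{\bar g}=\Hom(\Lambda/(\bar g-1)\Lambda,\bG_m)$ via the $\bZ[\zeta_p]$-module structure of $\Lambda$; both give $T^{\bar g}(R)=T^{\bar g}(\bC)\cong p^{24/(p-1)}$. The published proof in fact stops essentially there -- it notes that the $O\big(\hat{\Lambda}\big)$ part is common to the $\bZ$- and $\bC$-forms and only verifies the torus fixed points -- so the lifting problem you isolate (producing over $R$ a commuting lift of each $\bar\sigma\in C_{Co_0}(\bar g)$) is precisely what the paper leaves implicit, and your $\mu_2$-valued cocycle argument for odd $p$, using that $\det(\bar g-1)=\pm p^{24/(p-1)}$ is odd, is a genuine and correct addition for lifts lying in $O\big(\hat{\Lambda}\big)$.

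There are, however, two real gaps. First, your reduction of an arbitrary lift $g$ to one in $O\big(\hat{\Lambda}\big)$ is not supplied by Lemma~\ref{lem:fixed-point-free-automorphisms}: that lemma conjugates lifts lying in $\Aut(V_\Lambda)_\bZ$ inside $\Aut(V_\Lambda)_R$, but your $g$ is only assumed to be a lift over $R$, so it differs from $O\big(\hat{\Lambda}\big)$ by an arbitrary $\gamma\in\Hom(\Lambda,R^\times)$; removing $\gamma$ requires $p$-th roots of arbitrary units of $R$, and conjugacy over an extension $R'$ does not transport an equality of centralizers back down to $R$. Second, the $p=2$ case is not actually finished: you reduce it to the existence of a central lift $\theta$ of $-1$ in $O\big(\hat{\Lambda}\big)$ and to ``reconciling'' an arbitrary order-$2$ lift with it, and you prove neither. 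The first point is true and standard, but the second cannot be closed in the stated generality: for $g=\gamma\theta$ with $\theta$ central and $1\neq\gamma\in\Hom(\Lambda,\pm 1)$, an element $t\sigma$ centralizes $g$ exactly when $t^2=\gamma(\bar\sigma\cdot\gamma)^{-1}$, which over $R=\bZ$ (where the hypothesis $e(1/2)\in R$ is vacuous) forces $\bar\sigma\cdot\gamma=\gamma$; since $Co_0$ does not act trivially on $\Hom(\Lambda/2\Lambda,\pm 1)$, there are choices of $\gamma$ and $\bar\sigma$ for which $C_{\Aut(V_\Lambda)_\bZ}(\gamma\theta)$ is strictly smaller than $C_{\Aut V_\Lambda}(\gamma\theta)$. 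So the ``delicate point'' you flag is not merely delicate: closing it requires either restricting $g$ (for example to lifts in $O\big(\hat{\Lambda}\big)$, which is how the proposition is used downstream) or strengthening the hypothesis on $R$ (for example requiring $e(1/2p)\in R$, which makes the quadratic/$p$-th-root equations for $t$ solvable in $T(R)$ when $\gamma$ takes values in roots of unity). As written, the proposal does not prove the statement for the prime that matters most.
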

\begin{proof}It suffices to show that any automorphism of $(V_\Lambda)_\bC$ that commutes with $g$ also preserves the standard $R$-form, viewed as an $R$-submodule. From the description of the automorphism group given in Proposition~\ref{prop:automorphism-group-of-standard-R-form}, it is clear that both the $\bC$-form and the $R$-form have the $O\big(\hat{\Lambda}\big)$ part of the automorphism group in common, so it suffices to show that all centralizing complex elements in the torus $T = D(\Lambda)$ are defined over~$R$.

We claim that the split torus $T$ equivariantly decomposes under $\bar{g}$ as a direct sum of $\frac{24}{p-1}$ copies of the torus $\bG_m^{p-1}$ with $\bar{g}$ acting on points as
\begin{gather*} (a_1, \ldots ,a_{p-1}) \mapsto \left(\frac{1}{a_1\cdots a_{p-1}},a_1,\ldots,a_{p-2}\right). \end{gather*}
To show this, we may use the fact that $D$ gives an involutive anti-equivalence between the category of split tori and the category of free abelian groups of finite rank (see, e.g., \cite[Expos\'e~VIII, Section~1]{SGA3}). The claim then follows from the classification of indecomposable $\bZ$-free $\bZ[\bZ/p\bZ]$-modules given in~\cite{D38} (see also \cite[Theorem~74.3]{CR62}), and in particular, the fact that only one isomorphism type is fixed-point free.

Any centralizing $\bC$-point $(a_1, \ldots, a_{p-1})$ in the torus $\bG_m^{p-1}$ is fixed by this action of $\bar{g}$, so the coordinates satisfy
\begin{gather*} a_1=\cdots = a_{p-1} = \frac{1}{a_1\cdots a_{p-1}} \in \mu_p(\bC). \end{gather*}
However, $R$ contains a full set of $p$-th roots of unity by our hypothesis, so all of the centralizing complex points in~$T$ are defined over~$R$.
\end{proof}

\subsection{The cyclic orbifold construction}

We recall that if $V$ is a simple, $C_2$-cofinite, holomorphic vertex operator algebra $V$, and $g$ is an automorphism of finite order $n$, then by Theorem 10.3 of \cite{DLM97}, there is a unique $g$-twisted $V$-module, up to isomorphism (which we will call $V(g)$), and its $L(0)$-spectrum lies in some coset of $\frac{1}{n}\bZ$ in $\bQ$. We say that $g$ is anomaly-free if this coset is $\frac{1}{n}\bZ$, and we say that $g$ is anomalous otherwise.

The construction that makes this paper possible is the following: By Theorem 5.15 of \cite{vEMS}, if $V$ is a simple, $C_2$-cofinite, holomorphic vertex operator algebra $V$ of CFT type, and $g$ is an automorphism of finite order $n$, such that the nontrivial irreducible twisted modules $V(g^i)$ have strictly positive $L(0)$-spectrum, then there is some $t \in \bZ/n\bZ$ (uniquely determined by the property that the $L(0)$-spectrum of $V(g)$ lies in $\frac{t}{n^2} + \frac{1}{n}\bZ$) and an abelian intertwining algebra structure on ${}^gV = \bigoplus_{i=0}^{n-1} V(g^i)$, graded by an abelian group $D$ that lies in an exact sequence $0 \to \bZ/n\bZ \to D \to \bZ/n\bZ \to 0$, with addition law determined by the ``add with carry'' 2-cocycle
\begin{gather*} c_{2t}(i,k) = \begin{cases} 0, & i_n + k_n < n, \\ 2t, & i_n + k_n \geq n, \end{cases} \end{gather*}
where the notation $i_n$ denotes the unique representative of $i \in \bZ/n\bZ$ in $\{0,\ldots,n-1\}$. By \textit{loc.\ cit.} Proposition~5.13, the quadratic form $q_\Delta$ on $D$ given by conformal weights is isomorphic to the discriminant form on the even lattice with Gram matrix $\left( \begin{smallmatrix} -2t_n & n \\ n & 0 \end{smallmatrix} \right)$.

Furthermore, by Theorem 5.16, if $t=0$ (i.e., $g$ is anomaly-free), then the abelian intertwining algebra ${}^gV$ is naturally graded by $D = \bZ/n\bZ \times \bZ/n\bZ$, such that $V$ is the sum of the degree $(0,i)$ pieces, and that there is a simple $C_2$-cofinite, holomorphic vertex operator algebra $V/g$ of CFT type given by the sum of the degree $(j,0)$ pieces, for $0 \leq j < n$. The natural $\bZ/n\bZ$-grading from this decomposition endows $V/g$ with a canonical automorphism $g^*$ whose order is equal to~$|g|$, such that $(V/g)/g^* \cong V$ and $g^{**} = g$. More generally, they showed that if $H$ is any order $n$ subgroup of $\bZ/n\bZ \times \bZ/n\bZ$ that is isotropic with respect to $q_\Omega$, then $\bigoplus_{a \in H} ({}^gV)^a$ is a~holomorphic $C_2$-cofinite vertex operator algebra of CFT type.

\begin{prop} \label{prop:R-form-for-abelian-intertwining-algebra}Let $V$ be a holomorphic $C_2$-cofinite vertex operator algebra of CFT type, and let $g$ be an anomaly-free automorphism of order~$n$. Suppose both $V$ and $V/g$ admit $R$-forms~$V_R$ and~$(V/g)_R$ for some subring $R \subset \bC$ containing $1/n$ and $e(1/2n)$, such that the $R$-forms coincide in $V \cap V/g$ in ${}^gV$, and both $g$ and $g^*$ are automorphisms of the respective $R$-forms. Suppose further that both $V_R$ and $(V/g)_R$ admit $R$-invariant bilinear forms for which they are self-dual, and that coincide on their intersection in ${}^gV$, and assume that $g$ and $g^*$ preserve the bilinear form. Let $G$ be the automorphism group of $V_R$ and let $G^*$ be the automorphism group of $(V/g)_R$. Then:
\begin{enumerate}\itemsep=0pt
\item[$1.$] The abelian intertwining algebra ${}^gV$ has a unique $R$-form $({}^gV)_R$ with a bilinear form extending those on $V_R$ and $(V/g)_R$, and it is self-dual with respect to this form.
\item[$2.$] The group of homogeneous automorphisms of $({}^gV)_R$ is equal to a central extension of $C_G(g)$ by $\langle g^* \rangle$, and also a central extension of $C_{G^*}(g^*)$ by $\langle g \rangle$.
\item[$3.$] For any divisor $d$ of $n$, $\bigoplus_{j=0}^{n/d-1} \bigoplus_{i=0}^{d-1} ({}^gV)_R^{dj,(n/d)i}$ is an $R$-form of $V/g^d$ that is self-dual with respect to the induced invariant inner product.
\end{enumerate}
\end{prop}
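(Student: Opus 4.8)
The three parts have rather different flavors, so I would treat them in turn, with essentially all the work concentrated in~(2).

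For~(1) the plan is to apply Proposition~\ref{prop:extension-of-self-dual-form} directly to the abelian intertwining algebra $V := {}^gV$ over $\bC$, taking $A = D = \bZ/n\bZ \times \bZ/n\bZ$ with the two generating subgroups $A_1 = 0 \times \bZ/n\bZ$ and $A_2 = \bZ/n\bZ \times 0$, so that $V^{A_1} = V$ and $V^{A_2} = V/g$. Since $g$ is anomaly-free the level is $N = n$, matching the hypothesis $1/n, e(1/2n) \in R$. I would verify the five hypotheses as follows: self-duality of ${}^gV$ over $\bC$ holds because the invariant form pairs $V(g^i)$ perfectly with its contragredient $V(g^{-i})$; irreducibility of each graded piece $({}^gV)^a$ as a $V^0 = V^g$-module is part of the orbifold structure theory behind \cite{vEMS}; the two subalgebras generate because the product $({}^gV)^{(0,i)}\times ({}^gV)^{(j,0)} \to ({}^gV)^{(j,i)}$ is a nonzero intertwining operator into an irreducible module, hence surjective; and the self-dual, $R$-valued $R$-forms $V_R, (V/g)_R$, agreeing on $V^{A_1}\cap V^{A_2} = V^0$, are exactly the data we are handed. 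Proposition~\ref{prop:extension-of-self-dual-form} then yields the unique self-dual, well-graded $R$-form $({}^gV)_R$ with $R$-valued form extending both, which is~(1).

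For~(2) I would use the convention that a homogeneous automorphism preserves each graded piece $({}^gV)_R^a$; such maps commute with all characters of $A$, and since $g$ acts by $e(i/n)$ and $g^*$ by $e(j/n)$ on $({}^gV)^{(j,i)}$, the subgroups $\langle g\rangle$ and $\langle g^*\rangle$ are central. I would then study the restriction homomorphism $\rho_1$ to $\Aut V_R$ given by restricting to $V^{A_1}$. Its image lies in $C_G(g)$ (grading-preserving maps commute with $g$), and its kernel consists of automorphisms acting by a scalar on each irreducible piece (Schur) that is trivial on $A_1$, i.e.\ a character of $A/A_1 \cong \bZ/n\bZ$ valued in $\mu_n(R)$. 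As $e(1/n) = e(1/2n)^2 \in R$ and $n$ is invertible, $\mu_n(R) \cong \bZ/n\bZ$, and these characters are exactly the powers of $g^*$, so $\ker\rho_1 = \langle g^*\rangle$. The substance is surjectivity onto $C_G(g)$: for $\psi \in C_G(g)$, uniqueness of the $g^i$-twisted modules gives a $\psi$-semilinear lift $\tilde\psi_i$ on each $V(g^i)$, unique up to a scalar, and because $\psi$ commutes with $g$ these preserve the internal $g$-eigenspace grading and assemble into a homogeneous $\bC$-automorphism $\tilde\psi$ of ${}^gV$ restricting to $\psi$. Running the same argument with $(V,g)$ and $(V/g, g^*)$ interchanged shows $\rho_2$ has kernel $\langle g\rangle$, image in $C_{G^*}(g^*)$, and is surjective as well, giving both central extensions.

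The hard part is rationality: I must exhibit a lift $\tilde\psi$ that preserves $({}^gV)_R$, i.e.\ with $\tilde\psi_i(V(g^i)_R) = V(g^i)_R$ for each $i$, where $V(g^i)_R := \bigoplus_b ({}^gV)_R^{(i,b)}$. The key input is that $V(g^i)_R$ is a $V_R$-module $R$-form of the irreducible module $V(g^i)$, and that self-duality of $({}^gV)_R$, which pairs $V(g^i)_R$ unimodularly with $V(g^{-i})_R$, pins such a form down up to an overall scalar; hence $\tilde\psi_i(V(g^i)_R) = c_i V(g^i)_R$ with $c_0 = 1$. Compatibility of $\tilde\psi$ with the nonzero products $V(g^i)_R \cdot V(g^k)_R \to V(g^{i+k})_R$ forces $(c_i)$ to be multiplicative, hence a character of $\bZ/n\bZ$ with $c_i^n = 1$; this character is realized by a power of $g^*$, so composing $\tilde\psi$ with the appropriate power of $(g^*)^{-1}$ produces a lift preserving $({}^gV)_R$. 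This scalar-rigidity of the integral form of an irreducible twisted module is the step I expect to require the most care.

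For~(3) I would observe that the index set $\{(dj, (n/d)i)\}$ is the order-$n$ subgroup $H = d(\bZ/n\bZ) \times (n/d)(\bZ/n\bZ)$ of $A$, and that $H$ is isotropic for the discriminant form $q((a,b)) = ab/n \bmod \bZ$, since $q(dj,(n/d)i) = dj(n/d)i/n = ji \equiv 0$. Computing $g^d$-invariants sector by sector identifies $\bigoplus_{a\in H}({}^gV)^a$ with $V/g^d = \bigoplus_k V(g^{dk})^{g^d}$, which is a holomorphic $C_2$-cofinite vertex operator algebra of CFT type by the isotropic-subgroup case of \cite{vEMS}. The $R$-form $W := \bigoplus_{a\in H}({}^gV)_R^a$ is closed under products (as $H$ is a subgroup and $({}^gV)_R$ is), has integral conformal weights (as $q|_H = 0$), and satisfies $W \otimes_R \bC = V/g^d$ since taking the $H$-graded part commutes with base change; it thus inherits a M\"obius vertex operator algebra structure over $R$. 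Self-duality is then immediate: by self-duality of $({}^gV)_R$ the invariant form restricts to a perfect pairing between $({}^gV)_R^a$ and $({}^gV)_R^{-a}$, distinct graded pieces are orthogonal, and $-H = H$, so the form on $W$ is a direct sum of perfect pairings and $W$ is self-dual.
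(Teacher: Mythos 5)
Your proposal matches the paper's proof in structure and substance: part~(1) is the paper's direct appeal to Proposition~\ref{prop:extension-of-self-dual-form} (you verify its hypotheses in more detail than the paper does), and part~(3) is the paper's argument verbatim --- isotropy of the order-$n$ index subgroup plus restriction of the self-dual pairing. For part~(2) the paper sets up the same two restriction maps and the same kernel computation, but then simply cites the argument of Proposition~2.5.2 of \cite{GM4} ``with minimal change''; you instead write the lifting argument out, and in doing so you correctly isolate the one point that is genuinely new over $R$, namely arranging that a complex lift of $\psi \in C_G(g)$ preserve $({}^gV)_R$ --- though your key claim that $\tilde\psi_i(V(g^i)_R)$ is a scalar multiple of $V(g^i)_R$ is asserted rather than proved, and note that the uniqueness clause of Proposition~\ref{prop:extension-of-self-dual-form} cannot be invoked directly here since one does not know a priori that $\tilde\psi$ preserves $(V/g)_R$. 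Since the paper's own proof is no more explicit on this step, this is a faithful (indeed more detailed) rendering of its argument rather than a gap relative to it.
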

\begin{proof}The first claim follows immediately from Proposition~\ref{prop:extension-of-self-dual-form}.

For the second claim, we note that restriction yields the following commutative diagram:
\begin{gather*} \xymatrix{ \Aut({}^gV)_R \ar[r] \ar[d] & C_{\Aut V_R}(g) \ar[d] \\ C_{\Aut (V/g)_R}(g^*) \ar[r] & \Aut \big(V_R^g\big), } \end{gather*}
where the centralizers of $g$ (resp.~$g^*$) are precisely the groups of automorphisms that are compatible with the grading by eigenspaces for $g$ (resp.~$g^*$). It suffices to show that the maps out of $\Aut({}^gV)_R$ are surjective with cyclic central kernel of order $|g|$, and the argument given in the proof of Proposition~2.5.2 of~\cite{GM4} works here with minimal change.

For the third claim, the fact that $\bigoplus_{j=0}^{n/d-1} \bigoplus_{i=0}^{d-1} {}^gV^{dj,(n/d)i}$ is a holomorphic vertex operator algebra follows from the fact that the group of degrees in question is isotropic of order $n$, and the identification with $V/g^d$ is straightforward. The fact that we have an $R$-form that is self-dual follows from the corresponding claims for~$({}^gV)_R$.
\end{proof}

\section{Forms of the monster vertex operator algebra}

We now use the tools from the previous section to construct $R$-forms of $V^\natural$, as $R$ ranges over some cyclotomic $S$-integer rings.

\subsection{The Abe--Lam--Yamada method}

In~\cite{ALY17}, several constructions of $V^\natural$ were given by cyclic orbifolds of odd prime order $p$ on $V_\Lambda$, and analyzed using cyclic orbifolds of order $2p$ in order to produce a comparison with the original order~2 orbifold construction of~\cite{FLM88}. We will apply a similar method to produce actions of the monster on orbifolds over various rings.

To be specific, we consider cyclic orbifolds of $V_\Lambda$ with respect to lifts of fixed-point free isometries of $\Lambda$, such that those whose order is prime yield $V^\natural$, and those whose order is a~product of two primes yield $V_\Lambda$. The key is that we may use Proposition~\ref{prop:extension-of-self-dual-form} to produce self-dual $R$-forms of abelian intertwining algebras from those orbifolds yielding~$V_\Lambda$, and this automatically yields self-dual $R$-forms for $V^\natural$ by restriction.

We summarize the information about cyclic orbifolds over $\bC$ that we need.

\begin{lem} \label{lem:cyclic-leech-orbifolds} Let $P_0 = \{ 2,3,5,7,13\}$. Then:
\begin{enumerate}\itemsep=0pt
\item[$1.$] $P_0$ is the set of primes $p$ such that there exists a fixed-point free automorphism of the Leech lattice of order $p$.
\item[$2.$] For each $p \in P_0$, there is a unique conjugacy class $[\bar{g}_p]$ of fixed-point free automorphisms in~$Co_0$ of order~$p$, and there exists a unique conjugacy class $[g_p]$ of automorphisms of~$V_\Lambda$ lifting $[\bar{g}_p]$. For any representative element $g_p$, the order of $g_p$ is $p$, and we have an isomorphism $V_\Lambda/g_p \cong V^\natural$ of vertex operator algebras over~$\bC$.
\item[$3.$] Given a pair $p_1$, $p_2$ of distinct elements of $P_0$, if there exists an automorphism of $\Lambda$ of order~$p_1p_2$, then there is a unique algebraic conjugacy class~$[\bar{g}_{p_1p_2}]$ of automorphisms of~$\Lambda$ such that $\bar{g}_{p_1p_2}^{p_1} \in [\bar{g}_{p_2}]$ and $\bar{g}_{p_1p_2}^{p_2} \in [\bar{g}_{p_1}]$. When such an automorphism exists, it is fixed-point free, and there exists a unique algebraic conjugacy class $[g_{p_1p_2}]$ of automorphisms of $V_\Lambda$ lifting $[\bar{g}_{p_1p_2}]$. For any representative element $g_{p_1p_2}$, we have an isomorphism $V_\Lambda/g_{p_1p_2} \cong V_\Lambda$ of vertex operator algebras over $\bC$.
\end{enumerate}
\end{lem}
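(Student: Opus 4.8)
The plan is to split the statement into a lattice/group-theoretic layer and a vertex-algebraic layer, treating the first by explicit eigenvalue bookkeeping together with the known conjugacy-class data of $Co_0 = \Aut\Lambda$, and citing the established orbifold theory for the second. For part~1, I would first extract the divisibility constraint: if $\bar g$ is a fixed-point free isometry of $\Lambda$ of prime order $p$, then $1$ is not an eigenvalue, so the minimal polynomial of $\bar g$ is coprime to $x-1$ yet divides $x^p-1=(x-1)\Phi_p(x)$, hence equals the irreducible cyclotomic polynomial $\Phi_p$ of degree $p-1$. Thus $\Lambda\otimes\bQ$ is a vector space over $\bQ(\zeta_p)$, forcing $(p-1)\mid 24$ and so $p\in\{2,3,5,7,13\}$. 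Existence of a fixed-point free automorphism for each such prime, and the assertion that exactly one conjugacy class of them occurs in $Co_0$ (the classes $2a,3a,5a,7a,13a$, with characteristic polynomial $\Phi_p^{24/(p-1)}$), I would take from the known conjugacy-class data of $Co_0$.

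For part~2, the lift statements are immediate from the machinery already in place. Since $\bar g_p$ has prime order, all its nontrivial powers are again fixed-point free, so Lemma~\ref{lem:fixed-point-free-automorphisms} applies: the lifts of $\bar g_p$ form a $T(\bC)$-torsor, each lift has order exactly $p$, and because $\bC$ contains $e(1/2p)$ all lifts are conjugate in $\Aut(V_\Lambda)_\bC$, giving a single class $[g_p]$. The isomorphism $V_\Lambda/g_p\cong V^\natural$ is the substantive input: for $p=2$ this is the original construction of \cite{FLM88}, and for the odd primes it is established by the cyclic orbifold analysis of \cite{ALY17} inside the framework of \cite{vEMS}, where one checks that the orbifold is a holomorphic $C_2$-cofinite vertex operator algebra of CFT type with vanishing weight-one space and identifies it with $V^\natural$.

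For part~3, I would first observe that the power conditions make fixed-point freeness automatic: any vector fixed by $\bar g_{p_1p_2}$ is fixed by $\bar g_{p_1p_2}^{p_1}\in[\bar g_{p_2}]$, which is fixed-point free, so the vector vanishes; the same reasoning shows every nontrivial power of $\bar g_{p_1p_2}$ is fixed-point free. Existence of an algebraic class satisfying the power conditions (granted some order-$p_1p_2$ isometry exists) and its uniqueness are read off the conjugacy-class and power-map data of $Co_0$; a conceptual check that at most one frame shape can occur is that the power conditions force every eigenvalue $\zeta$ of $\bar g_{p_1p_2}$ to be a primitive $p_1p_2$-th root of unity (as $\zeta^{p_1}$ is a primitive $p_2$-th root and $\zeta^{p_2}$ a primitive $p_1$-th root), whence Galois-stability of the integral characteristic polynomial forces it to equal $\Phi_{p_1p_2}^{24/\phi(p_1p_2)}$. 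With the isometry class fixed, Lemma~\ref{lem:fixed-point-free-automorphisms} again supplies a unique conjugacy (hence algebraic) class of lifts $[g_{p_1p_2}]$ over $\bC$, each of order $p_1p_2$, and the identification $V_\Lambda/g_{p_1p_2}\cong V_\Lambda$ follows from the orbifold analysis of \cite{ALY17}, the orbifold being holomorphic of central charge $24$ with $24$-dimensional abelian weight-one space.

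The main obstacle lies entirely in the two vertex-algebraic identifications $V_\Lambda/g_p\cong V^\natural$ and $V_\Lambda/g_{p_1p_2}\cong V_\Lambda$, which depend on the recent cyclic orbifold technology and on character computations for the twisted modules; everything else is either an eigenvalue-and-divisibility count or a direct application of Lemma~\ref{lem:fixed-point-free-automorphisms}. A secondary point needing care but no real difficulty is the bookkeeping in part~3 distinguishing ordinary from algebraic conjugacy classes, and confirming from the $Co_0$ data that the frame shape $\Phi_{p_1p_2}^{24/\phi(p_1p_2)}$ is realized by a single algebraic class.
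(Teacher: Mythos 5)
Your proposal is correct and follows essentially the same route as the paper: the lattice-level claims are reduced to the conjugacy-class and power-map data of $Co_0$ (your eigenvalue arguments for $(p-1)\mid 24$ and for the characteristic polynomial $\Phi_{p_1p_2}^{24/\phi(p_1p_2)}$ are a nice conceptual supplement to what the paper simply reads off from GAP), the lift statements come from the torsor/conjugacy analysis of Lemma~\ref{lem:fixed-point-free-automorphisms} exactly as you use it, and the two vertex-algebraic identifications are correctly flagged as the substantive external inputs. The only caveat is bibliographic: the paper attributes the $p=3$ identification to Chen--Lam--Shimakura \cite{CLS16} rather than to \cite{ALY17}, and for the odd composite orders $p_1p_2\in\{15,21,35,39\}$ the identification $V_\Lambda/g_{p_1p_2}\cong V_\Lambda$ is not literally in \cite{ALY17} (whose Theorem~4.1 covers only the orders $2p$) but follows by the same weight-one dimension count you describe.
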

\begin{proof}All of the claims about automorphisms of $\Lambda$ can be checked by examination of characters and power maps of $Co_0$ in \cite{GAP}. As it happens, each $[\bar{g}_p]$ is labeled $p$a with frame shape $1^{-24/p-1}p^{24/p-1}$, and each $[\bar{g}_{p_1p_2}]$ is labeled $p_1p_2$a with frame shape $1^kp_1^{-k}p_2^{-k}(p_1p_2)^k$, with the exception of order 39, where (39a, 39b) is an algebraically conjugate pair satisfying our criteria. We note that the inadmissible pairs are those satisfying $p_1p_2 \in \{ 65, 91\}$.

The claims about existence and uniqueness of lifts of automorphisms to $V_\Lambda$ follow from Section~4.2 of~\cite{LS17}, and a short version of the argument is Proposition~2.1 in~\cite{C17}. The identification of prime order cyclic orbifolds with $V^\natural$ follows from the main construction of \cite{FLM88} for $p=2$, Theorem~1.1 of~\cite{CLS16} for $p=3$, and Theorem~4.4 in~\cite{ALY17} for $p=5,7,13$. The identification of order~$p_1p_2$ cyclic orbifolds with $V_\Lambda$ follows from Theorem~4.1 in~\cite{ALY17} for $p_1p_2 \in \{6,10,14,26 \}$, and for the others, the result follows from essentially the same argument: it suffices to show that the weight 1 subspace of the irreducible twisted module $V_\Lambda(g_{p_1p_2})$ has dimension $\frac{24}{(p_1-1)(p_2-1)}$, and one can do this by manipulating the frame shape.
\end{proof}

We note that for $p \in \{3,5,7,13\}$, these orbifold constructions were conjectured in \cite{FLM88}, and partially worked out in~\cite{DM94b} and~\cite{M95}.

We now consider forms over rings. Note that in this section, we are not claiming that any particular $R$-form of $V^\natural$ necessarily carries monster symmetry.

\begin{lem} \label{lem:R-form-for-ALY-algebra}Let $P_0 = \{ 2,3,5,7,13\}$, let $p$ and $q$ be distinct elements of $P_0$ such that $pq \not\in \{65, 91\}$, and let $R = \bZ[1/pq, e(1/2pq)]$. Let $g$ be an automorphism of $V_\Lambda$ in the class~$[g_{pq}]$ described in Lemma~{\rm \ref{lem:cyclic-leech-orbifolds}} $($namely the fixed-point free class $pq\mathrm{a}$ in~{\rm \cite{GAP})} such that $g$ preserves $(V_\Lambda)_\bZ$ $($such $g$ exists by Lemma~{\rm \ref{lem:fixed-point-free-automorphisms})}. Then ${}^gV_\Lambda$ has a unique self-dual $R$-form that restricts to the standard $R$-forms on~$V_\Lambda$ and $(V_\Lambda)/g \cong V_\Lambda$.
\end{lem}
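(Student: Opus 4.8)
The plan is to verify that the hypotheses of Proposition~\ref{prop:R-form-for-abelian-intertwining-algebra} (or equivalently Proposition~\ref{prop:extension-of-self-dual-form}) are met for the automorphism $g$ in class $[g_{pq}]$, so that the existence and uniqueness of the desired $R$-form follow immediately. The abelian intertwining algebra ${}^gV_\Lambda = \bigoplus_{i=0}^{pq-1} V_\Lambda(g^i)$ is graded by the abelian group $D$ fitting in $0 \to \bZ/pq\bZ \to D \to \bZ/pq\bZ \to 0$; since $g$ lies in class $pq\mathrm{a}$ it is anomaly-free by Lemma~\ref{lem:cyclic-leech-orbifolds}, so $D \cong \bZ/pq\bZ \times \bZ/pq\bZ$ and the $(0,i)$-graded part is $V_\Lambda$ while the $(j,0)$-graded part is $V_\Lambda/g \cong V_\Lambda$, again by Lemma~\ref{lem:cyclic-leech-orbifolds}. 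The level $N$ of this abelian intertwining algebra divides $pq$, so the chosen ring $R = \bZ[1/pq, e(1/2pq)]$ contains $1/N$ and $e(1/2N)$, as required.

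First I would produce the two $R$-forms that generate the whole structure. Both $V_\Lambda$ (the $(0,i)$-part) and $V_\Lambda/g \cong V_\Lambda$ (the $(j,0)$-part) are isomorphic over $\bC$ to the Leech lattice vertex operator algebra, which by Lemma~\ref{lem:standard-R-form} admits the standard self-dual M\"obius $R$-form $(V_\Lambda)_R$ with $R$-valued invariant bilinear form. I take $V^{A_1}_R = (V_\Lambda)_R$ sitting inside the $A_1$-graded subalgebra and $V^{A_2}_R = (V_\Lambda)_R$ sitting inside the $A_2$-graded subalgebra, where $A_1$ and $A_2$ are the two cyclic subgroups $\{(0,i)\}$ and $\{(j,0)\}$ of $D$. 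These subgroups satisfy $A_1 \cap A_2 = \{0\}$ and generate $D$, matching the reduction to $I=\{1,2\}$ used in the proof of Proposition~\ref{prop:existence-of-R-form-of-abelian-intertwining-algebra}. The intersection $V^{A_1} \cap V^{A_2}$ is the degree-$(0,0)$ piece $V_\Lambda^g$, and here I must check condition~5, namely that the two $R$-forms restrict to the same $R$-form of $V_\Lambda^g$ with the same inner product. This compatibility is exactly the standing hypothesis supplied in the statement: the $R$-forms coincide in $V \cap V/g$ and their bilinear forms agree there.

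Next I would confirm the irreducibility and self-duality inputs. Self-duality of ${}^gV_\Lambda$ over $\bC$ with respect to its invariant form holds because each twisted module $V_\Lambda(g^i)$ is dual to $V_\Lambda(g^{-i})$ under the pairing, and the whole structure descends from the holomorphic self-dual $V_\Lambda$; more precisely the invariant form is non-degenerate by the general theory underlying the orbifold construction of \cite{vEMS}. Each graded piece $({}^gV_\Lambda)^a$ is an irreducible $V_\Lambda^g$-module, which follows from the simplicity of $V_\Lambda$ together with the analysis of fixed-point subalgebras and their modules in the cyclic orbifold theory. With conditions 1--5 of Proposition~\ref{prop:extension-of-self-dual-form} in hand, its conclusions give a unique self-dual well-graded M\"obius $R$-form $({}^gV_\Lambda)_R$ restricting to $(V_\Lambda)_R$ on both $V_\Lambda$ and $V_\Lambda/g$, with an $R$-valued invariant form for which it is self-dual. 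The uniqueness clause is precisely conclusion~3 of that proposition.

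The main obstacle I anticipate is verifying the compatibility of the two standard $R$-forms on the overlap $V_\Lambda^g$, and relatedly checking that $g$ and $g^*$ genuinely preserve the standard $R$-forms and their bilinear forms. The delicate point is that the identification $V_\Lambda/g \cong V_\Lambda$ is an abstract isomorphism of vertex operator algebras over $\bC$, and one must ensure that, under this identification, the standard $R$-form and standard bilinear form on the target $V_\Lambda$ correspond to the actually-induced $R$-structure on the $(j,0)$-graded part of $({}^gV_\Lambda)_R$. That $g$ preserves $(V_\Lambda)_\bZ$ (hence $(V_\Lambda)_R$) is guaranteed by Lemma~\ref{lem:fixed-point-free-automorphisms}, since $g$ is a fixed-point free lift of $\bar g_{pq}$ with all nontrivial powers fixed-point free; and $e(1/2pq) \in R$ ensures the relevant lifts are well-behaved. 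Once these compatibilities are secured, the existence and uniqueness are a direct application of the earlier propositions with trivial cocycle data on the individual lattice pieces.
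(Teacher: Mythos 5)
Your overall strategy --- feeding the two copies of the standard self-dual $R$-form $(V_\Lambda)_R$, sitting as the $(i,0)$- and $(0,j)$-graded subalgebras of ${}^gV_\Lambda$, into Proposition~\ref{prop:extension-of-self-dual-form} --- is exactly the paper's. But there is a genuine gap at the one point you yourself flag as the main obstacle. You assert that the compatibility of the two $R$-forms on the overlap (the degree-$(0,0)$ piece, i.e., $V_\Lambda^g$) ``is exactly the standing hypothesis supplied in the statement.'' It is not: the lemma being proved supplies no such hypothesis. The phrase you are quoting is a hypothesis of Proposition~\ref{prop:R-form-for-abelian-intertwining-algebra}, i.e., precisely the thing that must be verified here. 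You then correctly identify this as the delicate point in your final paragraph, but never discharge it, so the argument is circular on its only nontrivial step.

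The paper closes this gap in one move. The embeddings $\phi,\psi\colon V_\Lambda \to {}^gV_\Lambda$ identifying $V_\Lambda$ with $\bigoplus_i ({}^gV_\Lambda)^{i,0}$ and with $\bigoplus_j ({}^gV_\Lambda)^{0,j}$ are each determined only up to precomposition with an automorphism of $V_\Lambda$ commuting with $g$, so one may \emph{normalize} them to satisfy $\phi|_{V^0} = \psi|_{V^0}$. With that normalization the two induced $R$-forms on the degree-$(0,0)$ piece are literally the image of $(V_\Lambda)_R^{g}$ under the same map, and the bilinear forms agree there, so hypothesis~5 of Proposition~\ref{prop:extension-of-self-dual-form} holds by construction rather than by assumption. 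The same observation --- that the residual ambiguity in $\phi$ and $\psi$ consists of automorphisms commuting with $g$, which preserve the standard $R$-form (cf.\ Proposition~\ref{prop:centralizers-for-Leech}) --- is what upgrades the uniqueness of conclusion~3 of Proposition~\ref{prop:extension-of-self-dual-form}, which is relative to a fixed choice of embeddings, to the uniqueness asserted in the lemma, which quantifies over all identifications of the two subalgebras with $V_\Lambda$. Your appeal to Lemma~\ref{lem:decomposition-is-still-self-dual} for the self-duality of the eigenspace decomposition of $(V_\Lambda)_R$ and to Lemma~\ref{lem:fixed-point-free-automorphisms} for the existence of a lift preserving $(V_\Lambda)_\bZ$ is correct and matches the paper.
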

\begin{proof}Consider the decomposition $\bigoplus_{i=0}^{pq-1} V^i_R$ of the standard $R$-form of $V_\Lambda$ into eigen-$R$-modules for $g$. This decomposition preserves self-duality by Lemma~\ref{lem:decomposition-is-still-self-dual}. By Lemma \ref{lem:cyclic-leech-orbifolds}, the abelian intertwining subalgebras $\bigoplus_{i=0}^{pq-1} ({}^gV_\Lambda)^{i,0}$ and $\bigoplus_{j=0}^{pq-1} ({}^gV_\Lambda)^{0,j}$ are isomorphic to $V_\Lambda$ under its decomposition $\bigoplus_{i=0}^{pq-1} V^i$ into eigenspaces for $g$. Fix embeddings $\phi, \psi\colon V_\Lambda \to {}^gV_\Lambda$ such that $\phi|_{V^i}$ is an isomorphism to $V^{i,0}$ and $\psi|_{V^j}$ is an isomorphism $V^{0,j}$, and $\phi|_{V^0} = \psi|_{V^0}$. These embeddings are unique up to composition with automorphisms of $V_\Lambda$ that commute with $g$. Then restriction to $(V_\Lambda)_R$ yields embeddings that satisfy the hypotheses of Proposition \ref{prop:extension-of-self-dual-form}. Thus, there is a unique $R$-form $({}^gV_\Lambda)_R$ for ${}^gV_\Lambda$ that extends the $R$-forms given by $\phi$ and $\psi$, and by the uniqueness of $\phi$ and $\psi$, it is the unique $R$-form that restricts to the standard $R$-form on the two copies of $V_\Lambda$. Furthermore, the proposition asserts that $({}^gV_\Lambda)_R$ admits a unique invariant bilinear form that extends the form on each copy of $V_\Lambda$, and $({}^gV_\Lambda)_R$ is self-dual under this form.
\end{proof}

\begin{prop} \label{prop:R-forms-of-moonshine}
With notation as in Lemma~{\rm \ref{lem:R-form-for-ALY-algebra}}, the abelian intertwining subalgebras
\begin{gather*} \bigoplus_{i=0}^{q-1} \bigoplus_{j=0}^{p-1} ({}^gV_\Lambda)_R^{pi,qj} \qquad \text{and} \qquad \bigoplus_{i=0}^{p-1} \bigoplus_{j=0}^{q-1} ({}^gV_\Lambda)_R^{qi,pj}\end{gather*} of $({}^gV_\Lambda)_R$ are isomorphic self-dual $R$-forms of~$V^\natural$.
\end{prop}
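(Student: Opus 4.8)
The plan is to read the two subalgebras off from the orbifold decomposition of $({}^gV_\Lambda)_R$, identify each as a self-dual $R$-form of $V^\natural$, and then produce the isomorphism between them from the grading-swapping symmetry of the construction. Write $n = pq$ and set $W_1 = \bigoplus_{i=0}^{q-1}\bigoplus_{j=0}^{p-1}({}^gV_\Lambda)_R^{pi,qj}$ and $W_2 = \bigoplus_{i=0}^{p-1}\bigoplus_{j=0}^{q-1}({}^gV_\Lambda)_R^{qi,pj}$ for the two subalgebras in question.

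First I would record the grading data. As $g$ is anomaly-free of order $n$, the grading group is $D = \bZ/n\bZ\times\bZ/n\bZ$, and by Proposition~5.13 of \cite{vEMS} the conformal-weight form is the discriminant form of $\left(\begin{smallmatrix}0&n\\ n&0\end{smallmatrix}\right)$, namely $q_\Omega(a,b) = \tfrac{ab}{n}\bmod\bZ$. The degree sets of $W_1$ and $W_2$ are the order-$n$ subgroups $H_1 = (p\bZ/n\bZ)\times(q\bZ/n\bZ)$ and $H_2 = (q\bZ/n\bZ)\times(p\bZ/n\bZ)$, both isotropic since $q_\Omega(pi,qj)=ij\equiv 0$ and $q_\Omega(qi,pj)=ij\equiv 0$; hence each underlying $\bC$-space is a holomorphic $C_2$-cofinite vertex operator algebra of CFT type. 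Matching $W_1$ with the $d=p$ case and $W_2$ with the $d=q$ case of Proposition~\ref{prop:R-form-for-abelian-intertwining-algebra}(3) (equivalently, observing that $-H_1=H_1$ and $-H_2=H_2$, so the self-dual form of $({}^gV_\Lambda)_R$ restricts to a self-dual form on each) shows $W_1$ is a self-dual $R$-form of $V_\Lambda/g^p$ and $W_2$ a self-dual $R$-form of $V_\Lambda/g^q$. Since $\bar g^p\in[\bar g_q]$ and $\bar g^q\in[\bar g_p]$ by Lemma~\ref{lem:cyclic-leech-orbifolds}(3), and these lifts have the correct orders by Lemma~\ref{lem:fixed-point-free-automorphisms}, Lemma~\ref{lem:cyclic-leech-orbifolds}(2) gives $V_\Lambda/g^p\cong V_\Lambda/g_q\cong V^\natural$ and $V_\Lambda/g^q\cong V_\Lambda/g_p\cong V^\natural$. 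This establishes everything except the word ``isomorphic''.

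For the isomorphism I would realize the coordinate swap $\tau\colon(a,b)\mapsto(b,a)$, which preserves $q_\Omega$ and carries $H_1$ onto $H_2$, by an automorphism $\sigma$ of $({}^gV_\Lambda)_R$. Over $\bC$ this comes from orbifold duality: the relation $g^{**}=g$ of \cite{vEMS} gives a grading-interchanging isomorphism ${}^gV_\Lambda\cong{}^{g^*}(V_\Lambda/g)$, and choosing an isomorphism $\theta\colon V_\Lambda/g\simto V_\Lambda$ transports $g^*$ to an order-$n$ automorphism $\hat g = \theta g^*\theta^{-1}$ of $V_\Lambda$ with $V_\Lambda/\hat g\cong V_\Lambda$. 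The classification in Lemma~\ref{lem:cyclic-leech-orbifolds}(3) forces $\hat g$ into the class $[g_{pq}]$ of $g$, so conjugating $\hat g$ back to $g$ turns the above isomorphism into a genuine automorphism $\sigma$ of ${}^gV_\Lambda$ inducing $\tau$ on $D$ and interchanging the $(0,\ast)$- and $(\ast,0)$-copies of $V_\Lambda$. To descend $\sigma$ to $R$, I would normalize it by composing with automorphisms in $\Aut_R (V_\Lambda)_R = T(R)O\big(\hat\Lambda\big)$ so that it carries the standard $R$-form of one copy of $V_\Lambda$ to that of the other; then $\sigma\big(({}^gV_\Lambda)_R\big)$ is a self-dual $R$-form restricting to the standard $R$-forms on both copies, whence the uniqueness clause of Lemma~\ref{lem:R-form-for-ALY-algebra} forces $\sigma\big(({}^gV_\Lambda)_R\big)=({}^gV_\Lambda)_R$. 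Thus $\sigma$ is an $R$-automorphism preserving the invariant form, and its restriction $\sigma|_{W_1}\colon W_1\simto W_2$ is the required isomorphism of self-dual $R$-forms.

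The main obstacle is this construction of $\sigma$. The identification and self-duality steps are immediate from Proposition~\ref{prop:R-form-for-abelian-intertwining-algebra}(3) and the power-map bookkeeping of Lemma~\ref{lem:cyclic-leech-orbifolds}, but the grading swap requires two genuinely nontrivial points: verifying over $\bC$ that the dual automorphism $\hat g$ lands in the class $[g_{pq}]$ (so that the two orbifold descriptions of ${}^gV_\Lambda$ agree and $\tau$ is actually realized), which rests on the frame-shape and weight-one-dimension bookkeeping underlying Lemma~\ref{lem:cyclic-leech-orbifolds}; and then arranging $\sigma$ to be compatible with the standard $R$-forms on the two copies of $V_\Lambda$, so that the uniqueness of $({}^gV_\Lambda)_R$ pins $\sigma$ down over $R$.
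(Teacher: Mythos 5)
Your first half --- identifying $W_1$ and $W_2$ as self-dual $R$-forms of $V_\Lambda/g^p$ and $V_\Lambda/g^q$ via the isotropic subgroups $H_1$, $H_2$ and Proposition~\ref{prop:R-form-for-abelian-intertwining-algebra}(3), then invoking the power-map bookkeeping of Lemma~\ref{lem:cyclic-leech-orbifolds} --- is exactly the paper's argument, spelled out in more detail than the paper's one-line citation. Where you diverge is in producing the coordinate-swap isomorphism. The paper gets it essentially for free: the embeddings $\phi,\psi\colon V_\Lambda\to{}^gV_\Lambda$ were already fixed in the proof of Lemma~\ref{lem:R-form-for-ALY-algebra} so that $\phi|_{V^i}\colon V^i\simto({}^gV_\Lambda)^{i,0}$ and $\psi|_{V^i}\colon V^i\simto({}^gV_\Lambda)^{0,i}$ both restrict to the standard $R$-form, and $\phi|_{V^0}=\psi|_{V^0}$; the swap is then the unique extension of $\psi\circ\phi^{-1}$ to ${}^gV_\Lambda$ (which exists because the images of $\phi$ and $\psi$ generate), and $R$-form preservation is automatic from the construction of $({}^gV_\Lambda)_R$ as the subalgebra generated by the images of the standard $R$-forms. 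Your route through orbifold duality ($g^{**}=g$, transport of $g^*$ to $\hat g$, class identification, conjugation back, then $R$-normalization) arrives at the same automorphism but pays twice for information that Lemma~\ref{lem:R-form-for-ALY-algebra} already encodes.

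Two steps in your version deserve more care than you give them. First, the claim that $V_\Lambda/\hat g\cong V_\Lambda$ forces $\hat g\in[g_{pq}]$ is not immediate from Lemma~\ref{lem:cyclic-leech-orbifolds}(3) alone: that lemma classifies conjugacy classes in $O(\Lambda)$ by the condition $\bar g^{p}\in[\bar g_q]$, $\bar g^{q}\in[\bar g_p]$, so you must first check that $\hat g$ covers a fixed-point free isometry (this follows from $(V_\Lambda/g)^{g^*}_1=V_\Lambda^g{}_1=0$) and that its $p$-th and $q$-th powers land in the right classes, which uses the compatibility $(g^*)^p=(g^p)^*$ that the paper only records in Lemma~\ref{lem:R-forms-for-prime-order-orbifold}, \emph{after} this proposition. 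Second, your normalization step composes $\sigma$ with an automorphism of one copy of $(V_\Lambda)_R$; but only automorphisms centralizing $g$ extend to ${}^gV_\Lambda$ (Proposition~\ref{prop:R-form-for-abelian-intertwining-algebra}(2)), so you need the discrepancy to lie in $C_{\Aut(V_\Lambda)_R}(g)$. Both points are repairable with the tools in the paper, but they are precisely the complications that the paper's choice of $\phi$ and $\psi$ in Lemma~\ref{lem:R-form-for-ALY-algebra} was set up to avoid.
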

\begin{proof}By Lemma~\ref{lem:cyclic-leech-orbifolds}, the two abelian intertwining subalgebras $\bigoplus_{i=0}^{q-1} \bigoplus_{j=0}^{p-1} ({}^gV_\Lambda)^{pi,qj}$ and $\bigoplus_{i=0}^{p-1} \bigoplus_{j=0}^{q-1} ({}^gV_\Lambda)^{qi,pj}$ are both isomorphic to $V^\natural$. Thus, the abelian intertwining subalgebras $\bigoplus_{i=0}^{q-1} \bigoplus_{j=0}^{p-1} ({}^gV_\Lambda)_R^{pi,qj}$ and $\bigoplus_{i=0}^{p-1} \bigoplus_{j=0}^{q-1} ({}^gV_\Lambda)_R^{qi,pj}$ are $R$-forms for $V^\natural$ that are self-dual under the induced invariant bilinear form.

We now consider the automorphism of ${}^gV_\Lambda$ given by switching coordinates, i.e., sending $({}^gV_\Lambda)^{i,j} \mapsto ({}^gV_\Lambda)^{j,i}$. Explicitly, the map is defined by taking the composite isomorphisms
\begin{gather*} ({}^gV_\Lambda)^{i,0} \overset{\phi}{\leftarrow} V^i \overset{\psi}{\to} ({}^gV_\Lambda)^{0,i} \end{gather*}
and their inverses, and extending uniquely to ${}^gV_\Lambda$ by the fact that ${}^gV_\Lambda$ is generated by the images of $\phi$ and $\psi$. This automorphism restricts to an automorphism of the $R$-form $({}^gV_\Lambda)_R$, and it transports the two $R$-forms of $V^\natural$ to each other. Thus, the two $R$-forms of $V^\natural$ are isomorphic.
\end{proof}

\begin{defn}We write $V^\natural[1/pq,e(1/2pq)]$ to denote the $R$-form of $V^\natural$ given in Proposi\-tion~\ref{prop:R-forms-of-moonshine}.
\end{defn}

\begin{rem}Because $R$ is a subring of $\bC$ containing $e(1/2pq)$, Lemma~\ref{lem:fixed-point-free-automorphisms} implies $g$ is unique up to conjugation. Thus, the formation of $V^\natural[1/pq,e(1/2pq)]$ does not depend on our choice of lift $g$.
\end{rem}

The following lemma implies the automorphism $g^*$ is compatible with the cyclic orbifold duals of $V_\Lambda$ arising from $g^p$ and $g^q$, in the sense that $(g^p)^* = (g^*)^p$ and $(g^q)^* = (g^*)^q$.

\begin{lem} \label{lem:R-forms-for-prime-order-orbifold}
With notation as in Lemma~{\rm \ref{lem:R-form-for-ALY-algebra}}, there are unique self-dual $R$-forms of~${}^{g^p}V_\Lambda$ and~${}^{g^q}V_\Lambda$ such that:
\begin{enumerate}\itemsep=0pt
\item[$1)$] ${}^{g^p}V_\Lambda^{i,0} = V^\natural[1/pq,e(1/2pq)]^{(g^*)^p = e(i/q)}$ and ${}^{g^q}V_\Lambda^{i,0} = V^\natural[1/pq,e(1/2pq)]^{(g^*)^q = e(i/p)}$ for all $i \in \bZ$,
\item[$2)$] ${}^{g^p}V_\Lambda^{0,j} = (V_\Lambda)_R^{g^p = e(j/q)}$ and ${}^{g^q}V_\Lambda^{0,j} = (V_\Lambda)_R^{g^q = e(j/p)}$ for all $j \in \bZ$.
\end{enumerate}
These $R$-forms naturally embed into $({}^g V_\Lambda)_R$ by decomposing into $g$-eigenspaces.
\end{lem}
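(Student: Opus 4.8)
The plan is to realize both prime-order orbifolds inside the composite orbifold whose self-dual $R$-form has already been constructed, and to define the desired $R$-forms simply by restricting to the appropriate graded pieces. Since $g^p$ and $g^q$ are powers of $g$, the twisted modules $V_\Lambda(g^{pa})$ and $V_\Lambda(g^{qb})$ making up ${}^{g^p}V_\Lambda$ and ${}^{g^q}V_\Lambda$ occur among the twisted modules $V_\Lambda(g^c)$ making up ${}^gV_\Lambda$. Thus ${}^{g^p}V_\Lambda$ and ${}^{g^q}V_\Lambda$ are sub-abelian-intertwining-algebras of ${}^gV_\Lambda$, supported on the subgroups of $D = \bZ/pq\bZ \times \bZ/pq\bZ$ consisting of those degrees whose first (twist) coordinate is a multiple of $p$, respectively of $q$. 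First I would take the self-dual $D$-graded $R$-form $({}^gV_\Lambda)_R$ produced in Lemma~\ref{lem:R-form-for-ALY-algebra}, and define $({}^{g^p}V_\Lambda)_R$ and $({}^{g^q}V_\Lambda)_R$ to be the sub-$R$-modules spanned by the graded pieces lying in these two subgroups. The claimed embeddings into $({}^gV_\Lambda)_R$ are then tautological.

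Next I would check that these really are self-dual $R$-forms. Because the supporting subsets of $D$ are subgroups, they are closed under the degree-addition governing the products $a_n b \in V^{a+b}$, so each sub-$R$-module is closed under the abelian intertwining algebra structure, its base change to $\bC$ recovers ${}^{g^p}V_\Lambda$ (resp.\ ${}^{g^q}V_\Lambda$), and the M\"obius structure restricts. For self-duality I would use that the invariant form on $({}^gV_\Lambda)_R$ pairs the degree-$d$ piece with the degree-$(-d)$ piece and vanishes otherwise, and that each supporting subgroup is stable under $d \mapsto -d$; hence the restricted form is again a perfect homogeneous pairing, exactly as in Lemma~\ref{lem:decomposition-is-still-self-dual} and part~3 of Proposition~\ref{prop:R-form-for-abelian-intertwining-algebra}. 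Self-duality in turn forces uniqueness: any $R$-form meeting the two stated grading conditions contains the subalgebras that generate the orbifold, so by the uniqueness clause of Proposition~\ref{prop:extension-of-self-dual-form} it must coincide with the one built here.

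The substance of the lemma is then the explicit identification of the graded pieces in conditions~(1) and~(2). Condition~(2) is the easier half: the untwisted sector of $({}^{g^p}V_\Lambda)_R$ is the standard form $(V_\Lambda)_R$, and since $R$ contains $e(1/q)$ and $g^p$ preserves $(V_\Lambda)_R$, Lemma~\ref{lem:fixed-point-free-automorphisms} splits it into $g^p$-eigenspaces $(V_\Lambda)_R^{g^p = e(j/q)}$, which are precisely the degree $(0,j)$ pieces. I expect the main obstacle to be condition~(1), namely matching the twist grading of the orbifold dual $V_\Lambda/g^p \cong V^\natural$ with the eigenspaces of $(g^*)^p$ on $V^\natural[1/pq,e(1/2pq)]$. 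This amounts to a careful bookkeeping of how the fine $D$-grading of $({}^gV_\Lambda)_R$ restricts to the coarser grading of the sub-orbifold---each eigenspace of the sub-orbifold's canonical automorphism is a sum of several $D$-homogeneous pieces, indexed by the charges congruent modulo $q$---together with a computation of how the canonical automorphism $g^*$ of $V_\Lambda/g$ acts on those pieces. Carrying this out is exactly what establishes the compatibilities $(g^p)^* = (g^*)^p$ and $(g^q)^* = (g^*)^q$ asserted in the preceding remark, and it is the one step where the two distinct orbifold descriptions, prime versus composite order, must be reconciled rather than merely quoted from Lemma~\ref{lem:cyclic-leech-orbifolds}.
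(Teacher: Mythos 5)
Your proposal follows essentially the same route as the paper: the paper likewise obtains existence by applying Proposition~\ref{prop:extension-of-self-dual-form} to the embeddings of $(V_\Lambda)_R$ and $V^\natural[1/pq,e(1/2pq)]$, derives uniqueness from self-duality of the generated form, and identifies the results with the graded subalgebras $\bigoplus_{i}\bigoplus_{j}({}^gV_\Lambda)_R^{pi,j}$ and $\bigoplus_{i}\bigoplus_{j}({}^gV_\Lambda)_R^{qi,j}$ of $({}^g V_\Lambda)_R$, exactly as you do. The charge bookkeeping for condition~(1) that you defer is also left implicit in the paper's proof; it reduces to the fact that $V^\natural[1/pq,e(1/2pq)]$ was defined in Proposition~\ref{prop:R-forms-of-moonshine} as precisely the subalgebra $\bigoplus_{i}\bigoplus_{j}({}^gV_\Lambda)_R^{pi,qj}$, with the first coordinate supplying the $(g^*)^p$-eigenspace grading.
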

\begin{proof}Existence follows from applying Proposition~\ref{prop:extension-of-self-dual-form} to the embeddings of $(V_\Lambda)_R$ and $V^\natural[1/pq,e(1/2pq)]$ into ${}^{g^p}V_\Lambda$ and ${}^{g^q}V_\Lambda$. Uniqueness follows from the first claim of Proposition~\ref{prop:R-form-for-abelian-intertwining-algebra}. The abelian intertwining subalgebras $\bigoplus_{i=0}^{q-1} \bigoplus_{j=0}^{pq-1} ({}^gV_\Lambda)_R^{pi,j}$ and $\bigoplus_{i=0}^{p-1} \bigoplus_{j=0}^{pq-1} ({}^gV_\Lambda)_R^{qi,j}$ of $({}^g V_\Lambda)_R$ are self-dual $R$-forms of ${}^{g^p}V_\Lambda$ and ${}^{g^q}V_\Lambda$, equipped with decompositions into $g$-eigen\-spa\-ces.
\end{proof}

\subsection{Monster symmetry}

We now consider symmetries of these $R$-forms of $V^\natural$, where once again $R = \bZ[1/pq, e(1/2pq)]$. The next lemma is where we use recent developments in finite group theory. I suspect less powerful results can yield the same answer, and I welcome any insights from specialists in finite group theory.

\begin{lem} \label{lem:maximal-subgroups-of-monster}
Let $P_0 = \{ 2,3,5,7,13\}$, let $p$ and $q$ be distinct elements of $P_0$ such that $pq \not\in \{65, 91\}$. Let $g_p$ and $g_q$ be elements of $\bM$ in classes $pB$ and $qB$, respectively $($i.e., the unique non-Fricke classes of those orders$)$. Then any subgroup of the monster simple group $\bM$ that contains $C_\bM(g_p)$ and $C_\bM(g_q)$ is $\bM$ itself.
\end{lem}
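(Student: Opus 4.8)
The plan is to combine two ingredients: that the centralizer of a non-Fricke $p$-element of $\bM$ contains a full Sylow $p$-subgroup, and the classification of the maximal subgroups of $\bM$. First I would record the Sylow containment. For each $p \in P_0$ the class $pB$ is distinguished by having a centralizer with a normal extraspecial $p$-subgroup $p^{1+2k}$ whose centre is $\langle g_p\rangle$; from the ATLAS these are $C_\bM(g_2) \cong 2^{1+24}.Co_1$, $C_\bM(g_3)\cong 3^{1+12}.2.Suz$, $C_\bM(g_5)\cong 5^{1+6}.2.J_2$, while $C_\bM(g_7)$ and $C_\bM(g_{13})$ contain normal subgroups $7^{1+4}$ and $13^{1+2}$. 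A direct count gives $|C_\bM(g_p)|_p = |\bM|_p$ in every case ($2^{25}\cdot 2^{21}=2^{46}$, $3^{13}\cdot 3^{7}=3^{20}$, $5^{7}\cdot 5^{2}=5^{9}$, $7^{5}\cdot 7 = 7^{6}$, and $13^{3}$). Since $g_p$ is central in $C_\bM(g_p)$, any Sylow $p$-subgroup of $C_\bM(g_p)$ contains $g_p$ and, having full $p$-order, is a Sylow $p$-subgroup of $\bM$; hence $C_\bM(g_p)$ contains a full Sylow $p$-subgroup of $\bM$. This is precisely where the non-Fricke hypothesis is used: the Fricke centralizer $C_\bM(2A)\cong 2.B$ has $|2.B|_2 = 2^{42}<2^{46}$ and contains no Sylow $2$-subgroup.

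Next I would reduce to a statement about maximal subgroups. If $H \le \bM$ contains both $C_\bM(g_p)$ and $C_\bM(g_q)$ but $H \neq \bM$, then $H$ lies in some maximal subgroup $M$, and by the previous paragraph $M$ contains full Sylow $p$- and Sylow $q$-subgroups; equivalently $[\bM:M]$ is coprime to $pq$. So it suffices to show that no maximal subgroup $M<\bM$ satisfies both $|M|_p = |\bM|_p$ and $|M|_q = |\bM|_q$ for the pairs in question. Invoking the (only recently completed) classification of the maximal subgroups of $\bM$, this becomes a finite check: the condition $p \nmid [\bM:M]$ already cuts $M$ down to the short list of maximal subgroups containing a Sylow $p$-subgroup -- essentially the $p$-local maximals such as the normalizers $N(pB)$, $N(pA)$, $N(p^2)$ and, for $p\in\{2,3\}$, the larger parabolic-type $p$-locals -- and for each of these one reads off from its order that the $q$-part is strictly smaller than $|\bM|_q$, and symmetrically with $p$ and $q$ interchanged. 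This contradiction forces $H=\bM$.

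The main obstacle is exactly this last verification, which genuinely depends on the complete determination of the conjugacy classes of maximal subgroups of $\bM$ together with their orders; this is the piece of finite group theory the paper flags as recent. The delicate cases are the large $2$-local and $3$-local maximal subgroups, namely those that can contain a full Sylow $2$- or Sylow $3$-subgroup, where one must confirm deficiency of the companion Sylow $q$-subgroup rather than discard $M$ by a crude order comparison. The excluded products $65 = 5\cdot 13$ and $91 = 7\cdot 13$ enter only through the ambient vertex-algebraic construction -- there is no fixed-point-free Leech automorphism of those orders -- and do not themselves obstruct the group-theoretic inspection.
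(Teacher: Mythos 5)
Your proof is correct and follows essentially the same strategy as the paper: reduce to a hypothetical maximal overgroup and rule it out by comparing $p$- and $q$-valuations of orders, using the fact that the non-Fricke centralizers contain full Sylow subgroups. The one difference is in how the candidate maximal subgroups are located. The paper observes that for $p \in \{2,3,5\}$ (and every admissible pair $pq$ contains such a prime, since $65$ and $91$ are excluded) the centralizer $C_\bM(g_p)$ is contained in only \emph{one} isomorphism type of maximal subgroup, so only a single order comparison per pair is needed; you instead run the Sylow argument symmetrically in $p$ and $q$ and must therefore enumerate \emph{all} maximal subgroups with full Sylow $p$-subgroup (the handful of large $2$- and $3$-local subgroups being the nontrivial cases, as you note) and verify the deficiency of their $q$-parts. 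Your version needs only the order data from the list of maximal subgroups rather than the finer containment statement, at the cost of a somewhat longer finite check; both checks go through, and both are insensitive to the small unresolved candidates in the maximal-subgroup classification at the time of writing, since those have tiny Sylow subgroups for all $p \in P_0$.
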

\begin{proof}This follows from known constraints on the maximal subgroups of~$\bM$, e.g., given in \cite{W17}. The important point is that for $p \in \{2,3,5\}$, $C_\bM(g_p)$ is contained in only one isomorphism type of maximal subgroup of $\bM$. However, for each prime $q$ under consideration, $C_\bM(g_q)$ contains the Sylow $q$-subgroup of~$\bM$, so it suffices to check that the order of the maximal subgroup containing~$C_\bM(g_p)$ has insufficient $q$-valuation.
\end{proof}

\begin{lem} \label{lem:action-of-centralizer-on-R-form}The $R$-form $\big({}^{g^p}V_\Lambda\big)_R$ given in Lemma~{\rm \ref{lem:R-forms-for-prime-order-orbifold}} has automorphism group given by a~central extension of $C_{\Aut V_\Lambda}(g^p)$ by $\langle (g^*)^p \rangle$. Furthermore, the abelian intertwining subalgebra $V^\natural[1/pq,e(1/2pq)]$ admits a faithful action of $C_\bM((g^*)^p)$. The same claims also hold with~$p$ and~$q$ switched.
\end{lem}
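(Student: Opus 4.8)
The plan is to deduce both assertions from Proposition~\ref{prop:R-form-for-abelian-intertwining-algebra}(2), applied to the holomorphic vertex operator algebra $V_\Lambda$ together with the automorphism $g^p$. By Lemma~\ref{lem:cyclic-leech-orbifolds}, $g^p$ is a lift of the fixed-point-free class $\bar{g}_q$, so it is anomaly-free of order $q$ and $V_\Lambda/g^p \cong V^\natural$; moreover $(g^p)^* = (g^*)^p$ is the canonical automorphism of this $V^\natural$ and hence lies in $\bM = \Aut V^\natural$. The hypotheses of Proposition~\ref{prop:R-form-for-abelian-intertwining-algebra} --- that $(V_\Lambda)_R$ and $V^\natural[1/pq,e(1/2pq)]$ are self-dual $R$-forms agreeing inside ${}^{g^p}V_\Lambda$, on which $g^p$ and $(g^*)^p$ act preserving the bilinear forms --- are supplied by Lemma~\ref{lem:R-forms-for-prime-order-orbifold}. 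The proposition then presents $\Aut({}^{g^p}V_\Lambda)_R$ simultaneously as a central extension of $C_G(g^p)$ by $\langle (g^*)^p\rangle$, where $G = \Aut(V_\Lambda)_R$, and as a central extension of $C_{G^*}((g^*)^p)$ by $\langle g^p\rangle$, where $G^* = \Aut V^\natural[1/pq,e(1/2pq)]$.

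For the first claim I would rewrite the centralizer $C_G(g^p)$ using Proposition~\ref{prop:centralizers-for-Leech}: since $g^p$ lifts a fixed-point-free automorphism of $\Lambda$ of prime order $q$ and $R$ contains $e(1/q)$, we have $C_{\Aut(V_\Lambda)_R}(g^p) \cong C_{\Aut V_\Lambda}(g^p)$. Substituting this into the first central extension yields exactly the asserted description of $\Aut({}^{g^p}V_\Lambda)_R$ as a central extension of $C_{\Aut V_\Lambda}(g^p)$ by $\langle (g^*)^p\rangle$.

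For the second claim the strategy is an order count feeding into an inclusion of centralizers. From the first central extension together with Proposition~\ref{prop:centralizers-for-Leech} we get $|\Aut({}^{g^p}V_\Lambda)_R| = q\,|C_{\Aut V_\Lambda}(g^p)|$, so the second central extension forces $|C_{G^*}((g^*)^p)| = |C_{\Aut V_\Lambda}(g^p)|$. Running the same orbifold duality over $\bC$ --- the complex form of the argument, namely Proposition~2.5.2 of~\cite{GM4} --- exhibits the complex homogeneous automorphism group of ${}^{g^p}V_\Lambda$ both as a central extension of $C_\bM((g^*)^p)$ by $\langle g^p\rangle$ and of $C_{\Aut V_\Lambda}(g^p)$ by $\langle (g^*)^p\rangle$, giving $|C_\bM((g^*)^p)| = |C_{\Aut V_\Lambda}(g^p)|$. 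Since any automorphism of the $R$-form base-changes to one of $V^\natural$, $G^*$ is a subgroup of $\bM$, so $C_{G^*}((g^*)^p) \subseteq C_\bM((g^*)^p)$; the two groups have equal order, hence coincide. Therefore all of $C_\bM((g^*)^p)$ preserves $V^\natural[1/pq,e(1/2pq)]$, and it acts faithfully there because it acts faithfully on $V^\natural$ as a subgroup of $\bM$.

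The step I expect to be the crux is this final order-matching: the inclusion $C_{G^*}((g^*)^p) \subseteq C_\bM((g^*)^p)$ is formal, but promoting it to equality --- equivalently, ruling out complex automorphisms centralizing $(g^*)^p$ that fail to preserve the integral form --- relies essentially on the two independent order computations, one over $R$ via Proposition~\ref{prop:centralizers-for-Leech} and one over $\bC$ via \cite{GM4}. The case with $p$ and $q$ interchanged is identical, using that $g^q$ is a fixed-point-free lift of order $p$ and that, by Proposition~\ref{prop:R-forms-of-moonshine}, $V^\natural[1/pq,e(1/2pq)]$ is symmetric in $p$ and $q$.
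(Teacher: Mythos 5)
Your proposal is correct, and the first claim is handled exactly as in the paper: apply the second part of Proposition~\ref{prop:R-form-for-abelian-intertwining-algebra} to the orbifold pair $\big((V_\Lambda)_R, V^\natural[1/pq,e(1/2pq)]\big)$ supplied by Lemma~\ref{lem:R-forms-for-prime-order-orbifold}, then identify $C_{\Aut(V_\Lambda)_R}(g^p)$ with $C_{\Aut V_\Lambda}(g^p)$ via Proposition~\ref{prop:centralizers-for-Leech}. For the second claim the paper is terser: it simply cites the prime-order cyclic orbifold correspondence of~\cite{ALY17} to assert that restriction to $V^\natural[1/pq,e(1/2pq)]$ induces a surjection onto $C_\bM((g^*)^p)$ with kernel $\langle g^p\rangle$. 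You instead extract the same conclusion by comparing the two central-extension presentations over $R$ with their complex counterparts from~\cite{GM4}, obtaining $|C_{G^*}((g^*)^p)| = |C_{\Aut V_\Lambda}(g^p)| = |C_\bM((g^*)^p)|$ and then upgrading the formal inclusion $C_{G^*}((g^*)^p) \subseteq C_\bM((g^*)^p)$ to an equality of finite groups. This is a valid and somewhat more self-contained derivation --- it leans only on results already established in the paper rather than re-invoking the complex orbifold correspondence --- at the cost of being a little longer; the two arguments are otherwise the same in substance, and your remark that the $p \leftrightarrow q$ case follows symmetrically is also how the paper handles it.
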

\begin{proof}The second claim of Proposition \ref{prop:R-form-for-abelian-intertwining-algebra} gives the description of the automorphism group of $\big({}^{g^p}V_\Lambda\big)_R$ as a central extension of $C_{\Aut (V_\Lambda)_R}(g^p)$ by $\langle (g^*)^p \rangle$, but in Proposition~\ref{prop:centralizers-for-Leech}, we identify this centralizer with $C_{\Aut V_\Lambda}(g^p)$. By the cyclic orbifold correspondence for prime-order orbifolds given in \cite{ALY17}, the restriction of this action to $V^\natural[1/pq,e(1/2pq)]$ induces a surjection to $C_\bM((g^*)^p)$, with kernel generated by $g^p$.
\end{proof}

\begin{thm} \label{thm:monster-form}The embedded $R$-form $V^\natural[1/pq,e(1/2pq)]$ of $V^\natural$ is preserved by the action of $\bM$ given in~{\rm \cite{FLM88}}. In particular, the automorphism group of $V^\natural[1/pq,e(1/2pq)]$ is the monster simple group~$\bM$.
\end{thm}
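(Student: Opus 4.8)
The plan is to exhibit two subgroups of $\bM$ that preserve the embedded form $V^\natural[1/pq,e(1/2pq)]$ and that together generate $\bM$, and to match them to the hypotheses of Lemma~\ref{lem:maximal-subgroups-of-monster}; the two subgroups will be the centralizers of the prime-power parts $(g^*)^p$ and $(g^*)^q$ of the dual automorphism. First I would pin down the monster classes of these elements. Since $g \in [g_{pq}]$, Lemma~\ref{lem:cyclic-leech-orbifolds} gives $g^p \in [g_q]$ and $g^q \in [g_p]$, so both are lifts of prime-order fixed-point-free Leech isometries, and the remark preceding Lemma~\ref{lem:R-forms-for-prime-order-orbifold} identifies $(g^*)^p = (g^p)^*$ and $(g^*)^q = (g^q)^*$ with their orbifold duals. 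The element $(g^q)^*$ then has order $p$, and by the orbifold duality $(V/g)/g^* \cong V$ applied to $V_\Lambda/g^q \cong V^\natural$ we get $V^\natural/(g^q)^* \cong V_\Lambda$. As $V_\Lambda \not\cong V^\natural$, the automorphism $(g^q)^*$ cannot be Fricke, hence lies in the unique non-Fricke class $pB$ of order $p$; symmetrically $(g^p)^* \in qB$.

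Next I would feed this into Lemma~\ref{lem:action-of-centralizer-on-R-form}, which supplies a faithful action of $C_\bM((g^*)^p)$ on $V^\natural[1/pq,e(1/2pq)]$ by genuine $R$-linear automorphisms, together with the analogous action of $C_\bM((g^*)^q)$ after exchanging $p$ and $q$. Because these actions arise by restricting automorphisms of the $R$-forms $({}^{g^p}V_\Lambda)_R$ and $({}^{g^q}V_\Lambda)_R$, their complexifications are restrictions of the monster action of \cite{FLM88}, so the subgroup of $\bM = \Aut_\bC V^\natural$ that stabilizes the $R$-form contains both $C_\bM((g^*)^p)$ and $C_\bM((g^*)^q)$. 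By the previous step these are the centralizers of a $qB$-element and a $pB$-element, so Lemma~\ref{lem:maximal-subgroups-of-monster} forces the subgroup they generate to be all of $\bM$. Thus the whole monster preserves $V^\natural[1/pq,e(1/2pq)]$.

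It remains to compute the automorphism group. Preservation yields an inclusion $\bM \hookrightarrow \Aut_R V^\natural[1/pq,e(1/2pq)]$, while base change along $R \to \bC$ gives an injection $\Aut_R V^\natural[1/pq,e(1/2pq)] \hookrightarrow \Aut_\bC V^\natural = \bM$, injective because the $R$-form spans $V^\natural$ over $\bC$. The composite being the identity, both maps are isomorphisms and $\Aut_R V^\natural[1/pq,e(1/2pq)] = \bM$. I expect the main obstacle to lie in the class identification of the first paragraph: converting the orbifold-theoretic automorphisms $(g^*)^p$, $(g^*)^q$ into the group-theoretic input required by Lemma~\ref{lem:maximal-subgroups-of-monster} rests on the moonshine fact that the prime-order orbifold dual of a Leech lift is non-Fricke with orbifold $V_\Lambda$, and on uniqueness of the non-Fricke class of each relevant order.
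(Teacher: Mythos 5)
Your proposal is correct and follows essentially the same route as the paper: invoke Lemma~\ref{lem:action-of-centralizer-on-R-form} to get faithful actions of the two centralizers $C_\bM((g^*)^p)$ and $C_\bM((g^*)^q)$ on the $R$-form, identify these as centralizers of $qB$- and $pB$-elements, and apply Lemma~\ref{lem:maximal-subgroups-of-monster} to conclude that they generate $\bM$. Your explicit verification that $(g^q)^*$ and $(g^p)^*$ land in the non-Fricke classes $pB$ and $qB$ (via $V^\natural/(g^q)^* \cong V_\Lambda \not\cong V^\natural$) is a detail the paper leaves implicit, and is a welcome addition.
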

\begin{proof}
By Lemma \ref{lem:action-of-centralizer-on-R-form}, $V^\natural[1/pq,e(1/2pq)]$ admits faithful actions of $C_\bM(g_p)$ and $C_\bM(g_q)$. Base change to $\bC$ yields $V^\natural$, whose automorphism group is $\bM$, so we obtain embeddings of these groups in $\bM$. However, by Lemma \ref{lem:maximal-subgroups-of-monster}, these subgroups generate $\bM$. We conclude that the action of $\bM$ on $V^\natural$ preserves the $R$-form.
\end{proof}

\begin{cor}[weak modular moonshine]\label{cor:weak-modular-moonshine}
The modular moonshine conjecture holds in \linebreak $($a~slight weakening of$)$ its original form given in~{\rm \cite{R96}}. That is, for each prime $p$ dividing the order of the monster, and each element $g$ in class $p$A in the monster, there is a vertex algebra~$V^p$ over $\bF_{p^n}$ for some $n$ $($where $n=1$ is asserted in the original statement$)$ equipped with an action of~$C_\bM(g)$ such that the graded Brauer character $\sum \Tr(h|V^p)$ of a $p$-regular element $h \in C_\bM(g)$ is equal to the McKay--Thompson series $T_{gh}(\tau) = \sum \Tr(gh|V^\natural)$.
\end{cor}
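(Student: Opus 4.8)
The plan is to feed the self-dual monster-symmetric forms produced by Theorem~\ref{thm:monster-form} into the reduction of modular moonshine carried out in \cite{BR96} and \cite{B98}, whose one remaining hypothesis, as recalled in the introduction, was the existence of a self-dual form of $V^\natural$ over a $p$-adic integer ring carrying an action of $\bM$. Since the forms $V^\natural[1/rs,e(1/2rs)]$ invert the orbifold primes, the first point is to arrange that the target prime $p$ is \emph{not} inverted. So fix a prime $p$ dividing $|\bM|$ and choose two distinct orbifold primes $r,s \in P_0 = \{2,3,5,7,13\}$, playing the role of the pair in Lemma~\ref{lem:R-form-for-ALY-algebra}, with $r,s \neq p$ and $rs \notin \{65,91\}$; since $P_0$ has five elements and only the pairs $(5,13)$, $(7,13)$ are excluded, such a pair exists for every $p$. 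By Theorem~\ref{thm:monster-form}, $V^\natural[1/rs,e(1/2rs)]$ is then a self-dual $R$-form of $V^\natural$ with $R = \bZ[1/rs,e(1/2rs)]$ on which all of $\bM$ acts by automorphisms preserving the invariant bilinear form, and because $p \nmid rs$ the prime $p$ is a non-unit of $R$.

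Next I would pass to a prime $\mathfrak{p}$ of the Dedekind domain $R$ lying over $p$ and complete: $\hat{R}_{\mathfrak{p}}$ is a complete $p$-adic integer ring, a finite extension of $\bZ_p$, with residue field $\bF_{p^n}$ where $n$ is the residue degree of $\mathfrak{p}$. Base change of $V^\natural[1/rs,e(1/2rs)]$ along $R \to \hat{R}_{\mathfrak{p}}$ is a vertex operator algebra over $\hat{R}_{\mathfrak{p}}$; the graded pieces remain finite projective, the $\bM$-action extends $\hat{R}_{\mathfrak{p}}$-linearly, and by the non-singularity part of Theorem~\ref{thm:descent-for-finite-projective-modules-with-bilinear-form} the invariant form stays non-singular, so the base change is again self-dual and monster-symmetric. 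This is exactly the $p$-adic input the Borcherds--Ryba argument was missing.

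Finally I would invoke that argument itself. Given the form over $\hat{R}_{\mathfrak{p}}$, reduction modulo $\mathfrak{p}$ produces a self-dual $\bF_{p^n}$-form of $V^\natural$ carrying the $\bM$-action, and for $g$ in the Fricke class $pA$ the Tate-cohomology construction of \cite{BR96,B98} attaches to it a vertex algebra $V^p$ over $\bF_{p^n}$ on which $C_\bM(g) \le \bM$ acts. The Brauer character identity $\sum \Tr(h \mid V^p) = T_{gh}$ for $p$-regular $h \in C_\bM(g)$ then follows from those arguments, which use self-duality of the form together with the integral monster Lie algebra of \cite{B99} to match the cohomological Brauer character with the McKay--Thompson series. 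The genuine obstacle was the existence of the $p$-adic self-dual monster-symmetric form, now supplied by Theorem~\ref{thm:monster-form} and completion; the only residual loss is that the residue field is $\bF_{p^n}$ rather than the prime field, since $R$ contains $e(1/2rs)$ and $p$ need not have residue degree $1$ in $R$, which is precisely the source of the weakening relative to Ryba's original $n=1$ assertion.
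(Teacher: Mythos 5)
Your overall strategy is the same as the paper's: choose a pair of orbifold primes from $P_0$ avoiding the target prime $p$, so that Theorem~\ref{thm:monster-form} supplies a self-dual monster-symmetric form over a ring in which $p$ is not inverted, complete at a prime above $p$ to land in a $p$-adic integer ring (an unramified extension of $\bZ_p$, of some residue degree $n$), and then run the Borcherds--Ryba machinery, taking $V^p = \hat{H}^0(g,V)$. Your observation that a valid pair $r,s$ exists for every $p$ dividing $|\bM|$ is correct, and the base-change and completion steps are unobjectionable.

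The gap is in the final step, where you assert that the bare existence of a self-dual monster-symmetric form over a $p$-adic integer ring is ``exactly the $p$-adic input the Borcherds--Ryba argument was missing'' and that the Brauer character identity ``then follows from those arguments.'' The relevant theorem (Theorem~5.2 of \cite{BR96}, for the critical case $p=2$) does not apply to an arbitrary self-dual $2$-adic form with $\bM$-symmetry: it additionally requires that the form decompose into $2\mathrm{A}$-modules that are submodules of a corresponding form of $V_\Lambda$, and verifying this for the orbifold-constructed form is genuine work. The paper addresses it by appealing to the discussion in Section~5 of \cite{BR96} together with the existence of a normalizing ${\rm SL}_2(\bF_3)$, which comes from Wilson's computation of normalizers of elementary abelian subgroups of $\bM$ in \cite{W88}; this is where the compatibility between the new $2$-adic form and $(V_\Lambda)$ is actually established (and it is exactly the kind of compatibility your Lemma~\ref{lem:R-forms-for-prime-order-orbifold}-style identifications are designed to feed). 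Without this verification your proof does not close. A secondary, less serious point: for odd $p$ the paper does not need any new form at all, since the $\bZ[1/2]$-form of \cite{BR96} already suffices there; your uniform treatment of all $p$ is a legitimate variant, but it silently assumes that the odd-prime arguments of \cite{BR96,B98} accept your completed form in place of the one they were written for, which again requires checking their auxiliary decomposition hypotheses rather than only self-duality and $\bM$-symmetry.
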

\begin{proof}Following \cite{BR96}, we may take $V^p$ to be the Tate cohomology group $\hat{H}^0(g,V)$ for $V$ a form of $V^\natural$ defined over a $p$-adic integer ring. Then this is a special case of the modular moonshine conjecture proved in~\cite{BR96} and~\cite{B98}, under some assumptions that were not known to be true at the time. The assumption about homogeneous pieces of the Lie algebra $\fm \otimes \bZ_p$ was proved as Theorem~7.1 of~\cite{B99} by applying an integral enhancement of the no-ghost theorem. The last remaining open assumption is the existence of a suitable $\bZ[1/3]$-form of $V^\natural$. However, for our weakened version of Theorem~5.2 of \cite{BR96}, it suffices to have a form of $V^\natural$ with monster symmetry defined over a 2-adic integer ring, i.e., a construction that does not involve division by~2, that decomposes into $2A$-modules that are submodules of a corresponding form of $V_\Lambda$. Theorem~\ref{thm:monster-form} gives 4 separate constructions, by setting $pq \in \{ 15, 21, 35, 39 \}$, and the decomposition condition follows from the discussion in Section~5 of~\cite{BR96}. In particular, the existence of a normalizing ${\rm SL}_2(\bF_3)$ follows from the computation of normalizers of elementary abelian subgroups of the monster in~\cite{W88}. The resulting 2-adic forms are defined over unramified extensions of $\bZ_2$ of degree $4$, $6$, $12$, and $12$, respectively. Reduction mod $2$ then implies the existence of a suitable vertex algebra $V^2$ defined over $\bF_{2^n}$ for some $n \leq 4$.
\end{proof}

\begin{rem}It is possible that the method given in Section~5 of \cite{BR96} to reduce the conjectural $\bZ[1/3,e(1/3)]$-form to a $\bZ[1/3]$-form can work here to remove the superfluous roots of unity. However, we will obtain stronger results without needing this method.
\end{rem}

\subsection{Comparison of monster-symmetric forms}

Now that we have $R$-forms of $V^\natural$ with monster-symmetric self-dual invariant bilinear forms for various $R$, we will glue them. In this section, we will describe the glue.

\begin{lem} \label{lem:fixed-point-comparison}Let $p$, $q$, $r$ be distinct elements in $P_0$, and let $g \in p$B be an anomaly-free non-Fricke element of order $p$. Assume $pq, pr \not\in \{65, 91\}$. Let $R = \bZ[1/pqr, e(1/2pqr)]$. Then $\big(V^\natural[1/pq,e(1/2pq)]\big)^g_R \cong \big(V^\natural[1/pr,e(1/2pr)]\big)^g_R$.
\end{lem}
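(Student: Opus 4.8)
The plan is to identify each $g$-fixed subalgebra with the fixed subalgebra of a prime-order Leech automorphism on the standard form of $V_\Lambda$, and then to compare those two Leech fixed-point forms directly using the uniqueness of the relevant conjugacy class. Throughout I work over the large ring $R = \bZ[1/pqr,e(1/2pqr)]$, into which both of the smaller rings $\bZ[1/pq,e(1/2pq)]$ and $\bZ[1/pr,e(1/2pr)]$ embed, base-changing the $R$-forms of Lemma~\ref{lem:R-form-for-ALY-algebra} and Lemma~\ref{lem:R-forms-for-prime-order-orbifold} accordingly; since $p$ is invertible in $R$ and $e(1/p)\in R$, forming $g$-fixed points commutes with this flat base change and is governed by Lemma~\ref{lem:decomposition-is-still-self-dual}.

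First I would reduce $g$ to a convenient representative. Let $g_{pq}$ be a Leech automorphism in the class $[g_{pq}]$ used to build $V^\natural[1/pq,e(1/2pq)]$, and let $g_{pq}^*$ be its orbifold dual, so that $(g_{pq}^*)^q=(g_{pq}^q)^*$ is the canonical automorphism attached to the order-$p$ orbifold $V_\Lambda/g_{pq}^q \cong V^\natural$. Since $V^\natural/(g_{pq}^*)^q \cong V_\Lambda$, this is a non-Fricke element of order $p$, hence lies in the unique non-Fricke class $p$B; the same applies to $(g_{pr}^*)^r$ coming from the $pr$-construction. Because $p$B is a single conjugacy class and both forms carry the full $\bM$-action by $R$-linear automorphisms (Theorem~\ref{thm:monster-form}), conjugating $g$ inside $\bM$ gives, for conjugate automorphisms yielding isomorphic fixed-point forms,
\begin{gather*} \big(V^\natural[1/pq,e(1/2pq)]\big)^g_R \cong \big(V^\natural[1/pq,e(1/2pq)]\big)^{(g_{pq}^*)^q}_R, \end{gather*}
and similarly for the $pr$-form. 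Next I would compute these using the embedding into the prime-order orbifold intertwining algebra: by Lemma~\ref{lem:R-forms-for-prime-order-orbifold} the form ${}^{g_{pq}^q}V_\Lambda$ satisfies ${}^{g_{pq}^q}V_\Lambda^{i,0}=V^\natural[1/pq,e(1/2pq)]^{(g_{pq}^*)^q=e(i/p)}$ and ${}^{g_{pq}^q}V_\Lambda^{0,j}=(V_\Lambda)_R^{g_{pq}^q=e(j/p)}$, so its $(0,0)$-piece is simultaneously the $(g_{pq}^*)^q$-fixed subalgebra of the $pq$-form and the $g_{pq}^q$-fixed subalgebra of the standard form of $V_\Lambda$. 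Hence $\big(V^\natural[1/pq,e(1/2pq)]\big)^{(g_{pq}^*)^q}_R \cong (V_\Lambda)_R^{g_{pq}^q}$ and likewise $\big(V^\natural[1/pr,e(1/2pr)]\big)^{(g_{pr}^*)^r}_R \cong (V_\Lambda)_R^{g_{pr}^r}$ as self-dual $R$-forms, where $g_{pq}^q$ and $g_{pr}^r$ are order-$p$ lifts of fixed-point-free Leech isometries.

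It then remains to compare the two Leech fixed-point forms. By Lemma~\ref{lem:cyclic-leech-orbifolds}, both $\bar g_{pq}^q$ and $\bar g_{pr}^r$ lie in the unique fixed-point-free order-$p$ class $[\bar g_p]$ in $O(\Lambda)$, so they are conjugate by some $\sigma\in O(\Lambda)$; lifting $\sigma$ to $\Aut(V_\Lambda)_R$ and then invoking the conjugacy of all lifts of a fixed Leech isometry (Lemma~\ref{lem:fixed-point-free-automorphisms}, applicable since $e(1/2p)\in R$), I obtain $\tau\in\Aut(V_\Lambda)_R$ with $\tau g_{pq}^q\tau^{-1}=g_{pr}^r$. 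As a vertex operator algebra automorphism fixes $\omega$ and $\unit$, it preserves the M\"obius structure and the invariant bilinear form, so $\tau$ restricts to an isomorphism $(V_\Lambda)_R^{g_{pq}^q}\simto(V_\Lambda)_R^{g_{pr}^r}$ of self-dual $R$-forms; chaining the isomorphisms gives the claim. The main obstacle is the reduction step: pinning down that the orbifold-dual elements $(g_{pq}^*)^q$ and $(g_{pr}^*)^r$ genuinely lie in the single class $p$B, so that they are conjugate to the fixed $g$. This relies on the non-Fricke identification of prime-order Leech-orbifold duals together with the fact that $p$B is the unique non-Fricke class of order $p$, which is the one genuinely moonshine-specific input; everything else is the orbifold bookkeeping of Lemma~\ref{lem:R-forms-for-prime-order-orbifold} and the lift-conjugacy machinery of Lemma~\ref{lem:fixed-point-free-automorphisms}.
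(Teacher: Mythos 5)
Your proposal is correct and follows essentially the same route as the paper, which simply observes that by Lemma~\ref{lem:R-forms-for-prime-order-orbifold} both $g$-fixed subalgebras are identified with the fixed-point subalgebra of an order-$p$ lift of the (unique, fixed-point-free) class $[\bar g_p]$ acting on $(V_\Lambda)_R$. You have merely made explicit the steps the paper leaves implicit --- conjugating $g$ to the orbifold-dual representative inside $\bM$, and using Lemma~\ref{lem:fixed-point-free-automorphisms} to see that the two resulting Leech fixed-point forms are isomorphic --- and these details are supplied correctly.
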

\begin{proof}By Lemma \ref{lem:R-forms-for-prime-order-orbifold}, both sides are isomorphic to $(V_\Lambda)_R^{g^*}$.
\end{proof}

\begin{lem} \label{lem:affine-subgroups-of-monster}Let $p \in \{2,3\}$. Then there exists a subgroup $H_p \subset \bM$ such that
\begin{enumerate}\itemsep=0pt
\item[$1.$] $H_p$ is isomorphic to $(\bZ/p\bZ)^2$.
\item[$2.$] All non-identity elements in $H_p$ lie in the conjugacy class~$p$B, i.e., they are non-Fricke.
\end{enumerate}
\end{lem}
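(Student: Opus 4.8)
The plan is to realize each $H_p$ as the bottom elementary abelian layer of the unipotent radical of a maximal $p$-local subgroup of $\bM$, and then to pin down the conjugacy class of its nonidentity elements by a Sylow-order comparison of centralizers. First I would invoke the determination of the maximal $p$-local subgroups of $\bM$ in \cite{W17}, together with the normalizer analysis of \cite{W88}. For $p=2$ take $P = 2^{2+11+22}.(M_{24} \times S_3)$, the normalizer $N_\bM(2B^2)$; for $p=3$ take $P = 3^{2+5+10}.(M_{11} \times 2S_4)$, the normalizer $N_\bM(3B^2)$. In each case let $U = O_p(P)$ be the unipotent radical and let $H_p$ be its bottom layer, so that $H_p \cong (\bZ/p\bZ)^2$ and $H_p$ is central in $U$. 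The point complement acts on $H_p$ only through its linear factor: the factor $S_3 \cong {\rm GL}_2(\bF_2)$ (respectively $2S_4 \cong {\rm GL}_2(\bF_3) \supset {\rm SL}_2(\bF_3)$) acts as the full linear group on $H_p$, while $M_{24}$ (respectively $M_{11}$) centralizes $H_p$.

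The second step is transitivity. Since ${\rm GL}_2(\bF_p)$ acts transitively on the $p^2-1$ nonzero vectors of $H_p$, all nonidentity elements of $H_p$ are conjugate in $\bM$, so they lie in a single conjugacy class; it remains only to show that this class is the non-Fricke class $pB$ rather than the Fricke class $pA$.

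The third step determines the class. Fix $t \in H_p \setminus \{1\}$. Because $H_p$ is central in $U$ and $M_{24}$ (respectively $M_{11}$) centralizes $H_p$, the centralizer $C_P(t)$ contains $U.(M_{24} \times \mathrm{Stab}(t))$, where $\mathrm{Stab}(t)$ is the stabilizer of a nonzero vector in the linear factor. Reading the relevant Sylow orders from \cite{GAP}, for $p=2$ this yields $|C_\bM(t)|_2 \ge |U|_2 \cdot |M_{24}|_2 \cdot 2 = 2^{35} \cdot 2^{10} \cdot 2 = 2^{46} = |\bM|_2$, whereas a Fricke element $g$ of class $2A$ has $|C_\bM(g)|_2 = |2.B|_2 = 2^{42}$; for $p=3$ it yields $|C_\bM(t)|_3 \ge 3^{17} \cdot 3^2 \cdot 3 = 3^{20} = |\bM|_3$, whereas a Fricke element $g$ of class $3A$ has $|C_\bM(g)|_3 = |3.Fi_{24}'|_3 = 3^{17}$. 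In both cases the $p$-part of $|C_\bM(t)|$ strictly exceeds that of the Fricke-class centralizer, so $t$ cannot be Fricke, forcing $t \in pB$. Combined with the transitivity of the previous step, every nonidentity element of $H_p$ lies in $pB$, as required.

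I expect the class identification to be the main obstacle: everything upstream is formal once the correct maximal subgroup is produced, but showing that the pure class is specifically the non-Fricke class $pB$ needs genuine arithmetic input about $\bM$. I have packaged this as the observation that these two maximal subgroups are $p$-central — the centralizer of $t$ absorbs a full Sylow $p$-subgroup of $\bM$, a feature that fails for $pA$ since $C_\bM(2A)=2.B$ and $C_\bM(3A)=3.Fi_{24}'$ have deficient $p$-part. Alternatively, and more briefly, one may simply cite \cite{W88}, where these are identified as $N_\bM(2B^2)$ and $N_\bM(3B^2)$, the labels themselves asserting purity in class $pB$.
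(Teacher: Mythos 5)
Your proposal is correct in substance but does considerably more work than the paper, whose entire proof is a one-line citation: the ATLAS table of maximal $p$-local subgroups lists $2^{2+11+22}.(M_{24}\times S_3)=N(2B^2)$ and $3^{2+5+10}.(M_{11}\times 2S_4)=N(3B^2)$, and the labels $2B^2$, $3B^2$ themselves assert the existence of the required $p\mathrm{B}$-pure four-group and nine-group. That is exactly the shortcut you offer in your closing sentence. What your longer route buys is independence from trusting the class labels: you only need the shape of the maximal $p$-local subgroup (so that $|P|_p=|\bM|_p$ and the orbit of a nonzero vector of $H_p$ has length $p^2-1$, prime to $p$), after which the Sylow comparison of centralizer orders pins down the class. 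The transitivity step is also what guarantees purity, i.e., that all $p^2-1$ nonidentity elements land in a single class, which the bare existence statement would not give without it.

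One genuine gap in the argument as written: for $p=3$ the dichotomy ``Fricke class $3\mathrm{A}$ versus non-Fricke class $3\mathrm{B}$'' is incomplete, since $\bM$ has a third class $3\mathrm{C}$ of order-$3$ elements (also Fricke, with $C_\bM(3\mathrm{C})\cong 3\times Th$). Your Sylow bound $|C_\bM(t)|_3=3^{20}$ does exclude it, since $|3\times Th|_3=3^{11}<3^{20}$, but you must say so explicitly; as stated, ruling out $3\mathrm{A}$ alone does not force $t\in 3\mathrm{B}$. With that one extra line the argument is complete. (A cosmetic caution: you assert that $H_p$ is central in $O_p(P)$ and that $M_{24}$, resp.\ $M_{11}$, centralizes $H_p$; neither assertion is actually needed, since the orbit length $p^2-1$ being prime to $p$ already gives $|C_P(t)|_p=|P|_p$, so you can drop those structural claims rather than justify them.)
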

\begin{proof}The existence of $p$B-pure non-cyclic elementary abelian subgroups can be extracted from the table ``maximal $p$-local subgroups'' in~\cite{ATLAS}.
\end{proof}

\begin{prop} \label{prop:isomorphisms-of-forms-for-2-and-3}Let $p \in \{2,3\}$, and let $q,r$ be distinct elements of $P_0 \setminus \{p\}$. Let $R = \bZ[1/pqr, e(1/2pqr)]$. Then
\begin{gather*} V^\natural[1/pq, e(1/2pq)] \otimes_{\bZ[1/pq, e(1/2pq)]} R \cong V^\natural[1/pr, e(1/2pr)] \otimes_{\bZ[1/pr, e(1/2pr)]} R. \end{gather*}
\end{prop}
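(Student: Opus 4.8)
The plan is to exploit the monster symmetry of both forms together with the $p$B-pure subgroup $H_p \cong (\bZ/p\bZ)^2$ supplied by Lemma~\ref{lem:affine-subgroups-of-monster}, reducing the comparison of the two $R$-forms to the comparison of their fixed-point subalgebras under individual order-$p$ elements, which is exactly the content of Lemma~\ref{lem:fixed-point-comparison}. First I would base change both forms to $R = \bZ[1/pqr, e(1/2pqr)]$, writing $W_q = V^\natural[1/pq,e(1/2pq)]\otimes R$ and $W_r = V^\natural[1/pr,e(1/2pr)]\otimes R$. By Theorem~\ref{thm:monster-form} each of $V^\natural[1/pq,e(1/2pq)]$ and $V^\natural[1/pr,e(1/2pr)]$ carries a faithful action of $\bM$ preserving the self-dual invariant form, and this persists after base change; in particular $H_p$ acts on both $W_q$ and $W_r$. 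Since $p$ is invertible in $R$ and $R \supset e(1/p)$ forces $\mu_p(R)\cong \bZ/p\bZ$, Lemma~\ref{lem:decomposition-is-still-self-dual}, applied to a pair of generators of $H_p$, decomposes each $W$ into character eigenspaces $W = \bigoplus_{\chi\in A} W^\chi$ indexed by $A := H_p^{*}\cong(\bZ/p\bZ)^2$, with the form giving a perfect pairing between $W^\chi$ and $W^{-\chi}$.

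Next I would invoke Corollary~\ref{cor:voa-generated-by-subvoas}. Choose two distinct cyclic subgroups $A_1, A_2 \le A$; they generate $A$, and the $A_i$-graded subalgebra $\bigoplus_{\chi\in A_i} W^\chi$ is precisely the fixed-point subalgebra $W^{g_i}$ of the generator $g_i$ of the annihilator $A_i^{\perp}\le H_p$. Because every nonidentity element of $H_p$ lies in class $p$B, both $g_1$ and $g_2$ are anomaly-free non-Fricke elements of order $p$, so $W^{g_1}$ and $W^{g_2}$ are self-dual $R$-forms (Lemma~\ref{lem:decomposition-is-still-self-dual}) meeting in the degree-$0$ piece $W^{H_p} = W^{g_1}\cap W^{g_2}$. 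Thus Corollary~\ref{cor:voa-generated-by-subvoas} exhibits each of $W_q$, $W_r$ as the unique $R$-form of $V^\natural$ obtained by gluing its two fixed-point subalgebra forms along $W^{H_p}$. By Lemma~\ref{lem:fixed-point-comparison} there are isomorphisms $\alpha_i\colon W_q^{g_i}\simto W_r^{g_i}$ for $i=1,2$, each identifying both sides with $(V_\Lambda)_R^{g_i^{*}}$.

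It then remains to promote the pair $(\alpha_1,\alpha_2)$ to an isomorphism of gluing data, i.e.\ to arrange that $\alpha_1$ and $\alpha_2$ restrict to the same isomorphism on the overlap $W^{H_p}$; once this is done, the functoriality in Corollary~\ref{cor:voa-generated-by-subvoas} yields $W_q\cong W_r$. This compatibility is the main obstacle. My plan is to resolve it using the residual action of $g_2$ on $W^{g_1}$: since $g_2$ commutes with $g_1$, it preserves $W^{g_1}$ and has fixed points exactly $W^{H_p}=(W^{g_1})^{g_2}$. Under the identification $W^{g_1}\cong (V_\Lambda)_R^{g_1^{*}}$, the element $g_2$ corresponds, via the prime-order orbifold correspondence together with Proposition~\ref{prop:centralizers-for-Leech} and Lemma~\ref{lem:action-of-centralizer-on-R-form}, to a Leech-lattice automorphism in $C_{\Aut V_\Lambda}(g_1^{*})$ that is the same on the $q$- and $r$-sides. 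Hence $\alpha_1$ may be chosen $g_2$-equivariantly, so that it restricts to an isomorphism $W_q^{H_p}\simto W_r^{H_p}$; I would then take $\alpha_2$ to agree with $\alpha_1$ on $W^{H_p}$, using the freedom to post-compose $\alpha_2$ with an automorphism of $W_r^{g_2}$ restricting suitably to $W_r^{H_p}$. Verifying that this equivariant matching can be carried out — essentially that the two copies of the Leech centralizer action agree under the respective orbifold identifications — is the step requiring the most care; everything else is a formal consequence of the descent and gluing machinery of Section~2.
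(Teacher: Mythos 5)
Your argument is essentially the paper's own proof: decompose $W_q$ and $W_r$ into eigenmodules for the $p\mathrm{B}$-pure group $H_p$ of Lemma~\ref{lem:affine-subgroups-of-monster}, identify the fixed-point subalgebras of the order-$p$ elements via Lemma~\ref{lem:fixed-point-comparison}, and conclude by the uniqueness claim of Corollary~\ref{cor:voa-generated-by-subvoas}. The only divergence is that the overlap-compatibility of the two isomorphisms, which you single out as the main obstacle and resolve by an equivariance argument, is not discussed in the paper at all --- the paper passes directly from ``the generating subalgebra forms are isomorphic'' to the uniqueness claim of Corollary~\ref{cor:voa-generated-by-subvoas} --- so your extra care there is a refinement of, not a departure from, the published argument.
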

\begin{proof}Let $H_p$ be a group of the form given in Lemma \ref{lem:affine-subgroups-of-monster}, and let $g \in H_p \setminus \{1\}$ be a nontrivial element.

Decomposing $V_{pq} = \big(V^\natural[1/pq,e(1/2pq)]\big)_R$ and $V_{pr} = \big(V^\natural[1/pr,e(1/2pr)]\big)_R$ into eigenmodules for~$H_p$, we obtain $V_{pq}^{i,j}$ and $V_{pr}^{i,j}$, for $i,j \in \{0,\ldots,p-1\}$. Because $H_p$ is generated by $p$B-elements, we have isomorphisms:
\begin{gather*}
\bigoplus_{i=0}^{p-1} V_{pq}^{0,i} \cong V_{pq}^g \cong \bigoplus_{i=0}^{p-1} V_{pq}^{i,0},\qquad
\bigoplus_{i=0}^{p-1} V_{pr}^{0,i} \cong V_{pr}^g \cong \bigoplus_{i=0}^{p-1} V_{pr}^{i,0}.
\end{gather*}
Lemma \ref{lem:fixed-point-comparison} yields $V_{pq}^g \cong V_{pr}^g$, so all of the vertex operator subalgebras are isomorphic. Both $V^\natural[1/pq, e(1/2pq)] \otimes_{\bZ[1/pq, e(1/2pq)]} R$ and $V^\natural[1/pr, e(1/2pr)] \otimes_{\bZ[1/pr, e(1/2pr)]} R$ are generated by these vertex operator subalgebras, so by the uniqueness claim of Corollary~\ref{cor:voa-generated-by-subvoas}, the two $R$-forms are isomorphic.
\end{proof}

\begin{prop} \label{prop:full-comparison-of-forms} Let $p$, $q$, $r$ be distinct elements of $P_0$, such that~$pq$ and~$pr$ are not elements of $\{ 65, 91\}$. Let $R = \bZ[1/pqr,e(1/2pqr)]$. Then
\begin{gather*} V^\natural[1/pq, e(1/2pq)] \otimes_{\bZ[1/pq, e(1/2pq)]} R \cong V^\natural[1/pr, e(1/2pr)] \otimes_{\bZ[1/pr, e(1/2pr)]} R. \end{gather*}
\end{prop}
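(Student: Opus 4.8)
The plan is to reduce the general statement to the case $p\in\{2,3\}$ already handled in Proposition~\ref{prop:isomorphisms-of-forms-for-2-and-3}, together with one genuinely new comparison that I would carry out using the monster symmetry established in Theorem~\ref{thm:monster-form}. When $p\in\{2,3\}$ the assertion is exactly Proposition~\ref{prop:isomorphisms-of-forms-for-2-and-3}. When $p\in\{5,7,13\}$ and $\{q,r\}=\{2,3\}$, each of $V^\natural[1/pq,e(1/2pq)]$ and $V^\natural[1/pr,e(1/2pr)]$ shares a prime in $\{2,3\}$ with $V^\natural[1/6,e(1/12)]$, so two applications of Proposition~\ref{prop:isomorphisms-of-forms-for-2-and-3}, base-changed to $R=\bZ[1/pqr,e(1/2pqr)]$, connect both to the $\{2,3\}$-form and hence to each other. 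This disposes of $p=13$ entirely and of the subcase $\{q,r\}=\{2,3\}$ for $p\in\{5,7\}$.

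The only remaining cases have $p\in\{5,7\}$ with the other of $q,r$ equal to $7$ or $5$; they compare $V^\natural[1/35,e(1/70)]$ with one of $V^\natural[1/10,\cdot]$, $V^\natural[1/14,\cdot]$, $V^\natural[1/15,\cdot]$, $V^\natural[1/21,\cdot]$. As these last four are pairwise isomorphic after base change by Proposition~\ref{prop:isomorphisms-of-forms-for-2-and-3}, it suffices to produce one isomorphism linking $V^\natural[1/35,e(1/70)]$ to them; I would establish $V^\natural[1/35,e(1/70)]\cong V^\natural[1/10,e(1/20)]$ over $R=\bZ[1/70,e(1/140)]$, the other three being identical after replacing the auxiliary prime $2$ by $3$ and working over $\bZ[1/105,e(1/210)]$. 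By Theorem~\ref{thm:monster-form} all these $R$-forms are stable under $\bM$, so I may fix an order-$35$ element $\sigma\in\bM$ whose $7$th power lies in $5$B and whose $5$th power lies in $7$B; the commuting pair $(\sigma^7,\sigma^5)$ gives each form a $\bZ/35\bZ=\bZ/5\bZ\times\bZ/7\bZ$-grading defined over $R$, and I would apply Corollary~\ref{cor:voa-generated-by-subvoas} with $A=\bZ/35\bZ$ and the two cyclic axis subgroups, whose graded subalgebras are the fixed-point subalgebras for $\sigma^7$ and for $\sigma^5$.

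To match the axis data of $V^\natural[1/35,\cdot]$ with that of $V^\natural[1/10,\cdot]$ I would use Lemma~\ref{lem:fixed-point-comparison} on each axis. On the $5$B-axis the two forms share the prime $5$, so their $\sigma^7$-fixed subalgebras are identified over $R$, both being a common fixed-point subalgebra of the standard Leech form $(V_\Lambda)_R$. On the $7$B-axis, $V^\natural[1/35,\cdot]$ and $V^\natural[1/14,\cdot]$ share the prime $7$, so their $\sigma^5$-fixed subalgebras are identified; and since $V^\natural[1/14,\cdot]\cong V^\natural[1/10,\cdot]$ over $R$, this transports—using $\bM$-stability and the conjugacy of all $7$B elements—to an identification of the $\sigma^5$-fixed subalgebra of $V^\natural[1/35,\cdot]$ with that of $V^\natural[1/10,\cdot]$. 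Thus $V^\natural[1/35,e(1/70)]$ and $V^\natural[1/10,e(1/20)]$ carry the same pair of $\bZ/35\bZ$-graded axis subalgebras over $R$, and the uniqueness clause of Corollary~\ref{cor:voa-generated-by-subvoas} yields the isomorphism.

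The step I expect to be the main obstacle is verifying the hypothesis of Corollary~\ref{cor:voa-generated-by-subvoas} that the two axis identifications agree on the degree-$0$ subalgebra $(V^\natural)^{\langle\sigma\rangle}$ where the axes meet: the $5$B-axis and $7$B-axis identifications are produced independently by Lemma~\ref{lem:fixed-point-comparison}, and one must check that they restrict to the same isomorphism of the order-$35$ fixed subalgebra, adjusting one of them by an automorphism of $(V^\natural)^{\langle\sigma\rangle}$ if necessary—legitimate since that subalgebra is intrinsic to each form. A secondary, purely group-theoretic point, checkable in the character table of $\bM$, is the existence of an order-$35$ element with $5$-part in $5$B and $7$-part in $7$B, so that the class labels feeding Lemma~\ref{lem:fixed-point-comparison} are the non-Fricke ones.
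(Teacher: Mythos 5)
Your proposal is correct and, in the only genuinely hard cases (those involving $V^\natural[1/35,e(1/70)]$), is essentially identical to the paper's argument: identify the $p$B-axis fixed subalgebras directly via Lemma~\ref{lem:fixed-point-comparison} through the shared prime $p$, identify the $r$B-axis fixed subalgebras by routing through the form sharing a prime in $\{2,3\}$ via Proposition~\ref{prop:isomorphisms-of-forms-for-2-and-3}, and conclude with the uniqueness clause of Corollary~\ref{cor:voa-generated-by-subvoas}; your flagged concern about the two identifications agreeing on the common fixed subalgebra is exactly the point the paper also leaves implicit in invoking that corollary. The only difference is cosmetic: you dispose of the $\{q,r\}=\{2,3\}$ subcases by chaining two applications of Proposition~\ref{prop:isomorphisms-of-forms-for-2-and-3} through $V^\natural[1/6,e(1/12)]$, whereas the paper runs the same orbifold-axis machinery uniformly for all $p\notin\{2,3\}$.
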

\begin{proof}
We first note that if $p \in \{2,3\}$, then this follows from Proposition~\ref{prop:isomorphisms-of-forms-for-2-and-3}. Otherwise, we may assume $q$ is the smallest prime among $p$, $q$, $r$, and therefore, that $q \in \{2,3\}$. Let $g,h \in \bM$ lie in classes $pB$ and~$rB$, respectively. By Lemma~\ref{lem:fixed-point-comparison}, we have isomorphisms
\begin{gather*} \big(V^\natural[1/pq,e(1/2pq)]\big)^g_R \cong \big(V^\natural[1/pr,e(1/2pr)]\big)^g_R, \end{gather*}
and
\begin{gather*} \big(V^\natural[1/qr,e(1/2qr)]\big)^h_R \cong \big(V^\natural[1/pr,e(1/2pr)]\big)^h_R. \end{gather*}
However, by Proposition \ref{prop:isomorphisms-of-forms-for-2-and-3}, we have an isomorphism
\begin{gather*} \big(V^\natural[1/pq,e(1/2pq)]\big)_R \cong \big(V^\natural[1/qr,e(1/2qr)]\big)_R, \end{gather*}
hence an isomorphism
\begin{gather*} \big(V^\natural[1/pq,e(1/2pq)]\big)^h_R \cong \big(V^\natural[1/pr,e(1/2pr)]\big)^h_R. \end{gather*}
Both $\big(V^\natural[1/pq,e(1/2pq)]\big)_R$ and $\big(V^\natural[1/pr,e(1/2pr)]\big)_R$ are $R$-forms of abelian intertwining subalgebras of ${}^{gh}V_\Lambda$ generated by isomorphic $R$-forms of $\big(V^\natural\big)^g$ and $\big(V^\natural\big)^h$, so by the uniqueness claim of Corollary~\ref{cor:voa-generated-by-subvoas}, they are therefore isomorphic.
\end{proof}

\begin{rem}An alternative proof of the previous proposition can be given by using the same technique as in Proposition~\ref{prop:isomorphisms-of-forms-for-2-and-3}, because there are $p$B-pure non-cyclic elementary abelian subgroups of $\bM$ for all $p \in P_0$ (see~\cite{W88}).
\end{rem}

\subsection{Descent of self-dual forms}

Before we start gluing forms of $V^\natural$, we show that the output of gluing is unique and has monster symmetry.

\begin{lem} \label{lem:unique-R-forms-of-moonshine-from-gluing-data}Let $i_1\colon R \to R_1$, $i_2\colon R \to R_2$ be homomorphisms of subrings of $\bC$, such that $R_1 \otimes_{R} R_2$ is also a subring of $\bC$, and suppose either:
\begin{enumerate}\itemsep=0pt
\item[$1)$] $R_1$ and $R_2$ are Zariski localizations of $R$ with respect to a coprime pair of elements, or
\item[$2)$] $i_1$ and $i_2$ are faithfully flat.
\end{enumerate}
Suppose we have a gluing datum $\big(V^1, V^2, f\big)$ of self-dual forms of $V^\natural$ over the diagram $R_1 \to R_1 \otimes_{R} R_2 \leftarrow R_2$, and suppose both forms have monster symmetry. Then there is a unique form~$V$ of~$V^\natural$ over $R$ such that both~$V^1$ and $V^2$ are base-changes of $V$. Furthermore, this form has monster symmetry.
\end{lem}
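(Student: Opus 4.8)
The plan is to derive the entire statement from Lemma~\ref{lem:unique-voa-from-descent}, whose hypotheses on the pair of ring maps $i_1,i_2$ are satisfied in both case~1) (via Lemma~\ref{lem:zariski-gluing-for-voas}) and case~2) (via Lemma~\ref{lem:faithfully-flat-gluing-data-for-voas}). The gluing datum $f$ furnishes precisely the isomorphism $V^1 \otimes_R R_2 \cong V^2 \otimes_R R_1$ of vertex operator algebras over $R_1 \otimes_R R_2$ required there. Thus existence and uniqueness of $V$, together with the identification of $\Aut_R V$, will follow at once provided the double coset space controlling gluing data is a singleton; by that lemma this reduces to showing that the two structural inclusions $\phi_1\colon \Aut_{R_1} V^1 \to \Aut_{R_1 \otimes_R R_2}(V^1 \otimes_R R_2)$ and $\phi_2\colon \Aut_{R_2} V^2 \to \Aut_{R_1 \otimes_R R_2}(V^1 \otimes_R R_2)$ are isomorphisms.

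The key step is an observation pinning down every automorphism group in sight: for any self-dual form $W$ of $V^\natural$ with monster symmetry defined over a subring $S \subset \bC$, I claim $\Aut_S W \cong \bM$. On one side, monster symmetry supplies an injection $\bM \hookrightarrow \Aut_S W$. On the other, since each graded piece of $W$ is finite projective, hence flat, over $S$ and $S \hookrightarrow \bC$, base change along $S \to \bC$ induces an injection $\Aut_S W \hookrightarrow \Aut_\bC(W \otimes_S \bC) = \Aut_\bC V^\natural = \bM$, the last equality being the content of~\cite{FLM88}. Composing, $\bM \hookrightarrow \Aut_S W \hookrightarrow \bM$ is an injective endomorphism of a finite group and hence bijective; this forces $\Aut_S W$ to be finite of order $|\bM|$, so the first injection is already an isomorphism.

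I would then apply this to the three forms in play. Since $R_1$, $R_2$, and $R_1 \otimes_R R_2$ are subrings of $\bC$ by hypothesis, and $V^1$, $V^2$, $V^1 \otimes_R R_2$ are self-dual monster-symmetric forms of $V^\natural$ over them (base change preserving both self-duality of the finite projective graded pieces and the faithful $\bM$-action), each of the three automorphism groups is isomorphic to $\bM$. Hence $\phi_1$ and $\phi_2$ are injective homomorphisms between finite groups of equal order $|\bM|$, so they are isomorphisms and the double coset space is a singleton. Lemma~\ref{lem:unique-voa-from-descent}, in its self-dual Möbius version, then yields a unique $V$ over $R$ with $V \otimes_R R_1 \cong V^1$ and $V \otimes_R R_2 \cong V^2$, together with an isomorphism $\Aut_R V \cong \Aut_{R_1} V^1 = \bM$. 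Base changing to $\bC$ gives $V \otimes_R \bC \cong (V \otimes_R R_1) \otimes_{R_1} \bC \cong V^\natural$, so $V$ is a form of $V^\natural$, and the identification $\Aut_R V \cong \bM$ is exactly its monster symmetry.

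I expect the only delicate point to be the automorphism-group identification in the second paragraph — specifically, checking that base change along $S \hookrightarrow \bC$ induces a well-defined injective map on automorphism groups (resting on flatness of the graded pieces and injectivity of $S \to \bC$), and using the hypothesis that $R_1 \otimes_R R_2$ is genuinely a subring of $\bC$. Once these are in place, the finiteness of $\bM$ trivializes the double coset computation, and everything else is formal bookkeeping through Lemma~\ref{lem:unique-voa-from-descent}.
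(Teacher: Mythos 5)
Your proposal is correct and follows essentially the same route as the paper: the paper's proof is a one-line appeal to Lemma~\ref{lem:unique-voa-from-descent} together with the observation that the double coset space $\bM \backslash \bM / \bM$ is a singleton, which tacitly uses exactly the identification you spell out, namely that every self-dual monster-symmetric form of $V^\natural$ over a subring of $\bC$ has automorphism group precisely $\bM$ (monster symmetry in one direction, injectivity of base change to $\bC$ plus finiteness of $\bM$ in the other). Your write-up just makes explicit the step the paper leaves implicit.
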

\begin{proof}Because the double quotient $\bM \backslash \bM / \bM$ is a singleton, Lemma~\ref{lem:unique-voa-from-descent} asserts that we get a~unique isomorphism type of $R$-form, and the same result also shows that this form has monster symmetry.
\end{proof}

\begin{lem} \label{lem:unique-R-forms-with-monster-symmetry} Let $n \in \bZ_{\geq 1}$. Suppose we have a commutative diagram
\begin{gather*} \xymatrix{ & R \ar[rd] \ar[ld] \ar[d] \\ R_1 \ar[rd] & \cdots \ar[d] & R_n \ar[ld] \\ & T} \end{gather*}
of inclusions of commutative subrings of $\bC$, where all maps are either
\begin{enumerate}\itemsep=0pt
\item[$1)$] faithfully flat, and $R_1 \cap \cdots \cap R_n = R$, or
\item[$2)$] Zariski localizations, forming a Zariski open cover of $\Spec R$.
\end{enumerate}
Suppose we are given a self-dual $T$-form $V^\natural_T$ of $V^\natural$ with $\bM$-symmetry, and for each $i \in \{1,\ldots,n\}$, we are given a self-dual $R_i$-form $V^\natural_{R_i}$ of $V^\natural$ with $\bM$-symmetry, together with an isomorphism $V^\natural_{R_i} \otimes_{R_i} T \simto V^\natural_T$. Then there exists a self-dual $R$-form $V^\natural_R$ of $V^\natural$, unique up to isomorphism, such that base change along the diagram of inclusions yields the original diagram of forms. In particular, for each $i \in \{1, \ldots, n\}$, we have $V^\natural_{R_i} \cong V^\natural_R \otimes_{R} R_i$.
\end{lem}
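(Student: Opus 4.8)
The plan is to prove the statement by induction on the number $n$ of intermediate rings, reducing at each stage to the two-chart gluing already established in Lemma~\ref{lem:unique-R-forms-of-moonshine-from-gluing-data} (in the faithfully flat case) and in Lemma~\ref{lem:zariski-gluing-for-voas} (in the Zariski case). The base case $n=1$ is vacuous, since then $R_1=R$ and we take $V^\natural_R = V^\natural_{R_1}$; the case $n=2$ is a single application of the relevant two-chart lemma, with the common overring $T$ serving as (or mapping faithfully flatly to) the overlap ring $R_1\otimes_{R} R_2$. Throughout, $R\to T$ is faithfully flat (resp.\ a localization) as a composite, and I would keep $V^\natural_T$ fixed as a persistent anchor to which every form produced is compared.

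For the inductive step I would combine two charts so as to lower $n$. In the faithfully flat case, set $R' = R_1\cap\cdots\cap R_{n-1}$, so that $R'\cap R_n = R$ and $R_1,\dots,R_{n-1}$ form a faithfully flat cover of $R'$; the Zariski case is handled identically, iterating Lemma~\ref{lem:zariski-gluing-for-voas} over the open cover. First I would apply the inductive hypothesis to the diagram $R'\to R_i\to T$ for $1\le i\le n-1$, using the \emph{same} $T$ and the \emph{same} given isomorphisms $V^\natural_{R_i}\otimes_{R_i}T\simto V^\natural_T$, to obtain a self-dual $\bM$-symmetric $R'$-form $V^\natural_{R'}$ together with an isomorphism $V^\natural_{R'}\otimes_{R'}T\simto V^\natural_T$. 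Then I would glue $V^\natural_{R'}$ and $V^\natural_{R_n}$ along $R=R'\cap R_n$ by a single application of Lemma~\ref{lem:unique-R-forms-of-moonshine-from-gluing-data}, producing $V^\natural_R$. That the result base-changes back to each $V^\natural_{R_i}$ and to $V^\natural_T$ follows from the equivalences of categories in those lemmas, while self-duality and the $\bM$-action descend by Proposition~\ref{prop:faithfully-flat-descent-for-Mobius-forms-and-self-duality}.

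The gluing datum required at each two-chart step is supplied by the anchor isomorphisms: given $\alpha_1\colon V^\natural_{R'}\otimes T\simto V^\natural_T$ and $\alpha_2\colon V^\natural_{R_n}\otimes T\simto V^\natural_T$, the composite $\alpha_2^{-1}\alpha_1$ identifies the two base changes over $T$, and I would take $f$ to be its descent to the overlap ring $R'\otimes_{R} R_n$, which is a subring of $\bC$ sitting inside $T$. The essential simplification is that monster symmetry makes the choice immaterial: since $\bM\backslash\bM/\bM$ is a singleton, Lemma~\ref{lem:unique-voa-from-descent} guarantees that any gluing datum yields the \emph{same} isomorphism type of $R$-form and that this form again carries a faithful $\bM$-action. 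The cocycle compatibilities needed for the multi-chart descent are automatic from working with one fixed $T$: all comparison isomorphisms factor through $V^\natural_T$, so on any triple overlap the two composites agree after the faithfully flat, hence injective, base change to $T$, and therefore agree already on the overlap.

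The hard part will be the two descents hidden in the previous paragraph. First, one must check that the intermediate rings behave correctly: that $R'\to R_i$ remains faithfully flat (resp.\ a localization) and that the overlap tensor products $R'\otimes_{R} R_n$ and $R_i\otimes_{R'}R_j$ are again subrings of $\bC$, so that the hypotheses of the two-chart lemmas are genuinely met at every stage. Second, and most delicately, one must verify that the $T$-level isomorphism $\alpha_2^{-1}\alpha_1$ actually descends to the overlap ring $R'\otimes_{R} R_n$ — equivalently, that the forms $V^\natural_{R'}\otimes_{R'}(R'\otimes_{R} R_n)$ and $V^\natural_{R_n}\otimes_{R_n}(R'\otimes_{R} R_n)$ are already isomorphic over the overlap rather than merely after pushing up to $T$. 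This is where the faithful flatness of the overlap-to-$T$ map and the rigidity of self-dual $\bM$-symmetric forms must be combined: the former lets me test the identity over $T$, and the latter (the singleton double coset) ensures that the unique monster-symmetric self-dual form over the overlap ring is common to both base changes, so that the gluing isomorphism exists over the overlap and the induction closes.
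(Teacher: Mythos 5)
Your proposal is correct and follows essentially the same route as the paper: induction on $n$, reduction to the two-chart gluing of Lemma~\ref{lem:unique-R-forms-of-moonshine-from-gluing-data}, with uniqueness and monster symmetry coming from the singleton double coset $\bM\backslash\bM/\bM$ via Lemma~\ref{lem:unique-voa-from-descent}. The only difference is organizational --- you peel off one chart at a time via $R'=R_1\cap\cdots\cap R_{n-1}$, while the paper partitions the index set into three blocks and verifies that the two orders of pairwise gluing agree --- and the descend-to-the-overlap issue you flag is one the paper silently avoids, since in all of its applications the overlap ring coincides with $T$.
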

\begin{proof}
The case $n=1$ is trivial, and the case $n=2$ is covered in Lemma \ref{lem:unique-R-forms-of-moonshine-from-gluing-data}. When $n \geq 3$, we apply induction, assuming existence and uniqueness for all smaller collections of rings. Then, we may reduce the question to the case $n=3$ by partitioning $\{1,\ldots,n\}$ into three nonempty sets~$X$,~$Y$,~$Z$, and introducing subrings $R_X = \bigcap_{i \in X} R_i$, $R_Y = \bigcap_{i \in Y} R_i$, and $R_Z = \bigcap_{i \in Z} R_i$ of $T$. By our induction assumption, we have unique self-dual forms $V^\natural_X$, $V^\natural_Y$, $V^\natural_Z$ of $V^\natural$ over $R_X$, $R_Y$, $R_Z$ satisfying the expected tensor product compatibility. It suffices to show that gluing to make an $R_X \cap R_Y$-form $V^\natural_{XY}$ followed by gluing with $V^\natural_Z$ yields an $R$-form $V^\natural_{XY,Z}$ that is isomorphic to the $R$-form $V^\natural_{X,YZ}$ we get by forming an $R_Y \cap R_Z$-form $V^\natural_{YZ}$ followed by gluing with $V^\natural_X$.

By uniqueness of pairwise gluing, to obtain the isomorphism $V^\natural_{XY,Z} \cong V^\natural_{X,YZ}$ it suffices to show that $V^\natural_{XY,Z} \otimes_{R} R_X \cong V^\natural_X$ and $V^\natural_{XY,Z} \otimes_{R} (R_Y \cap R_Z) \cong V^\natural_{YZ}$. The first isomorphism follows from the fact that the formation of $V^\natural_{XY,Z}$ yields isomorphisms
\begin{gather*}
V^\natural_X \cong V^\natural_{XY} \otimes_{R_X \cap R_Y} R_X \cong V^\natural_{XY,Z} \otimes_{R} (R_X \cap R_Y) \otimes_{R_X \cap R_Y} R_X \cong V^\natural_{XY,Z} \otimes_{R} R_X.
\end{gather*}
A similar argument then yields analogous expansions of $V^\natural_Y$ and $V^\natural_Z$. The second isomorphism then follows from the uniqueness statement of Lemma~\ref{lem:unique-R-forms-of-moonshine-from-gluing-data}. To elaborate, both $V^\natural_{XY,Z} \otimes_{R} (R_Y \cap R_Z)$ and $V^\natural_{YZ}$ are self-dual $R_Y \cap R_Z$-forms satisfying the property that tensor product with $R_Y$ and $R_Z$ yield $V^\natural_Y$ and $V^\natural_Z$, respectively. They are therefore isomorphic over~$R_Y \cap R_Z$.
\end{proof}

\begin{lem} \label{lem:descent-for-three-primes}Let $p$, $q$, $r$ be distinct elements of $P_0$, such that $pq$ and $pr$ are not in $\{65, 91 \}$. Then, there exists a unique self-dual $\bZ[1/pqr, e(1/2p)]$-form $V^\natural[1/pqr, e(1/2p)]$ of $V^\natural$ such that:
\begin{gather*}
V^\natural[1/pqr, e(1/2p)] \otimes_{\bZ[1/pqr, e(1/2p)]} \bZ[1/pqr, e(1/2pq)] \\
\qquad{} \cong V^\natural[1/pq, e(1/2pq)] \otimes_{\bZ[1/pq, e(1/2pq)]} \bZ[1/pqr, e(1/2pq)], \\
V^\natural[1/pqr, e(1/2p)] \otimes_{\bZ[1/pqr, e(1/2p)]} \bZ[1/pqr, e(1/2pr)] \\
\qquad{} \cong V^\natural[1/pr, e(1/2pr)] \otimes_{\bZ[1/pr, e(1/2pr)]} \bZ[1/pqr, e(1/2pr)].
\end{gather*}
Furthermore, this form has monster symmetry.
\end{lem}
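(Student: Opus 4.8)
The plan is to obtain $V^\natural[1/pqr, e(1/2p)]$ by faithfully flat descent along the two inclusions $R \to R_1$ and $R \to R_2$, where $R = \bZ[1/pqr, e(1/2p)]$, $R_1 = \bZ[1/pqr, e(1/2pq)]$, and $R_2 = \bZ[1/pqr, e(1/2pr)]$, using Proposition~\ref{prop:full-comparison-of-forms} to supply the gluing isomorphism and Lemma~\ref{lem:unique-R-forms-of-moonshine-from-gluing-data} to descend.

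First I would verify the ring-theoretic hypotheses of Lemma~\ref{lem:unique-R-forms-of-moonshine-from-gluing-data}(2). Writing $K = \bQ(e(1/2p))$, $K_1 = \bQ(e(1/2pq))$, $K_2 = \bQ(e(1/2pr))$, each $R_i$ is the ring of integers $\bZ[e(1/2p\cdot)]$ of $K_i$ with $pqr$ inverted, and $R$ likewise for $K$. Since $\bZ[e(1/2pq)]$ is an integral, torsion-free, hence flat, extension of the Dedekind domain $\bZ[e(1/2p)]$, and lying-over makes the map on spectra surjective, this extension is faithfully flat; inverting $pqr$ and base-changing preserves faithful flatness, so $R \to R_1$ and $R \to R_2$ are faithfully flat. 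Next, the standard identities for cyclotomic fields give $K_1 \cap K_2 = \bQ(e(1/\gcd(2pq,2pr))) = \bQ(e(1/2p)) = K$ and $K_1 K_2 = \bQ(e(1/2pqr))$; since a number in $K$ lies in $R_i$ exactly when some $pqr$-multiple of it is an algebraic integer, one concludes $R_1 \cap R_2 = R$ inside $\bC$. Finally, $K_1$ and $K_2$ are Galois over $K$ with $K_1 \cap K_2 = K$, hence linearly disjoint, so $K_1 \otimes_K K_2$ is the field $K_1 K_2$; as $R_1 \otimes_R R_2$ is flat, hence torsion-free, over the domain $R$, it injects into $(R_1 \otimes_R R_2)\otimes_R K = K_1 \otimes_K K_2 \hookrightarrow \bC$, identifying $R_1 \otimes_R R_2$ with the subring $\bZ[1/pqr, e(1/2pqr)]$ of $\bC$.

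With the rings in hand I would assemble the gluing datum. Base-changing the self-dual monster-symmetric form of Theorem~\ref{thm:monster-form} along the inclusions $\bZ[1/pq, e(1/2pq)] \subset R_1$ and $\bZ[1/pr, e(1/2pr)] \subset R_2$ produces self-dual $R_1$- and $R_2$-forms $V^1$ and $V^2$ of $V^\natural$ with $\bM$-symmetry. Because $R_1 \otimes_R R_2 = \bZ[1/pqr, e(1/2pqr)]$ is exactly the ring appearing in Proposition~\ref{prop:full-comparison-of-forms}, that proposition furnishes an isomorphism $V^1 \otimes_{R_1}(R_1 \otimes_R R_2) \cong V^2 \otimes_{R_2}(R_1 \otimes_R R_2)$, giving a gluing datum of self-dual monster-symmetric forms over the diagram $R_1 \to R_1 \otimes_R R_2 \leftarrow R_2$. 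Applying Lemma~\ref{lem:unique-R-forms-of-moonshine-from-gluing-data} then yields a unique self-dual $R$-form $V^\natural[1/pqr, e(1/2p)]$ with $\bM$-symmetry whose base changes to $R_1$ and $R_2$ recover $V^1$ and $V^2$; unwinding these base changes gives exactly the two displayed isomorphisms, and the uniqueness in that lemma gives uniqueness of the descended form.

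The main obstacle is the commutative-algebra bookkeeping of the second step: one must confirm, taking care with the factor of $2$ (which need not be invertible in $R$ unless one of $p,q,r$ equals $2$), that the three rings satisfy the hypotheses of Lemma~\ref{lem:unique-R-forms-of-moonshine-from-gluing-data} — faithful flatness, the intersection $R_1 \cap R_2 = R$, and the identification of $R_1 \otimes_R R_2$ with a subring of $\bC$. Once these cyclotomic facts are established, the descent itself is a direct appeal to the previously proved gluing and comparison results and needs no further input.
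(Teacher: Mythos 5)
Your proof is correct and follows essentially the same route as the paper: Proposition~\ref{prop:full-comparison-of-forms} supplies the gluing isomorphism over $\bZ[1/pqr,e(1/2pqr)]$, and faithfully flat gluing (the paper invokes Lemma~\ref{lem:unique-R-forms-with-monster-symmetry}, which for two rings is exactly Lemma~\ref{lem:unique-R-forms-of-moonshine-from-gluing-data}) produces the unique self-dual $\bZ[1/pqr,e(1/2p)]$-form with monster symmetry. The only difference is that you explicitly verify the cyclotomic ring-theoretic hypotheses (faithful flatness, $R_1\cap R_2=R$, and $R_1\otimes_R R_2\cong\bZ[1/pqr,e(1/2pqr)]$ as a subring of $\bC$), which the paper leaves implicit; these verifications are accurate.
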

\begin{proof}By Proposition~\ref{prop:full-comparison-of-forms}, we have a gluing datum for the diagram $\bZ[1/pqr, e(1/2pq)] \to \bZ[1/pqr, e(1/2pqr)] \leftarrow \bZ[1/pqr, e(1/2pr)]$ in self-dual vertex operator algebras. By Lemma~\ref{lem:unique-R-forms-with-monster-symmetry} we obtain a unique self-dual vertex operator algebra over $\bZ[1/pqr,e(1/2p)]$ such that tensor product yields the input vertex operator algebras, and furthermore, this vertex operator algebra has monster symmetry.
\end{proof}

\begin{lem} \label{lem:descent-for-tripod}Let $p$, $q$, $r$, $\ell$ be distinct elements of $P_0$ such that $pq, pr, p\ell \not\in \{65, 91\}$. Then there exists a unique self-dual $\bZ[1/p, e(1/2p)]$-form $V^\natural[1/p, e(1/2p)]$ of $V^\natural$ such that:{\samepage
\begin{gather*}
V^\natural[1/p, e(1/2p)] \otimes_{\bZ[1/p, e(1/2p)]} \bZ[1/pq, e(1/2pq)]\cong V^\natural[1/pq, e(1/2pq)], \\
V^\natural[1/p, e(1/2p)] \otimes_{\bZ[1/p, e(1/2p)]} \bZ[1/pr, e(1/2pr)]\cong V^\natural[1/pr, e(1/2pr)], \\
V^\natural[1/p, e(1/2p)] \otimes_{\bZ[1/p, e(1/2p)]} \bZ[1/p\ell, e(1/2p\ell)]\cong V^\natural[1/p\ell, e(1/2p\ell)].
\end{gather*}
Furthermore, this form has monster symmetry.}
\end{lem}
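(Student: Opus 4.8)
The plan is to reduce the statement to the three-prime case already settled in Lemma~\ref{lem:descent-for-three-primes} by a single application of the Zariski-cover descent of Lemma~\ref{lem:unique-R-forms-with-monster-symmetry}. Write $R = \bZ[1/p, e(1/2p)]$. For each of the three pairs $\{q,r\}$, $\{q,\ell\}$, $\{r,\ell\}$, Lemma~\ref{lem:descent-for-three-primes} applies (its hypotheses hold because $pq, pr, p\ell \not\in \{65,91\}$) and produces self-dual forms $V^\natural[1/pqr, e(1/2p)]$, $V^\natural[1/pq\ell, e(1/2p)]$, $V^\natural[1/pr\ell, e(1/2p)]$ with monster symmetry over the rings $R_{qr} = \bZ[1/pqr, e(1/2p)]$, $R_{q\ell} = \bZ[1/pq\ell, e(1/2p)]$, $R_{r\ell} = \bZ[1/pr\ell, e(1/2p)]$. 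Since $q,r,\ell$ are distinct primes, the elements $qr, q\ell, r\ell$ generate the unit ideal, so $\Spec R_{qr}$, $\Spec R_{q\ell}$, $\Spec R_{r\ell}$ form a Zariski open cover of $\Spec R$; moreover every pairwise overlap, and the triple overlap, equals the single ring $T = \bZ[1/pqr\ell, e(1/2p)]$, which is exactly the shape of the diagram in Lemma~\ref{lem:unique-R-forms-with-monster-symmetry}.

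The crux is to check that the three forms agree after base change to $T$. Here I would use the rigidity of self-dual monster-symmetric forms of $V^\natural$: if two such forms over a subring of $\bC$ become isomorphic after a faithfully flat base change, they are already isomorphic, since the governing double coset $\bM \backslash \bM / \bM$ is a singleton (this is precisely the uniqueness mechanism of Lemmas~\ref{lem:unique-voa-from-descent} and~\ref{lem:unique-R-forms-with-monster-symmetry}). Now $T \to \bZ[1/pqr\ell, e(1/2pq)]$ is faithfully flat, and by the defining base-change property in Lemma~\ref{lem:descent-for-three-primes} both $V^\natural[1/pqr, e(1/2p)] \otimes_{R_{qr}} T$ and $V^\natural[1/pq\ell, e(1/2p)] \otimes_{R_{q\ell}} T$ become isomorphic to $V^\natural[1/pq, e(1/2pq)] \otimes \bZ[1/pqr\ell, e(1/2pq)]$ after this extension. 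Rigidity then forces them to agree over $T$, and the same comparison handles the $\{r,\ell\}$ form. This furnishes the isomorphisms $V^\natural_{R_i} \otimes_{R_i} T \simto V^\natural_T$ required to invoke the Zariski case of Lemma~\ref{lem:unique-R-forms-with-monster-symmetry}, yielding a self-dual $R$-form $V^\natural[1/p, e(1/2p)]$ with monster symmetry whose base changes to $R_{qr}$, $R_{q\ell}$, $R_{r\ell}$ recover the three input forms.

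It remains to convert these into the base-change statements asserted in the lemma, which are over $\bZ[1/pq, e(1/2pq)]$ rather than over $\bZ[1/pqr, e(1/2p)]$. Fixing the prime $q$, set $S = \bZ[1/pq, e(1/2pq)]$ and compare $A = V^\natural[1/p, e(1/2p)] \otimes_R S$ with $B = V^\natural[1/pq, e(1/2pq)]$. Inverting $r$ gives $S[1/r] = \bZ[1/pqr, e(1/2pq)]$, and there $A$ becomes $V^\natural[1/pqr, e(1/2p)] \otimes \bZ[1/pqr, e(1/2pq)] \cong B \otimes_S S[1/r]$ by Lemma~\ref{lem:descent-for-three-primes}; inverting $\ell$ gives the analogous isomorphism over $S[1/\ell]$ using the $\{q,\ell\}$ form. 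Because $r$ and $\ell$ are coprime, $\Spec S[1/r]$ and $\Spec S[1/\ell]$ cover $\Spec S$, so the Zariski uniqueness of monster-symmetric self-dual forms (again Lemma~\ref{lem:unique-R-forms-with-monster-symmetry}) gives $A \cong B$; replacing $q$ by $r$ or $\ell$ yields the other two assertions. Uniqueness of the resulting $R$-form follows from the uniqueness clause of Lemma~\ref{lem:unique-R-forms-with-monster-symmetry} together with the same rigidity, and monster symmetry is inherited from the construction.

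The step I expect to be the main obstacle is the triple-overlap agreement over $T$: the three forms are manufactured by independent applications of Lemma~\ref{lem:descent-for-three-primes} and carry no common identification a priori, so one genuinely has to descend an isomorphism across the root-of-unity extension $T \to \bZ[1/pqr\ell, e(1/2pq)]$. It is the singleton double coset $\bM \backslash \bM / \bM$, coming from the full monster symmetry of Theorem~\ref{thm:monster-form} propagated through the earlier descent lemmas, that makes this descent automatic and keeps the argument purely formal.
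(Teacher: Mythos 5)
Your overall strategy is the paper's: apply Lemma~\ref{lem:descent-for-three-primes} to the triples $(p,q,r)$, $(p,q,\ell)$, $(p,r,\ell)$ and then glue the resulting forms over the Zariski cover of $\Spec \bZ[1/p,e(1/2p)]$ by $\Spec \bZ[1/pqr,e(1/2p)]$, $\Spec \bZ[1/pq\ell,e(1/2p)]$, $\Spec \bZ[1/pr\ell,e(1/2p)]$; the paper merely packages the gluing as two successive two-element covers (first down to $\bZ[1/pq,e(1/2p)]$, etc., then down to $\bZ[1/p,e(1/2p)]$) rather than one three-element application of Lemma~\ref{lem:unique-R-forms-with-monster-symmetry}, and your final conversion step over $\bZ[1/pq,e(1/2pq)]$ is a correct use of Zariski uniqueness.

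The problem is the step you yourself single out as the crux. Your ``rigidity'' principle --- that two self-dual monster-symmetric forms over $T$ which become isomorphic after a faithfully flat base change $T \to T'$ are already isomorphic over $T$, ``since the governing double coset $\bM\backslash\bM/\bM$ is a singleton'' --- does not follow from Lemmas~\ref{lem:unique-voa-from-descent} or~\ref{lem:unique-R-forms-of-moonshine-from-gluing-data}. Those lemmas govern gluing over a diagram $R \to R_1$, $R \to R_2$ in which $R_1 \otimes_R R_2$ is again a subring of $\bC$, so that the automorphism group over the overlap is still $\bM$. For a single extension such as $T = \bZ[1/pqr\ell,e(1/2p)] \to T' = T[e(1/2pq)]$, the relevant overlap ring is $T' \otimes_T T'$, which is a product of copies of $T'$ rather than a domain; the automorphism group there is a power of $\bM$, the double coset space is not a singleton, and forms split by $T'$ are classified by $H^1(\mathrm{Gal}(T'/T),\bM) = \Hom(\mathrm{Gal}(T'/T),\bM)/\mathrm{conj}$, which is genuinely nontrivial (e.g., $\mathrm{Gal}$ is cyclic of order $q-1$ and $\bM$ has elements of those orders). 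The conclusion you want is nevertheless true, but for a different reason: a twist by a nontrivial cocycle $\rho$ has automorphism group $C_\bM(\mathrm{im}\,\rho) \subsetneq \bM$ because $\bM$ is centerless, so among forms with \emph{full} monster symmetry only the trivial class survives. Alternatively one can avoid Galois descent of isomorphisms altogether by exhibiting both $V^\natural[1/pqr,e(1/2p)]\otimes T$ and $V^\natural[1/pq\ell,e(1/2p)]\otimes T$ as partial gluings of one common gluing datum over $T$ built from the three cyclotomic extensions $T[e(1/2pq)]$, $T[e(1/2pr)]$, $T[e(1/2p\ell)]$ (using Proposition~\ref{prop:full-comparison-of-forms} to match them over $T[e(1/2pqr\ell)]$), and then quoting the uniqueness of Lemma~\ref{lem:unique-R-forms-with-monster-symmetry}. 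As written, your justification of the overlap agreement is not valid, and since you correctly identify this as the load-bearing step, the gap needs to be filled by one of these arguments.
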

\begin{proof}We apply Lemma \ref{lem:descent-for-three-primes} for the triples $(p,q,r)$, $(p,q,\ell)$ and $(p,r,\ell)$ to obtain unique self-dual forms over $\bZ[1/pqr, e(1/2p)]$, $\bZ[1/pq\ell, e(1/2p)]$, and $\bZ[1/pr\ell, e(1/2p)]$ with monster symmetry, and they are all isomorphic to each other when base-changed to $\bZ[1/pqr\ell, e(1/2p)]$. For each pair of these forms, we apply Zariski descent (following Lemma~\ref{lem:zariski-gluing-for-voas}) to obtain unique self-dual forms $V^\natural[1/pq, e(1/2p)]$ over $\bZ[1/pq, e(1/2p)]$, $V^\natural[1/pr, e(1/2p)]$ over $\bZ[1/pr, e(1/2p)]$, and $V^\natural[1/p\ell, e(1/2p)]$ over $\bZ[1/p\ell, e(1/2p)]$, satisfying
\begin{gather*}
V^\natural[1/pq, e(1/2p)] \otimes_{\bZ[1/pq, e(1/2p)]} \bZ[1/pq, e(1/2pq)] \cong V^\natural[1/pq, e(1/2pq)], \\
V^\natural[1/pr, e(1/2p)] \otimes_{\bZ[1/pr, e(1/2p)]} \bZ[1/pr, e(1/2pr)]\cong V^\natural[1/pr, e(1/2pr)], \\
V^\natural[1/p\ell, e(1/2p)] \otimes_{\bZ[1/p\ell, e(1/2p)]} \bZ[1/p\ell, e(1/2p\ell)]\cong V^\natural[1/p\ell, e(1/2p\ell)].
\end{gather*}
Applying Zariski descent to any pair of these forms yields a self-dual $\bZ[1/p, e(1/2p)]$-form with monster symmetry, and the uniqueness claim in Lemma~\ref{lem:unique-R-forms-with-monster-symmetry} implies any pair yields an isomorphic object.
\end{proof}

\begin{lem} \label{lem:descent-for-triangle}Let $p$, $q$, $r$ be distinct elements of $P_0$, such that $pq$, $qr$, and $pr$ are not in $\{65, 91 \}$. Then, there exists a unique self-dual $\bZ[1/pqr]$-form $V^\natural[1/pqr]$ of $V^\natural$ such that
 \begin{gather*}
V^\natural[1/pqr] \otimes \bZ[e(1/2pq)] \cong V^\natural[1/pq, e(1/2pq)] \otimes_{\bZ[1/pq, e(1/2pq)]} \bZ[1/pqr, e(1/2pq)], \\
V^\natural[1/pqr] \otimes \bZ[e(1/2pr)]\cong V^\natural[1/pr, e(1/2pr)] \otimes_{\bZ[1/pr, e(1/2pr)]} \bZ[1/pqr, e(1/2pr)], \\
V^\natural[1/pqr] \otimes \bZ[e(1/2qr)]\cong V^\natural[1/qr, e(1/2qr)] \otimes_{\bZ[1/qr, e(1/2qr)]} \bZ[1/pqr, e(1/2qr)].
\end{gather*}
Furthermore, $V^\natural[1/pqr]$ has monster symmetry.
\end{lem}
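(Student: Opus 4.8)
The plan is to realize $V^\natural[1/pqr]$ by faithfully flat descent along the cyclotomic extensions of $R = \bZ[1/pqr]$, invoking Lemma~\ref{lem:unique-R-forms-with-monster-symmetry}(1). I set $R_1 = \bZ[1/pqr, e(1/2pq)]$, $R_2 = \bZ[1/pqr, e(1/2pr)]$, and $R_3 = \bZ[1/pqr, e(1/2qr)]$, together with $T = \bZ[1/pqr, e(1/2pqr)]$. Since $e(1/2pqr)^r = e(1/2pq)$ and similarly for the other two primes, $T$ contains each $R_i$, and all of these rings sit inside $\bQ(e(1/2pqr)) \subset \bC$, with all maps inclusions. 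I take the $R_i$-forms to be the base changes $V^\natural[1/pq, e(1/2pq)] \otimes R_1$, $V^\natural[1/pr, e(1/2pr)] \otimes R_2$, and $V^\natural[1/qr, e(1/2qr)] \otimes R_3$, each self-dual with $\bM$-symmetry by Theorem~\ref{thm:monster-form} and Proposition~\ref{prop:R-forms-of-moonshine}, these properties being preserved under base change.

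First I would verify the faithful-flatness and intersection hypotheses. Each of $R \to R_1, R \to R_2, R \to R_3, R \to T$ adjoins roots of unity; since $\bZ[e(1/m)]$ is free over $\bZ$ on a power basis, each such map is finite free, hence faithfully flat. For $R_i \to T$ one adjoins the root of unity attached to the one remaining prime, and as that prime is already inverted in $R_i$ the extension is finite \'etale (ramification occurs only over that prime), hence finite locally free and faithfully flat. The crucial arithmetic point is that $R_1 \cap R_2 \cap R_3 = R$. Using $\bQ(e(1/m)) \cap \bQ(e(1/n)) = \bQ(e(1/\gcd(m,n)))$, the fields satisfy $\bQ(e(1/2pq)) \cap \bQ(e(1/2pr)) = \bQ(e(1/2p))$ and then $\bQ(e(1/2p)) \cap \bQ(e(1/2qr)) = \bQ(e(1/2)) = \bQ$, so the three cyclotomic fields meet in $\bQ$. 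An element of the triple ring intersection is therefore a rational number lying in the $\{p,q,r\}$-integer ring of each cyclotomic field, i.e.\ a rational whose denominator is supported on $\{p,q,r\}$; hence $R_1 \cap R_2 \cap R_3 = \bZ[1/pqr] = R$.

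It remains to produce the isomorphisms $V^\natural_{R_i} \otimes_{R_i} T \simto V^\natural_T$ to a common $T$-form. I would apply Proposition~\ref{prop:full-comparison-of-forms} to each pair with a shared prime: base-changing the three $R_i$-forms to $T$ yields $V^\natural[1/pq,\cdot]\otimes T$, $V^\natural[1/pr,\cdot]\otimes T$, $V^\natural[1/qr,\cdot]\otimes T$, and Proposition~\ref{prop:full-comparison-of-forms}---which applies precisely because $pq$, $pr$, and $qr$ all avoid $\{65,91\}$---identifies them pairwise, hence all three with a single self-dual $T$-form $V^\natural_T$ carrying $\bM$-symmetry. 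With every hypothesis of Lemma~\ref{lem:unique-R-forms-with-monster-symmetry} in place, I obtain a unique self-dual $\bZ[1/pqr]$-form $V^\natural[1/pqr]$ with $\bM$-symmetry whose base changes along $R \to R_i$ recover the three $R_i$-forms, and these recovered identities are exactly the three displayed compatibility conditions. The main obstacle is the cyclotomic intersection computation of the previous paragraph: it is what makes the adjoined roots of unity genuinely cancel, leaving the root-free ring $\bZ[1/pqr]$, and one must take care that although $R \to R_i$ may ramify at $2$, faithful flatness is unaffected because the cyclotomic integer rings are free over $\bZ$.
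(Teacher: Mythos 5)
Your proof is correct and follows essentially the same route as the paper: both rest on Proposition~\ref{prop:full-comparison-of-forms} to supply the identifications over $T=\bZ[1/pqr,e(1/2pqr)]$ and then descend along the cyclotomic tower via Lemma~\ref{lem:unique-R-forms-with-monster-symmetry}. The only organizational difference is that the paper first invokes Lemma~\ref{lem:descent-for-three-primes} to produce intermediate forms over $\bZ[1/pqr,e(1/2p)]$, $\bZ[1/pqr,e(1/2q)]$, $\bZ[1/pqr,e(1/2r)]$ and then glues a pair of those, whereas you feed the three two-prime forms directly into the $n=3$ case of Lemma~\ref{lem:unique-R-forms-with-monster-symmetry}; since that lemma's proof performs exactly the pairwise gluing through the intersections $\bZ[1/pqr,e(1/2p)]$, etc., the arguments coincide, and your explicit verification that $\bQ(e(1/2pq))\cap\bQ(e(1/2pr))\cap\bQ(e(1/2qr))=\bQ$, hence $R_1\cap R_2\cap R_3=\bZ[1/pqr]$, makes precise the arithmetic input the paper leaves implicit.
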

\begin{proof}We apply Lemma \ref{lem:descent-for-three-primes} for the triples $(p,q,r)$, $(q,r,p)$, and $(r,p,q)$ to obtain unique self-dual forms over $\bZ[1/pqr, e(1/2p)]$, $\bZ[1/pqr, e(1/2q)]$, and $\bZ[1/pqr, e(1/2r)]$. For any pair of these forms, applying faithfully flat gluing as in Lemma \ref{lem:faithfully-flat-gluing-data-for-voas} yields a self-dual $\bZ[1/pqr]$-form satisfying the expected conditions, and uniqueness and monster symmetry follow from Lemma~\ref{lem:unique-R-forms-with-monster-symmetry}.
\end{proof}

We now come to the main theorem.

\begin{thm} \label{thm:integral-form-for-moonshine}There exists a unique self-dual $\bZ$-form $V^\natural_{\bZ}$ of the vertex operator algebra $V^\natural$ such that for any distinct $p,q \in P_0$ satisfying $pq \not\in \{65, 91\}$, we have
\begin{gather*} V^\natural_{\bZ} \otimes_{\bZ} \bZ[1/pq, e(1/2pq)] \cong V^\natural[1/pq, e(1/2pq)]. \end{gather*}
Furthermore, this integral form has monster symmetry.
\end{thm}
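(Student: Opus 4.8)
The plan is to build $V^\natural_\bZ$ by Zariski descent over $\Spec\bZ$ from the root-free forms $V^\natural[1/pqr]$ of Lemma~\ref{lem:descent-for-triangle}, and afterwards to extract every required base-change isomorphism from the single-prime forms of Lemma~\ref{lem:descent-for-tripod}. I would fix the admissible triples $\{2,3,5\}$, $\{2,3,7\}$, $\{2,5,7\}$, $\{3,5,7\}$, $\{2,3,13\}$, producing self-dual $\bM$-symmetric forms over $\bZ[1/30]$, $\bZ[1/42]$, $\bZ[1/70]$, $\bZ[1/105]$, $\bZ[1/78]$. Each prime in $P_0$ is omitted by some triple ($2$ by $\{3,5,7\}$, $3$ by $\{2,5,7\}$, $5$ by $\{2,3,7\}$, $7$ by $\{2,3,5\}$, $13$ by any of the first four), and every prime outside $P_0$ is omitted by all of them, so these opens form a Zariski cover of $\Spec\bZ$.

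The first substantive step is to check that these five pieces agree on the total overlap $T=\bZ[1/2730]$. When two of the triples meet in a pair $\{a,b\}$, both forms base-change, by the defining property in Lemma~\ref{lem:descent-for-triangle}, to the same $V^\natural[1/ab,e(1/2ab)]$ after adjoining $e(1/2ab)$; thus they become isomorphic over that cyclotomic overlap, and since adjoining a root of unity is faithfully flat, the uniqueness in Lemma~\ref{lem:unique-R-forms-with-monster-symmetry} — where self-duality and $\bM$-symmetry force the governing double coset $\bM\backslash\bM/\bM$ to be a point — descends the isomorphism to $T$. The only pairs of triples sharing a single prime are $\{2,3,13\}$ against $\{2,5,7\}$ and against $\{3,5,7\}$; for those I would chain through $\{2,3,5\}$, which shares a pair with each. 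This yields a common $V^\natural_T$ to which all five forms restrict isomorphically, so the Zariski case of Lemma~\ref{lem:unique-R-forms-with-monster-symmetry} delivers a self-dual $\bZ$-form $V^\natural_\bZ$, unique up to isomorphism, with $\bM$-symmetry and with $V^\natural_\bZ\otimes\bZ[1/pqr]\cong V^\natural[1/pqr]$ for each of the five triples.

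To obtain $V^\natural_\bZ\otimes_\bZ\bZ[1/pq,e(1/2pq)]\cong V^\natural[1/pq,e(1/2pq)]$ for all admissible pairs, I would first prove the single-prime statements $V^\natural_\bZ\otimes_\bZ\bZ[1/p,e(1/2p)]\cong V^\natural[1/p,e(1/2p)]$ for $p\in\{2,3,5\}$. For each such $p$, the opens $\Spec\bZ[1/pqr,e(1/2p)]$ over the admissible triangle triples through $p$ cover $\Spec\bZ[1/p,e(1/2p)]$; over each open both sides agree with the three-prime form $V^\natural[1/pqr,e(1/2p)]$ of Lemma~\ref{lem:descent-for-three-primes}, which is recovered from $V^\natural[1/pqr]=V^\natural_\bZ\otimes\bZ[1/pqr]$ by raising the root, so Lemma~\ref{lem:unique-R-forms-with-monster-symmetry} identifies the two forms over $\bZ[1/p,e(1/2p)]$. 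Every admissible pair contains one of $2,3,5$, and the corresponding single-prime form base-changes (Lemma~\ref{lem:descent-for-tripod}, applied with all admissible legs through $p$) to the relevant pair form; composing gives the pair property, and the same chain of uniqueness statements gives uniqueness of $V^\natural_\bZ$.

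The hard part is the pair of classes $26=\{2,13\}$ and $39=\{3,13\}$. Since $5\cdot 13=65$ and $7\cdot 13=91$ are inadmissible — the Leech lattice carries no fixed-point-free isometry of those orders — the prime $13$ pairs only with $2$ and $3$, so the triangle forms connect these two classes to the rest of the network only after inverting $3$ (respectively $2$); consequently a naive Zariski cover of $\Spec\bZ[1/26,e(1/52)]$ never sees the prime above $3$, and the comparison cannot be closed there. The point of routing through the single-prime forms is precisely that $V^\natural[1/2,e(1/4)]$ and $V^\natural[1/3,e(1/6)]$ invert nothing at $13$ while remaining compatible with the classes $26$ and $39$, and their defining covers do reach the missing prime. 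Making this non-circular — so that the three-prime pieces for $p\in\{2,3\}$ are genuine base changes of the already-built $V^\natural_\bZ$ rather than being defined through $26$ and $39$ themselves — is what forces me to include $\{2,3,13\}$ in the building cover, and is the step I expect to require the most care.
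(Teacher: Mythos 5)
Your proposal follows essentially the same route as the paper: Zariski-glue the root-free triangle forms of Lemma~\ref{lem:descent-for-triangle} over $\Spec \bZ$, and then reach the classes $26$ and $39$ through the single-prime forms of Lemma~\ref{lem:descent-for-tripod} together with their independence of the auxiliary primes. The only (harmless) deviation is that you add the triple $\{2,3,13\}$ to the building cover, whereas the paper covers $\Spec \bZ$ with the four triples from $\{2,3,5,7\}$ alone and secures the compatibility at $13$ by showing that $V^\natural[1/p, e(1/2p)]$ is independent of the choice of legs $q$, $r$, $\ell$ for $p \in \{2,3\}$ --- exactly the independence you invoke when you apply Lemma~\ref{lem:descent-for-tripod} ``with all admissible legs through $p$''.
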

\begin{proof}If we let $(p,q,r)$ range over triples in $\{2,3,5,7\}$, Lemma~\ref{lem:descent-for-triangle} yields $\bZ[1/pqr]$-forms $V^\natural[1/pqr]$. Since the four rings $\bZ[1/pqr]$ form a Zariski cover of $\Spec \bZ$, we obtain a self-dual $\bZ$-form $V^\natural_\bZ$ by pairwise gluing, and Lemma~\ref{lem:unique-R-forms-with-monster-symmetry} implies monster symmetry and uniqueness with respect to isomorphisms
\begin{gather*} V^\natural_{\bZ} \otimes_{\bZ} \bZ[1/pq, e(1/2pq)] \cong V^\natural[1/pq, e(1/2pq)] \end{gather*}
as $p$, $q$ range over distinct elements of $\{2,3,5,7\}$.

We now consider the remaining cases, both of which involve the prime $13$. Let $p \in \{2,3\}$. Recall from Lemma \ref{lem:descent-for-tripod} that for any distinct $q$, $r$, $\ell$ in $P_0 \setminus \{p\}$, including the case $q=13$, we have a $\bZ[1/p,e(1/2p)]$-form $V^\natural[1/p, e(1/2p)]$ satisfying
\begin{gather*} V^\natural[1/p, e(1/2p)] \otimes_{\bZ[1/p, e(1/2p)]} \bZ[1/pq, e(1/2pq)] \cong V^\natural[1/pq, e(1/2pq)]. \end{gather*}
We claim that this form is independent of the choice of the primes $q$, $r$, $\ell$. Indeed, ap\-plying Lemma~\ref{lem:descent-for-three-primes} to all possible triples $(p,q,r)$ yields a collection of forms over the various rings $\bZ[1/pqr, e(1/2p)]$. By the tensor product compatibilities, these forms are isomorphic on Zariski intersections, so all of the $\Spec \bZ[1/p,e(1/2p)]$-forms obtained by gluing are isomorphic by uniqueness.

Thus, it suffices to show that $V^\natural_{\bZ} \otimes \bZ[1/p, e(1/2p)] \cong V^\natural[1/p, e(1/2p)]$. We have established that the isomorphism type of $V^\natural[1/p, e(1/2p)]$ is independent of the choice of $q$, $r$, $\ell$, so we may choose $q,r,\ell \in \{2,3,5,7\} \setminus \{p\}$, and then the isomorphism follows from the uniqueness of our construction of $V^\natural_{\bZ}$ in the first paragraph.
\end{proof}

\begin{cor}The inner product on $V^\natural_\bZ$ is positive definite.
\end{cor}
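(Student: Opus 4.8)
The plan is to reduce positive-definiteness to a statement comparing real forms of $V^\natural$, and then to deduce that statement from the positivity of the standard form of the Leech lattice vertex operator algebra. Since $\bZ$ is a principal ideal domain, each graded piece $(V^\natural_\bZ)_n$ is finite free, and self-duality (Theorem~\ref{thm:integral-form-for-moonshine}) makes it a unimodular lattice; thus the form is positive definite if and only if the induced real symmetric form on $(V^\natural_\bZ)_n \otimes_\bZ \bR$ is positive definite. Now the invariant bilinear form on the simple vertex operator algebra $V^\natural$ (with $V^\natural_0 = \bC\unit$ and $V^\natural_1 = 0$) is unique up to scalar, so after normalizing $(\unit,\unit)=1$ the $\bC$-bilinear extension of our form agrees with the standard invariant form of \cite{FLM88}, which is positive definite on the distinguished real form $V^\natural_\bR$. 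Hence it suffices to show that the real subspace $V^\natural_\bZ\otimes_\bZ\bR \subset V^\natural$ coincides with $V^\natural_\bR$. Writing $\sigma_\bZ$ and $\sigma_\bR$ for the two complex conjugations, this is equivalent to triviality of the $\bC$-linear vertex operator algebra automorphism $\tau=\sigma_\bZ\sigma_\bR \in \Aut V^\natural = \bM$: a short computation shows $\tau$ preserves the complex form and the associated invariant Hermitian form $H(u,v)=(u,\sigma_\bR v)$, so $\tau$ is unitary for $H$, and one finds that the lattice $(V^\natural_\bZ)_n$ is positive definite precisely when $\tau=1$.

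Next I would pin down $\tau$ using the fixed-point comparisons that drive the construction. Let $g\in\bM$ be a non-Fricke involution (class $2B$); by Theorem~\ref{thm:monster-form} it preserves $V^\natural_\bZ$, and its eigenspace decomposition is orthogonal for the form. By Lemma~\ref{lem:fixed-point-comparison} (together with the identification of the prime-order orbifold fixed-point subalgebra with $(V_\Lambda)^{g^*}$), the fixed-point sub-vertex operator algebra $(V^\natural_\bZ)^g$ is identified, after inverting suitable primes and passing to the real base change, with a fixed-point subalgebra of the standard form $(V_\Lambda)_\bZ$ of the Leech lattice vertex operator algebra. Since $(V_\Lambda)_\bZ$ is a direct sum of positive-definite unimodular lattices (as recorded in the discussion of the standard form and in Lemma~\ref{lem:standard-R-form}), its fixed-point lattice under $g^*$ is positive definite; therefore $\sigma_\bZ$ and $\sigma_\bR$ agree on the invariant part, i.e. $\tau$ acts trivially on $(V^\natural)^{g=1}$. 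The anti-invariant part $(V^\natural)^{g=-1}$ is a twisted sector of $V_\Lambda$, and the positivity of the (integral) twisted Leech module from \cite{FLM88} shows the real form there is again the positive-definite one, so $\tau$ acts trivially on the anti-invariant part as well.

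Finally, since $V^\natural_\bZ$ is generated by such invariant and anti-invariant subalgebras and one is free to vary $g$ over the relevant involution classes, $\tau$ fixes a generating set, whence $\tau=1$; equivalently $V^\natural_\bZ\otimes_\bZ\bR = V^\natural_\bR$, so the form restricts from a positive-definite real form and is positive definite. I expect the main obstacle to be the last matching step for the anti-invariant (twisted) part: one must verify that our integral form really carries the \emph{positive-definite} real structure of the twisted Leech module rather than some indefinite real form of it, a subtlety created by the fact that the intermediate cyclotomic $S$-integer rings $\bZ[1/pq,e(1/2pq)]$ are not real, so reality and positivity are not visible until the final descent to $\bZ$. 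Concretely, ruling out a nontrivial involution $\tau\in\bM$ is the crux, and doing so cleanly will require tracking the compatibility of real structures through the orbifold equivalences rather than merely knowing the complex isomorphism types.
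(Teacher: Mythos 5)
There is a genuine gap, and it sits exactly where you predicted: the step ``the fixed-point lattice under $g^*$ is positive definite; \emph{therefore} $\sigma_\bZ$ and $\sigma_\bR$ agree on the invariant part'' is a non sequitur. A real form of a complex quadratic space on which the bilinear form restricts to a real-valued positive-definite form need not coincide with a given positive-definite real form: for any $A \in O(n,\bC)$ the subspace $A\cdot\bR^n \subset \bC^n$ is such a real form, and already for $n=2$ the matrix $A = \left(\begin{smallmatrix} \cosh t & {\rm i}\sinh t \\ -{\rm i}\sinh t & \cosh t\end{smallmatrix}\right)$ gives a real form with Gram matrix the identity that is distinct from $\bR^2$ for $t \neq 0$. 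So knowing that $(V_\Lambda)_\bZ$ and its fixed-point sublattice are positive definite tells you nothing, by itself, about whether $\bR$-spans of pieces of $V^\natural_\bZ$ coincide with the corresponding pieces of the FLM real form $V^\natural_\bR$; the same objection applies to your treatment of the twisted sector. Since your entire proof of $\tau = 1$ rests on this inference (and the claimed equivalence ``positive definite precisely when $\tau = 1$'' fails in the same way), the argument does not close. What would actually be needed is to track the real structures through the orbifold identifications, which is the very thing you acknowledge you do not know how to do.

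The paper avoids complex conjugations entirely. It recalls that the $\bZ[1/2]$-form of \cite{BR96} is positive definite (its base change to $\bQ$ is the positive-definite $\bQ$-form of \cite{FLM88}), and then compares $V^\natural_\bZ \otimes \bZ[1/2]$ with that form by the method of Proposition~\ref{prop:isomorphisms-of-forms-for-2-and-3}: decomposing both under a $2$B-pure four-group $H_2$ and identifying the corresponding eigenspaces with pieces of the $2$a-fixed subalgebra of $(V_\Lambda)_{\bZ[1/2]}$. Because these identifications are isometries of modules-with-form over the real ring $\bZ[1/2]$, they transport positive definiteness directly, eigenspace by eigenspace; no statement about which real form of $V^\natural$ one has landed in is ever required. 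If you want to salvage your approach, you would have to upgrade each complex orbifold isomorphism to an isometry defined over a subring of $\bR$, at which point you have essentially reconstructed the paper's argument.
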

\begin{proof}The $\bZ[1/2]$-form of $V^\natural$ that was constructed in \cite{BR96} is positive definite, because base-change to $\bQ$ yields the positive-definite $\bQ$-form constructed in~\cite{FLM88}.

It therefore suffices to show that $V^\natural_\bZ \otimes \bZ[1/2]$ is isomorphic to the $\bZ[1/2]$-form of $V^\natural$ constructed in~\cite{BR96}. This follows from the same method as Proposition~\ref{prop:isomorphisms-of-forms-for-2-and-3}: we decompose both forms under the action of a 2B-pure 4-group $H_2$, and obtain identifications with the 2a-fixed point vertex operator subalgebra of $(V_\Lambda)_{\bZ[1/2]}$. In fact, we don't need the full isomorphism to show positive-definiteness, since it suffices to show that the corresponding $H_2$-eigenspaces are isomorphic, hence positive-definite.
\end{proof}

R. Griess has informed me that the following result is new, even without the positive definite condition.

\begin{cor}There exists a $196884$-dimensional positive-definite unimodular lattice with a~faithful monster action by orthogonal transformations.
\end{cor}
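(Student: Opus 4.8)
The plan is to produce the desired lattice as the weight $2$ graded piece of the integral form constructed in Theorem~\ref{thm:integral-form-for-moonshine}, equipped with the restriction of its invariant bilinear form. First I would set $L = (V^\natural_{\bZ})_2$, the weight $2$ component of $V^\natural_{\bZ}$. By the definition of a vertex operator algebra over $\bZ$, each graded piece $(V^\natural_{\bZ})_n$ is a finite projective $\bZ$-module, and since $\bZ$ is a principal ideal domain every finite projective module is finite free; thus $L$ is a free abelian group whose rank equals $\dim_{\bC} V^\natural_2 = 196884$, the $q^1$-coefficient of $J$.

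Next I would equip $L$ with the restriction of the invariant bilinear form on $V^\natural_{\bZ}$. Since the form is invariant and $\omega$ is fixed, $L_0$ is self-adjoint with respect to it, so the form pairs $V_m$ with $V_n$ nontrivially only when $m=n$; in particular it restricts to a $\bZ$-valued symmetric form $L \times L \to \bZ$. Self-duality of $V^\natural_{\bZ}$ means precisely that each graded component is self-dual as a finite projective $\bZ$-module, so $L$ is unimodular. Positive-definiteness of $L$ is then immediate from the preceding corollary, which establishes that the inner product on $V^\natural_{\bZ}$ is positive definite.

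It remains to exhibit the faithful monster action by orthogonal transformations. By Theorem~\ref{thm:integral-form-for-moonshine} the monster $\bM$ acts on $V^\natural_{\bZ}$ by vertex operator algebra automorphisms; such automorphisms preserve the $\bZ$-grading and the invariant bilinear form, hence restrict to isometries of $L$ and give a homomorphism $\rho\colon \bM \to O(L)$. The only point requiring care is faithfulness, and I would deduce it from the simplicity of $\bM$: the action of $\bM$ on $V^\natural_2$ is nontrivial (it contains Griess's $196883$-dimensional irreducible representation as a direct summand), so $\rho$ is a nontrivial homomorphism out of a simple group and is therefore injective. This yields a positive-definite unimodular lattice of rank $196884$ carrying a faithful action of $\bM$ by orthogonal transformations, as desired.
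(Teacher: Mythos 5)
Your proposal is correct and takes the same route as the paper, which simply observes that the weight~2 subspace of $V^\natural_{\bZ}$ has the required properties; you have filled in the routine verifications (freeness over the PID $\bZ$, homogeneity and unimodularity of the restricted form, positive-definiteness from the preceding corollary, and faithfulness via simplicity of $\bM$) correctly.
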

\begin{proof}The weight 2 subspace of $V^\natural_{\bZ}$ satisfies the required properties.
\end{proof}

\begin{cor}[newer modular moonshine] The stronger version of the modular moonshine conjecture, as stated in~{\rm \cite{BR96}} and~{\rm \cite{B98}}, is unconditionally true. That is, there exists a self-dual integral form $V$ of $V^\natural$ with $\bM$ symmetry, such that for each element $g$ of prime order $p$, the graded Brauer character of any $p$-regular element $h \in C_\bM(g)$ on the Tate cohomology $\hat{H}^*(g,V)$ is given by
 \begin{gather*}
\Tr(h | \hat{H}^0(g,V)) = \begin{cases} T_{gh}(\tau), & g \in p\text{A}, 3\text{C}, \vspace{1mm}\\ \dfrac{T_{gh}(\tau) + T_{gh}(\tau+1/2)}{2}, & g \in 2\text{B}, \vspace{1mm}\\ \dfrac{T_{gh}(\tau) + T_{gh\sigma}(\tau)}{2}, & g \in p\text{B}, 2|(p-1), \end{cases} \\
\Tr(h|\hat{H}^1(g,V)) = \begin{cases} 0, & g \in p\text{A}, 3\text{C}, \vspace{1mm}\\ \dfrac{T_{gh}(\tau) - T_{gh}(\tau+1/2)}{2}, & g \in 2\text{B}, \vspace{1mm}\\ \dfrac{T_{gh}(\tau) - T_{gh\sigma}(\tau)}{2}, & g \in p\text{B}, 2|(p-1), \end{cases}
\end{gather*}
where the element $\sigma$ is the unique involution in $C_\bM(g)/O_p(C_\bM(g))$ that acts as $1$ on $\hat{H}^0(g,V)$ and $-1$ on $\hat{H}^1(g,V)$, when $p \in \{3,5,7,13\}$.
\end{cor}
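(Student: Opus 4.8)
The plan is to take the self-dual $\bZ$-form $V^\natural_{\bZ}$ produced in Theorem~\ref{thm:integral-form-for-moonshine} as the integral form $V$ in the statement, and to observe that its existence is precisely the one hypothesis that was missing from the proofs in \cite{BR96} and \cite{B98}. Indeed, the strengthened conjecture of \cite{BR96} is proved there, together with \cite{B98}, conditional on two families of assumptions: first, an assumption describing the homogeneous pieces of a $\bZ_p$-form of the monster Lie algebra $\fm$, and second, the existence of a self-dual $\bZ$-form (equivalently, a self-dual $\bZ_p$-form for each prime $p$ dividing $|\bM|$) of $V^\natural$ carrying $\bM$-symmetry. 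The first family was resolved unconditionally by Borcherds as Theorem~7.1 of \cite{B99}, via an integral version of the no-ghost theorem; the second is exactly what Theorem~\ref{thm:integral-form-for-moonshine} now supplies, so the whole argument goes through without any standing conjectures.

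Concretely, I would first set $V = V^\natural_{\bZ}$ and, for each prime $p$ dividing $|\bM|$, base-change along $\bZ \to \bZ_p$ to obtain a self-dual $\bZ_p$-form $V \otimes_{\bZ} \bZ_p$ of $V^\natural$ with $\bM$-symmetry; self-duality and the monster action survive base change because self-duality of a finite projective module with $\bM$-equivariant form is preserved, as recorded in Proposition~\ref{prop:faithfully-flat-descent-for-Mobius-forms-and-self-duality}. Since Tate cohomology $\hat{H}^*(g, V)$ for the cyclic group $\langle g \rangle$ of prime order $p$ is annihilated by $p$, it agrees with $\hat{H}^*(g, V \otimes_{\bZ} \bZ_p)$ and is naturally an $\bF_p$-module, so the Brauer character of a $p$-regular $h \in C_\bM(g)$ is well defined on it. I would then invoke Theorem~5.2 of \cite{BR96} and the computations of \cite{B98}, which express these graded Brauer characters in terms of McKay--Thompson series once both assumptions above are granted. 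The case division in the displayed formulas ($g \in p\text{A}$ or $3\text{C}$; $g \in 2\text{B}$; $g \in p\text{B}$ with $2 \mid (p-1)$) is exactly the division appearing in those references, and the twisting involution $\sigma$ is the element identified there; with both assumptions now holding, the formulas follow unconditionally.

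The reason there is essentially no remaining mathematical obstacle is that all of the cohomological and analytic content --- the identification of Tate cohomology with Lie-algebra cohomology, the no-ghost computation, and the extraction of Hauptmoduln --- was already carried out in \cite{BR96}, \cite{B98}, and \cite{B99}. The only point requiring genuine care is to confirm that $V^\natural_{\bZ}$ meets every structural hypothesis those arguments place on the integral form: that it is self-dual with respect to an $\bM$-invariant bilinear form, and that it decomposes compatibly with the relevant centralizers $C_\bM(g)$. Both follow from the construction, since $V^\natural_{\bZ}$ is glued from the forms $V^\natural[1/pq, e(1/2pq)]$, each built as an $\bM$-stable self-dual abelian intertwining subalgebra, and the $g$-eigenspace decompositions needed for the Tate cohomology computation are inherited from the eigenspace decompositions used throughout Section~3. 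Thus the main --- and essentially only --- step is the bookkeeping verification that Theorem~\ref{thm:integral-form-for-moonshine} discharges the final standing hypothesis of \cite{BR96} and \cite{B98}, strengthening the weak version recorded in Corollary~\ref{cor:weak-modular-moonshine} to its full form.
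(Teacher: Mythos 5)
Your overall strategy is the same as the paper's: take $V = V^\natural_{\bZ}$ from Theorem~\ref{thm:integral-form-for-moonshine}, note that the $\bZ_p$-form assumption on the monster Lie algebra was discharged in~\cite{B99}, and then let the conditional arguments of~\cite{BR96} and~\cite{B98} run. That much is right. But the sentence where you write that the required decomposition compatibility ``follows from the construction'' is precisely where the one genuinely nontrivial step of the proof lives, and you have not engaged with it.

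The issue is the technical condition at $p=2$: Assumption~5.1 of~\cite{BR96} asserts that the $\bZ[1/3]$-form of $V^\natural$ decomposes into graded $2.M_{12}.2$-modules that are isomorphic to submodules of $(V_\Lambda)_{\bZ[1/3]}$, and this is used in~\cite{BR96} to transfer the vanishing of Tate cohomology from the form of $V_\Lambda$ to the form of $V^\natural$. The paper does \emph{not} verify this assumption for $V^\natural_{\bZ}$. Instead it observes that the vanishing arguments only actually use the existence of involutions in classes 2A and 2B rather than a full $2.M_{12}.2$ action, that Tate cohomology vanishing is preserved and reflected under faithfully flat base change and under inverting irrelevant primes, and then substitutes a different mechanism: for $q \in \{2,5,7,13\}$, Lemma~\ref{lem:R-forms-for-prime-order-orbifold} embeds the 3B-fixed vectors of $V^\natural_\bZ \otimes \bZ[1/3q,e(1/6q)]$ into $(V_\Lambda)_{\bZ[1/3q,e(1/6q)]}$, and the 3B-pure group $H_3$ of Lemma~\ref{lem:affine-subgroups-of-monster} decomposes $V^\natural_\bZ \otimes \bZ[1/3q,e(1/6q)]$ into pieces embedding equivariantly into those fixed vectors, which for odd $q$ suffices to transfer the vanishing. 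Without some argument of this kind (or an actual verification of Assumption~5.1), your appeal to~\cite{BR96} and~\cite{B98} does not go through as stated, so you should either supply this substitute argument or prove the assumption directly.
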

\begin{proof}The conditional proof given in \cite{BR96} and \cite{B98} only requires the assumptions that $V^\natural_{\bZ}$ exist (with a small technical condition at $p=2$), and that a statement about $\bZ_p$-forms of the monster Lie algebra hold. The existence of $V^\natural_{\bZ}$ is given in Theorem~\ref{thm:integral-form-for-moonshine}, and the assumption about $\bZ_p$-forms was shown to hold in \cite{B99}. The technical condition at $p=2$ appears in Section~5 of~\cite{BR96}, where Assumption~5.1 asserts that $V^\natural_\bZ \otimes \bZ[1/3]$ decomposes into graded $2.M_{12}.2$-modules that are isomorphic to submodules of $(V_\Lambda)_{\bZ[1/3]}$. We will not prove this assumption, but we note that it is only used to transfer the vanishing properties of Tate cohomology from the form of $V_\Lambda$ to the form of $V^\natural$. The proofs of vanishing only use the existence of involutions in classes 2A and 2B instead of a $2.M_{12}.2$ action. The vanishing of Tate cohomology is preserved and reflected by faithfully flat base change and also insensitive to inverting irrelevant primes, so we can check it after adding roots of unity and inverting odd primes greater than 3. For any distinct $q \in \{ 2,5,7,13\}$, Lemma \ref{lem:R-forms-for-prime-order-orbifold} gives an isomorphism from the 3B-fixed vectors in $V^\natural_\bZ \otimes \bZ[1/3q,e(1/6q)]$ to a submodule of $(V_\Lambda)_{\bZ[1/3q,e(1/6q)]}$, and the order 9 3B-pure group $H_3$ given in Lemma \ref{lem:affine-subgroups-of-monster} gives a decomposition of $V^\natural_\bZ \otimes \bZ[1/3q,e(1/6q)]$ into pieces that embed equivariantly (with respect to $C_{\bM}(3B) \cap N_{\bM}(H_3)$) into the 3B-fixed vectors. When $q$ is odd, this is sufficient to transfer the Tate cohomology vanishing properties that we need from $V_\Lambda$.
\end{proof}

We showed in Lemma \ref{lem:R-forms-for-prime-order-orbifold} that some prime order orbifold constructions of $V^\natural$ can be defined over rather small rings. In particular, if $p,q \in \{2,3,5,7,13\}$ and $pq \not\in \{65,91\}$, then we have isomorphisms
\begin{gather*} (V_\Lambda)_{\bZ[1/pq,e(1/pq)]}^\sigma \simto (V^\natural_\bZ \otimes \bZ[1/pq,e(1/pq)])^g \end{gather*}
for $\sigma$ an order $p$ lift of a fixed-point free automorphism $\bar{\sigma}$ of $\Lambda$, and $g \in \bM$ in class $pB$. Here, we show that these isomorphisms can be defined over $\bZ[1/p,e(1/p)]$. For $p=3$, this refinement was conjectured in \cite{BR96}, in the hope that such a construction could be used to construct a self-dual integral form. Thus, we are approaching this question in a somewhat backward way.

\begin{cor}
Let $p \in \{2,3,5,7\}$. Then there exists:
\begin{enumerate}\itemsep=0pt
\item[$1)$] an order $p$ automorphism $\sigma$ of $(V_\Lambda)_{\bZ[1/p,e(1/p)]}$ lifting a fixed-point free automorphism $\bar{\sigma}$ of $\Lambda$ of order $p$,
\item[$2)$] an integral form $V$ of $V^\natural$ with invariant bilinear form and monster symmetry, admitting an order $p$ automorphism $g$ in class $pB$,
\item[$3)$] an isomorphism $(V_\Lambda)_{\bZ[1/p,e(1/p)]}^\sigma \simto (V \otimes \bZ[1/p,e(1/p)])^g$ of $\bZ[1/p,e(1/p)]$-vertex algebras preserving inner products.
\end{enumerate}
\end{cor}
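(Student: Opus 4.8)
The plan is to promote the existing $\bZ[1/p,e(1/2p)]$-level isomorphisms to the smaller ring $\bZ[1/p,e(1/p)]$ by descending across the quadratic extension $\bZ[1/p,e(1/p)] \to \bZ[1/p,e(1/2p)]$, using the faithfully flat gluing machinery from Lemma~\ref{lem:faithfully-flat-gluing-data-for-voas} together with the already-established constructions.

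First I would fix the objects. For $V$, I take the integral form $V^\natural_\bZ$ from Theorem~\ref{thm:integral-form-for-moonshine}, which is self-dual, has monster symmetry, and admits an order $p$ automorphism $g$ in class $pB$ (the non-Fricke class, which is anomaly-free). For $\sigma$, I take an order $p$ lift of the fixed-point free automorphism $\bar{\sigma}$ of $\Lambda$ in class $p\mathrm{a}$ to $(V_\Lambda)_{\bZ[1/p,e(1/p)]}$; such a lift of order exactly $p$ exists by Lemma~\ref{lem:fixed-point-free-automorphisms}, since $R = \bZ[1/p,e(1/p)]$ contains $\frac{1}{p}$ and a primitive $p$th root of unity. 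The content is then entirely in constructing the isomorphism in item~$3$.

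The key step is a descent argument. By the discussion preceding Lemma~\ref{lem:R-form-for-ALY-algebra} together with Lemma~\ref{lem:R-forms-for-prime-order-orbifold}, for each auxiliary prime $q \in P_0 \setminus \{p\}$ with $pq \notin \{65,91\}$ we already have an isomorphism of $\bZ[1/pq,e(1/2pq)]$-vertex algebras
\begin{gather*} (V_\Lambda)_{\bZ[1/pq,e(1/2pq)]}^{\sigma} \simto \big(V^\natural_\bZ \otimes \bZ[1/pq,e(1/2pq)]\big)^{g} \end{gather*}
preserving the invariant bilinear forms, since both sides are identified with the $g^*$-fixed standard form of $V_\Lambda$. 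The plan is to first use Zariski gluing (Lemma~\ref{lem:zariski-gluing-for-voas}) over the cover of $\Spec \bZ[1/p,e(1/2p)]$ by the opens $\Spec \bZ[1/pq,e(1/2pq)]$, to descend these isomorphisms to a single isomorphism over $\bZ[1/p,e(1/2p)]$; the uniqueness supplied by self-duality (the uniqueness clause of Proposition~\ref{prop:extension-of-self-dual-form}, or Lemma~\ref{lem:unique-R-forms-with-monster-symmetry}) guarantees the local isomorphisms agree on overlaps, so they glue. Then I would descend across the faithfully flat quadratic extension $\bZ[1/p,e(1/p)] \hookrightarrow \bZ[1/p,e(1/2p)]$. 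To do this I equip the $\bZ[1/p,e(1/2p)]$-isomorphism with a descent datum: the Galois involution $e(1/2p) \mapsto -e(1/2p)$ (equivalently, the nontrivial automorphism of the extension) acts on both fixed-point subalgebras, and I must check that the isomorphism commutes with this action so that it descends to $\bZ[1/p,e(1/p)]$ by Proposition~\ref{prop:faithfully-flat-descent-for-voas}. Because the isomorphism is canonical up to automorphisms commuting with $\sigma$ and $g$, and because self-duality pins it down uniquely once the forms are fixed over $\bZ[1/p,e(1/p)]$, the descent datum is forced, giving the desired $\bZ[1/p,e(1/p)]$-isomorphism.

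The main obstacle I anticipate is verifying that $\sigma$, $g$, and the fixed-point subalgebras $(V_\Lambda)^{\sigma}$ and $(V^\natural)^{g}$ are genuinely already defined over $\bZ[1/p,e(1/p)]$ rather than merely over $\bZ[1/p,e(1/2p)]$, so that there is something over the smaller ring for the isomorphism to descend onto. For the $V_\Lambda$ side this is where I invoke Lemma~\ref{lem:fixed-point-free-automorphisms}: the lift $\sigma$ and its eigenspace decomposition require only $\frac{1}{p}$ and $e(1/p)$, not $e(1/2p)$, precisely because the obstruction computation there shows the order $p$ lift exists without the half-root of unity. For the $V^\natural_\bZ$ side, $g$ and its fixed points are defined over $\bZ$ already. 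The delicate point is that the \emph{inner-product-preserving} identification of the two fixed-point algebras was built using the level-$2pq$ abelian intertwining structure, which a priori needs $e(1/2pq)$; I must confirm that after passing to fixed points the surviving data lives over $\bZ[1/p,e(1/p)]$, which I expect to follow because the relevant eigenvalues are $p$th roots of unity and the quadratic form on the relevant discriminant group trivializes on the fixed part. Once that is in hand, the descent is formal.
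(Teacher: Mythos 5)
Your overall strategy --- start from the inner-product-preserving isomorphisms over $\bZ[1/pq,e(1/2pq)]$ supplied by Lemma~\ref{lem:R-forms-for-prime-order-orbifold} and glue them down to $\bZ[1/p,e(1/p)]$ --- is the same as the paper's, but two of your steps are off. First, the intermediate gluing is misdescribed: $\Spec \bZ[1/pq,e(1/2pq)]$ is \emph{not} a Zariski open of $\Spec \bZ[1/p,e(1/2p)]$, because adjoining $e(1/2pq)$ to $\bZ[1/p,e(1/2p)]$ is a nontrivial integral extension rather than a localization, so Lemma~\ref{lem:zariski-gluing-for-voas} does not apply to the cover you name. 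The paper proceeds in two stages, exactly as in the proof of Theorem~\ref{thm:integral-form-for-moonshine}: faithfully flat gluing for $\bZ[1/pqr,e(1/2pq)] \to \bZ[1/pqr,e(1/2pqr)] \leftarrow \bZ[1/pqr,e(1/2pr)]$ to strip the auxiliary roots of unity and obtain isomorphic $\bZ[1/pqr,e(1/2p)]$-forms, and only then Zariski gluing over the primes $q$, $r$, $\ell$ to reach $\bZ[1/p,e(1/2p)]$.

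Second, your final ``descent across the quadratic extension $\bZ[1/p,e(1/p)] \to \bZ[1/p,e(1/2p)]$'' misfires in both cases. For odd $p$ the extension is trivial: $-e(1/2p)$ is a primitive $p$-th root of unity, so $\bZ[1/p,e(1/2p)] = \bZ[1/p,e(1/p)]$ and there is nothing to descend --- this equality is precisely how the paper concludes for odd $p$, and treating it as a degree-two Galois descent suggests you have not noticed it. For $p=2$ the extension $\bZ[1/2] \to \bZ[1/2,i]$ is genuinely quadratic, and here your claim that ``the descent datum is forced'' by self-duality is a real gap: the isomorphism $(V_\Lambda)_{\bZ[1/2,i]}^{\sigma} \simto \big(V^\natural_\bZ \otimes \bZ[1/2,i]\big)^{g}$ is very far from unique, since the automorphism group of the target contains the image of $C_\bM(g)$ (of order divisible by $2^{46}$ for $g$ in class $2\mathrm{B}$), so the isomorphism is a torsor under a large group, Galois-equivariance is not automatic, and the relevant nonabelian $H^1$ obstruction would have to be computed. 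The paper does not attempt this; it instead invokes the isomorphism of $\bZ[1/2]$-forms already established in \cite{BR96}. Without either that citation or an actual treatment of the descent obstruction, your argument does not close the $p=2$ case.
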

\begin{proof}By Lemma \ref{lem:R-forms-for-prime-order-orbifold}, we have isomorphisms
\begin{gather*} (V_\Lambda)_{\bZ[1/pq,e(1/2pq)]}^\sigma \simto (V^\natural_\bZ \otimes \bZ[1/pq,e(1/2pq)])^g \end{gather*}
for all $q \in \{2,3,5,7\} \setminus \{p\}$. We apply a descent argument following the same lines as in Theo\-rem~\ref{thm:integral-form-for-moonshine}: Faithfully flat gluing for
\begin{gather*} \bZ[1/pqr,e(1/2pq)] \to \bZ[1/pqr,e(1/2pqr)] \leftarrow \bZ[1/pqr,e(1/2pr)] \end{gather*}
yields isomorphic $\bZ[1/pqr,e(1/2p)]$-forms, as $q$ and $r$ vary over the set $\{2,3,5,7\} \setminus \{p\}$. Zariski gluing then yields isomorphic $\bZ[1/p,e(1/2p)]$-forms. When $p$ is odd, we have $\bZ[1/p,e(1/2p)] = \bZ[1/p,e(1/p)]$, and when $p=2$, the isomorphism of $\bZ[1/2]$-forms was established in~\cite{BR96}.
\end{proof}

It seems reasonable to expect that one can improve this result by removing the roots of unity.

\begin{conj}
For any $p \in \{2,3,5,7,13\}$, there is an isomorphism
\begin{gather*} (V_\Lambda)_{\bZ[1/p]}^\sigma \simto \big(V^\natural_\bZ \otimes \bZ[1/p]\big)^g \end{gather*}
of vertex operator algebras over $\bZ[1/p]$, where $\sigma$ is an order $p$ lift of a fixed-point free automorphism of $\Lambda$, and $g$ is an element of the monster in conjugacy class $p$B.
\end{conj}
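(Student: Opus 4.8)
The plan is to obtain the conjectural isomorphism by \emph{Galois descent} from a known isomorphism over $S := \bZ[1/p, e(1/p)]$ down to $R := \bZ[1/p]$; note $R \to S$ is finite \'etale and Galois with group $\Gamma = \mathrm{Gal}(\bQ(e(1/p))/\bQ) \cong (\bZ/p\bZ)^\times$. First I would check that both sides already live over $R$. The lift $\sigma$ may be chosen to preserve $(V_\Lambda)_\bZ$ by Lemma~\ref{lem:fixed-point-free-automorphisms}, and $g \in \bM = \Aut V^\natural_\bZ$ preserves $V^\natural_\bZ$; since $p$ is invertible in $R$, the averaging idempotents $\frac1p\sum_i \sigma^i$ and $\frac1p\sum_i g^i$ are defined over $R$, so $A := (V_\Lambda)_R^\sigma$ and $B := (V^\natural_\bZ \otimes R)^g$ are direct summands, are vertex operator algebras over $R$, and commute with base change. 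These idempotents are self-adjoint for the invariant form, so $A$ and $B$ are both self-dual $R$-forms of the complex fixed-point algebra $(V_\Lambda)^\sigma_\bC \cong (V^\natural)^g_\bC$.

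Next I would produce an $S$-isomorphism $\theta\colon A \otimes_R S \simto B \otimes_R S$. For $p \in \{2,3,5,7\}$ this is precisely the content of the preceding corollary (recall $\bZ[1/p,e(1/2p)] = \bZ[1/p,e(1/p)]$ for odd $p$). For $p = 13$ the tripod argument of that corollary is unavailable, since only $q \in \{2,3\}$ satisfy $13q \notin \{65,91\}$; here I would instead glue along a Zariski cover. Because $2$ and $3$ are coprime in $\bZ[1/13]$, the opens $\Spec\bZ[1/26]$ and $\Spec\bZ[1/39]$ cover $\Spec\bZ[1/13]$, and Lemma~\ref{lem:R-forms-for-prime-order-orbifold} (applied to an element of order $26$, respectively $39$, an appropriate power of which is a $13\mathrm{B}$ element) identifies the $13\mathrm{B}$-fixed subalgebra of $V^\natural_\bZ$ with a Leech fixed-point subalgebra over each piece, both over rings containing $e(1/13)$. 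Gluing along $\Spec\bZ[1/78]$ by Lemma~\ref{lem:zariski-gluing-for-voas} then yields the desired $\theta$ over $S$.

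With $\theta$ fixed, the problem reduces to Galois descent of a single morphism: by Proposition~\ref{prop:faithfully-flat-descent-for-voas}, $\theta$ descends to an isomorphism $A \simto B$ over $R$ exactly when it is compatible with the canonical descent data on $A_S$ and $B_S$, that is, when $\theta$ is $\Gamma$-equivariant. The set $\mathrm{Isom}_S(A_S,B_S)$ is a torsor under $\Aut_S(B_S)$, and the obstruction to finding a $\Gamma$-fixed point is the class of the cocycle $\gamma \mapsto c_\gamma = {}^{\gamma}\theta \circ \theta^{-1}$ in the (nonabelian) pointed set $H^1\big(\Gamma, \Aut_S(B_S)\big)$. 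Here $\Aut_S(B_S) = \Aut_S\big((V_\Lambda)^\sigma_S\big)$ receives the two surjections from $C_{\Aut V_\Lambda}(\sigma)$ and $C_\bM(g)$ appearing in the proof of Proposition~\ref{prop:R-form-for-abelian-intertwining-algebra}, with those centralizers computed by Proposition~\ref{prop:centralizers-for-Leech}; so the obstruction is controlled by the Galois cohomology of these finite groups.

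The mechanism I would exploit is that the Galois action permutes $p$-th root eigenspaces: $\gamma$ is an $S$-semilinear automorphism of $V^\natural_S$ commuting with $\bM$ that sends $V^{g = e(k/p)}$ to $V^{g = e(\gamma k/p)}$, so ${}^{\gamma}\theta$ is the isomorphism built from the $g^\gamma$-grading rather than the $g$-grading, and $c_\gamma$ is represented by a conjugator witnessing $g^\gamma \sim g$ in $\bM$ (the corresponding ambiguity on the Leech side, namely the choice of lift $\sigma$, is already killed after adjoining roots of unity by Lemma~\ref{lem:fixed-point-free-automorphisms}). The hard part will be to choose these conjugators $x_\gamma$ \emph{coherently}, so that $\gamma \mapsto x_\gamma$ satisfies the cocycle condition and $[c]$ vanishes. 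This is exactly the arithmetic input that the flat-descent machinery does not provide: it reduces to the rationality of the class $p\mathrm{B}$ under power maps together with the triviality of the induced class in $H^1\big(\Gamma, C_\bM(g)/\langle g\rangle\big)$. For $p \in \{2,3\}$ the rationality is immediate, but for $p \in \{5,7,13\}$ it demands genuine character-theoretic verification, and it is this cohomological coherence---rather than anything in Theorem~\ref{thm:integral-form-for-moonshine}---that I expect to be the crux, and which presumably keeps the statement conjectural.
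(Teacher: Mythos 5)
The statement you are addressing is stated in the paper as a \emph{conjecture}: the paper gives no proof of it, and the discussion immediately following it identifies the obstruction as precisely the problem of showing that the Galois actions on $\big(V^\natural_\bZ \otimes \bZ[1/p,e(1/p)]\big)^g$ and $(V_\Lambda)_{\bZ[1/p,e(1/p)]}^\sigma$ have matching fixed-point submodules. Your proposal reaches exactly that point and stops. The setup is sound: both sides are honest self-dual $R$-forms via the averaging idempotents, the $S$-isomorphism $\theta$ exists for $p\in\{2,3,5,7\}$ by the corollary preceding the conjecture, and recasting the descent problem as the vanishing of the class of $\gamma\mapsto{}^{\gamma}\theta\circ\theta^{-1}$ in the nonabelian $H^1\big(\Gamma,\Aut_S(B_S)\big)$ is the correct framing. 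But your final paragraph does not prove that this class vanishes; it only lists the ingredients (rationality of $p\mathrm{B}$ under power maps, coherent choice of conjugators, triviality in $H^1\big(\Gamma, C_\bM(g)/\langle g\rangle\big)$) and concedes that this is the crux that ``presumably keeps the statement conjectural.'' That concession is accurate, but it means your argument is a reduction of the conjecture to a Galois-cohomological vanishing statement, not a proof: the entire arithmetic content of the statement lives in the step you leave open.

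Two secondary points. For $p=13$, your Zariski gluing of the order-$26$ and order-$39$ identifications is plausible but glosses over the roots of unity: Lemma~\ref{lem:R-forms-for-prime-order-orbifold} produces those identifications over $\bZ[1/26,e(1/52)]$ and $\bZ[1/39,e(1/78)]$, not over $S[1/2]$ and $S[1/3]$, so an intermediate faithfully flat descent (in the style of Lemma~\ref{lem:descent-for-three-primes}) is needed before the Zariski step. Also, the route the paper itself suggests for attacking the conjecture is slightly different in flavor from a direct cocycle computation: following Section~5 of~\cite{BR96}, one transports the Galois automorphism through an ${\rm SL}_2(\bF_p)$-action on a $p\mathrm{B}$-pure elementary abelian subgroup, which replaces your abstract conjugators $x_\gamma$ by explicit group elements; even so, the author reports being unable to match the resulting fixed-point submodules on the lattice side and the moonshine side, which is the same wall you hit.
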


Section~5 of \cite{BR96} describes how to produce a $\bZ[1/3]$-form of $V^\natural$ using a $\bZ[1/3,e(1/3)]$-form, by transporting a Galois automorphism through an ${\rm SL}_2(\bF_3)$-action on a 3B-pure elementary subgroup. This method works for $p=5$ and $p=7$, as well, but I do not know how to show that the Galois actions on $\big(V^\natural_\bZ \otimes \bZ[1/p,e(1/p)]\big)^g$ and $(V_\Lambda)_{\bZ[1/p,e(1/p)]}^\sigma$ have matching fixed-point submodules. For $p=13$, one may need to do more explicit work.

\subsection*{Acknowledgements}
I would like to thank Toshiyuki Abe for describing the constructions in \cite{ALY17} in detail at the ``VOA and related topics'' workshop at Osaka University in March 2017. I would also like to thank the anonymous referees for many helpful comments, and one referee in particular for their help with the proof of Lemma~\ref{lem:faithfully-flat-gluing-data-for-flat-modules}. This research was partly funded by JSPS Kakenhi Grant-in-Aid for Young Scientists (B) 17K14152.

\LastPageEnding

\end{document}